\documentclass[oneside]{amsart}

\usepackage{amsthm}
\usepackage{amsmath,amscd}
\usepackage{amssymb}
\usepackage{enumerate}
\usepackage{graphicx}
\usepackage[hidelinks,pagebackref,pdftex]{hyperref}
\usepackage{booktabs}
\usepackage{color}
\usepackage[dvipsnames]{xcolor}
\usepackage{import}
\usepackage{tikz-cd}
\usepackage{ulem}

\AtBeginDocument{%
   \def\MR#1{}
}

\usepackage{marginnote}
\long\def\@savemarbox#1#2{\global\setbox#1\vtop{\hsize\marginparwidth 
  \@parboxrestore\tiny\raggedright #2}}
\renewcommand*{\backref}[1]{}
\renewcommand*{\backrefalt}[4]{
  \ifcase #1
  [No citations.]
  \or [#2]
  \else [#2]
  \fi }

\numberwithin{equation}{section}
\theoremstyle{plain}
\newtheorem{theorem}[equation]{Theorem}
\newtheorem{corollary}[equation]{Corollary}
\newtheorem{lemma}[equation]{Lemma}

\newtheorem{proposition}[equation]{Proposition}

\newtheorem*{namedtheorem}{\theoremname}
\newcommand{\theoremname}{testing}

\theoremstyle{definition}
\newtheorem{definition}[equation]{Definition}
\newtheorem{remark}[equation]{Remark}

\newcommand{\calT}{{\mathcal{T}}}
\newcommand{\calS}{{\mathcal{S}}}

\newcommand{\fakeenv}{} 

\newenvironment{restate}[2]  
{
 \renewcommand{\fakeenv}{#2} 
 \theoremstyle{plain}
 \newtheorem*{\fakeenv}{#1~\ref{#2}} 
 \begin{\fakeenv}
}
{
 \end{\fakeenv}
}

\newtheorem{rthm}{Theorem}
 \theoremstyle{plain}
\newenvironment{reptheorem}[1]
  {\renewcommand\therthm{\ref*{#1}}\rthm}
  {\endrthm}

\DeclareMathOperator{\arcsinh}{arcsinh}

\makeatletter

\def\chaptermark#1{}

\def\chapter{%
  \if@openright\cleardoublepage\else\clearpage\fi
  \thispagestyle{plain}\global\@topnum\z@
  \@afterindenttrue \secdef\@chapter\@schapter}

\def\@chapter[#1]#2{\refstepcounter{chapter}%
  \ifnum\c@secnumdepth<\z@ \let\@secnumber\@empty
  \else \let\@secnumber\thechapter \fi
  \typeout{\chaptername\space\@secnumber}%
  \def\@toclevel{0}%
  \ifx\chaptername\appendixname \@tocwriteb\tocappendix{chapter}{#2}%
  \else \@tocwriteb\tocchapter{chapter}{#2}\fi
  \chaptermark{#1}%
  \addtocontents{lof}{\protect\addvspace{10\p@}}%
  \addtocontents{lot}{\protect\addvspace{10\p@}}%
  \@makechapterhead{#2}\@afterheading}
\def\@schapter#1{\typeout{#1}%
  \let\@secnumber\@empty
  \def\@toclevel{0}%
  \ifx\chaptername\appendixname \@tocwriteb\tocappendix{chapter}{#1}%
  \else \@tocwriteb\tocchapter{chapter}{#1}\fi
  \chaptermark{#1}%
  \addtocontents{lof}{\protect\addvspace{10\p@}}%
  \addtocontents{lot}{\protect\addvspace{10\p@}}%
  \@makeschapterhead{#1}\@afterheading}
\newcommand\chaptername{Chapter}

\def\@makechapterhead#1{\global\topskip 7.5pc\relax
  \begingroup
  \fontsize{\@xivpt}{18}\bfseries\centering
    \ifnum\c@secnumdepth>\m@ne
      \leavevmode \hskip-\leftskip
      \rlap{\vbox to\z@{\vss
          \centerline{\normalsize\mdseries
              \uppercase\@xp{\chaptername}\enspace\thechapter}
          \vskip 3pc}}\hskip\leftskip\fi
     #1\par \endgroup
  \skip@34\p@ \advance\skip@-\normalbaselineskip
  \vskip\skip@ }
\def\@makeschapterhead#1{\global\topskip 7.5pc\relax
  \begingroup
  \fontsize{\@xivpt}{18}\bfseries\centering
  #1\par \endgroup
  \skip@34\p@ \advance\skip@-\normalbaselineskip
  \vskip\skip@ }
\def\appendix{\par
  \c@chapter\z@ \c@section\z@
  \let\chaptername\appendixname
  \def\thechapter{\@Alph\c@chapter}}

\newcounter{chapter}

\newif\if@openright

\makeatother

\title[Polynomially many surfaces of fixed Euler characteristic]{Polynomially many surfaces of fixed Euler characteristic
in a hyperbolic 3-manifold}
\author{Marc Lackenby, Anastasiia Tsvietkova}
\date{}
\subjclass[2010]{}
\everymath{\displaystyle}

\begin{document}

\footnotesize
 \begin{abstract}
We give an upper bound for the number of compact essential orientable non-isotopic surfaces, with Euler characteristic at least some constant $\chi$, properly embedded in a finite-volume hyperbolic 3-manifold $M$, closed or cusped. This bound is a polynomial function of the volume of $M$, with degree that depends linearly on $|\chi|$.
 \end{abstract}

\maketitle
\normalsize

\section{Introduction}

Embedded incompressible surfaces play a central role in 3-manifold theory. A natural question is therefore how many essential properly embedded surfaces of a given Euler characteristic does a 3-manifold contain. This number is finite for hyperbolic 3-manifolds, but is infinite in general. It therefore is reasonable to restrict to the case of hyperbolic 3-manifolds.

This question about the number of surfaces in a 3-manifold has been studied in recent years. One can fix the 3-manifold and investigate how the surface count changes with surface genus or Euler characteristic. The earliest work in this direction is due to Haken, for embedded normal surfaces, and Kneser, for surfaces simultaneously embedded in a 3-manifold. Later a related question, about immersed $\pi_1$-injective surfaces, was studied by Masters \cite{Masters}, and Kahn and Markovic \cite{KahnMarkovic}. There, it is shown that in a closed hyperbolic 3-manifold, the number of such closed connected surfaces (up to homotopy) of genus $g$ grows like $g^{2g}$. This was recently extended to quasi-Fuchsian surface subgroups in cusped hyperbolic 3-manifolds in \cite{HRW}. (Calegari, Marques and Neves also counted quasi-Fuchsian surface subgroups in closed 3-manifolds in Section 4 of \cite{CMN} earlier.) For embedded surfaces (up to isotopy), the count can be smaller. In \cite{DGR}, Dunfield, Garoufalidis and Rubinstein proved that for a fixed hyperbolic 3-manifold (subject to some mild constraints, and without an embedded closed non-orientable essential surface), the count for closed embedded essential surfaces, possibly disconnected, is quasi-polynomial in Euler characteristic for all but finitely many of its values. In all these results, one needs to fix the 3-manifold to obtain the exact expression: the constants in \cite{Masters, KahnMarkovic} and the quasi-polynomials in \cite{DGR} depend on the 3-manifold.

If one instead fixes the Euler characteristic of the surfaces rather than fixing a 3-manifold, then upper bounds can be obtained that are \textit{universal} and \textit{polynomial}. By universal we mean that the expression and constants are given by an explicit general formula for all 3-manifolds.

\begin{theorem}\label{Thm:NumberOfSurfaces}
There are constants $c_1$ and $c_2$ with the following property.
Let $M$ be an orientable hyperbolic 3-manifold of finite volume $\mathrm{vol}(M)$, closed or with cusps. The number of properly embedded orientable essential surfaces in $M$ with Euler characteristic at least $\chi$, up to isotopy, is at most $(c_1 \mathrm{vol}(M))^{c_2 |\chi|}$.
\end{theorem}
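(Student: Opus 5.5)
The plan is to triangulate $M$ efficiently, put every essential surface into normal form, and then count normal surfaces of bounded Euler characteristic. Concretely, I will use the thick–thin decomposition. Fix a Margulis constant $\epsilon$. The $\epsilon$-thick part of $M$ is covered by at most $c\,\mathrm{vol}(M)$ balls of radius $\epsilon/2$ (by a packing argument). The thin part consists of cusps and Margulis tubes.

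From the thick part I build a triangulation (or handle structure) $\calT$ of $M$ minus the thin part, with $O(\mathrm{vol}(M))$ tetrahedra and each vertex of bounded valence. The tubes and cusps are handled separately. A surface is essential in $M$ iff it can be isotoped to meet each tube and cusp in a controlled way: in meridian discs, in annuli, or not at all. For closed $M$ with short geodesics I would drill them out, or alternatively treat each solid tube as a single handle of bounded complexity. Either way I obtain a "triangulation" $\calT$ with $t \le c\,\mathrm{vol}(M)$ pieces, each of bounded combinatorial type.

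Next, every essential surface $S$ with $\chi(S)\ge\chi$ is isotopic to a normal surface, and by choosing a least-weight representative I may assume it is "efficient". The key counting step is to bound the complexity of $S$ in terms of $|\chi(S)|$, not of $t$. The natural tool is a combinatorial Gauss–Bonnet or angled-structure argument: give the pieces of $\calT$ geometric angles coming from the hyperbolic metric (thick part has bounded geometry), so that each normal disc that is not a "trivial" parallel piece contributes a definite amount of negative curvature. This implies that $S$ has at most $C|\chi|$ non-parallel normal discs. Away from those, $S$ is a union of parallelity (I-bundle) regions consisting of normal discs parallel to each other.

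Then comes the count. $S$ is determined by the following data:
\begin{enumerate}[(i)]
\item the at most $C|\chi|$ exceptional discs and their locations, giving at most $(c\,t)^{C|\chi|}$ choices;
\item the gluing pattern among them, which has bounded combinatorics per disc;
\item the structure of the parallelity regions.
\end{enumerate}
For (iii) I would argue that these regions are $I$-bundles over surfaces whose topology is bounded by $|\chi|$. Since $S$ is incompressible and $M$ is hyperbolic (so there are no essential tori or annuli beyond the cusps), the number of sheets is irrelevant up to isotopy: parallel sheets of an essential surface that bound a product region can be isotoped together, or the region is an $I$-bundle whose horizontal boundary lies in $S$. So the data in (iii) is determined by a bounded choice per exceptional disc. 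Multiplying gives $(c_1\mathrm{vol}(M))^{c_2|\chi|}$. Disconnected surfaces are included because the total number of components is at most $|\chi|$ plus spheres, tori and annuli, and in hyperbolic $M$ the essential tori and annuli are peripheral and boundedly many per cusp. Cusp-related components need care, since the number of cusps is itself $O(\mathrm{vol}(M))$.

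I expect the main obstacle to be the middle step: showing that an essential surface of Euler characteristic $\chi$ can be isotoped so that, up to parallelism, it has only $O(|\chi|)$ pieces relative to a triangulation with $O(\mathrm{vol}(M))$ pieces. This must hold uniformly, with constants independent of $M$. My first attempt would be to run the argument with a handle structure built from the thick part, in which every handle has bounded geometry. I would then use the hyperbolic metric (for instance, pleated or least-area representatives of the surface) to show that all but $O(|\chi|)$ of the surface's intersections with handles are product pieces. This mirrors how one bounds area via Gauss–Bonnet: a pleated surface has area $2\pi|\chi|$, so it meets only $O(|\chi|)$ of the $\epsilon$-balls nontrivially outside the thin part. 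The delicate point is converting a pleated (immersed) surface into an embedded normal one while keeping this control.
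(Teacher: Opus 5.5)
There is a genuine gap, and it sits exactly at the step you flag as the main obstacle: nothing in the proposal gives a bound, uniform in $M$, on the complexity of a normal representative.

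\textbf{The Gauss--Bonnet step.} The combinatorial Gauss--Bonnet mechanism cannot work in a compact triangulation. Take the combinatorial area $a(D)=\sum_i(\pi-\alpha_i)-2\pi$ of a normal disc $D$ meeting edges with angles $\alpha_i$. Any angle assignment with angle sum $2\pi$ around each edge gives every vertex-linking sphere total area $-4\pi$. So normal triangles must have negative area; geometrically, $\pi-\sum\alpha_i<0$ because the link of a finite vertex is a spherical triangle. Squares, by contrast, typically have positive area (about $78^\circ$ for a regular tetrahedron). With no definite sign, the identity $\sum_D a(D)=-2\pi\chi(S)$ bounds nothing. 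Nor can any choice of angles single out the ``non-parallel'' discs, since parallelism is a property of $S$, not of the triangulation.

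\textbf{Step (iii).} This step is also incorrect. The multiplicities of discs in the parallelity regions are part of the normal coordinates that determine $S$. Parallel sheets of a single connected surface cannot be merged by an isotopy. An $I$-bundle over an annulus or M\"obius band in the parallelity bundle can contain arbitrarily many blocks without changing $\chi(S)$. So you would still have to bound these multiplicities, which is the original problem again.

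\textbf{How the paper avoids this.} The paper never normalises. It isotopes $S$ to a least-area, hence stable minimal, surface (Freedman--Hass--Scott, Ruberman), or to the double cover of a one-sided one. Schoen's estimate then bounds the principal curvatures uniformly, so at a sufficiently small scale $\mu$ the surface has almost constant normals. The paper then proceeds as follows.
\begin{enumerate}
\item Take a $(\theta,a\mu,b\mu)$-thick triangulation of a copy of the fat part. This is Breslin's construction, refined so the constants are linear in $\mu$, which is needed because $\mu$ depends on Schoen's constant.
\item Barycentrically subdivide it, and perturb the barycentres so that no edge is nearly tangent to this particular $S$.
\item The minimal surface is then already normal, and each intrinsic disc of radius $\mu/16$ in $S$ meets at most one normal disc per tetrahedron and only boundedly many tetrahedra.
\item Since $\mathrm{area}(S)\le 2\pi|\chi(S)|$, and intrinsic discs of radius $r$ below the injectivity radius have area at least $\pi r^2$, the \emph{total} number of normal discs is at most $N|\chi(S)|$.
\end{enumerate}
A stars-and-bars count then finishes the proof with no parallelity analysis at all. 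The triangulation depends on $S$, but it is combinatorially the fixed barycentric subdivision and isotopic to it, so the normal coordinates still determine $S$ up to isotopy.

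Your pleated-surface remark points in this direction. However, pleated surfaces are neither embedded nor curvature-controlled at small scale, which is why the paper uses stable minimal surfaces instead.
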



Our theorem yields a simple bound for the number of essential surfaces in the exteriors of many links. Note that in cusped 3-manifolds, it applies to both closed surfaces and surfaces with non-empty boundary.

\begin{corollary}\label{Cor:Links}
There are constants $c'_1$ and $c_2$ with the following property.
Let $K$ be a hyperbolic link in the 3-sphere. Let 
its crossing number be $c(K)$. Then the number of properly embedded orientable essential
surfaces in the exterior of $K$ with 
Euler characteristic at least $\chi$, up to isotopy, is at most $(c'_1 c(K))^{c_2 |\chi|}$.
\end{corollary}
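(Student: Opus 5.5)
The plan is to deduce Corollary~\ref{Cor:Links} from Theorem~\ref{Thm:NumberOfSurfaces} by bounding the volume of the link exterior linearly in the crossing number. If $S^3 \setminus K$ is hyperbolic, then its exterior is an orientable finite-volume cusped hyperbolic 3-manifold. Properly embedded orientable essential surfaces in the exterior correspond, up to isotopy, to those in $S^3\setminus K$ (the interior of the exterior). So Theorem~\ref{Thm:NumberOfSurfaces} applies directly and gives the bound $(c_1 \mathrm{vol}(S^3\setminus K))^{c_2|\chi|}$.

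The key step is a linear upper bound $\mathrm{vol}(S^3\setminus K) \le A\, c(K)$ for a universal constant $A$. I would obtain this from the standard ideal-polyhedral decomposition of a link complement coming from a diagram. Take a diagram $D$ with $c(K)$ crossings. If $c(K)=0$ the link is a trivial link and is not hyperbolic, so we may assume $c(K)\ge 1$. Building, for example, the Menasco/Thurston decomposition from $D$, one obtains a decomposition of $S^3\setminus K$, minus possibly a few points or arcs, into at most $B\,c(K)$ ideal tetrahedra for a universal constant $B$. For instance, placing a vertex at each crossing and coning gives a bounded number of tetrahedra per crossing.

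Since each ideal tetrahedron, straightened in $\mathbb{H}^3$, has volume at most $v_3\approx 1.01494$, the straightening/Gromov-norm argument gives $\mathrm{vol}(S^3\setminus K)\le v_3 B\, c(K)$. Alternatively, I would cite the known bound $\mathrm{vol}(S^3\setminus K) < 10 v_3 (c(K)-1)$ when $c(K)\ge 2$, which follows from the same method; cf.\ Adams' bound of $4v_3$ times the number of crossings via octahedra at crossings. I expect this volume estimate to be the main (though standard) obstacle. The only care needed is that the decomposition is into genuinely ideal, possibly degenerate, tetrahedra whose straightenings cover the manifold with degree one, so that volume is bounded by the sum of the simplex volumes.

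Combining the two steps, set $c'_1 = c_1 A$ with $A = v_3 B$. This gives at most $(c_1 A\, c(K))^{c_2|\chi|} = (c'_1 c(K))^{c_2|\chi|}$ surfaces, with the same constant $c_2$ as in Theorem~\ref{Thm:NumberOfSurfaces}.
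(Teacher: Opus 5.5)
Your proposal is correct and takes essentially the same route as the paper. The paper also triangulates the link exterior with a bounded number of tetrahedra per crossing (at most $8c(K)$), bounds the volume by $v_3$ times the number of tetrahedra using Gromov and Thurston, and then applies Theorem~\ref{Thm:NumberOfSurfaces} with $c'_1 = 8v_3c_1$.
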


We will give the proof of this in Section \ref{Sec:ConclusionOfProof}. 

Note also that the restriction in the above results to orientable surfaces can be avoided.

\begin{corollary}\label{Cor:notorientable}

The statement of Theorem \ref{Thm:NumberOfSurfaces} extends to non-orientable surfaces that are $\pi_1$-injective and boundary-$\pi_1$-injective.
\end{corollary}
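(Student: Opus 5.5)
The plan is to reduce to the orientable case by passing to the boundary of a regular neighbourhood. Let $S$ be a properly embedded non-orientable surface in $M$ that is $\pi_1$-injective and boundary-$\pi_1$-injective, with $\chi(S) \geq \chi$. Since $M$ is orientable and $S$ is one-sided, a regular neighbourhood $N(S)$ is a twisted $I$-bundle over $S$. Its horizontal boundary $\tilde S = \partial_h N(S)$ is the orientation double cover of $S$. It is a connected, orientable, two-sided, properly embedded surface with $\chi(\tilde S) = 2\chi(S) \geq 2\chi$.

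First, I will check that $\tilde S$ is essential. The map $\tilde S \to S$ is a covering, so it is $\pi_1$-injective, and $S$ is $\pi_1$-injective in $M$. Hence $\pi_1(\tilde S) \to \pi_1(M)$ is injective, and the same argument gives boundary-$\pi_1$-injectivity. For a two-sided orientable surface, this yields incompressibility and boundary-incompressibility. I also need to exclude spheres, discs, boundary-parallel annuli and tori. A $\pi_1$-injective projective plane cannot exist since $\pi_1(M)$ is torsion-free, so $\tilde S$ is not a sphere. The other degenerate cases must be ruled out or handled separately. Since $M$ is hyperbolic, $S$ cannot be a Klein bottle. If $S$ is a Möbius band, it is $\pi_1$-injective, but boundary-$\pi_1$-injectivity, together with the fact that its core would have square equal to a boundary-parallel curve, should force a contradiction in a hyperbolic manifold. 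Any surviving boundary-parallel annulus cases are few per cusp and can be absorbed into the constant. So, under whatever precise definition of essential the paper uses, $\tilde S$ is essential. Theorem \ref{Thm:NumberOfSurfaces} applied with $2\chi$ then bounds the number of isotopy classes of such $\tilde S$ by $(c_1\mathrm{vol}(M))^{2c_2|\chi|}$.

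Second, I need the assignment $S \mapsto \tilde S$ to be finite-to-one on isotopy classes, with uniformly bounded fibres. Suppose $S, S'$ give isotopic $\tilde S, \tilde S'$. After an isotopy, $\tilde S = \tilde S'$. Then $S$ and $S'$ lie in components of $M \setminus \tilde S$ which are twisted $I$-bundles, namely the sides of $\tilde S$ containing $N(S)$ and $N(S')$. There are at most two sides. If a side is a twisted $I$-bundle bounded by $\tilde S$, its $I$-bundle structure is unique up to isotopy for a hyperbolic base (by the standard uniqueness of $I$-bundle structures on $\pi_1$-injective twisted $I$-bundles, excluding the Klein bottle/Möbius band cases treated above). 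So the zero-section $S$ is determined up to isotopy by the side. Hence each $\tilde S$ arises from at most $2$ classes $S$, and the count is at most $2(c_1\mathrm{vol}(M))^{2c_2|\chi|}$. This is of the required form after enlarging $c_1, c_2$, using that $\mathrm{vol}(M)$ is bounded below by a positive constant.

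Finally, possibly disconnected surfaces, with both orientable and non-orientable components, are handled the same way. Replace each non-orientable component by its double cover, or count components separately and multiply; the number of components is bounded linearly in $|\chi|$ once sphere, disc and degenerate components are excluded. The resulting product of polynomial bounds is still of the form $(c_1\mathrm{vol}(M))^{c_2|\chi|}$.

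The main obstacle I expect is the uniqueness step: showing that the isotopy class of $\tilde S$ determines $S$ up to isotopy. This relies on the uniqueness of $I$-bundle structures and requires care with the low-complexity exceptions.
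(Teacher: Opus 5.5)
Your proposal is correct and follows the paper's argument: take the orientable frontier $\tilde S$ of a regular neighbourhood of $S$, get incompressibility and boundary-incompressibility from the two injectivity hypotheses, apply Theorem~\ref{Thm:NumberOfSurfaces}, and note that each $\tilde S$ arises from at most two $S$, one for each side. You also make explicit some details the paper leaves implicit, namely $\chi(\tilde S)=2\chi(S)$ and the uniqueness of the $I$-bundle structure behind the ``at most two'' count.

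The one flaw is your fallback that boundary-parallel annuli are ``few per cusp''. It is unnecessary, because the boundary-$\pi_1$-injectivity of $\tilde S$ that you established already rules them out. It is also false, since each cusp torus carries a boundary-parallel annulus for every slope, so drop it.
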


Here, a compact surface $S$ properly embedded in a 3-manifold $M$ is \emph{boundary-}\newline\emph{$\pi_1$-injective} if for any (possibly non-embedded) arc $\alpha$ in $S$ with endpoints in $\partial S$, the existence of a homotopy in $M$ of $\alpha$ into $\partial M$ implies the existence of a homotopy in $S$ of $\alpha$ into $\partial S$. The proof of Corollary \ref{Cor:notorientable} is also in Section \ref{Sec:ConclusionOfProof}.

\subsection{Related work}

The proof of our main result was inspired by the work of Hass, Thompson, and Tsvietkova in \cite{HTT1, HTT2} on prime alternating link complements in the 3-sphere and their Dehn fillings. Our Corollary \ref{Cor:Links} can be compared with two explicit and universal upper bounds they give: one for closed surfaces, and one for spanning surfaces, both being polynomial in terms of the link crossing number. In their proofs, surfaces are decomposed into pieces, and then an Euler characteristic argument is used to control the number of pieces. While we undertake similar steps, the techniques themselves are very different here, with the geometric ones being significantly more complex, which allows the higher level of generality. 

Another related work is that of Purcell and Tsvietkova \cite{PT1, PT2}. There, universal, explicit and polynomial upper bounds are established for a wide class of cusped 3-manifolds and their Dehn fillings. The cusped 3-manifolds are complements of certain links in compact, orientable,
irreducible 3-manifolds, possibly with boundary. The links are required to have a `nice' projection on some closed oriented embedded surface.  (`Nice' means alternating, prime in a certain natural sense, and checkerboard colourable.) The set of such 3-manifolds and the set of orientable hyperbolic 3-manifolds have a non-empty intersection, but none is a proper subset of the other. Our proof techniques are quite different, being much more geometric. In comparison, \cite{PT2}, as well as the paper \cite{PT1} that it relies upon, generalise topological methods from classical knot theory to achieve their bounds.

The techniques of our paper exploit the interaction between stable minimal surfaces and triangulations of 3-hyperbolic manifolds. In particular, 
we put such surfaces into normal position with respect to a `thick' triangulation while maintaining them as stable and minimal. Given the central place that normal surface theory and minimal surfaces have played in 3-manifold topology and geometry, it is reasonable to expect that the tools we develop will have other applications.

The interaction between minimal and normal surfaces was also recently explored by Appleboim \cite{Apple}, although he did not explore the counting question that is central to our paper, and the details of his method are different from ours. Among significant differences, Appleboim's surfaces are put into quasi-normal position (a relaxed version of normal position). We also strictly control the number of tetrahedra in a triangulation of the ambient 3-manifold: for this, we perturb our triangulations rather than iteratively subdivide them. Lastly, we consider 3-manifolds that are not necessarily compact.

The notion of a `thick' triangulation is important in this paper, and is defined below, following the work of Breslin \cite{Breslin2009, Breslin}, with some of Breslin's technical results and lemmas being refined here. Such triangulations were used, for example, for the classification of Heegaard splittings by Colding, Gabai, and Ketover \cite{CGK}, and studied in an algorithmic context by Edelsbrunner at al. \cite{Ed}.

\subsection{Acknowledgements}

M.L. was partially supported by the Engineering and Physical Sciences Research Council (grant number EP/Y004256/1). A.T. was partially supported by NSF CAREER grant DMS-2142487, by NSF research grants DMS-2005496, DMS-1664425 (formerly DMS-1406588), by Institute of Advanced Study under DMS-1926686 grant, and by Okinawa Institute of Science and Technology. A.T. would like to thank Jessica Purcell for drawing our attention to some related work concerning Delaunay and thick triangulations. For the purpose of open access, the authors have applied a CC BY public copyright licence to any author accepted manuscript arising from this submission.

\section{Overview of the proof}
\label{Sec:Overview}

The outline of the proof is as follows.

Let $M$ be a compact orientable 3-manifold with interior that admits a finite-volume hyperbolic structure. Our aim is to find an upper bound on the number of essential orientable surfaces $F$ properly embedded in $M$, up to isotopy, with Euler characteristic at least some constant $\chi$. This also yields a bound for surfaces of Euler characteristic exactly a constant.

According to the Margulis lemma \cite{KazhdanMargulis}, there is a universal constant $\epsilon_3 > 0$ with the following property. For any $\mu < \epsilon_3$, the interior of $M$ admits a decomposition into a `thick' part $M_{[\mu,\infty)}$ and a `thin' part $M_{(0,\mu)}$. Each component of the thin part is homeomorphic either to $T^2 \times (1,\infty)$ or to $S^1 \times \mathrm{int}(D^2)$. In the former case, this component is a neighbourhood of an end of $M - \partial M$, and is  called a `cusp'. In the latter case, the component is a regular neighbourhood of a geodesic with length less than $\mu$. At this stage, it may be useful to suppose that $M - \partial M$ has no geodesics with length less than $\mu$, so that the only components of $M_{(0,\mu)}$ are cusps. In that case, $M_{[\mu,\infty)}$ is homeomorphic to $M$.

A well known result of Thurston \cite{Thurston} states that  $M_{[\mu,\infty)}$ has a triangulation $T$ using at most $O(\mathrm{vol}(M))$ tetrahedra, where the implied constant depends on $\mu$. Since $F$ is essential, it can be isotoped into normal form with respect to $T$. This means that it intersects each tetrahedron in a collection of triangles and squares, as shown in Figure \ref{Fig:Normal}. For the basics of normal surface machinery, we refer the reader to Matveev's book \cite{Matveev}. The number of different normal disc types in $T$ is $7|T|$, where $|T|$ is the number of tetrahedra in $T$. Thus, $F$ is represented by a list of $7|T|$ non-negative integers, which count the number of normal discs of each type. This list of integers determines $F$ up to isotopy.

\begin{figure}
  \includegraphics[width=0.7\textwidth]{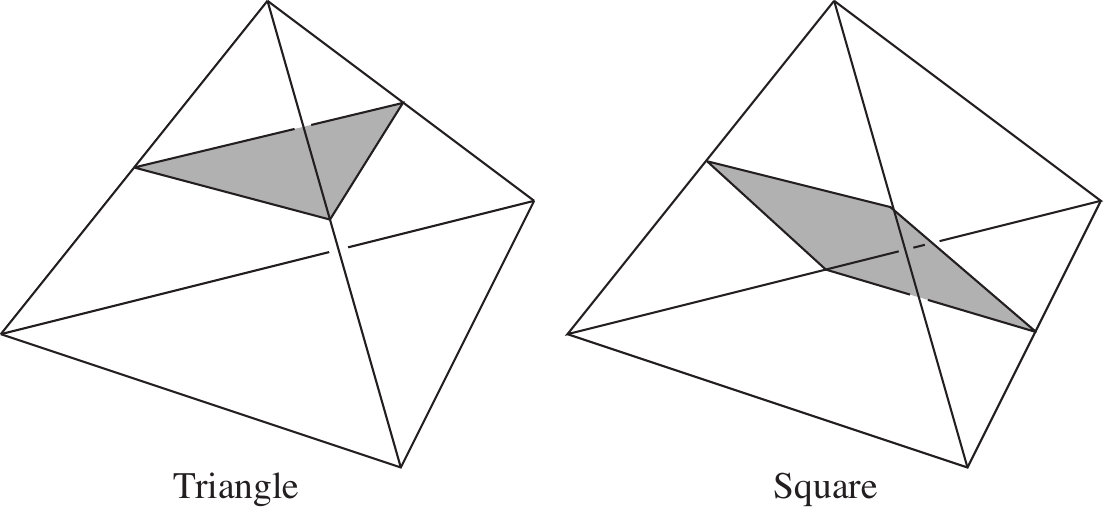}
  \caption{A normal disc in a tetrahedron is a triangle or square, as shown.}
  \label{Fig:Normal}
\end{figure}

\emph{Suppose} that we could arrange that the total number of normal discs of $F \cap T$ is at most $N |\chi(F)|$ for some universal constant $N$. Let us explain then how to give an upper bound for the number of surfaces.

To bound the number of possibilities for $F$ of Euler characteristic at least $\chi$, it would suffice to count the number of $(7|T|)$-tuples of non-negative integers that add to at most $N |\chi |$. Here, each integer represents the number of discs of some type in one of the $|T|$ tetrahedra.

We can use the `stars and bars' technique from combinatorics for $(7|T|+1)$-tuples, with the last integer representing the difference between $N |\chi|$ and the number of normal discs of $F \cap T$. Then the number of options for $F\cap T$ is 
\[ {N |\chi| + 7|T|   \choose  N |\chi|}.  \]
Since 
\[ {n \choose k}\leq n^k/k! \] 
we have an upper bound of 
\[ \frac{(N |\chi| +7|T| )^{N |\chi|}}{(N |\chi|)!}.\]
As $|T|$ is $O(\mathrm{vol}(M))$, we obtain an upper bound on the number of possibilities for $F$ that grows like 
$$\frac{\big(O(\mathrm{vol}(M)) + N|\chi|\big)^{N|\chi|}}{(N|\chi|)!}.$$

Alternatively, we may obtain a simpler bound on the number of surfaces, as follows. We can think of the normal discs of $F \cap T$ as labelled by integers from 1 to at most $N|\chi|$. For each such disc, we choose it to be one of at most $7|T|$ types. It is possible for different choices to give the same normal surface. It is also the case that many choices will not result in a normal surface. However, the number of choices is at most $O(\mathrm{vol}(M))^{N |\chi(F)|}$.

 This only controls the number of possibilities for the intersection between $F$ and $M_{[\mu,\infty)}$. However, the remainder $M_{(0,\mu)}$ has very simple topology, and so $F \cap M_{(0,\mu)}$ is nearly determined by the curves $F \cap \partial M_{[\mu,\infty)}$, and we have already controlled the number of possibilities for these curves. 

Thus, our main task is to bound the total number of normal discs of $F \cap T$ by a linear function of $|\chi(F)|$. Now, by results of Freedman, Hass and Scott \cite{FHS}, and Ruberman \cite{Ruberman}, $F$ can be isotoped to an embedded minimal surface (or one other possibility that we will discuss later). It then inherits a Riemannian metric with sectional curvature $\kappa$ that is everywhere at most $-1$, which is the sectional curvature of the metric on $M - \partial M$. By the Gauss-Bonnet theorem,
$$\mathrm{area}(F) = \int_F 1 \, dA \leq \int_F - \kappa \, dA = -2 \pi \chi(F),$$
where $dA$ is the area form on $F$.
So, the area of this representative for $F$ is bounded by a linear function of $|\chi(F)|$. It is therefore reasonable to try to establish that the number of normal discs of a normal representative for $F$ is also bounded by a linear function of $|\chi(F)|$. This is our main technical challenge.

We now give a more detailed outline of the remainder of the paper, section by section.

The triangulation that has a linear number of tetrahedra in terms of the manifold volume, as constructed by Thurston \cite{Thurston}, is unfortunately insufficient for our purposes. Although its number of tetrahedra is bounded above by a linear function of $\mathrm{vol}(M)$, it does not have extra properties that we require. Instead, we have to modify the triangulation by using a construction of Breslin \cite{Breslin}.

A tetrahedron $\Delta$ in hyperbolic 3-space is \emph{$(\theta, A, B)$-thick} for positive real numbers $\theta$, $A$ and $B$ if its dihedral angles are all at least $\theta$ and the length of each edge lies between $A$ and $B$. A triangulation of a hyperbolic 3-manifold is \emph{$(\theta, A, B)$-thick} if each tetrahedron is isometric to a $(\theta, A, B)$-thick tetrahedron in $\mathbb{H}^3$. Breslin proved that 
any hyperbolic 3-manifold $M$ has a triangulation with property that the tetrahedra lying in $M_{[\mu/2, \infty)}$ are $(\theta, A, B)$-thick. Here, $\mu$ is any positive real number less than the Margulis constant. In Breslin's result, $\theta(\mu)$, $A(\mu)$, $B(\mu)$ depend only on $\mu$, not on the manifold $M$.

Unfortunately, this also is not quite enough for our purposes, for two reasons. Firstly, we would like to work with a triangulation not of $M$ but of $M_{[\mu/2, \infty)}$. However, the boundary of $M_{[\mu/2, \infty)}$ is not piecewise totally geodesic and so when this boundary is non-empty, it is impossible to triangulate $M_{[\mu/2, \infty)}$ with geodesic hyperbolic tetrahedra. Furthermore, the boundary of $M_{[\mu/2, \infty)}$ may have very small injectivity radius, when $M$ contains a geodesic with length only a little less that $\mu/2$. Therefore, we work with a `fat' part of $M$, denoted $M^{\textrm{fat}}_{\mu/2}$, which is obtained from $M_{[\mu/2, \infty)}$ by attaching components of $M_{(0,\mu/2)}$ that are regular neighbourhoods of geodesics with length between $\mu/4$ and $\mu/2$. And we seek to triangulate not $M^{\textrm{fat}}_{\mu/2}$ but a manifold isotopic to it.

Secondly, we also need to know how $\theta(\mu)$, $A(\mu)$, $B(\mu)$ depend on $\mu$. We will prove that $\theta(\mu)$ can be taken to be a constant $\theta$, independent of $\mu$, and that $A(\mu)$ and $B(\mu)$ can be chosen to be $a\mu$ and $b\mu$ for universal constants $a$ and $b$. This is discussed in Section \ref{Sec:Triangulate}. The proof is a simple but technical adaption of Breslin's argument, and so is relegated to an appendix.

We therefore fix the constants $\theta$, $a$ and $b$ at this stage. However, we will defer the choice of $\mu$ to later in the paper. Thus, at this stage, we have not fixed our triangulation $T$.

As mentioned above, our goal is isotope the essential orientable properly embedded surface $F$ to a minimal surface, in order to apply the Gauss-Bonnet theorem. The fact that this is usually possible is due to Freedman, Hass and Scott \cite{FHS} and Ruberman \cite{Ruberman}. We discuss this material in Section \ref{Stable}. One alternative conclusion is that, instead of isotoping $F$ to a minimal surface, the final surface is the boundary of a thin regular neighbourhood of an embedded non-orientable minimal surface $F'$. A further small homotopy takes $F$ to an immersed minimal surface that double covers $F'$.

In our argument, it will be important that $F$ can be realised as a \emph{stable} minimal surface. The definition of this term is discussed in Section \ref{Stable}. Then a theorem of Schoen \cite{Schoen} (also see Colding and Minicozzi \cite{CMbook}) gives that there is a universal upper bound $C$ to the principal curvatures of a stable minimal surface in a hyperbolic 3-manifold. As a result, a `small' disc around any point of $F$ looks `almost totally geodesic'. In Section \ref{Sec:ConsequencesStability}, we make this notion precise with the definition of having $(\eta,\delta)$-almost constant normals. Here, $\eta$ is the scale that we are working at, and $\delta$ is the amount that these normals `vary'. We have to be careful with the latter notion, since we are comparing vectors at different points in $M$. A consequence of the theorem of Schoen is that for every $\delta > 0$, there is an $\eta > 0$ such that near any point $x$ of $F$, the surface has $(\eta, \delta)$-almost constant normals, provided $\eta$ is less than the injectivity radius of $M$ at $x$. Importantly, $\eta$ depends only on $\delta$ and not on the manifold $M$ or surface $F$.

Our next step is to build a triangulation of $M$ that is `sufficiently transverse' to $F$. We make this notion precise in Section \ref{Sec:TriangulationTransverseToSurface}. It has several conditions, but the main one is that the edges of the tetrahedra are not too close to being tangent to $F$. Again, it requires some care to make this precise, since we are comparing vectors based at different points of $M$. But the main conclusion is that when a surface is `almost totally geodesic' and is sufficiently transverse to a triangulation, then it intersects each tetrahedron in normal discs. The way that the triangulation is built is by forming a barycentric subdivision $T'$ of the triangulation $T$ constructed in Section \ref{Sec:Triangulate}, and then perturbing it, keeping its edges and faces totally geodesic. In Section \ref{Sec:Barycentric}, we show that there is always such a perturbation that can be made to $T'$, provided the scale $\mu$ that we choose is sufficiently small. We therefore now fix $\mu > 0$ so that this holds. Note that although we have perturbed $T'$, this triangulation is still combinatorially isomorphic to the barycentric subdivision of $T$. 

We show that this perturbed barycentric subdivision is $(\theta''', a'''\mu, b'''\mu)$-thick for universal constants $\theta'''$, $a'''$ and $b'''$. This implies that near each point of $F$ in $M^{\textrm{fat}}_{\mu/2}$, there is a normal disc with a uniform lower bound on its area. Hence, the number of normal discs in $M^{\textrm{fat}}_{\mu/2}$ is bounded above by a linear function of $\mathrm{area}(F) \leq -2 \pi \chi(F)$. The argument given at the beginning of this section then establishes the required upper bound on the number of possibilities for $F \cap M^{\textrm{fat}}_{\mu/2}$ up to isotopy. 

In Section \ref{Sec:ConclusionOfProof}, we complete the proof by also considering how $F$ lies in the thin part of $M$.

\section{Triangulating a fat part of the 3-manifold}\label{Sec:Triangulate}

\subsection{Thick-thin decomposition.}

Let $M$ be a Riemannian manifold and let $\mu >0$. The \emph{$\mu$-thin part} of $M$ is the subset of points $x\in M$ where the injectivity radius of $M$ at $x$ is less than $\mu/2$, usually denoted $M_{(0,\mu)}$. The thick part is the complement of the thin part, usually denoted $M_{[\mu, \infty)}$. There is a decomposition into a disjoint union $M=M_{(0,\mu)}\cup M_{[\mu,\infty)}$.

When $M$ is an orientable hyperbolic $n$-manifold of finite volume, the structure of the components of the thin part is well-understood due to the Margulis lemma \cite{KazhdanMargulis}. This asserts that there is a universal constant $\epsilon_n > 0$, depending only on the dimension $n$, with the property that for any $\mu <\epsilon_n$, the $\mu$-thin part has two sorts of components:
\begin{enumerate}
\item Cusps: these are the unbounded components, diffeomorphic to a flat $(n-1)$-manifold times a line.
\item Margulis tubes: these are neighbourhoods of closed geodesics of length $<\mu$ in $M$. They are bounded and diffeomorphic to a circle times a $(n-1)$-disc.
\end{enumerate}
In particular, an orientable complete finite-volume hyperbolic manifold is always diffeomorphic to the interior of a compact manifold with (possibly empty) boundary. For an orientable finite-volume hyperbolic 3-manifold, the thin part is therefore a disjoint union of solid tori and cusps.

A value $\epsilon_n > 0$ with the property that the $\epsilon_n$-thin part always has components only of the above form is known as an \emph{($n$-dimensional) Margulis constant}. We now fix a 3-dimensional Margulis constant $\epsilon_3$. It is known for instance that $0.104$ is such a constant, by Theorem 2 in the work of Meyerhoff \cite{Meyerhoff}. 
It is also known that $\epsilon_3 \leq 0.766$ (see \cite[Theorem 1.5(2)]{FPS}).

Although it is crucial to our arguments that there is a universal 3-dimensional Margulis constant $\epsilon_3$ that applies to all finite-volume hyperbolic 3-manifolds, we will nevertheless need to work at scales possibly much smaller than $\epsilon_3$. Thus, we will consider positive $\mu < \epsilon_3$ and will analyse $M_{(0,\mu)}$ and $M_{[\mu,\infty)}$. Thus we will speak of \emph{a} `thin' or `thick' part of $M$.

\subsection{A fat part of $M$}
Here and further in the paper, we assume $M$ is a complete hyperbolic 3-manifold with finite volume. 

\begin{definition}
Let $\mu > 0$ be a real number less than a 3-dimensional Margulis constant.
So $M_{(0,\mu/2)}$ consists of cusps and solid toral neighbourhoods of geodesics
with length less than $\mu/2$. Let $\textrm{Thin}_{\geq \mu/4}$ denote the union of those components
of $M_{(0,\mu/2)}$ that are regular neighbourhoods of geodesics with length at least $\mu/4$.
Define $M^{\textrm{fat}}_{\mu/2}$ to be $M_{[\mu/2,\infty)} \cup \textrm{Thin}_{\geq \mu/4}$.
\end{definition}

Thus, $M^{\textrm{fat}}_{\mu/2}$ is obtained from $M_{[\mu/2,\infty)}$ by attaching some
components of the thin part. These are regular neighbourhoods of geodesics that are not
too short: with length at least $\mu/4$. Note also that $\partial M^{\textrm{fat}}_{\mu/2}$ is a union of
certain components of $\partial M_{[\mu/2, \infty)}$.

\subsection{Triangulating a fat part of $M$ with a `thick' triangulation.}
Our aim in this subsection is to describe a triangulation of a fat part of $M$ with various properties. One of these properties is as follows.

\begin{definition}
Let $Y$ be a 3-dimensional submanifold of a hyperbolic 3-manifold $M$. A triangulation of $Y$ is 
\emph{geodesic} if each of its 3-simplices is isometric to a tetrahedron in hyperbolic 3-space.
\end{definition}

\begin{definition}
Let $Y$ be a 3-dimensional submanifold of a hyperbolic 3-manifold $M$. A triangulation of $Y$ is 
\emph{$(\theta, A,B)$-thick} triangulation, for positive real numbers $\theta$, $A$ and $B$, if it is geodesic and 
each of its tetrahedra has dihedral angles bounded below by $\theta$ and its edge lengths are in $[A, B]$.
\end{definition}

\begin{theorem}[Theorem 2 from \cite{Breslin} by Breslin] 
\label{BreslinTh}
Let $\mu$ be a positive real number less than a 3-dimensional Margulis constant. There exist positive constants $A := A(\mu), B := B(\mu)$, and $\theta := \theta(\mu)$ with the following property. Any complete hyperbolic 3-manifold $M$ has a geodesic triangulation such that each of its tetrahedra lying in 
the thick part $M_{[\mu/2, \infty)}$ is $(\theta, A,B)$-thick.
\end{theorem}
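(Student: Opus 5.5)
We will follow Breslin's strategy: build a well-spaced net of points in the thick part, take its Delaunay triangulation, and then perturb the net so that no tetrahedron is a sliver. Throughout we work in the universal cover $\mathbb{H}^3$, with $M = \mathbb{H}^3/\Gamma$, so that everything is $\Gamma$-equivariant.

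\textbf{Step 1: the net.} Choose a maximal $\Gamma$-invariant set $P \subset \mathbb{H}^3$ of points projecting into $M_{[\mu/2,\infty)}$ (and a thin collar of it) whose pairwise distances are at least $\epsilon = c\mu$, for a small universal $c$. Since the injectivity radius on the thick part is at least $\mu/4$, balls of radius $\epsilon$ about points of the thick part embed in $M$. By maximality, every point of the thick part lies within $\epsilon$ of $P$. Inside the thin components we add a sparser $\Gamma$-invariant net, for instance points along the cores and along the cusp horospheres, chosen so that the Delaunay complex is defined everywhere. We impose no thickness requirement there, because the theorem only asks for thickness on tetrahedra lying in the thick part.

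\textbf{Step 2: Delaunay triangulation.} Take the $\Gamma$-equivariant Delaunay decomposition of $\mathbb{H}^3$ with respect to $\widetilde P$. This decomposition is locally finite and its cells are geodesic. Perturbing $P$ generically within a small distance makes every cell a tetrahedron, and the decomposition descends to a geodesic triangulation of $M$. For a tetrahedron in the thick part, the empty-circumball property together with the $\epsilon$-density of $P$ gives a circumradius at most about $\epsilon$. This bounds each edge length by $2\epsilon$. The separation of $P$ bounds each edge length below by about $c\mu$. Both bounds depend only on $\mu$. The empty balls have radius below the injectivity radius, so they embed, and the Delaunay property can be checked locally.

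\textbf{Step 3: excluding slivers (main obstacle).} Bounded edge ratio and bounded circumradius do not bound the dihedral angles from below, because slivers can occur. A sliver has four nearly cospherical vertices and small volume. We will follow Breslin and adapt the Euclidean "sliver exudation/perturbation" argument of Cheng, Dey, Edelsbrunner et al. We process the points of $P$ one at a time, and perturb each point within a ball of radius $\rho\epsilon$. For a fixed point $p$, the set of perturbed positions that create a sliver with $p$ as a vertex has small measure. Indeed, each such sliver forces $p$ into a thin neighbourhood of a sphere or plane determined by three other nearby points. The number of such triples is bounded by a packing argument that uses the $\epsilon$-separation and the fact that injectivity radius exceeds the scale. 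So the union of these bad sets is a proper subset of the ball, and a good position exists. We must also check that moving later points does not spoil earlier choices. This works because a sliver condition involves all four of its vertices, so each tetrahedron can be charged to its last-moved vertex.

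The geometric estimates in $\mathbb{H}^3$ at scale $\epsilon \le \epsilon_3$ differ from the Euclidean ones by bounded distortion. Consequently the resulting quality bound yields a dihedral-angle lower bound $\theta(\mu)$. In fact the bound is uniform in $\mu$, though the theorem needs only dependence on $\mu$. The hardest part is making this counting and measure argument rigorous equivariantly in $\mathbb{H}^3$ near the boundary of the thick part. There, tetrahedra may have some vertices in the thin-part net, so the argument needs a collar of width comparable to $\epsilon$ on which the dense net is also used.
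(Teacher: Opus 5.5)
Your Steps~1 and~3 are Breslin's argument as sketched in the appendix. You take an $\epsilon$-separated, $\epsilon$-dense net with $\epsilon=c\mu$ on (a collar of) the thick part. You then perturb one point at a time to avoid sliver regions, using a packing bound on the relevant triples and a small-volume bound on each sliver region. Finally, Breslin's Lemma~2.1 turns ``no slivers'' into a dihedral-angle bound.

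You differ genuinely in how the Delaunay triangulation of $M$ is produced.

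\emph{The paper's route.} Following Breslin, the paper works intrinsically in $M$. It requires every ball $B(p,\mathrm{inj}(M,p)/5)$ to contain a net point, so all Delaunay balls embed. It then needs a Riemannian existence theorem: Leibon--Letscher, which is flawed, or a generalised-net version of Boissonnat--Dyer--Ghosh.

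\emph{Your route.} You take the $\Gamma$-equivariant Delaunay decomposition of the lifted net in $\mathbb{H}^3$. For generic points this is classical in constant curvature. It descends to $M$ because $\Gamma$ acts freely: distinct Delaunay tetrahedra have disjoint interiors, and no nontrivial element preserves one. This buys you independence from the Riemannian Delaunay machinery.

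\emph{The cost, in the thin part.} You now have to handle the thin part by hand, and your prescription there needs correcting.
\begin{enumerate}
\item For all cells to be compact, you need net points arbitrarily far up every cusp. Otherwise an empty horoball produces non-compact cells.
\item Your Step~2 claim that ``the empty balls have radius below the injectivity radius'' is false for a \emph{sparser} thin-part net. For example, take a single point $a$ on the core of a Margulis tube. Then $a$ and $\gamma a$ span a Delaunay edge whose image in $M$ is the whole core loop. So in general you only get a $\Delta$-complex, not a simplicial triangulation.
\end{enumerate}
To get an honest triangulation, the thin-part net must be dense at scale $\mathrm{inj}(M,p)/5$, i.e.\ denser rather than sparser. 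That is exactly Breslin's condition~(1). None of this affects the thickness conclusion: your collar of width a few $\epsilon$ already bounds the circumradius of every tetrahedron lying in the thick part.

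\emph{A precision point in Step~3.} Three points do not determine a sphere. The bad set for the moving vertex $p$ is a thin neighbourhood of the circumcircle of the opposite face (Breslin's Lemma~2.4), or more crudely a slab about its plane. The slab already suffices for the volume estimate. However, the sliver condition $d_v/c_v\le\sigma$ is only assumed at \emph{some} vertex. To place $p$ itself in that neighbourhood, you need Breslin's Lemma~2.3: bounded radius-to-edge ratio forces $d_v/c_v$ to be small at all four vertices.
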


We will need a refined version of this result, which works at arbitrarily small scales (but focuses on finite-volume hyperbolic manifolds).

\begin{theorem}[Extension of Breslin's theorem] 
\label{Thm:ExtensionBreslin}
There are real numbers $0 < a \leq b \leq 1/40$ and $\theta > 0$ with the following property. Let $\mu$ be any positive real number less than the 3-dimensional Margulis constant, and let $M$ be a complete finite-volume hyperbolic 3-manifold. Then there is a  $(\theta, a\mu,b\mu)$-thick triangulation of a subset of $M$ 
isotopic to $M^{\textrm{fat}}_{\mu/2}$ and lying in $M_{[\mu/4,\infty)}$.
\end{theorem}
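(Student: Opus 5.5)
We follow Breslin's construction, tracking how every constant depends on $\mu$. The scale-invariance comes from one observation: at scales $r \le \mu \le 0.766$, hyperbolic space is uniformly bi-Lipschitz to Euclidean space, with constants independent of $\mu$. Consequently, any Euclidean argument producing a well-shaped triangulation at scale $r$ produces hyperbolic tetrahedra with angles bounded below by a universal $\theta$ and edge lengths in $[ar, br]$.

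\textbf{Step 1: the subset $Y$.} First we choose a subset $Y \subset M_{[\mu/4,\infty)}$ isotopic to $M^{\textrm{fat}}_{\mu/2}$. On $M_{[\mu/2,\infty)}$ the injectivity radius is at least $\mu/4$. For each Margulis tube with core length in $[\mu/4,\mu/2)$, the tube is a solid torus whose core has injectivity radius at least $\mu/8$. So $M^{\textrm{fat}}_{\mu/2}$ lies in $M_{[\mu/4,\infty)}$ once the cusp and short-tube boundaries are pushed slightly inward. Near each boundary torus of $M^{\textrm{fat}}_{\mu/2}$ the injectivity radius is at least $\mu/4$. This is the reason short tubes (length $<\mu/4$) are excluded: near them the injectivity radius degenerates.

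\textbf{Step 2: net and Delaunay triangulation.} Next we take a maximal $\epsilon$-separated set $P$ in a neighbourhood of $Y$, with $\epsilon = c\mu$ for a small universal $c$. We lift $P$ to $\mathbb{H}^3$ and form the Delaunay triangulation of the lift. Because $\epsilon \ll$ injectivity radius, each Delaunay cell embeds in $M$, so it descends to a geodesic triangulation near $Y$. Edge lengths automatically lie in $[\epsilon, 2\epsilon]$, by maximality and separation. The remaining problems are slivers (tetrahedra with tiny dihedral angles) and the boundary. For slivers we use Breslin's perturbation argument, in the style of Edelsbrunner et al. We perturb the points of $P$ within distance $\rho\epsilon$ of their original positions. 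The set of bad perturbations of a given point, namely those creating a sliver with its neighbours, has small measure relative to the $\rho\epsilon$-ball, and there are boundedly many neighbours by a packing bound. So a good position exists, and we choose it point by point. In the bi-Lipschitz Euclidean model these measure estimates are scale-invariant, which gives $\theta$ independent of $\mu$. Then $Y$ is taken to be the union of those tetrahedra meeting a suitable level set (e.g.\ all tetrahedra within the thick region defined by injectivity radius $\geq 3\mu/8$-ish). This union is a polyhedral submanifold isotopic to $M^{\textrm{fat}}_{\mu/2}$, since the region between two nearby level sets of the injectivity radius in a tube or cusp is a product $T^2\times I$.

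\textbf{Step 3: the main obstacle.} The hardest step is this last one: showing that the union of chosen tetrahedra is a 3-manifold isotopic to $M^{\textrm{fat}}_{\mu/2}$, with no pinching along edges or vertices. I would handle it by using the radial (distance-to-core or horospherical height) function $h$ on each thin component. This function has gradient of norm near $1$ at scale $\mu$. I would define $Y$ as the closure of the union of tetrahedra all of whose vertices satisfy $h \le t$, with the threshold $t$ chosen generically. Then I would show that the boundary is transverse to $\partial_h$, so that $Y$ is a flow-deformation of $\{h\le t\}$. This is where Breslin's lemmas need refinement.

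The bound $b \le 1/40$ is finally achieved by shrinking $c$, which preserves all the other constants.
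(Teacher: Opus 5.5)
Your treatment of the interior is essentially the paper's closed-case argument. You take a net at scale comparable to $\mu$, form a Delaunay triangulation, and use Breslin's one-point-at-a-time perturbation to avoid sliver regions whose volume is a small multiple of $\mu^3$, uniformly in $\mu$. What makes this uniform is not a fixed bi-Lipschitz constant, which would not by itself preserve a lower bound on dihedral angles. It is the fact that after rescaling by $1/\mu$ the hyperbolic picture converges to the Euclidean one; the paper verifies this by tracking Breslin's explicit functions as $\mu\to0$. Lifting to $\mathbb{H}^3$ to obtain the Delaunay triangulation is a reasonable substitute for the Boissonnat--Dyer--Ghosh input.

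The genuine gap is Step 3, which is exactly the new content of the theorem when $\partial M^{\textrm{fat}}_{\mu/2}\neq\emptyset$. You state the goal but give no argument, and the route you propose does not work. Let $Y_t$ be the union of tetrahedra all of whose vertices satisfy $h\le t$. Its boundary need not be transverse to $\partial_h$, even for a well-spaced Delaunay triangulation with thick tetrahedra. Take a Delaunay edge $[a,u]$ of length close to $2\epsilon$, inclined to the level sets, with $h(u)$ just above $t$ and $h(a)$ roughly $\epsilon$ below $t$. Every tetrahedron around $[a,u]$ contains $u$ and so is excluded from $Y_t$. Hence $Y_t$ can have a notch of depth comparable to $\epsilon$, and a flow line of $\partial_h$ crossing $[a,u]$ near $a$ then leaves $Y_t$ and re-enters it.

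Nothing in your argument rules out such notches pinching $Y_t$ either. Suppose $a$ has two such upper neighbours whose closed stars in $\mathrm{lk}(a)$ are disjoint, and all its other neighbours lie below $t$. Then the link of $a$ in $Y_t$ is an annulus, so $Y_t$ is not a manifold at $a$. These are open conditions, so a generic choice of $t$ does not remove them, and Breslin's sliver lemmas say nothing about them. The fact that the region between nearby smooth level sets is $T^2\times I$ does not transfer to this polyhedral union.

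The missing idea is to make the boundary a subcomplex by construction, rather than extracting one afterwards. The paper proceeds as follows.
\begin{enumerate}
\item $\partial M^{\textrm{fat}}_{\mu/2}$ is a union of Euclidean tori with injectivity radius greater than $\mu/4$ (Lemmas \ref{Lem:TubeRadius} and \ref{Lem:BoundaryHasControlledGeometry}). Their intrinsic and extrinsic metrics agree up to a factor close to $1$ at scale $\zeta\mu$ (Lemma \ref{Lem:DistanceComparison}).
\item These tori are sampled by a thick Delaunay triangulation with a robustness margin: the next-nearest sample point to any circumcentre is a definite factor farther away (Theorem \ref{Thm:ThickTriangTorus}). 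This sample is extended to $\calS$, keeping the other points at least $\mu/90$ away.
\item Robustness ensures every boundary triangle survives as a Delaunay face of $M$ after any perturbation of size at most $D'$ (Lemma \ref{Lem:RobustImpliesSimplicial}).
\item Radial projection composed with straightening is a degree-one branched self-cover of each torus. Hence the straightened surface is embedded, isotopic to $\partial M^{\textrm{fat}}_{\mu/2}$, and lies in $M_{[\mu/4,\infty)}$ (Lemmas \ref{Lem:StraighteningIsIsotopic} and \ref{Lem:StraighteningIsIsotopicVariation}).
\item The sliver-removing perturbations are capped at $D'$, so this boundary subcomplex persists. The required triangulation is the part of the Delaunay triangulation on the fat side of it.
\end{enumerate}
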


Since the proof of this result is mostly a technical analysis and adjustment of Breslin's argument, we defer it to the Appendix.

 \section{Stable minimal surfaces}\label{Stable}

\subsection{Background.}

 For basic background on surfaces in 3-space and their curvature, we refer the reader to, for example, Section 3 of \cite{Tu}.

Recall that $M - \partial M$ is a complete orientable hyperbolic 3-manifold with finite volume. In general, if $F$ is a surface in a Riemannian 3-manifold $N$, then an immersion $f$ (in our case, an embedding) of $F$ in $N$ induces a Riemannian metric on $F$, by pulling back the quadratic form on each tangent space. The area of $f$ is defined to be the area of $F$ in this induced metric. A surface $F$ smoothly embedded or immersed in a Riemannian
manifold $M$ is 
\emph{minimal} if it has mean curvature zero at all points.  Equivalently, for any compactly supported variation of $f$ \cite[Section 1.3]{CM}, the rate of change of the area of $f$ is zero.

Minimal surfaces do not necessarily minimise total area in their homotopy or isotopy classes. Rather, minimal surfaces are in general only critical points
for the total area function.  A surface is a least area
surface in a class of surfaces if it has finite area which realises the infimum of all
possible total areas for surfaces in this class. A surface which is of least area in a reasonable class of surfaces must be
minimal. The converse is false. It can also be proved that minimal surfaces are locally of least area, in the sense
that a variation supported on a sufficiently small region of the surface will not decrease the area.

Informally, a minimal surface is 
\emph{stable} if the second
variation is non-negative. For a precise and self-contained definition of a stable minimal surface, see for example \cite{Meeks}, where the surfaces considered are the stable $H$-surfaces (for $H$=0 an $H$-surface is the same as a minimal surface). A minimal and least area surface is stable. 

Recall that a map is \emph{proper} if inverse images of compact subsets are compact.

The main existence result that we will use is as follows. We will outline the results that prove this statement in the following subsection.

\begin{theorem}
\label{Thm:IsotopicToStableMinimal}
Let $M$ be a compact orientable 3-manifold whose interior admits a complete hyperbolic metric of finite volume. Let $F$ be a connected properly embedded essential orientable surface in $M$ other than a sphere or disc. Then one of the following holds:
\begin{enumerate}
\item $\mathrm{int}(F)$ is properly isotopic in $\mathrm{int}(M)$ to an embedded surface that has least area in its homotopy class, and which is therefore stable minimal;
\item $\mathrm{int}(F)$ is properly isotopic in $\mathrm{int}(M)$ to the boundary of a regular neighbourhood of a least area non-orientable surface $G$
 that is properly embedded in the interior of $M$; furthermore, there is a homotopy taking $\mathrm{int}(F)$ to an immersed surface that double covers $G$ and that has least area in its homotopy class.
\end{enumerate}
\end{theorem}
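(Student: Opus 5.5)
This theorem is assembled from existing results rather than proved from scratch. The plan is to reduce to the setting of Freedman--Hass--Scott \cite{FHS} in the closed case and of Ruberman \cite{Ruberman} in the cusped case.

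Step one is to pass from $F$ to a good representative in the hyperbolic manifold. Since $F$ is essential, connected, orientable and not a sphere or disc, it is $\pi_1$-injective. In the cusped case it is also boundary-$\pi_1$-injective, because $M$ is irreducible with incompressible toral boundary. We replace $F$ by $\mathrm{int}(F)$, viewed as a proper surface in the complete hyperbolic manifold $\mathrm{int}(M)$. Any boundary components of $F$ are essential curves on the cusp tori, and we push them out to run straight up the cusps.

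Step two is to minimise area in the homotopy class. In the closed case this is the theorem of Schoen--Yau and Sacks--Uhlenbeck, which gives a least area map homotopic to the inclusion. In the cusped case, a properly immersed $\pi_1$-injective surface of finite area exists: we use the finite-area model in which cusp ends of $F$ are asymptotic to vertical annuli. Ruberman shows that a least area representative exists among proper maps homotopic to the inclusion. The key input is that the cusps have negative curvature, so mass cannot escape to infinity or degenerate into the cusp. This gives a minimiser $f$ that is a proper least area immersion, and hence stable minimal.

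Step three is embeddedness, and I expect it to be the main obstacle. For it we invoke Freedman--Hass--Scott \cite[Theorem 5.1]{FHS}: when the inclusion is homotopic to an embedding, a least area map in the homotopy class is either an embedding or a double cover of an embedded one-sided (non-orientable) least area surface $G$. Its proof relies on the exchange-and-roundoff argument. If two sheets of $f$ crossed, cutting and pasting along the intersection would produce a surface of no greater area with a corner, and rounding the corner would strictly decrease area, a contradiction. Ruberman extends this argument to the noncompact finite-volume setting. The difficulty is controlling area in the ends, where areas are infinite a priori, so one compares the surfaces only on compact pieces and uses the standard form in the cusps.

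Step four is to upgrade homotopy to isotopy. In case (1), the image is an embedded incompressible surface homotopic to $F$. By Waldhausen (and its proper version for surfaces with boundary in cusps) homotopic incompressible properly embedded surfaces in an irreducible manifold are isotopic, which gives conclusion (1). In case (2), the map $f$ double covers $G$, and $F$ is homotopic to $\partial N(G)$, which is an embedded incompressible orientable surface. By the same Waldhausen argument, $\mathrm{int}(F)$ is properly isotopic to $\partial N(G)$. Stability and least area of $G$, and of the double cover, follow from minimality of $f$, which gives conclusion (2).
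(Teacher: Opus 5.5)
Your proposal is correct and follows essentially the paper's route. It cites the existence of a proper least area map in the homotopy class (\cite{FHS} in the closed case, Ruberman \cite{Ruberman} in general), then the dichotomy that such a map is either an embedding or the orientable double cover of an embedded one-sided surface (\cite[Theorem 5.1]{FHS}, \cite[Theorem 3.16]{Ruberman}), and finally upgrades proper homotopy to isotopy via Waldhausen \cite{Waldhausen}, with Johannson \cite[Proposition 19.1]{Johannson} supplying the ``proper version'' for surfaces with boundary; note that the torus case, which the existence theorem excludes, is vacuous here because a finite-volume hyperbolic $M$ contains no essential tori.
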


In the second case, we say that $F$ is the \emph{orientable double cover of $G$}. Note that in this case, both $\mathrm{int}(F)$ and $G$ are stable and minimal. We also say that $F$ is \emph{nearly isotopic} to the double cover of $G$.

\subsection{A survey of relevant results} We now collate the results that are needed to establish Theorem \ref{Thm:IsotopicToStableMinimal}.

The following result asserts the existence of a least area surface in a homotopy class. In the case when $M$ is closed, this is a consequence of Theorem 1.1 by Freedman, Hass, Scott \cite{FHS}, but the general case is Corollary 3.12 by Ruberman \cite{Ruberman}.

\begin{theorem}
\label{ExistenceLeastArea} Let $M$ be a compact orientable 3-manifold with interior admitting a finite-volume hyperbolic metric and let $F$
be a compact connected orientable surface other than a sphere, disc or torus. If $g \colon (F,\partial F) \rightarrow (M, \partial M)$ is an incompressible boundary-incompressible map, then
there is a least area map $f \colon \mathrm{int}(F) \rightarrow \mathrm{int}(M)$ which is properly homotopic to $g$.
\end{theorem}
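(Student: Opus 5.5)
The statement is Theorem~\ref{ExistenceLeastArea}, the existence of a least area map in a proper homotopy class. I would prove it by the direct method of the calculus of variations, in the form used by Freedman--Hass--Scott and Ruberman.

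Set $\Lambda = \inf \mathrm{area}(f)$, where $f$ ranges over proper maps $\mathrm{int}(F) \to \mathrm{int}(M)$ properly homotopic to $g$ and having finite area. The first step is to check that this class is nonempty with $\Lambda < \infty$. When $\partial F \neq \emptyset$, the ends of $\mathrm{int}(F)$ run into cusps. Since $g$ is boundary-incompressible and $F$ is not a disc or torus, each boundary curve maps to an essential curve on a cusp torus. I would homotope $g$ so that, in each cusp $T^2 \times [1,\infty)$, the map is a product $\gamma \times [1,\infty)$ with $\gamma$ a geodesic on the flat torus. Such a collar has area $\int \ell(\gamma) e^{-t}\,dt < \infty$, so the class contains finite-area maps.

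Next, take a minimising sequence $f_n$ and normalise it. Rado's theorem gives a good representative on each truncation: take the least area disc or annulus with fixed boundary on a core region, then use a replacement/exhaustion argument. Work on compact subsurfaces $F_k \subset \mathrm{int}(F)$ with $F_k$ exhausting $\mathrm{int}(F)$. On each $F_k$, apply Schoen--Yau/Sacks--Uhlenbeck to minimise area in the relative homotopy class, with boundary held on horospherical cross-sections deep in the cusps. Incompressibility of $g$ ensures that $\pi_1$-injectivity prevents bubbling off of spheres (and $\pi_2(M)=0$ anyway) and prevents pinching of essential curves. The uniform area bound together with the thick part's lower injectivity radius keeps the images in a fixed compact region away from the cusp ends. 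Here I would use the monotonicity formula: a minimal surface of area at most $\Lambda+1$ cannot venture deep into a cusp without meeting an essential curve of tiny length, and such a curve would contradict boundary-incompressibility. This gives a least area map $f_k$ on each $F_k$, with compatible behaviour in the cusps.

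Finally, pass to the limit $k \to \infty$. The uniform curvature estimates for stable minimal surfaces (Schoen) give $C^{2,\alpha}$ convergence on compacta to a minimal map $f$ that is still properly homotopic to $g$, with area at most $\liminf$, so $f$ realises $\Lambda$.

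I expect the main obstacle to be control at the cusps, namely showing that the limit remains proper and in the right proper homotopy class, with no area escaping down a cusp and no part of the surface drifting off. I would handle this first by a comparison argument: replacing the cusp portion by the geodesic product collar never increases area beyond the limit, and the essential boundary curves cannot shrink because of boundary-incompressibility. Properness then follows because a finite-area end of a minimal surface in a cusp must be asymptotic to such a collar.
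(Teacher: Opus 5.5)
The paper does not prove this statement; it quotes it from Freedman--Hass--Scott for closed $M$ and from Ruberman, Corollary 3.12, for cusped $M$. Your minimise--exhaust--take-limits outline is the standard one, but there is a genuine gap in the one step that is new in the cusped case: controlling the surfaces in the cusps. The mechanism you propose for it fails.

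\textbf{The compactness claim.} When $\partial F\neq\emptyset$, the images cannot stay in a fixed compact region, because every proper map in the class sends the ends of $\mathrm{int}(F)$ out the cusps. Your own $F_k$ even have boundary held deep in the cusps. Your justification also fails for the rest of the surface:
\begin{enumerate}
\item Deep in a cusp the injectivity radius of $M$ tends to $0$, so monotonicity in $M$ gives only a vanishing lower bound on area.
\item Lifting to $\mathbb{H}^3$ helps only for pieces that lift homeomorphically (discs). It does not help for annular pieces with parabolic core, whose lift is covered many times by any large ball.
\item A short essential curve of the surface deep in a cusp is just a curve that is peripheral in $M$, and this does not contradict boundary-incompressibility. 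Curves parallel to $\partial F$ are of this type. A non-peripheral curve of $F$ that is parabolic in $M$ (an accidental parabolic) is also compatible with $g$ being incompressible and boundary-incompressible.
\end{enumerate}
A collar of such a curve $\alpha$ can be pushed up the cusp to any depth $T$ at an area cost bounded independently of $T$: the two vertical annuli over $\alpha$ have total area $2\ell(\alpha)(1-e^{-T})$. So nothing in your argument stops pieces of a minimising sequence from escaping down a cusp. If that happened, the limit could change topology and fail to be properly homotopic to $g$.

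\textbf{The proposed remedy.} Your final paragraph asserts exactly the facts that need proof:
\begin{enumerate}
\item that the cusp part of a minimising map can be replaced by product collars $\gamma\times[1,\infty)$ without increasing area and without leaving the proper homotopy class;
\item that a finite-area minimal end in a cusp is asymptotic to such a collar.
\end{enumerate}
The first is not automatic, because the cusp part of $f_n$ need not consist of product ends. It can contain annuli with both boundary curves on a cross-sectional torus, or fingers as above. Proving these needs genuine cusp geometry. Examples are the convexity of the Busemann function, so that depth into a cusp has no interior local minimum on a minimal surface, and the fact that for the cusp metric $dt^2+e^{-2t}g_{\mathrm{flat}}$ the closed comass-one form $e^{-t}\,ds\wedge dt$ calibrates vertical annuli. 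These must be combined with a cut-and-paste analysis of how the minimising sequence meets the cusps.

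\textbf{Smaller gaps.} The ``compatible behaviour in the cusps'' of the $f_k$ for different $k$ is only asserted. Schoen--Yau and Sacks--Uhlenbeck are theorems about closed surfaces, so for $F_k$ with boundary you would need a fixed- or free-boundary existence theorem in a relative homotopy class.

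As written, your sketch covers only closed $M$, where it reduces to the Schoen--Yau/Sacks--Uhlenbeck existence theorem used by Freedman--Hass--Scott. The cusped case, which is the one the paper needs, remains unproved.
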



The following result guarantees that the resulting least area surface is either embedded or the orientable double cover of an embedded surface.
When the surface is closed, this is a consequence of Theorem 5.1 by Freedman, Hass, Scott \cite{FHS}, but the general case is 
Theorem 3.16 by Ruberman \cite{Ruberman}.

\begin{theorem}
Let $M$ be a compact orientable $3$-manifold with interior admitting a finite-volume hyperbolic metric and let $F$
be a compact connected orientable surface. Let $f \colon \mathrm{int}(F) \rightarrow \mathrm{int}(M)$ be a least area map
that is proper, incompressible and boundary-incompressible, such that $f$ is properly homotopic to an embedding. Then either
\begin{enumerate}
\item $f$ is an embedding, or
\item $f$ is the orientable double cover of an embedded minimal non-orientable surface. 
\end{enumerate}
\end{theorem}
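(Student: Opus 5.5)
The plan is to follow the exchange--roundoff strategy of Freedman, Hass and Scott, working in covering spaces where the relevant lifts of $f$ become embedded or nearly so. Let $\tilde M = \mathbb{H}^3$ be the universal cover and consider the family of lifts $\tilde f_\gamma$ of $f$ to $\mathbb{H}^3$, indexed by cosets of $f_*\pi_1(F)$. Since $f$ is properly homotopic to an embedding $g$, the corresponding lifts of $g$ form a family of pairwise disjoint properly embedded planes, each separating $\mathbb{H}^3$. The key topological input is that the limit sets (or, in the cusped case, the ideal boundaries together with their behaviour in the cusps) of the lifts of $f$ coincide with those of the lifts of $g$, because a proper homotopy moves points a bounded distance in the compact core and moves ends within cusps. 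Hence two lifts of $f$ are ``unlinked'': their limit sets do not separate one another on the sphere at infinity.

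First step: show each individual lift $\tilde f$ is an embedding. If $\tilde f$ had a self-intersection curve, one cuts along it and exchanges pieces to get a map of the same area with a fold; rounding off the fold strictly decreases area while staying in the same proper homotopy class, contradicting least area. Finite area in the cusped case, and properness of $f$, guarantee the cut-and-paste region can be taken compact (or at least of finite area), which is where Ruberman's extension is needed. Second step: for two distinct lifts $\tilde f_1, \tilde f_2$ that intersect, use the unlinking of limit sets to find compact regions $D_1\subset \tilde f_1$, $D_2\subset\tilde f_2$ with $\partial D_1 = \partial D_2$ in the intersection. Least area (applied in the appropriate quotient, e.g.\ the cover of $M$ corresponding to $f_*\pi_1F$, where $\tilde f_1$ descends to an embedded least area surface) forces $\mathrm{area}(D_1)=\mathrm{area}(D_2)$; swapping them gives another least area surface with a corner, and rounding the corner reduces area, a contradiction. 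So distinct lifts are disjoint or coincide as sets.

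Third step: descend to $M$. Lifts being disjoint or equal means $f(\mathrm{int}F)$ is an embedded surface $G$ and $f$ is a covering map onto it. Its degree is controlled by comparing with $g$: the covering $\mathrm{int}F\to G$ must be injective on $\pi_1$ and, since $g$ is an embedding homotopic to $f$, a degree computation (or the fact that the stabiliser of a lift has index at most two in the stabiliser of its image plane) shows the degree is $1$ or $2$. Degree $1$ gives conclusion (1); degree $2$ with $G$ orientable would contradict $f$ being homotopic to an embedding of a connected surface, so $G$ is non-orientable and $f$ is its orientable double cover, conclusion (2); $G$ is minimal since it is locally the image of $f$.

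The main obstacle is the second step: producing the compact exchange regions $D_1,D_2$ from the limit-set information, particularly when $M$ has cusps and the surfaces are non-compact, where one must control intersection curves escaping to infinity. I would first try to truncate along horospherical tori, using that the least area surface meets deep cusp neighbourhoods in nearly vertical annuli, so that all intersection curves lie in a compact set.
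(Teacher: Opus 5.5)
The paper does not prove this statement; it quotes it from Freedman--Hass--Scott (Theorem 5.1, closed case) and Ruberman (Theorem 3.16, general case). Your outline has the right overall shape for those arguments. However, the two cut-and-paste steps do not work as you describe them, so there is a genuine gap.

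The central problem is your second step: unlinked limit sets cannot produce compact regions $D_1,D_2$ with $\partial D_1=\partial D_2$.

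\emph{Fibres.} Suppose $F$ is a fibre of a fibration of $M$ over the circle, for instance the fibre of a fibred hyperbolic knot exterior. Then $H=f_*\pi_1(F)$ is normal in $\pi_1(M)$, and every lift has limit set the whole sphere at infinity. So limit sets carry no information at all.

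\emph{Non-trivial common stabilisers.} The set $\tilde f_1\cap\gamma\tilde f_1$ is invariant under $H\cap\gamma H\gamma^{-1}$, and this group is often non-trivial. For a fibre it is all of $H$. When $\partial F\neq\emptyset$, any two lifts entering a common horoball share the parabolic element represented by the boundary slope, since the curves of $\partial F$ on a cusp torus are parallel and the torus group is abelian. In such cases the intersection curves can be lines in $\mathbb{H}^3$, possibly running out to a parabolic fixed point, and no compact $D_1,D_2$ need exist. This is exactly the cusp difficulty you defer to an unproved claim about nearly vertical annuli.

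Even when compact $D_1,D_2$ exist, least area of $f$ does not give $\mathrm{area}(D_1)=\mathrm{area}(D_2)$. Swapping $D_1$ for $D_2$ changes the lift on a compact set. This is a competitor for $f$, or for its lift $f_H$ to $M_H=\mathbb{H}^3/H$, only if the swap can be done $H$-equivariantly. That fails when $D_1$ meets its own $H$-translates, i.e.\ when $D_1$ does not embed in the quotient surface. The reverse inequality needs the same thing in $\mathbb{H}^3/\gamma H\gamma^{-1}$.

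The same objection applies to your first step. Cutting an immersed plane along a single double curve (which may itself be a line) and exchanging pieces does not in general give a map of the same surface in the same proper homotopy class.

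What is missing is a mechanism of the kind used in the cited proofs:
\begin{enumerate}
\item argue in covers such as $M_H$, where $f_H$ is again least area and the lift of $g$ is a properly embedded surface separating $M_H$ (it is two-sided and $\pi_1$-surjective);
\item note that every other lift of $g$ is disjoint from it, so lies on one side;
\item exchange whole complementary regions of the two surfaces rather than single discs;
\item in the cusped case, add the analysis of least area surfaces inside the cusps that Ruberman supplies.
\end{enumerate}

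Finally, the index-at-most-two claim in your third step is true but also needs proof. One way: lift $g$ to $\mathbb{H}^3/\mathrm{Stab}(P)$, where it becomes an embedded incompressible surface carrying a subgroup of index $d$, and use the classification of such surfaces in product and twisted $I$-bundles.
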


As a consequence of the above theorems, the initial embedded surface can be properly homotoped either to an embedded least area surface or to the orientable double cover of an embedded least area non-orientable surface. The following result implies that the homotopy can be achieved by an isotopy. 

\begin{theorem}
Let $M$ be a compact orientable irreducible 3-manifold with incompressible boundary. Let $F$ and $F'$ be properly embedded essential surfaces such $\mathrm{int}(F)$ and $\mathrm{int}(F')$ are properly homotopic in $\mathrm{int}(M)$. Then $F$ and $F'$ are ambient isotopic.
\end{theorem}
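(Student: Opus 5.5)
This statement is Waldhausen's theorem, in its relative form for surfaces with boundary. The plan is the classical argument: upgrade the proper homotopy to an $h$-cobordism-type product region between $F$ and $F'$, then use irreducibility to fill it in.

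First we put $F$ and $F'$ in general position, so that they meet transversely in a finite collection of simple closed curves and properly embedded arcs. We then reduce the intersection by induction on the number of components of $F\cap F'$. An innermost-curve argument removes inessential intersection curves:
\begin{itemize}
\item A curve bounding a disc in $F'$ also bounds a disc in $F$, by incompressibility.
\item Irreducibility gives a ball between these two discs.
\item Isotoping across the ball removes the curve.
\end{itemize}
Similarly, an outermost arc cutting off a disc in $F'$ is removed using boundary-incompressibility of $F$. Here we use that $\partial M$ is incompressible and $M$ is irreducible, so that the disc it cobounds with a disc in $F$ and a disc in $\partial M$ bounds a ball.

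After these moves, every curve and arc of $F\cap F'$ is essential in both surfaces. Next we lift to the cover $\widetilde M$ corresponding to $\pi_1(F)$. Since $F$ is $\pi_1$-injective and $F'$ is properly homotopic to $F$, both $F$ and $F'$ lift homeomorphically to compact surfaces $\widetilde F,\widetilde F'$ in $\widetilde M$, and each of these inclusions is a homotopy equivalence. By a standard argument (Waldhausen, Lemma 5.3; see also Hempel, Chapter 10), the compact region of $\widetilde M$ between $\widetilde F$ and $\widetilde F'$ is an $h$-cobordism between surfaces. By the classification of surfaces together with irreducibility, it is a product $F\times I$, possibly after cutting along essential intersection curves first.

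Projecting this product back to $M$, the argument proceeds as follows:
\begin{itemize}
\item If the projection is an embedding, it gives a product region between pieces of $F$ and $F'$ through which we isotope to reduce the number of intersection curves. When $F\cap F'=\emptyset$, it directly gives a parallelism between $F$ and $F'$.
\item If the projection is not an embedding, the covering projection restricted to the product region is a finite cover onto a twisted $I$-bundle over a non-orientable surface. This is where the orientable double cover phenomenon of Theorem~\ref{Thm:IsotopicToStableMinimal}(2) arises.
\end{itemize}
In the twisted case, $F$ and $F'$ are both isotopic to the boundary of that $I$-bundle, and hence to each other.

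The main obstacle is the product-region step. It requires showing that essential intersection curves can be eliminated by isotopies through products. The key input is the theorem that an $h$-cobordism between surfaces in an irreducible 3-manifold is a product; for this I would cite Waldhausen rather than reprove it.

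For the boundary case, we first apply the closed-case argument to the doubles of $M$, $F$ and $F'$ along $\partial M$. Alternatively, we first isotope $\partial F$ to $\partial F'$ in $\partial M$; this is possible since properly homotopic essential curves on a torus or incompressible boundary surface are isotopic. We then run the argument relative to the boundary.

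Since this result is classical, in the paper I would simply cite Waldhausen \cite{Waldhausen} (Corollary 6.2) for it rather than give a full proof.
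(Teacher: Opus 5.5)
\textbf{Closed case.} For closed $F,F'$ your plan matches the paper's, which gives no argument and simply cites Waldhausen. The result it uses, however, is Waldhausen's Corollary~5.5, not Corollary~6.2.

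The sketch you give before citing has one spurious branch: for two distinct embedded surfaces the twisted $I$-bundle case cannot occur. Choose the region $R$ between compact lifts (or, when $F\cap F'\neq\emptyset$, an innermost region between pieces of $\widetilde F$ and $\widetilde F'$) so that no other lift of $F\cup F'$ meets $\mathrm{int}(R)$. Then $p|_{\mathrm{int}(R)}$ is a finite covering onto a component $Q$ of $M\setminus(F\cup F')$. Every face of $Q$ lying in $F$ (resp.\ $F'$) is covered only by the face of $R$ in $\widetilde F$ (resp.\ $\widetilde F'$), and that face maps injectively. So the covering has degree one and $R$ embeds. A twisted $I$-bundle can only arise when both faces of $R$ project to the same surface, as for a surface homotoped off itself. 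That is the phenomenon behind case (2) of Theorem~\ref{Thm:IsotopicToStableMinimal}, where the least-area map is not an embedding; it plays no role in this statement.

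\textbf{Bounded case: the genuine gap.} The paper does not reduce this case to the closed one; it cites Johannson's Proposition~19.1.

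Your doubling route does not go through as stated. The closed case only supplies \emph{some} ambient isotopy of $DM$ carrying $DF$ to $DF'$. Nothing forces that isotopy to commute with the reflection of $DM$ or to preserve the copy of $M$, so it does not restrict to an isotopy in $M$. Making it do so would need an equivariant version of Waldhausen's theorem, which is a substantial additional input.

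Your second route has the right shape. Its first step, isotoping $\partial F$ onto $\partial F'$ in $\partial M$, is fine, since homotopic essential multicurves in a surface are isotopic. But ``run the argument relative to the boundary'' is exactly the relative theorem that must be proved or cited. Moreover, the leftover homotopy need not fix $\partial F$: it can sweep a boundary curve once around a torus component of $\partial M$. You would have to correct this first, e.g.\ by spinning a collar of that torus, before any rel-boundary argument applies.

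The simplest repair is to do what the paper does: cite Waldhausen for closed surfaces and Johannson for surfaces with boundary.
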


When $F$ and $F'$ are closed, this is a consequence of a theorem of Waldhausen \cite[Corollary 5.5]{Waldhausen}. When $F$ and $F'$ have non-empty boundary, the result is due to Johannson \cite[Proposition 19.1]{Johannson}.

 \section{Consequences of minimality}\label{ConsequencesMinimality}

We therefore now focus on a connected stable minimal surface, that may be orientable or non-orientable. For convenience, we refer to this surface as $F$. However, in the case where it is non-orientable, we are really interested in its orientable double cover.

\begin{lemma}\label{BallArea} Let $x$ be a point in a $\pi_1$-injective minimal surface $F$ embedded in the hyperbolic 3-manifold $M$. The surface $F$ inherits a Riemannian metric from $M$. Let $r$ be any real number less than or equal to the injectivity radius of $x$ in $M$. Denote an open disc around $x$ of radius $r>0$ on $F$ in this metric by $B_{F}(x,r)$. Then the area of $B_F(x,r)$ is at least $\pi r^2$.
\end{lemma}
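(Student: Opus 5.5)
Lift everything to the universal cover $\mathbb{H}^3$, where the comparison is cleanest. Since $F$ is $\pi_1$-injective, a component $\tilde F$ of its preimage in $\mathbb{H}^3$ is a complete (properly embedded) minimal surface, and it is the universal cover of $F$. Fix a lift $\tilde x$ of $x$. Because $r$ is at most the injectivity radius of $M$ at $x$, the hyperbolic ball $B_{\mathbb{H}^3}(\tilde x, r)$ maps injectively into $M$ under the covering projection.

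The core estimate is the monotonicity formula for minimal surfaces in $\mathbb{H}^3$. Intersect $\tilde F$ with the extrinsic ball $B_{\mathbb{H}^3}(\tilde x,\rho)$ and let $A(\rho)$ be its area. Take the distance function $d$ to $\tilde x$. In curvature $-1$ the Hessian of $d$ is $\coth d$ times the metric on directions orthogonal to $\nabla d$, and this is at least $1/d$ times the metric. Applying the divergence theorem on the minimal surface to the tangential part of $\nabla(d^2/2)$ then gives $\Delta_{\tilde F}(d^2/2)\geq 2$. Integrating over $\tilde F\cap B(\tilde x,\rho)$, and controlling the boundary term by the coarea formula, yields $2A(\rho)\le \rho A'(\rho)$. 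Hence $A(\rho)/\rho^2$ is nondecreasing. As $\rho\to 0$ this ratio tends to $\pi$, because $\tilde F$ is smooth at $\tilde x$. So $A(\rho)\geq\pi\rho^2$. (One could even get the hyperbolic disc area $2\pi(\cosh\rho-1)$, but $\pi\rho^2$ suffices.)

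It remains to pass from the extrinsic ball to the intrinsic disc. Intrinsic distance on $\tilde F$ dominates extrinsic distance, so $B_{\tilde F}(\tilde x,r)\subseteq \tilde F\cap B_{\mathbb{H}^3}(\tilde x,r)$. This inclusion goes the wrong way, and I expect it to be the main obstacle. To fix it, I would redo the monotonicity argument using the intrinsic distance $d_F$ on $\tilde F$, which I expect is what is intended. The Laplacian comparison then needs the Gaussian curvature, and by the Gauss equation a minimal surface in curvature $-1$ has $K = -1 - |A|^2/2 \le -1$. On a simply connected surface with $K\le 0$, Cartan–Hadamard makes the exponential map a diffeomorphism, and Günther–Bishop comparison gives
\[
\mathrm{area}\, B_{\tilde F}(\tilde x,r)\ \geq\ 2\pi(\cosh r -1)\ \geq\ \pi r^2 .
\]
This intrinsic route avoids the extrinsic issue entirely, so it is the argument I would actually write.

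Finally, project back to $M$. The covering map restricted to $B_{\tilde F}(\tilde x,r)$ lands in $B_{\mathbb{H}^3}(\tilde x,r)$, which injects into $M$. So the projection is an isometric embedding onto its image, and that image contains $B_F(x,r)$. The reverse containment also holds: any path in $F$ of length less than $r$ starting at $x$ lifts to a path in $\tilde F$ starting at $\tilde x$. Hence $B_F(x,r)$ is isometric to $B_{\tilde F}(\tilde x,r)$, and its area is at least $\pi r^2$.

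The delicate point is to check that injectivity on the ambient ball really gives injectivity on the surface disc. Two points of $B_{\tilde F}(\tilde x,r)$ with the same image would lie in $B_{\mathbb{H}^3}(\tilde x,r)$ and have the same image in $M$, so they must coincide. In the non-orientable case, one runs the same argument on the orientable double cover, or equally on $\tilde F$ directly.
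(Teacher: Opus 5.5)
Your final (intrinsic) argument is correct and is essentially the paper's proof: the Gauss equation gives $K \le -1$ on $F$, comparison geometry for the exponential map gives area at least $\pi r^2$ (you even get $2\pi(\cosh r - 1)$), and $\pi_1$-injectivity together with $r \le \mathrm{inj}_M(x)$ makes the exponential map injective on the $r$-disc. The difference is only packaging. The paper rules out non-injectivity directly, noting that a loop at $x$ of length less than $2r$ that is essential in $F$ would be essential in $M$. You instead pass to the universal cover $\tilde F \subset \mathbb{H}^3$, using Cartan--Hadamard there and the injectivity of the covering projection on $B_{\mathbb{H}^3}(\tilde x, r)$, which is an equivalent and slightly cleaner formulation. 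You were also right to abandon the extrinsic monotonicity route.
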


\begin{proof} Recall that for a surface $F$ in a Riemannian 3-manifold $M$, the intrinsic Gaussian curvature at any point $x$ of $F$ is $K_i=K_e+K_M$, where $K_M$ is the sectional curvature of $M$, and $K_e$ is the extrinsic curvature of the surface. Here $K_M$ is equal to $-1$, since $M$ is hyperbolic. The curvature $K_e$ is the product of the two principal curvatures. Since $F$ is minimal, the principal curvatures have opposite signs or are both zero, and hence $K_e$ is non-positive. Therefore, $K_M+K_e \leq -1$.

Consider the exponential map $\exp: T_x F \rightarrow F $, where $T_x F$ is the tangent space to $F$ at the point $x$. We can endow $T_x F$ with the natural Euclidean metric arising from the induced Riemannian metric on $F$. For a surface of sectional curvature at most $-1$, an exponential map does not decrease distances. To prove this, one can use, for example, Hinge Comparison theorem (see, for example, \cite{CheegerEbin} or \cite{Petersen}). 

The exponential map is also a homeomorphism onto its image, when restricted to the open ball of radius $r$ about the origin in $T_x F$, for the following reason. If the exponential map was not injective, then there would be a geodesic $\gamma$ in $F$, based at $x$, starting and ending at $x$, and with length less than $2r$. Since the sectional curvature of $F$ is negative, such a geodesic would represent a non-trivial element of $\pi_1(F,x)$ \cite[Theorem 6.9.1]{Jost}.
 We are assuming that $F$ is $\pi_1$-injective. Therefore, $\gamma$ represents a non-trivial element of $\pi_1(M,x)$. However, $r$ is at most the injectivity radius of $M$, and therefore any curve based at $x$ with length less than $2r$ is homotopically trivial in $M$, which is a contradiction.

The area of a ball of radius $r$ in $T_x F$ is $\pi r^2$. Since the exponential map does not decrease distances and is a homeomorphism onto its image when restricted to the ball of radius $r$ about the origin, we deduce that the area of a ball of radius $r$ does not decrease under this map, i.e. $\pi r^2 = \mathrm{area}(B_{{\mathbb{E}}^2}(0, r))\leq \mathrm{area}(B_{F}(x, r))$. \end{proof}

By virtue of the Gauss-Bonnet Theorem, one can bound the area of the surface $F$ above by $-2\pi \chi(F)$, where $\chi(F)$ is the Euler characteristic of the surface.

\begin{lemma}\label{MuCover} Let $\eta$ be any positive real number. Let $F$ be a $\pi_1$-injective minimal surface in the hyperbolic 3-manifold $M$. Then there is a collection of at most $8 |\chi(F)|/ {\eta}^2$ discs in $F$ with radius $\eta$ that cover $F \cap M_{[\eta,\infty)}$. Here the distance $\eta$ is taken using the induced Riemannian metric on $F$ in $M$.
\end{lemma}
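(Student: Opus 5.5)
A maximal packing argument combined with the area lower bound of Lemma \ref{BallArea} and the Gauss--Bonnet area bound gives the cover. Choose a maximal collection of points $x_1, \dots, x_n$ in $F \cap M_{[\eta,\infty)}$ whose pairwise distances in the induced metric on $F$ are at least $\eta$. Such a finite maximal collection exists once we show that every such collection has bounded size, which is the main estimate below. By maximality, every point of $F \cap M_{[\eta,\infty)}$ lies within distance less than $\eta$ of some $x_i$. Hence the discs $B_F(x_i,\eta)$ cover $F \cap M_{[\eta,\infty)}$, and it remains to show $n \leq 8|\chi(F)|/\eta^2$.

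The discs $B_F(x_i, \eta/2)$ are pairwise disjoint, by the triangle inequality in $F$. Each $x_i$ lies in $M_{[\eta,\infty)}$. Under the convention of the paper, this thick part consists of points where the injectivity radius of $M$ is at least $\eta/2$. So $r = \eta/2$ is at most the injectivity radius at $x_i$. Lemma \ref{BallArea} then applies, since $F$ is $\pi_1$-injective and minimal, and gives
\[ \mathrm{area}(B_F(x_i,\eta/2)) \geq \pi \eta^2/4. \]

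Summing over the disjoint discs and using the Gauss--Bonnet bound $\mathrm{area}(F) \leq -2\pi\chi(F) = 2\pi|\chi(F)|$ established earlier, we get
\[ n \pi \eta^2/4 \leq 2\pi |\chi(F)|, \]
so $n \leq 8|\chi(F)|/\eta^2$, as required.

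The only delicate points are bookkeeping ones. First, one must check that the thick-part convention (injectivity radius at least $\mu/2$ on $M_{[\mu,\infty)}$) gives exactly radius $\eta/2$, which is what makes the constant come out as $8$. Second, the Gauss--Bonnet inequality for noncompact $F$ in the cusped case should be justified by the finite-area properness of least area surfaces. If $F$ is non-orientable, or is the double cover case, the same argument runs on the orientable double cover, or directly with $|\chi|$, since the Gauss--Bonnet area bound is still valid there.
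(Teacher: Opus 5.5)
Your proposal is correct and is essentially the paper's proof. A maximal $\eta$-separated set in $F \cap M_{[\eta,\infty)}$ gives a cover by $\eta$-discs; the $\eta/2$-discs about these points are disjoint, each has area at least $\pi(\eta/2)^2$ by Lemma \ref{BallArea}, and comparing with $\mathrm{area}(F) \leq 2\pi|\chi(F)|$ yields the bound. Your extra bookkeeping makes explicit what the paper leaves implicit: that $x_i \in M_{[\eta,\infty)}$ gives injectivity radius at least $\eta/2$, and that a finite maximal collection exists because the packing bound caps its size.
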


\begin{proof}

Choose a maximal collection of points ${x_1, ..., x_k}$ in $F \cap M_{[\eta,\infty)}$ such that $x_i, x_j$ are at least $\eta$-away from each other for each $i \not= j$. By maximality, any other point of $F \cap M_{[\eta,\infty)}$ is less than $\eta$ from $x_i$ for some $i$, and we can cover $F \cap M_{[\eta,\infty)}$ by discs of radius $\eta$ centred at $x_1, ..., x_k$. At the same time, $\eta/2$-discs centred at $x_1, ..., x_k$ are disjoint.

The resulting upper bound on $k$, the number of discs, is then the upper bound for the total area of $F$, which is $2\pi |\chi(F)|$, divided by the lower bound on the area of disc of radius $\eta/2$ in $F$, which is at least $\pi (\eta/2)^2$ by Lemma \ref{BallArea}. So we have at most $2\pi |\chi(F)|/(\pi (\eta/2)^2)=8 |\chi(F)|/{\eta}^2$ discs. \end{proof}

Note that the bound in the above lemma is linear in the Euler characteristic of the surface.

\section{Consequences of stability}
\label{Sec:ConsequencesStability}

The principal curvatures for 2-sided stable minimal embedded surfaces in a hyperbolic 3-manifold are bounded above by a universal constant $C$. This follows from the following theorem.

A surface has \emph{a trivial normal bundle} if there is a global orthonormal basis for the normal bundle. When the surface is immersed in an orientable 3-manifold (as in our case), this is equivalent to the surface being orientable.

For a surface $F$, denote by $A$ its second fundamental form. Recall that the second fundamental form is a bilinear form, represented by a 2 by 2 matrix $A$ as follows. If we have a point $x \in F$ and vectors $X, Y$ in $T_xF$, then $A(X, Y)= - \langle \nabla_XN , Y \rangle$, where $N$ is a field of unit normal vectors. (See subsection 1.2 in \cite{CMbook}.) It is easy to see that $A$ is symmetric. The principal curvatures are the eigenvalues of this symmetric matrix.

\begin{theorem}[Theorem 3 in \cite{Schoen} by Schoen, also see Theorem 2.10 in the textbook by Colding and Minicozzi \cite{CMbook}] 
\label{Schoen}
For each real number $k$, there exists $\eta > 0$ with the following property.
Suppose we have an immersed stable minimal surface $F$ in $M$ with a trivial normal bundle. Let $r_0\in(0,\eta]$, and for a point $x$ in $F$, let $B_F(x,r_0)$ be in $F - \partial F$. Suppose that the sectional curvatures $K_M$ of $M$ satisfy $|K_M|\leq k^2$. Then there exists a constant $C'(k)$ depending only on $k$ such that for all $0<\sigma\leq r_0$, $sup_{B_F(x, r_0-\sigma)}|A|^2\leq C'\sigma^{-2}$.\end{theorem}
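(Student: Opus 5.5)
We argue by contradiction, using a point-picking and rescaling scheme. The main tool is the Bernstein-type theorem for complete stable minimal surfaces in Euclidean space, due to Fischer-Colbrie--Schoen, do Carmo--Peng and Pogorelov. This theorem says that a complete, two-sided, stable minimal immersed surface in $\mathbb{R}^3$ is a flat plane.

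\textbf{Setting up.} First fix $\eta>0$ small in terms of $k$. We may work in the universal cover, or pull back the metric of $M$ by the exponential map at the image of $x$; either way, $\exp$ gives a chart on the ambient ball of radius $2\eta$ in which the metric is uniformly $C^2$-close to Euclidean. We then lift the immersed intrinsic ball $B_F(x,r_0)$ into this chart. The two-sidedness given by the trivial normal bundle is preserved, so the stability inequality
\[ \int_F \big(|A|^2+\mathrm{Ric}(N,N)\big)\varphi^2 \le \int_F |\nabla\varphi|^2 \]
holds for all compactly supported test functions $\varphi$.

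\textbf{Point-picking.} Suppose the estimate fails for every constant $C'$. Then there are data $F_n, x_n, r_n\le \eta$ and $\sigma_n$ with $\sigma_n^2\sup_{B(x_n,r_n-\sigma_n)}|A|^2\to\infty$. Let $d_n(y)$ denote the intrinsic distance from $y$ to $\partial B_F(x_n,r_n)$, and consider the function $d_n(y)\,|A|(y)$ on $B_F(x_n,r_n)$. It is continuous and vanishes on the boundary, so it attains a maximum $M_n$ at some point $y_n$, and by hypothesis $M_n\to\infty$.

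Set $\lambda_n=|A|(y_n)$ and rescale the ambient metric by $\lambda_n^2$. After rescaling:
\begin{enumerate}
\item the surface satisfies $|A|(y_n)=1$;
\item the surface satisfies $|A|\le 2$ on the intrinsic ball of radius $M_n/2\to\infty$ about $y_n$, since on that ball $d_n\ge d_n(y_n)/2$;
\item the ambient sectional curvatures are bounded by $k^2/\lambda_n^2\to 0$;
\item the ambient injectivity radius in the chart is at least $\eta\lambda_n\to\infty$.
\end{enumerate}
Note that $\lambda_n\ge M_n/r_n\ge M_n/\eta\to\infty$, so all these limits hold.

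\textbf{Compactness.} The uniform bound on $|A|$ means that each rescaled surface is, near each of its points, a graph over a disc of uniform radius in its tangent plane. The minimal surface equation is a quasilinear elliptic equation, so Schauder estimates give uniform $C^{2,\alpha}$ bounds on these graphs. Passing to a subsequence, the surfaces converge smoothly on compact sets, as immersions, to a complete immersed minimal surface $F_\infty\subset\mathbb{R}^3$. This limit satisfies $|A|(0)=1$ and is two-sided.

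\textbf{Conclusion.} Stability passes to the limit. Indeed, for fixed compactly supported $\varphi$ on $F_\infty$ the stability inequality holds for the approximating surfaces, the Ricci term tends to $0$, and smooth convergence gives the inequality $\int|A|^2\varphi^2\le\int|\nabla\varphi|^2$ on $F_\infty$. The Bernstein-type theorem then forces $F_\infty$ to be a plane, so $|A|\equiv0$. This contradicts $|A|(0)=1$, and yields a constant $C'(k)$ with $d\,|A|\le \sqrt{C'}$ on $B_F(x,r_0)$. Points of $B_F(x,r_0-\sigma)$ satisfy $d\ge\sigma$, which gives the stated bound $|A|^2\le C'\sigma^{-2}$.

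\textbf{Main obstacle.} I expect the hard part to be the Bernstein theorem for complete stable minimal surfaces in $\mathbb{R}^3$. To prove it I would first try the Fischer-Colbrie--Schoen argument: stability yields a positive Jacobi field, which gives a conformal metric $u^2 g$ of nonnegative curvature, so that $F_\infty$ is parabolic. A logarithmic cutoff $\varphi$ in the stability inequality then forces $\int|A|^2=0$. A secondary technical issue is handling the immersed (rather than embedded) setting in the lifting and convergence steps. This is exactly why the trivial normal bundle hypothesis is needed: two-sidedness is what makes the stability inequality above available.
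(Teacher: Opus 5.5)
The paper gives no proof of this statement; it imports it from Schoen and from Colding--Minicozzi. Your blow-up argument is a correct proof, but it takes a genuinely different route from those sources.

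In the Colding--Minicozzi treatment (as best I recall it), the estimate is proved directly:
\begin{enumerate}
\item stability with radial test functions, together with Gauss--Bonnet on the universal cover, gives a quadratic area bound for intrinsic balls;
\item a logarithmic cut-off in the stability inequality makes $\int|A|^2$ small on a smaller ball;
\item a Choi--Schoen $\varepsilon$-regularity argument (Simons' inequality plus a mean value inequality) turns small total curvature into $\sup_{B_F(x,r_0-\sigma)}|A|^2\le C'\sigma^{-2}$.
\end{enumerate}
The Bernstein theorem then comes out as a corollary by letting $r_0\to\infty$.

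You go the other way. You take the Fischer-Colbrie--Schoen / do Carmo--Peng / Pogorelov theorem as input and recover the local estimate by point-picking and compactness. This is a standard scheme, and your key steps are right: maximising $d_n|A|$, getting $|A|\le 2$ on balls of radius $M_n/2$, $\lambda_n\to\infty$ because $r_n\le\eta$, and passing stability to the limit.

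What your route buys:
\begin{itemize}
\item it is short;
\item it makes the role of the restriction $r_0\le\eta$ transparent;
\item done carefully, it gives a constant depending only on the sectional curvature bound. The direct route passes through Simons' inequality, whose error terms in a curved ambient involve $\nabla\mathrm{Rm}$.
\end{itemize}
What it costs:
\begin{itemize}
\item it is non-effective, so no value of $C'$ comes out;
\item it leans on compactness for immersed surfaces with bounded $|A|$ and on convergence of the rescaled ambient metrics.
\end{itemize}

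Two steps in your sketch need (routine) repair.

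First, $|K_M|\le k^2$ alone does not give uniform $C^2$ control of the metric in normal coordinates. What it gives is $C^{1,\alpha}$ control in harmonic coordinates. That is enough for Schauder estimates on the minimal graph equation, since the $\mathrm{Ric}(N,N)$ term is bounded by $2k^2/\lambda_n^2$ at each finite stage. Alternatively, restrict to hyperbolic $M$, which is all the paper uses; there the rescaled metrics converge smoothly to the Euclidean one.

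Second, $B_F(x,r_0)$ need not lift through $\exp$ or to $\mathbb{H}^3$. When the injectivity radius of $M$ at $x$ is below $r_0$, the ball can contain loops that are essential in $M$. Pass instead to the universal cover of $F$. Two-sided stability lifts to covers, because it is equivalent to the existence of a positive $u$ with $Lu\le 0$ for the Jacobi operator $L=\Delta+|A|^2+\mathrm{Ric}(N,N)$ (Fischer-Colbrie--Schoen).
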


The theorem is formulated so that for every $x$ and $r_0$ such a constant $C'$ exists. Schoen notes that the constant $C'$ depends only on the curvature of the ball $B_M(x, r_0)$ in \cite{Schoen}.
In particular $C'$ does not depend on $F$. For a hyperbolic manifold $M$, $K_M=-1$, so once $x$ and $r_0$ are fixed, the constant $C'$ is the same for every hyperbolic 3-manifold and surface. 

\begin{corollary} 
\label{Cor:UniformBoundCurvatures}
There is a universal constant $C$ such that the principal curvatures of any immersed stable minimal surface $F$ with trivial normal bundle in a complete hyperbolic 3-manifold $M$ are at most $C$.
\end{corollary}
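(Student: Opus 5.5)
We apply Theorem \ref{Schoen} with $k = 1$, since $|K_M| = 1$ for a hyperbolic 3-manifold. This yields a fixed $\eta > 0$ and a constant $C'(1)$. Both depend only on $k$, so they are independent of $M$ and $F$. The plan is to choose a single radius $r_0$ and a single $\sigma$, both depending only on $\eta$, and to show that every point $x$ of $F$ lies in a ball $B_F(x, r_0-\sigma)$ to which the estimate applies. Then $|A|^2 \leq C'\sigma^{-2}$ at $x$. Since $|A|^2$ is the sum of the squares of the two principal curvatures, each principal curvature has absolute value at most $\sqrt{C'}/\sigma$. We will take $C = \sqrt{C'}/\sigma$.

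Concretely, take $r_0 = \eta$ and $\sigma = \eta/2$. For any $x \in F$ we must check the hypothesis that $B_F(x, r_0)$ lies in $F - \partial F$.

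\begin{itemize}
\item In our setting $F$ is (the interior of) a properly immersed surface in $\mathrm{int}(M)$, as in Theorem \ref{Thm:IsotopicToStableMinimal}. It is complete with empty boundary in $\mathrm{int}(M)$, so the condition holds automatically.
\item If one wants the statement for surfaces with boundary, the corollary should be read as applying at points at intrinsic distance at least $\eta$ from $\partial F$. In our application $\partial F = \emptyset$ in $\mathrm{int}(M)$, so nothing is lost.
\end{itemize}

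Then $x \in B_F(x, \eta/2)$, and Theorem \ref{Schoen} gives $|A(x)|^2 \leq 4C'/\eta^2$. Hence every principal curvature is bounded by $C := 2\sqrt{C'}/\eta$.

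The main point requiring care is that the constants are genuinely universal. The theorem as stated depends only on the curvature bound of the ambient ball, not on the injectivity radius of $M$ at $x$. This is essential, because the statement must hold even deep in the thin part. If one were worried that Schoen's estimate implicitly needs a lower bound on the injectivity radius, the remedy is to lift to the universal cover.

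\begin{itemize}
\item The immersion $\mathrm{int}(F) \to M$ lifts to an immersion of the universal cover $\tilde F \to \mathbb{H}^3$. Equivalently, precompose with the covering $\tilde F \to F$ and lift to $\mathbb{H}^3$.
\item The lifted surface is still minimal, since minimality is a local condition.
\item It is still stable, because stability of the cover of a stable two-sided minimal surface follows from the existence of a positive Jacobi field (Fischer-Colbrie--Schoen), and positivity lifts.
\item Its normal bundle is trivial, since $\tilde F$ is simply connected.
\item The second fundamental form is unchanged at corresponding points, because the covering is a local isometry.
\end{itemize}

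Applying the estimate in $\mathbb{H}^3$, which is homogeneous with injectivity radius infinite everywhere, then gives a bound independent of $x$, $F$ and $M$. This is the step I expect to need the most justification, specifically the transfer of stability to the cover. The rest is immediate from Theorem \ref{Schoen}.
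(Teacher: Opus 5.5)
Your proof is correct and is the same as the paper's. The paper likewise applies Theorem \ref{Schoen} with $k=1$, $r_0=\eta$ and $\sigma=\eta/2$ at an arbitrary point $x$, getting $|A|^2\le 4C'/\eta^2$. Your extra remarks go beyond what the paper spells out, and both are sound: that the hypothesis $B_F(x,\eta)\subset F-\partial F$ relies on $F$ having no boundary, as holds for $\mathrm{int}(F)$ in the application, and the optional lift to $\mathbb{H}^3$ via a positive Jacobi field.
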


\begin{proof}
Let $x$ be any point of $F$. We can set $r_0=\eta$, and $\sigma=\eta/2$. \end{proof}

We now want to examine the consequences of the above corollary. It implies that on a sufficiently small scale, the surface $F$ is `almost totally geodesic'. We make this precise in the following definition, where we compare the normal vectors of the surface at nearby points. In order to make this comparison, we use a basepoint $x$ in $M$, and we consider the exponential map $\exp_x \colon T_x M \rightarrow M$. The surface $F$ pulls back under $\exp_x^{-1}$ to a surface in $T_xM$. Since $T_xM$ is a Euclidean space, we can compare vectors based at different points of $T_xM$. In particular, if they are non-zero vectors, then there is a well-defined angle between these vectors, obtained through parallel transport of one of the vectors.

\begin{definition}\label{almost constant normals}
Let $F$ be a surface smoothly immersed in a Riemannian manifold $M$. Let $x$ be a point in $M$. Let $\eta$ and $\delta$ be positive real numbers. Then we say that $F$ has \emph{$(\eta, \delta)$-almost constant normals near $x$} if for every pair of normal vectors to the surface $\exp_x^{-1}(F) \cap B(0,\eta)$, the angle between them is at most $\delta$ or at least $\pi-\delta$. Here, $\exp_x \colon T_xM \rightarrow M$ is the exponential map, and $B(0,\eta)$ is the ball of radius $\eta$ about the origin in $T_xM$, with $T_xM$ given its canonical Euclidean metric. Since $T_xM$ is Euclidean, we may speak of the angle between two non-zero vectors, even when they are based at distinct points of $T_xM$. 
\end{definition}

The following statement is well known, but will be a key fact for us later on. So we give the result, and a brief sketch of proof.

\begin{corollary}
\label{Cor:AlmostConstantNormalComponent}
For any $\delta > 0$, there is an $\eta \in (0,\epsilon_3)$ with the following property. For any immersed stable minimal surface $F$ with trivial normal bundle in a complete hyperbolic 3-manifold $M$ and any point $x$ in $M$, the angle between any two normal vectors to any component of $\exp_x^{-1}(F) \cap B(0,\eta)$ is at most $\delta$ or at least $\pi - \delta$.
\end{corollary}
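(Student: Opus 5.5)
The plan is to pull everything back to the Euclidean space $T_xM$ through $\exp_x$ and to use Corollary \ref{Cor:UniformBoundCurvatures} to control how fast the normal direction of the pulled-back surface can turn along any path inside it.

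First, the geometry of the exponential map. Since $M$ is hyperbolic, $\exp_x$ is a local isometry from $T_xM$ with the pulled-back hyperbolic metric $g$. In geodesic polar coordinates,
\[
g = dr^2 + \sinh^2(r)\, g_{S^2},
\]
whereas the Euclidean metric is $g_0 = dr^2 + r^2 g_{S^2}$. On $B(0,\eta)$ with $\eta<1$, the two metrics agree up to a factor $1+O(\eta^2)$. The Christoffel symbols of $g$ in Euclidean coordinates are $O(r)$. Hence the $g_0$-second fundamental form of a surface differs from its $g$-second fundamental form by $O(\eta)$ times the first fundamental form, plus a relative $O(\eta^2)$ distortion.

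We only need $\exp_x$ to be a local diffeomorphism, which it is everywhere since there are no conjugate points. Injectivity is not needed, because we work on $T_xM$ with the pulled-back immersed surface $\tilde F=\exp_x^{-1}(F)$. Pulling back preserves the trivial normal bundle, stability and minimality in the metric $g$. Corollary \ref{Cor:UniformBoundCurvatures} applies to $F$ in $M$, so $\tilde F$ has $g$-principal curvatures at most $C$. Consequently its Euclidean principal curvatures are at most $C' = C + 1$ once $\eta$ is small.

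Second, we bound the turning of the normal along paths. Take a component $\Sigma$ of $\tilde F \cap B(0,\eta)$ and a Euclidean unit normal field $\nu$ on it. Along a curve $\gamma$ in $\Sigma$, the Euclidean derivative of $\nu$ is the Euclidean shape operator applied to $\gamma'$. Therefore the angle between $\nu(\gamma(0))$ and $\nu(\gamma(1))$ is at most $C'\,\mathrm{length}(\gamma)$.

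It then suffices to show that any two points $p,q$ of $\Sigma$ are joined by a path in $\Sigma$ of length at most $L\eta$, for a universal $L$. Given that, the normals differ by at most $C'L\eta$. The qualifier "at most $\delta$ or at least $\pi-\delta$" only accounts for the choice of sign of the normal. So we choose $\eta<\min(\epsilon_3, \delta/(C'L))$ small enough for the metric comparison above as well.

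Third, the intrinsic diameter bound, which I expect to be the main obstacle: a priori a component of a surface intersected with a ball could be a long winding sheet. The plan is to use the curvature bound to see that, at scale $\eta \ll 1/C'$, $\Sigma$ is a graph. Around any point $p\in\Sigma$, the surface is a graph over its tangent plane $T_p\Sigma$ on a disc of radius about $1/(2C')$ with small slope; this is the standard consequence of a bound on $|A|$. Since $\eta\ll 1/C'$, this graph patch extends across the whole ball $B(0,\eta)$, so the connected piece of $\tilde F\cap B(0,\eta)$ containing $p$ lies inside a single graph over $T_p\Sigma$.

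The intersection of such a nearly flat graph with a round ball is connected and is close to a planar disc. Points in it are therefore joined by a path of length at most about $2\eta$: project the straight segment between their images in $T_p\Sigma$ back to the graph, pushing it inside the ball if necessary using the near-flatness. This gives $L=3$, say.

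The care needed is in checking that the component containing $p$ does not leave the graph patch and come back as a different sheet. This is handled because the graph patch already contains every point of $\tilde F$ that is both within $B(0,\eta)$ and connected to $p$ through $\tilde F\cap B(0,\eta)$, by a continuity or open–closed argument along paths in $\Sigma$.
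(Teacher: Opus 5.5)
Your proposal is correct and follows essentially the same route as the paper. Both arguments pull $F$ back to $T_xM$ via $\exp_x$, which is uniformly controlled because it is modelled on $\exp_{\tilde x}$ in $\mathbb{H}^3$, convert the bound of Corollary \ref{Cor:UniformBoundCurvatures} into a Euclidean curvature bound, and use the graph lemma (Lemma 2.4 of Colding--Minicozzi) to see that each component of $\exp_x^{-1}(F)\cap B(0,\eta)$ is a single graph over a tangent plane. The only difference is the last step: the paper reads off the near-constancy of the normal directly from the gradient bound $|\nabla u| = O(\eta)$ for that graph, so your detour through an intrinsic diameter bound and integrating the shape operator along paths is valid but not needed.
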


\begin{proof}

Fix a point $\tilde x$ in $\mathbb{H}^3$. Let $p \colon \mathbb{H}^3 \rightarrow M$ be the universal covering map. We may arrange that $p(\tilde x) = x$. Then there is a commutative diagram (where $(Dp)_{\tilde x}$ denotes the derivative of $p$ at $\tilde x$):
$$\begin{CD} 
T_{\tilde x} \mathbb{H}^3 @>{\exp_{\tilde x}}>> \mathbb{H}^3\\ 
@VV{(Dp)_{\tilde x}}V @VVpV\\
T_{x} M  @>{\exp_{x}}>> M \end{CD}$$
The vertical maps are local isometries.
Hence, the exponential map at $x$ is modelled on a single map $\exp_{\tilde x}$ that is independent of $x$ and $M$, up to isometry. 
So, when restricted to $B(0,\eta)$, there is a bound on the size of the iterated covariant derivates of $\exp_{x}$ and of its inverse. This bound depends
only the order of the derivative, not on $M$, $x$ and $\eta < \epsilon_3$. Hence
Corollary \ref{Cor:UniformBoundCurvatures} implies that the surface $\tilde F = \exp_x^{-1}(F) \cap B(0,\eta) \subset T_xM$ has a uniform bound on its principal curvatures.
Then by Lemma 2.4 in \cite{CMbook}, provided $\eta$ is sufficiently small, each component of $\tilde F$ is graphical, in the following sense. For any point $y$ in $\tilde F$, the component of $\tilde F$ containing $y$ is a graph of a function $u$ over $T_y \tilde F$. Moreover, by Lemma 2.4 in \cite{CMbook} and the remarks following it, $|\nabla u|$ is at most a constant times $\eta$. Therefore, the graph of $u$ is nearly a graph of a linear function. So, by picking $\eta$ sufficiently small, we can ensure that the angle between the normal at $y$ and the normal at any point of the component of $\tilde F$ containing $y$ is at most $\delta$ or at least $\pi - \delta$.  \end{proof}

When $\eta$ is at least the injectivity radius at $x$, the conclusion of the above corollary is that each component of $F \cap B_M(x,\eta)$ has $(\eta, \delta)$-almost constant normals near $x$.

A similar but easier proof (which is omitted) gives the following result.

\begin{lemma}
\label{Cor:AlmostConstantNormalPlane}
For any $\delta > 0$, there is an $\eta \in (0,\epsilon_3)$ with the following property. For any ball $B$ in hyperbolic 3-space with radius at most $\eta$ and for any disc $D$ properly embedded in $B$ lying in a totally geodesic hyperplane, $D$ has $(\eta, \delta)$-almost constant normals near any point of $B$.
\end{lemma}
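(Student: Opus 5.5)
We prove Lemma \ref{Cor:AlmostConstantNormalPlane} by following the proof of Corollary \ref{Cor:AlmostConstantNormalComponent}, with the curvature input now trivial: a totally geodesic plane in $\mathbb{H}^3$ has all principal curvatures zero. Since $B$ lies in $\mathbb{H}^3$, there is no need to pass to the universal cover, and the exponential map at any point $x \in B$ is a global diffeomorphism $\exp_x \colon T_x\mathbb{H}^3 \to \mathbb{H}^3$. By homogeneity of $\mathbb{H}^3$, all such maps are isometric to a single model map $\exp_{\tilde x}$. Its restriction to $B(0,2\eta)$ (for $\eta<\epsilon_3$, say) and its inverse have uniform bounds on all covariant derivatives, independent of $x$ and of $B$. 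Note that any point $x \in B$ is within $2\eta$ of every point of $B$, so $\exp_x^{-1}(D) \subset B(0,2\eta)$; this is why we work on the slightly larger ball.

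Let $P$ be the totally geodesic plane containing $D$ and fix $x \in B$. The key step is to show that the normals of $\tilde P = \exp_x^{-1}(P) \cap B(0,2\eta)$ vary by at most $\delta$, up to sign. I would aim for this in one of two ways.

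\emph{Via curvature bounds.} The second fundamental form of $P$ is zero. Transporting by the uniformly controlled map $\exp_x^{-1}$ shows that $\tilde P$ has principal curvatures bounded by a universal constant $C_0$ on $B(0,2\eta)$. Then we argue as before: by Lemma 2.4 of \cite{CMbook}, for $\eta$ small $\tilde P$ is a graph over its tangent plane at any of its points with $|\nabla u| \leq c\eta$. Hence normals differ in angle by $O(\eta)$ (mod $\pi$), and this is less than $\delta$ once $\eta$ is small.

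\emph{More directly.} $\tilde P$ is connected, since $P$ is convex and $\exp_x$ maps radial lines to geodesics. Hence $\tilde P \cap B(0,2\eta)$ is a single graphical piece, and we can bound the turning of the normal along a path in $\tilde P$ of length at most $4\eta$ (say) by $C_0 \cdot 4\eta$. This gives the desired angle bound $\delta$ with $\eta = \delta/(4C_0)$, after also shrinking $\eta$ below $\epsilon_3$.

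In either case, since $D \subset P \cap B$, every pair of normals to $\exp_x^{-1}(D)$ is a pair of normals to $\tilde P$ restricted to $B(0,\eta)$, where defined. So $D$ has $(\eta,\delta)$-almost constant normals near $x$.

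The only mild obstacle is the uniformity of the bounds on the derivatives of $\exp_x^{\pm1}$ on a ball of radius comparable to $\eta$. These bounds must be independent of the ball and the point, which follows from homogeneity of $\mathbb{H}^3$ and compactness of the closed model ball. One can also check this explicitly. If $x\in P$, then $\exp_x^{-1}(P)$ is exactly a linear plane, so the normals are constant. For $x \notin P$ at distance $d \leq 2\eta$ from $P$, the hyperbolic plane $P$ is the image of a smooth family of surfaces depending continuously on $d$ and the plane's orientation, and at $d=0$ this surface is a linear plane. Continuity on this compact parameter space then yields the required $\eta$.
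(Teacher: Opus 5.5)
Your proposal is correct and follows essentially the paper's route. The paper omits this proof as a ``similar but easier'' version of the proof of Corollary~\ref{Cor:AlmostConstantNormalComponent}: homogeneity of $\mathbb{H}^3$ gives uniform control of $\exp_x^{\pm 1}$, so $\exp_x^{-1}(P)$ has uniformly bounded principal curvatures and hence nearly constant normals at small scale. That is exactly your first approach. Your second approach bounds the turning of the normal along a path of length at most $4\eta$ in the connected set $\exp_x^{-1}(P)\cap B(0,2\eta)$, which is connected because $P\cap B_{\mathbb{H}^3}(x,2\eta)$ is convex. It is a slightly more self-contained variant that avoids the Colding--Minicozzi graphicality lemma.
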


\begin{proposition}
\label{Prop:AlmostConstantNormalsMultipleComponents}
Let $F$ be a surface smoothly and properly embedded in a Riemannian manifold $M$. Let $x$ be a point in $M$. Let $\eta$ and $\delta$ be positive real numbers. 
Suppose that the angle between any two normal vectors to any component of $\exp_x^{-1}(F) \cap B(0,\eta)$ is at most $\delta$ or at least $\pi - \delta$.
Then for any $\lambda \in (0,1)$, $F \cap B_M(x, \lambda \eta)$ has 
$(\lambda \eta, \delta')$-almost constant normals near $x$ for $\delta' = 4\delta + 2\sin^{-1}(\lambda)$.
\end{proposition}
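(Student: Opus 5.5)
We need to compare normals on different components of $\exp_x^{-1}(F)\cap B(0,\lambda\eta)$; within a single component the hypothesis already gives angle at most $\delta$ or at least $\pi-\delta$, and $\delta\le\delta'$. So let $\tilde F_1$ and $\tilde F_2$ be two distinct components of $\exp_x^{-1}(F)\cap B(0,\eta)$ that meet $B(0,\lambda\eta)$. Pick points $y_1\in\tilde F_1\cap B(0,\lambda\eta)$ and $y_2\in\tilde F_2\cap B(0,\lambda\eta)$, with unit normals $n_1,n_2$. By the hypothesis, every normal to $\tilde F_i$ is within angle $\delta$ of $\pm n_i$. It therefore suffices to show that the angle between the lines spanned by $n_1$ and $n_2$ is at most $2\delta+2\sin^{-1}(\lambda)$. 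Adding the errors $\delta$ at each end, and allowing a further factor of $2$ for the passage between points within each component, gives the stated $\delta'=4\delta+2\sin^{-1}(\lambda)$.

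The key geometric step is to show that each $\tilde F_i$ is nearly a plane that crosses the whole ball $B(0,\eta)$, and that two such disjoint near-planes which both pass close to the centre must be nearly parallel. Concretely, I would argue as follows.

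\begin{enumerate}
\item Since $F$ is properly embedded, $\tilde F_i$ is properly embedded in $B(0,\eta)$, so its boundary lies on the sphere of radius $\eta$.
\item Its normals lie within $\delta$ of $\pm n_i$. Hence $\tilde F_i$ is a graph over the plane $P_i$ through $y_i$ perpendicular to $n_i$, with gradient at most $\tan\delta$, on the part of $P_i$ inside the ball. Making this precise requires a connectivity/properness argument: the orthogonal projection to $P_i$ is a local diffeomorphism and is proper onto its image, and its image is essentially the disc $P_i\cap B(0,\eta)$.
\item Consequently $\tilde F_i$ stays within a thin wedge of angle $\delta$ around $P_i$.
\item Suppose the two planes $P_1,P_2$, both passing within $\lambda\eta$ of the origin, met at an angle $\phi$ exceeding $2\sin^{-1}(\lambda)+2\delta$. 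Then their intersection line lies within distance roughly $\lambda\eta/\sin(\phi/2)<\eta$ of the centre. More directly: each plane at distance $d_i\le\lambda\eta$ from $0$ cuts the ball, and its unit-normal direction cone is constrained. Two plane discs in the ball at distances $d_1,d_2\le\lambda\eta$ from the centre, making angle $\phi$, must cross inside the ball once $\sin(\phi/2)>\lambda$, with margin to spare to absorb the $\delta$-wedges.
\item By a transversality/intersection-number argument, the graphs $\tilde F_1$ and $\tilde F_2$ would then intersect. For example, $\tilde F_1$ separates $B(0,\eta)$, and $\tilde F_2$ has points on both sides of it. This contradicts the embeddedness of $F$, since the components are disjoint.
\end{enumerate}

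The main obstacle is step (4): making the elementary Euclidean estimate clean. I would reduce it to a two-dimensional picture in the plane spanned by $n_1$ and $n_2$ through the origin. In that plane each near-plane appears as a strip of directions within $\delta$ of a line at distance at most $\lambda\eta$ from $0$. The condition for two lines at distances at most $\lambda\eta$ from the centre, at angle $\phi$, to meet inside the disc of radius $\eta$ is controlled by $\sin(\phi/2)\ge\lambda$; this yields the $2\sin^{-1}(\lambda)$ term. The remaining work is bookkeeping to ensure that the endpoints of each graph really reach the boundary sphere on opposite sides of the other surface, which again uses properness.
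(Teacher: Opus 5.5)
Your route differs from the paper's at the decisive step, and it can be completed with the same constant. As written, however, the step that carries the whole argument is not established.

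\textbf{How the paper proceeds.} The paper never argues in the interior of the ball. It takes $p_i$ to be the point of $F_i$ closest to $0$, so that $n_i$ is parallel to $\overrightarrow{0p_i}$ and the sine rule in the triangle $0p_iq_i$ applies directly. It then shows three things:
\begin{enumerate}
\item $\partial F_i$ lies in the annular strip $A_i\subset\partial B(0,\eta)$ of half-width $\delta+\sin^{-1}(\lambda)$ about the great circle perpendicular to $n_i$;
\item each component of $\partial F_i$ winds once around $A_i$;
\item two such strips at angle more than $2\delta+2\sin^{-1}(\lambda)$ meet in two quadrilaterals, which both boundary curves must cross.
\end{enumerate}

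\textbf{Where your argument is incomplete.} You instead want $\tilde F_1$ to separate the ball and $\tilde F_2$ to cross it. Your steps (3)--(5) all rest on (2): that $\tilde F_i$ is a single graph over essentially the whole disc, so that $\tilde F_2$ reaches the sphere on both sides of $\tilde F_1$. The justification you give for (2) is circular.
\begin{itemize}
\item Properness of the projection onto its image is not automatic. A sequence in $\tilde F_i$ may converge to a point of the sphere lying over an interior point of the image, and that is exactly the second-sheet situation you need to exclude.
\item Identifying the image requires knowing where $\partial\tilde F_i$ lies, and you only derive that in (3) from (2).
\end{itemize}
The trigonometry in (4), which you flag as the main obstacle, is the routine part.

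\textbf{A non-circular order.} Let $\pi_i$ denote orthogonal projection onto $P_i$.
\begin{enumerate}
\item Get injectivity of $\pi_i|_{\tilde F_i}$ from two-sidedness. $\tilde F_i$ separates $B(0,\eta)$ and carries a continuous unit normal $\nu$ with $\nu\cdot n_i>0$. A line parallel to $n_i$ therefore cannot meet $\tilde F_i$ twice, since two consecutive crossings would go in the same direction. This is the parity argument at the end of the proof of Proposition~\ref{NormalDisk}. So $\tilde F_i$ is the graph of some $h_i$ with $|\nabla h_i|\le\tan\delta$ over an open set $U_i$.
\item Integrate along each ray from $\pi_i(y_i)$ up to its first exit from $U_i$. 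The limit point $q$ lies on $\partial B(0,\eta)$ and satisfies $|(q-y_i)\cdot n_i|\le\tan\delta\,|\pi_i(q)-\pi_i(y_i)|$. Together with $|q|=\eta$ and $|y_i|\le\lambda\eta$, this gives $|q\cdot n_i|\le\eta\sin(\delta+\sin^{-1}(\lambda))$.
\item Consequently $U_i$ contains the disc of radius $\eta\cos(\delta+\sin^{-1}(\lambda))$ about $\pi_i(0)$, and every limit point of $\tilde F_i$ on the sphere lies in the strip $A_i$.
\item Exit along the rays from $\pi_2(0)$ in the directions $\pm(n_1-(n_1\cdot n_2)n_2)$. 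Once the angle between $n_1$ and $n_2$ exceeds $2\delta+2\sin^{-1}(\lambda)$, this gives boundary points of $\tilde F_2$ in both caps $\{\pm q\cdot n_1>\eta\sin(\delta+\sin^{-1}(\lambda))\}$.
\item These caps lie on opposite sides of $\tilde F_1$. The line through $y_1$ parallel to $n_1$ joins them and meets $\tilde F_1$ only at $y_1$. Connectedness of $\tilde F_2$ then forces $\tilde F_1\cap\tilde F_2\neq\emptyset$, a contradiction.
\end{enumerate}
This ordering also replaces the paper's winding-number claim.

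\textbf{A smaller inaccuracy in (4).} Your planar reduction is not accurate as stated. $y_i$ need not lie in $\mathrm{span}(n_1,n_2)$, so the slice of $\tilde F_i$ by that plane need not come within $\lambda\eta$ of $0$. A plane tilted by $\delta$ can produce a slice at distance close to $\lambda\eta\sec\delta$. You can fix this either by choosing $y_i$ closest to $0$, as the paper does, or by using the three-dimensional estimate in step 2 above.
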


\begin{proof}
The definition of almost constant normals near $x$ is phrased in terms of the exponential map $\exp_x$. Specifically, we are considering the normals to the surface $\tilde F = \exp_x^{-1}(F)$ in the ball $B(0,\eta)$ in the Euclidean space $T_xM$. Note that $\tilde F$ is smoothly embedded in $B(0,\eta) \cap T_xM$. So, it suffices to consider surfaces in this Euclidean space.

We may suppose that $\delta' < \pi/2$, as otherwise there is nothing to prove. Hence, $\delta < \pi/2$.
Let $F_1$ and $F_2$ be components of $\tilde F \cap B(0, \eta)$ that intersect $B(0, \lambda \eta)$. Let $p_1$ and $p_2$ be points in $F_1$ and $F_2$ that are closest to the origin in the Euclidean metric. We will show that the normal vectors $n_1$ and $n_2$ to $\tilde F$ at $p_1$ and $p_2$ have angle at most $\delta'' = 2 \delta + 2\sin^{-1}(\lambda)$ or at least $\pi - \delta''$. Because $F_1$ and $F_2$ have $(\eta,\delta)$-almost constant normals, this will imply that the angle between any two normal vectors for $F_1$ and $F_2$ is at most $\delta' = 2\delta + \delta''$ or at least $\pi - \delta'$, as required. Observe that, since $\delta' < \pi/2$, we have that $\pi/2 - \delta''/2 > \delta$.

Note that because $p_i$ is a closest point on $F_i$ to $0$, the normal vector $n_i$ is parallel to the vector $\overrightarrow{0p_i}$, unless $p_i$ is the origin. 

We claim that $\partial F_i$ is non-empty. If not, then $F_i$ is a closed surface embedded in the interior of $B(0,\eta)$. But, then by considering a supporting hyperplane with a given normal vector, we see that it has two unit normals that are perpendicular to each other. This contradicts the assumption that the angle between any two normals to $F_i$ is at most $\delta < \pi/2$ or at least $\pi - \delta$.

Let $q_i$ be any point on $\partial F_i = F_i \cap \partial B(0,\eta)$. We claim that the angle between the vector $\overrightarrow{p_iq_i}$ and $n_i$ lies between $\pi/2 - \delta$ and $\pi/2 + \delta$. In the case where $p_i$ is not the origin, let $P$ be the plane through $0$, $p_i$ and $q_i$. When $p_i$ is the origin, let $P$
be the plane containing $n_i$ and $\overrightarrow{0q_i}$. In both cases, $P$ contains $n_i$.
Assign orthogonal $(x,y)$ co-ordinates to this plane so that $n_i$ points in the $y$ direction. (See Figure \ref{Fig:NearlyConstantNormals}.)
Since $F_i$ has $(\eta, \delta)$-nearly constant normals near $0$ and $\delta < \pi/2$, no normal vector for $F_i$ is perpendicular to $n_i$. Hence, $F_i \cap P$ is the graph of a smooth function $f$ defined on an interval in $(-\eta, \eta)$.  Applying the Mean Value Theorem to this function, there is an $x$ in the domain of $f$ such that the tangent line to this graph at $(x,f(x))$ has the same slope as the line joining $p_i$ and $q_i$. This tangent line lies in a tangent plane to $F_i$, and hence as $F_i$ has $(\eta, \delta)$-nearly constant normals, the angle between this tangent line and $n_i$ lies between $\pi/2 - \delta$ and $\pi/2 + \delta$. This proves the claim.

\begin{figure}
  \includegraphics[width=0.48\textwidth]{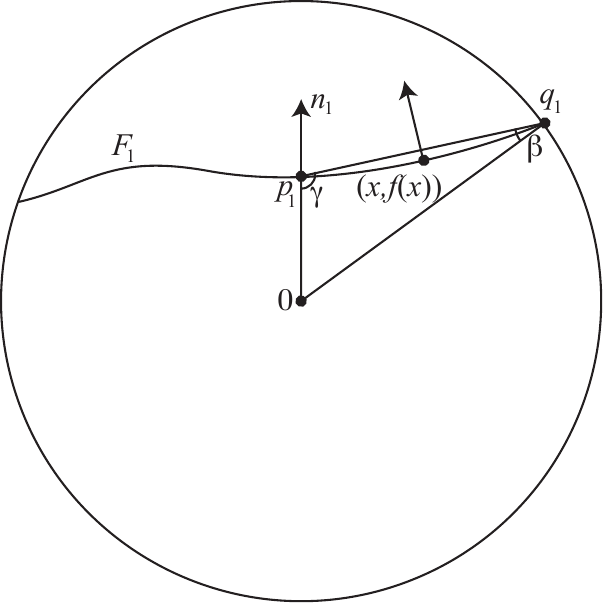}
  \caption{Shown is the intersection between $B(0, \eta)$ and the plane $P$ through $0$, $p_1$ and $q_1$.}
  \label{Fig:NearlyConstantNormals}
\end{figure}

Let $A_i$ (for $i=1,2$) be the annular strip in $\partial B(0,\eta)$ consisting of the points $q$ such that the angle between $n_i$ (translated to the origin) and the vector $\overrightarrow{0q}$ is between $\pi/2 - \delta''/2$ and $\pi/2 + \delta''/2$. 
We claim that $\partial F_i = F_i \cap \partial B(0,\eta)$ lies in $A_i$.  When $p_i =0$, this immediately follows from the previous claim. So we may assume that $p_i \not= 0$.
Recall that $q_i$ is a point on $\partial F_i$. View $0p_iq_i$ as a triangle in the plane $P$. Let $\beta$ and $\gamma$ be the internal angles at the vertices $q_i$ and $p_i$ of the triangle. Applying the sine rule, we have
$$|\sin \beta| = \frac{|\overrightarrow{0p_i}|}{|\overrightarrow{0q_i}|} |\sin \gamma| \leq  \frac{|\overrightarrow{0p_i}|}{|\overrightarrow{0q_i}|} \leq \lambda,$$
since $p_i \in B(0, \lambda \eta)$ and $q_i \in \partial B(0,\eta)$. 
So, $\beta \leq \sin^{-1}(\lambda)$. The angle $p_i0q_i$ equals $\pi - (\gamma + \beta)$. By the previous claim, $\pi - \gamma$ lies between $\pi/2 - \delta$ and $\pi/2 + \delta$. Hence, the angle $p_i0q_i$ lies between $\pi/2 - \delta - \sin^{-1}(\lambda)$ and $\pi/2 + \delta + \sin^{-1}(\lambda)$. The angle $p_i 0 q_i$ is the angle between $n_i$ and $\overrightarrow{0q_i}$. Thus, $\partial F_i$  does lie in $A_i$, as claimed.

We claim that each component of $\partial F_i$ has winding number one around the annulus $A_i$. Since $F_i$ is embedded in $M$ and $\partial F_i$ lies in $A_i$, the winding number is at most 1. We rule out the winding number being 0 as follows. The annulus $A_i$ is foliated by intervals, each of which is an arc of intersection with a plane through the origin containing $n_i$. If some component of $\partial F_i$ did not have winding number one around $A_i$, then, since $\partial F_i$ lies in $A_i$, $\partial F_i$ would be tangent to one of these intervals at some point $q$. Hence, the normal to $F_i$ at $q$ would lie in the plane perpendicular to the tangent at $q$ to this interval. This would imply that the angle between this normal and $n_i$ lies between $\pi/2 - \delta''/2$ and $\pi/2 + \delta''/2$. This contradicts the assumption that this angle is at most $\delta$ or at least $\pi - \delta$, since, as observed in the beginning of the proof, $\pi/2 - \delta''/2 > \delta$.

\begin{figure}
  \includegraphics[width=0.45\textwidth]{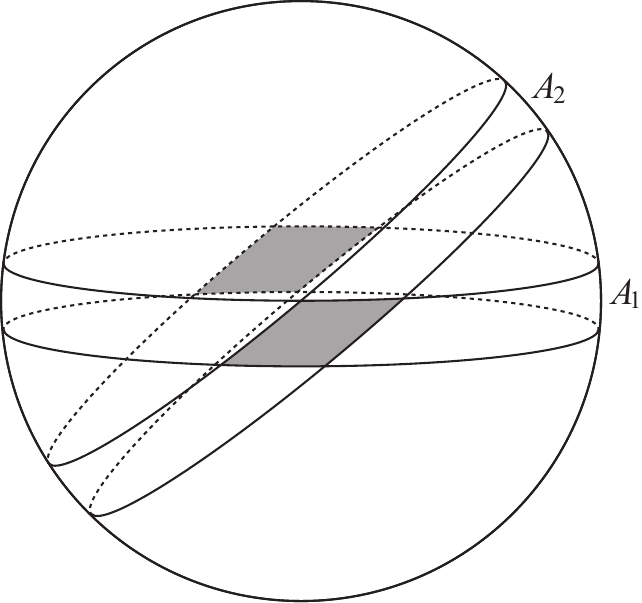}
  \caption{The annuli $A_1$ and $A_2$ in $\partial B(0,\eta)$. If the angle between $n_1$ and $n_2$ were greater than $\delta''$, then $A_1 \cap A_2$  consists of two discs.}
  \label{Fig:TwoAnnularStrips}
\end{figure}

 If the angle between $n_1$ and $n_2$ were greater than $\delta''$ or less than $\pi - \delta''$, 
 then $A_1 \cap A_2$ would consist of two discs. (See Figure \ref{Fig:TwoAnnularStrips}.)
 For each such disc $D$, $\partial D$ would consist of a concatenation of four arcs, one lying in a component of $\partial A_1$, the next lying in a component of $\partial A_2$, the next lying in the other component of $\partial A_1$, and the next lying in the other component of $\partial A_2$. Since $\partial F_1$ has winding number one around $A_1$, and $\partial F_2$ has winding number one around $A_2$, we deduce that $\partial F_1$ and $\partial F_2$ must intersect in $D$. This contradicts the fact that $F_1$ and $F_2$ are distinct components of the embedded surface $\tilde F$.

Thus, we deduce that the angle between $n_1$ and $n_2$ is at most $\delta''$ or at least $\pi - \delta''$, as required. \end{proof}

\begin{corollary}
\label{Cor:AlmostConstantNormalsSimplified}
For any $0 <\delta < \pi/2$, there is an $\eta >0$ with the following property. For any properly embedded stable minimal surface $F$ with trivial normal bundle in a complete hyperbolic 3-manifold $M$ and any point $x$ in $M$, $F$ has $(2\delta \eta/\pi, 6\delta)$-almost constant normals near $x$.
\end{corollary}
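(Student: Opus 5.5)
The plan is to combine Corollary \ref{Cor:AlmostConstantNormalComponent} with Proposition \ref{Prop:AlmostConstantNormalsMultipleComponents}, choosing the parameter $\lambda$ so that the $\sin^{-1}(\lambda)$ term is controlled by $\delta$.

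Given $0<\delta<\pi/2$, first apply Corollary \ref{Cor:AlmostConstantNormalComponent} with this $\delta$ to obtain $\eta\in(0,\epsilon_3)$. Then, for any properly embedded stable minimal surface $F$ with trivial normal bundle in a complete hyperbolic $M$ and any $x\in M$, the angle between any two normals to any single component of $\exp_x^{-1}(F)\cap B(0,\eta)$ is at most $\delta$ or at least $\pi-\delta$. This is exactly the hypothesis of Proposition \ref{Prop:AlmostConstantNormalsMultipleComponents}, with the same $\eta$ and $\delta$.

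Next, set $\lambda = 2\delta/\pi$. Since $0<\delta<\pi/2$, we have $\lambda\in(0,1)$. The proposition then says that $F\cap B_M(x,\lambda\eta)$ has $(\lambda\eta,\delta')$-almost constant normals near $x$, with $\lambda\eta = 2\delta\eta/\pi$ and $\delta' = 4\delta + 2\sin^{-1}(2\delta/\pi)$.

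It remains to show $\delta'\le 6\delta$, that is, $\sin^{-1}(2\delta/\pi)\le\delta$, equivalently $2\delta/\pi\le\sin\delta$ for $\delta\in[0,\pi/2]$. This is Jordan's inequality, which follows from concavity of $\sin$ on $[0,\pi/2]$: the chord from $(0,0)$ to $(\pi/2,1)$ lies below the graph. Because almost constant normals with a smaller angle bound imply them with a larger bound, we obtain $(2\delta\eta/\pi,6\delta)$-almost constant normals.

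Two bookkeeping points need checking, and I expect them to be the only real obstacle.
\begin{enumerate}
\item The definition of almost constant normals near $x$ only concerns $\exp_x^{-1}(F)\cap B(0,2\delta\eta/\pi)$, which the proposition controls via $F\cap B_M(x,\lambda\eta)$. So the two statements match.
\item The proposition is phrased in the Euclidean space $T_xM$ via $\exp_x$, and the surface $\exp_x^{-1}(F)$ must be embedded there even when the injectivity radius at $x$ is small. If $\exp_x$ is not injective, the preimage is still a properly embedded surface in $B(0,\eta)$, because it is the pullback of an embedded surface under a local diffeomorphism. The disjointness-of-components argument in the proposition's proof therefore still applies.
\end{enumerate}
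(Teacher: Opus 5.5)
Your proposal is correct and is essentially the paper's proof. The paper combines the same two results but takes $\lambda = \sin\delta$, so that $\delta' = 6\delta$ exactly, and then uses $\sin\delta \geq 2\delta/\pi$ to shrink the radius from $\eta\sin\delta$ to $2\delta\eta/\pi$; you instead fix the radius with $\lambda = 2\delta/\pi$ and use the same inequality to bound the angle, which is a purely cosmetic difference since the almost-constant-normals condition is monotone in both parameters.
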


\begin{proof} By Corollary \ref{Cor:AlmostConstantNormalComponent}, the angle between any two normal vectors to any component of $\exp_x^{-1}(F) \cap B(0,\eta)$ is at most $\delta$ or at least $\pi - \delta$. Set $\lambda = \sin \delta$. Then by Proposition \ref{Prop:AlmostConstantNormalsMultipleComponents}, $F$ has $(\lambda \eta, \delta')$-almost constant normals near $x$ for $\delta' = 4\delta + 2\sin^{-1}(\lambda)$. Hence, it has $(2\delta \eta/\pi, 6\delta)$-almost constant normals near $x$, since $\lambda = \sin \delta \geq (2/\pi)\delta$. (The last inequality holds since  $0 <\delta < \pi/2$.) \end{proof}

\section{Triangulations transverse to a surface}
\label{Sec:TriangulationTransverseToSurface}

Recall that Theorem \ref{Thm:ExtensionBreslin} provides a triangulation $T$ of a subset of $M$ 
isotopic to $M^{\textrm{fat}}_{\mu/2}$ and lying in $M_{[\mu/4,\infty)}$. This triangulation is  $(\theta, a\mu,b\mu)$-thick.
In the next section, we will modify $T$ so that it becomes `transverse' to our given minimal surface $F$. But we first need to make the notion of transversality precise since we are working with surfaces that have geometric properties, and will need more than just topological transversality for them. We will do this by considering each tetrahedron $\Delta$ of the triangulation and by considering some point $x$ in $\Delta$. We will consider various objects near $x$, such as the surface $F$ or edges and faces of $\Delta$. We will want to consider the angles between various vectors associated with these objects, such as a tangent vector to $F$ at some point of $F$ or a tangent vector to some edge of $\Delta$ at a point of the edge. These vectors will typically not be based at the same point of $M$. In order to compare them, we will use the exponential map $\exp_x \colon T_xM \rightarrow M$. We will pull back the vectors $v_1$ and $v_2$ that we are considering to vectors based at points of $T_xM$, by using $\exp_x^{-1}$. Since $T_xM$ is a Euclidean space, we can then unambiguously refer to the angle in 3-space between these two vectors, despite the fact that they are based at different points of $T_xM$. Some care must be taken, though, because the exponential map $\exp_x$ is not an isometry. Hence, it need not send a unit normal to $\exp_x^{-1}(F)$ at some point to a unit normal to $F$.

We introduce the following notation. When a point $x$ in $M$ has been fixed and $\eta$ is less than the injectivity radius of $M$ at $x$, then for any subset $A$ of $M$, we let $\tilde A$ denote $\exp_x^{-1}(A) \cap B(0, \eta)$. Note that $\exp_x$ induces a diffeomorphism from $B(0,\eta)$ to $B_M(x, \eta)$, and this sends $\tilde A$ onto $A \cap B_M(x,\eta)$.

\begin{definition}
Let $\Delta$ be a geodesic tetrahedron in a hyperbolic 3-manifold $M$. Let $x$ be a point in $\Delta$. Let $\eta> 0$ be less than the injectivity radius of $M$ at $x$ and let $0 < \delta < \pi/2$. We say that a unit vector $u$ in $T_xM$ is \emph{$(\eta,\delta)$-nonperpendicular to the edges of $\Delta$} if, for each point $y$ in any edge $e$ of $\Delta$ with distance less than $\eta$ from $x$, the angle between $\tilde e$ at $\tilde y$ and $u$ is less than $\pi/2 - \delta$ or more than $\pi/2+\delta$.

\end{definition}

\begin{definition}\label{SufficientlyTransverse}
Let $F$ be a surface smoothly embedded in $M$, and let $\Delta$ be a geodesic tetrahedron embedded in $M$. We say that $F$ is \emph{sufficiently transverse} to $\Delta$ at scale $\eta >0$ if it is disjoint from the vertices of $\Delta$ and either $F \cap \Delta = \emptyset$ or there is some $x$ in $\Delta$ and some 
$\delta \in (0,\pi/6)$ such that
\begin{enumerate}
\item $\Delta$ lies in the ball of radius $\eta/4$ about $x$;
\item $F$ has $(\eta, \delta/5)$-almost constant normals near $x$ as in Definition \ref{almost constant normals}; 
\item a unit normal vector $v_{\tilde F}$ at some point of $\tilde F \cap \tilde \Delta$ is $(\eta, 2\delta)$-nonperpendicular to the edges of $\Delta$; 
\item $\eta$ is sufficiently small so that the angle between the unit normals at any two points of $\tilde R$, for any face $R$ of $\Delta$, is at most $\delta$;
\item $\eta$ is sufficiently small so that the angle between the unit tangent vectors at any two points of $\tilde e$, for any edge $e$ of $\Delta$, is at most $\delta$. 
\end{enumerate}

\end{definition}

When a surface is sufficiently transverse to each tetrahedron in a triangulation, we have the following important consequence.

\begin{proposition}\label{NormalDisk}
Let $\Delta$ be a geodesic tetrahedron in $M$. Suppose that  $F$ is a 2-sided surface properly embedded in $M$ that is sufficiently transverse to $\Delta$ at scale $\eta$. Then $F$ intersects $\Delta$ in normal discs. Moreover, for any point $p$ in $F$, the ball of radius $\eta/2$ around $p$ in $F$ (with its Riemannian metric) intersects at most one of these normal discs. 
\end{proposition}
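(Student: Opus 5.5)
We work throughout in the Euclidean ball $B(0,\eta)\subset T_xM$, where $x\in\Delta$ and $\delta$ are as in Definition \ref{SufficientlyTransverse}. Since $\Delta$ lies in $B_M(x,\eta/4)$, its preimage $\tilde\Delta$ is a topological tetrahedron whose edges and faces are smooth arcs and discs in $B(0,\eta/4)$. Let $v=v_{\tilde F}$ be the normal vector provided by condition (3). By condition (2), every normal to every point of $\tilde F$ makes angle at most $\delta/5$ with $\pm v$.

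The first step is to establish the edge behaviour. Condition (3) says that $v$ makes angle less than $\pi/2-2\delta$ or more than $\pi/2+2\delta$ with each edge tangent. Together with (2), any normal to $\tilde F$ therefore makes angle bounded away from $\pi/2$, with margin at least $2\delta-\delta/5>\delta$, with each edge tangent. Consider the height function $h(p)=\langle p, v\rangle$ restricted to an edge $\tilde e$. By (5) the edge tangent varies by at most $\delta$, so the sign of $\langle \dot e, v\rangle$ is constant and $h$ is strictly monotone along $\tilde e$. To show that $\tilde e$ meets each component of $\tilde F$ at most once, we use the graphical picture: each component $\tilde F_j$ of $\tilde F\cap B(0,\eta)$ is a graph over the plane $v^\perp$. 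This holds because its normals are within $\delta/5$ of $\pm v$, and the proof of Proposition \ref{Prop:AlmostConstantNormalsMultipleComponents} shows that $\partial \tilde F_j$ winds once around an equatorial annulus. Hence $\tilde F_j$ separates $B(0,\eta)$ into an ``above'' side and a ``below'' side. Since an edge is almost straight and transverse to $\tilde F_j$, with its tangent bounded away from $v^\perp$ by more than the variation of the normals of $\tilde F_j$, it crosses $\tilde F_j$ at most once. For the same reason no edge is tangent to $F$, and $F$ misses the vertices by hypothesis.

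The second step is to show that each component of $F\cap\Delta$ is a disc. Fix a component $\tilde F_j$, viewed as a graph of a function $g$ over a domain in $v^\perp$. Using (4) and (5), $\tilde\Delta$ is close to a genuine Euclidean tetrahedron with nearly flat faces. The intersection $\tilde F_j\cap\tilde\Delta$ projects to the set of points where the graph lies between the lower and upper boundaries of $\tilde\Delta$ over $v^\perp$. I would argue that this set is a topological disc, because it is a monotone slice: the function $h$ is monotone on edges, and on faces its critical points would force a face normal parallel to $v$. Condition (4) limits how much a face normal can turn, so this rules out critical points of the face other than possibly along the boundary. Hence $\tilde F_j\cap\tilde R$ consists of arcs for each face $\tilde R$, and these arcs have endpoints on distinct edges. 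An arc returning to the same edge would give two intersections of that edge with $\tilde F_j$, contradicting the first step. An Euler characteristic or innermost-curve argument, using the fact that $\tilde F_j$ is a graphical disc and that there are no closed curves of intersection with faces (again by monotonicity), shows that $\tilde F_j\cap\tilde\Delta$ is a disc. Its boundary meets each edge at most once, so it is a triangle or a square: a normal disc.

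The last assertion is proved as follows. Distinct normal discs lie in distinct components $\tilde F_j$ by the first step, or in the same component but separated within $\Delta$. Since each component is graphical, a single $\tilde F_j$ meets $\tilde\Delta$ in one slice, so distinct discs lie in distinct components of $F\cap B_M(x,\eta)$. For $p\in F$, the intrinsic ball $B_F(p,\eta/2)$ is connected. If it meets $\Delta\subset B_M(x,\eta/4)$, it lies within distance $3\eta/4<\eta$ of $x$, hence inside a single component of $F\cap B_M(x,\eta)$. It therefore meets at most one normal disc.

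The main obstacle is the disc and normality step. It requires converting the angular hypotheses (2)--(5) into the topological statement that $h$ has no interior critical points on faces and that $\tilde F_j\cap\tilde\Delta$ has no extra components. I would first try to compare $\tilde\Delta$ with the straight Euclidean tetrahedron on the same vertices, using (4) and (5).
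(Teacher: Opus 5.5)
Your strategy works in outline, and it is organized differently from the paper's proof. The paper argues by cases inside $\Delta$:
\begin{itemize}
\item a component $D$ meeting an edge twice is excluded by a bigon in $\Delta$ and the intermediate value theorem;
\item a closed curve of $F\cap R$ is excluded by the winding of its tangent direction around an annular strip;
\item a non-disc component is excluded by showing that the component of $\tilde F$ in a cylinder of radius $\eta/2$ projects homeomorphically onto a disc.
\end{itemize}
Only the final claim uses the separation-plus-continuous-normal argument. You put that argument first: every crossing of $\tilde e$ with a component $\tilde F_j$ of $\tilde F\cap B(0,\eta)$ has the same sign, and $\tilde F_j$ separates the ball, so there is at most one crossing. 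This disposes of Case 1 and the final assertion at once, which is a genuine simplification.

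However, several steps as written do not go through.
\begin{itemize}
\item \emph{Graph property.} The winding statement from the proof of Proposition~\ref{Prop:AlmostConstantNormalsMultipleComponents} does not by itself make $\tilde F_j$ a graph. You do not need it. Separation holds simply because $\tilde F_j$ is properly embedded in the simply connected ball. Your sign argument, applied to lines parallel to $v$, then shows the projection $\mathcal{P}$ to $v^\perp$ is injective on $\tilde F_j$. This gives planarity of $\tilde F_j\cap\tilde\Delta$ without knowing that $\tilde F_j$ is a disc.
\item \emph{No closed curves in faces.} This is the more serious problem. The inference ``$h$ has no critical points on faces, hence $\tilde F_j\cap\tilde R$ has no closed curves'' is a non sequitur, because $\tilde F_j\cap\tilde R$ is not a level set of $h$. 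The qualitative fact that no face normal equals $\pm v$ is too weak; you need a quantitative version. Suppose $C$ is such a closed curve, bounding a disc $E\subset\tilde R$. Then $h-g\circ\mathcal{P}$ vanishes on $\partial E$, so it has an interior critical point. There $\tilde R$ is tangent to a translate of the graph in the direction of $v$, so the face normal is within $\delta/5$ of $\pm v$. By (4) it is within $6\delta/5$ of $\pm v$ at some point of an edge of that face. The edge tangent there is then within $6\delta/5$ of perpendicular to $v$, contradicting (3). The same estimate gives transversality of $F$ to the faces, which you need but never state.
\item \emph{Single normal disc.} Your ``Euler characteristic or innermost curve'' step needs the combinatorial fact that any two disjoint normal curves in $\partial\Delta$ share an edge. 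Combined with each edge meeting $\tilde F_j$ at most once, this makes $\partial(\tilde F_j\cap\tilde\Delta)$ a single normal curve, so $\tilde F_j\cap\tilde\Delta$ is a single triangle or square. It is this fact, not graphicality, that justifies ``one slice'' in your last paragraph.
\item \emph{Final claim.} $B_F(p,\eta/2)$ need not lie within $3\eta/4$ of $x$; you only get $5\eta/4$. Instead, join $p$ to points of two normal discs by paths of length less than $\eta/2$. These paths do stay in $B_M(x,3\eta/4)$, which is all you need.
\end{itemize}
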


\begin{proof} 
Fix the point $x \in \Delta$ and let the unit normal vector $v_{\tilde F}$ be as in Definition \ref{SufficientlyTransverse}. 

As assumed in Definition \ref{SufficientlyTransverse}, $F$ misses the vertices of $\Delta$. It also intersects the edges of $\Delta$ transversely, since otherwise there would be some point $y$ in the intersection between $F$ and some edge $e$ for which a unit tangent vector $v$ to $e$ at $y$ lies in the tangent plane of $F$ to $y$.  Then the unit tangent vector to $\tilde e$ at $\tilde y$ lies in the tangent plane of the surface $\tilde F$. But this implies that the angle between this vector and the normal to $\tilde F$ at $\tilde y$ is $\pi/2$. Since $F$ has $(\eta, \delta/5)$-almost constant normals near $x$, the angle between this normal to $\tilde F$ and the unit normal $v_{\tilde F}$ to $\tilde F$ is at most $\delta/5$ or at least $\pi - \delta/5$. Hence, the angle between $v_{\tilde F}$ and the unit tangent vector to $\tilde e$ is at least $\pi/2 - \delta/5$ and at most $\pi/2 + \delta/5$. This contradicts the assumption that $v_{\tilde F}$ is $(\eta, 2\delta)$-nonperpendicular to the edges of $\Delta$.

A similar argument gives that $F$ intersects each face transversely. For otherwise there is some point $y$ in the intersection between $F$ and some face $R$ of $\Delta$ such that the unit normals for $F$ and $R$ coincide at that point. Pick any edge $e$ of $\Delta$ contained in that face and any point $y'$ on $e$. Then the unit normals of $\tilde R$ at the two points $\tilde y$ and $\tilde y'$ differ by at most $\delta$, by Definition \ref{SufficientlyTransverse} (4). Hence, the angle between the unit tangent vector to $e$ at $\tilde y'$ and the unit normal to $\tilde F$ at $\tilde y$ differ by at least $\pi/2-\delta$ and at most $\pi/2 + \delta$. Again, this contradicts the assumption that $v_{\tilde F}$ is $(\eta, 2\delta)$-nonperpendicular to the edges of $\Delta$.

Hence, $F \cap \Delta$ is a properly embedded surface in $\Delta$, with boundary transverse to $\partial \Delta$.

Suppose there is a component $D$ of $F \cap \Delta$ that is not a normal disc. Then at least one of the following cases must hold:
\begin{enumerate}
\item $D$ intersects an edge of $\Delta$ more than once; 
\item some component of intersection between $D$ and a face of $\Delta$ is a closed curve;
\item $D$ is not a topological disc.
\end{enumerate}

\textbf{Case 1}.  Suppose $D$ intersects an edge $e$ of $\Delta$ more than once. 
Then there is a disc $C$ embedded in $\Delta$ such that $\partial C$ is a concatenation of an arc in $e$ and
an arc $\alpha$ in $F$, and with the remainder of some collar neighbourhood of $\partial C$ in
$C$ disjoint from $\partial \Delta$ and $F$. Let $y_1$ and $y_2$ be the
two points of $F$ at the endpoint of the arc $e \cap C$. Pick a continuous choice of unit normals over $\tilde F$ i.e. all normals lie on the same side of the two-sided surface. Since $F$ is
sufficiently transverse to $\Delta$, the unit normal $v_{\tilde F}$ to $\tilde F$ 
has angle at most $\pi/2 - 2\delta$ or at least $\pi/2 + 2 \delta$ from
the tangent vectors of $\tilde e$ at $\tilde y_1$ and $\tilde y_2$. As $F$ has $(\eta, \delta/5)$-almost constant normals, the
normal vector to $\tilde F$ at $\tilde y_1$ has angle at most $\pi/2 - \delta$ 
or at least $\pi/2 + \delta$ with the tangent vector of $\tilde e$ at $\tilde y_1$. 
Say that this angle at most $\pi/2 - \delta$. The same is true at $\tilde y_2$, but because of the path $\alpha$,
the unit normals to $\tilde F$ at $\tilde y_1$ and $\tilde y_2$  either both point away from each
other or both point towards each other. Hence, the angle between the tangent vector of $\tilde e$ at
$\tilde y_2$ and the unit normal of $\tilde F$ at $\tilde y_2$ is at least $\pi/2 +\delta$.
So, the angle between $v_{\tilde F}$ and the unit tangent vector of $\tilde e$ at $\tilde y_1$ is at most
$\tilde y_1$ is at most $\pi/2$. And the angle between $v_{\tilde F}$ and the unit tangent vector of $\tilde e$ at $\tilde y_2$ is at most
$\tilde y_2$ is at least $\pi/2$. As we travel along $\tilde e$ from $\tilde y_1$ to $\tilde y_2$,
the tangent vectors to $\tilde e$ vary continuously. So, there is some point $\tilde y$ on $\tilde e$
where the tangent vector to $\tilde e$ has angle exactly $\pi/2$ with $v_{\tilde F}$. This contradicts the assumption
that $v_{\tilde F}$ is $(\eta, 2\delta)$-transverse to $\Delta$.

\textbf{Case 2}. Suppose that for some face $R$ of $\Delta$, there is a simple closed curve of $R \cap F$.

Let $C$ be a simple closed curve of $R \cap F$. This corresponds to a curve $\tilde C$ of $\tilde R \cap \tilde F$. While $\tilde R$ is close to being Euclidean, it is not strictly Euclidean. At each point of $\tilde C$, its unit tangent lies in a tangent plane to $\tilde R$. Translate each of
these planes so that it goes through the origin in $T_xM$. Each such plane intersects the unit sphere in $T_xM$
in a great circle. As one goes around $\tilde C$, these great circles may vary, but only by an angle at most $\delta$.
Hence, the unit tangent vectors of $\tilde C$ lie in an annular strip $A$ in this unit sphere of width at most $\delta$,
and as one goes around $\tilde C$, the tangent vector to $\tilde C$ traces out a closed path that goes once
around this annular strip. (See Figure \ref{Fig:SimpleCurveFace}.)

Pick an $e$ edge of $R$ and some point $y$ on $e$. This corresponds to an edge $\tilde e$ of $\tilde R$ and a point $\tilde y$ in $T_xM$.
Translate the tangent plane of $\tilde R$ at $\tilde e$ so that it goes through the origin in $T_xM$. Then the unit tangent to $\tilde e$
becomes a point $v$ on the unit sphere. It lies inside the annular strip $A$. There must be a tangent vector to $\tilde C$
within angle $\delta$ from $v$. So the normal to $\tilde F$ at this point has angle more than $\pi/2 - \delta$ and less
than $\pi/2 + \delta$ from $v$. So, the normal $v_{\tilde F}$ has angle more than $\pi/2 - 2 \delta$ and less than $\pi/2 + 2 \delta$ from $v$.
This contradicts the fact that $v_{\tilde F}$ is $(\eta, 2\delta)$-transverse to $\Delta$.

\begin{figure}
  \includegraphics[width=0.95\textwidth]{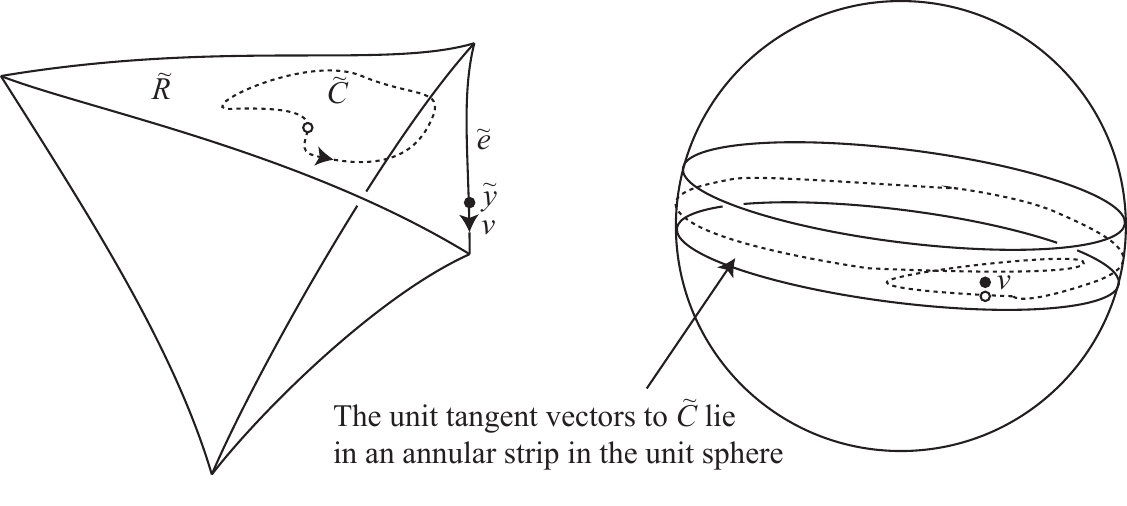}
  \caption{A simple closed curve $C$ of $R \cap F$ corresponds to a simple closed curve $\tilde C$ of $\tilde R \cap \tilde F$. Some unit tangent vector of $\tilde C$ is almost parallel to a unit tangent vector at some point $\tilde y$ on some edge of $\tilde R$.}
  \label{Fig:SimpleCurveFace}
\end{figure}

\textbf{Case 3.} Suppose that $D$ is not a disc, but that Cases 1 and 2 do not hold. 

Since $F$ has $(\eta, \delta/5)$-almost constant normals near $x$ and $\delta < \pi/6$, no unit tangent vector to $\tilde{F}$ equals the normal $v_{\tilde F}$. Consider the projection map $\mathcal{P}$ from $T_xM$ onto the plane through the origin perpendicular to $v_{\tilde F}$. Then $\mathcal{P}$ restricted to $\tilde{F} \cap B(0,\eta)$ has no critical points. It is therefore an immersion. Let $N$ be a cylinder in $T_xM$ centred at the origin (i.e. the origin is the midpoint of its height), with the two end discs perpendicular to $v_{\tilde F}$. Pick $N$ so that the two end discs have radius $\eta/2$ and so that its height is  $\eta$. Thus, $N$ lies in the ball of radius $\eta$ about the origin. Moreover, since $\Delta$ lies in the ball of radius $\eta/5$ about $x$, a homeomorphic copy $\tilde \Delta$ lies in $N$. See Figure \ref{Fig:TetInCylinder}. Let $\tilde S$ be the component of $\tilde F \cap N$ containing $\tilde D$. 

\begin{figure}
  \includegraphics[width=0.4\textwidth]{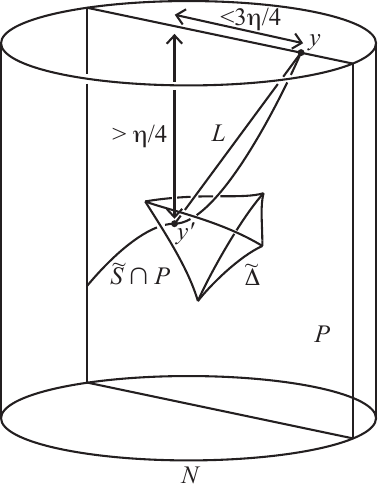}
  \caption{The cylinder $N$ in $T_xM$ containing the tetrahedron $\tilde \Delta$.}
  \label{Fig:TetInCylinder}
\end{figure}

\textbf{Claim 1.} The boundary of $\tilde S$ lies in the curved part of $\partial N$. 

If Claim 1 does not hold, the boundary of $\tilde S$ intersects one of the two end discs. Pick a point $y$ in the intersection between $\tilde S$ and the end disc, and pick a point $y'$ in $\tilde D$. Join these by a Euclidean straight line $L$. Since $y'\in\tilde{D}\subset\tilde{\Delta}$, the point $y'$ has distance at most $\eta/4$ from the origin. Hence, the distance from $y'$ to the end disc containing $y$ is at least $\eta/4$. The distance in the plane of the end disc, which is of radius $\eta/2$, 
between $y$ and the vertical projection of $y'$ is then at most $\eta/4+\eta/2=3\eta/4$.  Hence, from the right-angled triangle with sides $\eta/4, 3\eta/4$, the angle that $L$ makes from the plane containing the end disc is at least $\tan^{-1}(1/3)$. 

Let $P$ be the plane through $L$ perpendicular to the end discs. We can view $\tilde S \cap P$ as the graph of a function. By the mean value theorem applied to this function, there is a tangent vector to $\tilde S$ that is parallel to $L$. Since $\tan^{-1}(1/3) > \pi/10$, we deduce that at this point of $\tilde S$, its normal vector
differs from $v_{\tilde F}$ by more than $\pi/10$. But this contradicts the assumption that $F$ has $(\eta, \delta/5)$-almost constant normals near $x$, since $\delta<\pi/2$. This proves Claim 1. 

By Claim 1, the restriction of $\mathcal{P}$ to $\tilde S$ is an immersion onto a disc. It is therefore a covering map of the disc, and since the disc is simply-connected, it is a homeomorphism. We deduce that $\tilde S$ is a disc. Hence, $D$ is planar (i.e. has genus zero), since $\tilde D$ is a subset of $\tilde S$. But its boundary is connected, because it is a concatenation of at most four normal arcs in $\partial \Delta$, since Cases 1 and 2 do not hold. Hence, $D$ is a disc. Thus, Case 3 does not hold.

Thus, we have shown that Cases 1, 2, and 3 do not hold, and hence $F$ intersects $\Delta$ in normal discs. 

We now prove the final claim of the proposition. Consider a point $p$ in $F$, and suppose that the ball of radius $\eta/2$ about $p$ intersects two normal discs $D_1$ and $D_2$ of $\Delta \cap F$. These two discs are joined by a path of length at most $\eta$ in $F$. Furthermore, $D_1$ and $D_2$ both have distance at most $\eta/5$ from $x$ by Definition \ref{SufficientlyTransverse}(1). So, $D_1$ and $D_2$ lie in the same component $F'$ of $F \cap B(x,\eta)$. Now, for any two normal discs in $\Delta$, there must be some edge that they both intersect. Say that $D_1$ and $D_2$ both intersect the edge $e$. Pick a transverse orientation on $F'$, and pick an orientation on $e$. Using these orientations, we can make sense of the assertion that at any point $z$ of $F' \cap e$, the transverse orientation of $F'$ agrees or disagrees with the orientation on $e$, as follows. The unit normal to $\tilde F'$ at $\tilde z$ has angle at most $\delta/5$ from $v_{\tilde F}$ or $-v_{\tilde F}$, by (2) in the definition of sufficiently transverse. Hence, by (3) in that definition, the unit normal to $\tilde F'$ and the tangent to $\tilde e$ at $\tilde z$ have angle at most $\pi/2 - 9\delta/5$ or at least $\pi/2 +9 \delta/5$. In the former case, we say that the orientations of $F'$ and $e$ are consistent at $z$, and in the latter case, inconsistent. Now, at two points of $F' \cap e$ that are adjacent in $e$, the orientations at these points cannot be both consistent or both inconsistent, since we could then construct a closed loop in $B(x,\eta) $ that intersects $F'$ once.  The loop would then have to be homologically non-trivial in $B(x,\eta)$, which is a 3-ball, and hence this is a contradiction. Now fix $z_1$ to be one of the points of $F' \cap e$ and let $z_2$ be an adjacent point. Say that the orientations are consistent at $z_1$ and inconsistent at $z_2$. The tangents to $\tilde e$ at $\tilde z_1$ and $\tilde z_2$ differ by an angle at most $\delta$, by (4) of Definition \ref{SufficientlyTransverse}. Hence, the normal to $\tilde F'$ at $z_2$ has angle more than $\pi/2 + 4\delta/5$ with the tangent to $\tilde e$ at $\tilde z_1$. Now consider a path in $F'$ joining $z_2$ to $z_1$. At some point of this path, the normal to $\tilde F'$ has angle $\pi/2$ with the tangent to $\tilde e$ at $\tilde z_1$. Hence, by (2) of Definition \ref{SufficientlyTransverse}, the normal $v_{\tilde F}$ and the tangent to $\tilde e$ at $\tilde z_1$ have angle between $\pi/2 - \delta/5$ and $\pi/2 + \delta/5$. This contradicts the assumption (3) of Definition \ref{SufficientlyTransverse} that $v_{\tilde F}$ is $(\eta, 2\delta)$-nonperpendicular to the edges of $\Delta$.
\end{proof}

\section{Barycentric subdivision}
\label{Sec:Barycentric}

Our goal is find a triangulation of a codimension zero submanifold of the hyperbolic 3-manifold $M$ that can be made sufficiently transverse to any stable minimal surface. We will start with the triangulation $T$ from Theorem \ref{Thm:ExtensionBreslin} that is $(\theta, a\mu, b \mu)$-thick, for some suitably chosen $\mu > 0$. Recall that $\theta$, $a$ and $b$ are fixed constants provided by Theorem \ref{Thm:ExtensionBreslin}. Our required triangulation will, combinatorially, be equal to the barycentric subdivision $T'$ of $T$. In this section, we will define a geometric position for the vertices of this triangulation, and then we will perturb the triangulation, by moving its vertices, so that it becomes sufficiently transverse to a given surface $F$.

Consider a triangle in the hyperbolic plane, with geodesic edges. Pick an ordering of its vertices $v_1, v_2, v_3$. Define the \emph{ordered barycentre} of the triangle as follows. First pick the point half way along the geodesic joining $v_1$ and $v_2$. Then run a geodesic from this point to $v_3$. Stop one third of the way along this geodesic.

Let $\Delta$ be a hyperbolic tetrahedron. Pick an ordering $v_1, v_2, v_3, v_4$ of its vertices. Define the \emph{ordered barycentre} of $\Delta$ as follows. Starting at the ordered barycentre of $v_1, v_2, v_3$, run a geodesic to $v_4$. The point that is one quarter of the way along this geodesic is the ordered barycentre of $\Delta$.

The \emph{ordered barycentric subdivision} of $\Delta$ is the geodesic triangulation, where each vertex is the ordered barycentre of a simplex of $\Delta$. Since vertices are simplices as well, we include each old vertex together with the new ones. Here, the ordering on the vertices of each simplex is the restriction of the given ordering on the vertices of $\Delta$. 

Now let $T$ be a geodesic triangulation of a 3-dimensional submanifold of a hyperbolic 3-manifold. Pick a total ordering on its vertices. Then the \emph{ordered barycentric subdivision} of $T$ is obtained by replacing each tetrahedron of $T$ with its ordered barycentric subdivision. 

\begin{lemma}
\label{Lem:Barycentric}
Given $\theta \in (0,\pi)$ and $0 < a \leq b \leq 1/40$, there are $\theta' \in (0,\pi)$ and $0 < a' \leq b' \leq 1/40$
 with the following property. If $\mu$ is any positive real number at most the 3-dimensional Margulis constant, and $\Delta$ is a hyperbolic tetrahedron that is $(\theta, a \mu, b \mu)$-thick, then each tetrahedron of the ordered barycentric subdivision of $\Delta$ is $(\theta',a' \mu,b' \mu)$-thick.

\end{lemma}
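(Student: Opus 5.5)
Since every edge of $\Delta$ has length at most $b\mu \le b\,\epsilon_3 \le 1/40$, the tetrahedron lies in a hyperbolic ball of tiny radius, where the geometry is uniformly close to Euclidean. I would rescale by $1/\mu$ and compare with Euclidean geometry. Concretely, fix a vertex $v_1$ of $\Delta$ and work in the Klein (projective) model centred at $v_1$, or equivalently pull back by $\exp_{v_1}^{-1}$. Geodesics become straight segments in the Klein model. The map from the Klein model to $T_{v_1}\mathbb{H}^3$ is bi-Lipschitz with constant $1+O(r^2)$ on the ball of radius $r$, and angles are distorted by $O(r^2)$. Dilating by $1/\mu$ then produces a Euclidean tetrahedron $\Delta_E$ whose edge lengths lie in $[a(1-\epsilon), b(1+\epsilon)]$ and whose dihedral angles are at least $\theta - \epsilon$. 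Here $\epsilon$ is a uniform small quantity, bounded using only $b\mu \le 1/40 \cdot \epsilon_3$.

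The key step is a compactness argument in the Euclidean setting. The set $\mathcal{K}$ of Euclidean tetrahedra with edge lengths in $[a/2, 2b]$ and dihedral angles at least $\theta/2$, taken up to isometry, is compact. Every tetrahedron in $\mathcal{K}$ is nondegenerate, so its ordered barycentric subdivision consists of 24 nondegenerate tetrahedra. The ordered barycentre in Euclidean space is just the ordinary barycentre, because the iterated midpoint, third-point and quarter-point construction reproduces the centroid. Minimal dihedral angles and edge lengths of the subdivision pieces are continuous positive functions on $\mathcal{K}$, so they have positive minima $\theta_0$, $a_0$ and a maximum $b_0 \le 2b$. This uses the fact that a barycentric-subdivision piece has diameter at most that of $\Delta$.

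Next I would transfer back to hyperbolic space. In the Klein model the hyperbolic ordered barycentres differ from the Euclidean barycentres by a relative error of $O(\mu^2)$ after rescaling. Midpoints and fractional points along geodesics, measured in hyperbolic length, differ from the affine fractional points by a factor $1+O(r^2)$. The subdivision tetrahedra are geodesic, so they are straight in the Klein model. Consequently their rescaled edge lengths and dihedral angles differ from those of the Euclidean subdivision of $\Delta_E$ by at most $C\mu^2 \le C\epsilon_3^2$, which is bounded uniformly. I would then set $\theta' = \theta_0/2$, $a' = a_0/2$ and $b' = \min(b(1+\epsilon)\cdot 1, 1/40)$. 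For $b'$, note that each subdivision edge joins two points of $\Delta$, so its hyperbolic length is at most $\operatorname{diam}\Delta \le b\mu$, and we may simply take $b' = b$.

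The main obstacle is making the perturbation uniform. One must check that the error constant $\epsilon$ can be made smaller than what is needed to stay in $\mathcal{K}$ and preserve the minima, for all $\mu$ up to $\epsilon_3$, and not merely for small $\mu$. I would handle this by observing that the diameter of $\Delta$ is at most $b\epsilon_3 < 1/40$. At that scale the Klein-to-Euclidean distortion of lengths and angles is bounded by an explicit small constant. If that constant is not small enough, I would instead use a second compactness argument. The family of hyperbolic $(\theta,a\mu,b\mu)$-thick tetrahedra, over $\mu$ in a compact range $[\mu_1,\epsilon_3]$, is compact up to isometry. Its ordered subdivisions are nondegenerate by continuity from the Euclidean limit as $\mu\to 0$, after rescaling. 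Combining this with the small-$\mu$ regime covers all $\mu$.
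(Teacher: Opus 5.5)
Your argument is correct and rests on the same principle as the paper's proof: rescale by $1/\mu$, note that the $\mu\to 0$ limit is Euclidean, and use continuity and compactness. The packaging differs.

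The paper places $v_1$ at the centre of a Poincar\'e ball rescaled to radius $1/\mu$ and encodes the tetrahedra as points of $S\times S\times S\times(0,\epsilon_3]$. It then runs a single compactness argument on the closure in $S\times S\times S\times[0,\epsilon_3]$, taking for granted that the minimal dihedral angle and the edge-length-to-$\mu$ ratios of the subdivision extend continuously to $\mu=0$. You instead split into two regimes:
\begin{enumerate}
\item for $\mu\le\mu_1$, the Klein model (straight geodesics, metric conformally $O(\mu^2)$-close to Euclidean) gives explicit $O(\mu^2)$ control of the ordered barycentres, lengths and angles relative to the compact Euclidean family $\mathcal{K}$;
\item for $\mu\in[\mu_1,\epsilon_3]$, ordinary compactness of the hyperbolic family suffices.
\end{enumerate}
Your estimates are exactly the quantitative content of the paper's continuity claim at $\mu=0$. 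So your route is longer but more self-contained, while the paper's is slicker. Your choice $b'=b$ is sharper than the paper's compactness bound; it works because the diameter of a geodesic tetrahedron in $\mathbb{H}^3$ is realized by two vertices.

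Two small repairs are needed. First, the step ending in ``$C\mu^2\le C\epsilon_3^2$, bounded uniformly'' and then setting $\theta'=\theta_0/2$ only works for $\mu\le\mu_1$, as you later acknowledge. Second, for $\mu\ge\mu_1$ the nondegeneracy of the subdivision pieces does not come from the Euclidean limit. It holds for every nondegenerate $\Delta$, because each piece is spanned by a vertex, a point of an open edge, a point of an open face and a point of the open tetrahedron, and these are affinely independent in the Klein model.
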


\begin{proof}
We will consider the set of all $(\theta, a \mu, b \mu)$-thick hyperbolic tetrahedra in hyperbolic 3-space $\mathbb{H}^3$, with one specific vertex at a fixed basepoint in  $\mathbb{H}^3$. The remaining three vertices are points of $\mathbb{H}^3$, and so we can view each such hyperbolic tetrahedron as a point in $\mathbb{H}^3 \times  \mathbb{H}^3 \times  \mathbb{H}^3$. We have an upper bound on $\mu$. \emph{Suppose} that we also had a positive lower bound of the form $\mu \geq c$ for some constant $c >0$. Then the set of all $(\theta, a \mu, b \mu)$-thick tetrahedra would form a compact subset of $\mathbb{H}^3 \times  \mathbb{H}^3 \times  \mathbb{H}^3$. Moreover, the minimum interior angle between adjacent faces of the ordered barycentric subdivision is a continuous function on this compact set. Hence, its minimum is realised by some $(\theta, a \mu, b \mu)$-thick tetrahedron. We would therefore obtain a positive lower bound on these angles. Similarly, we would obtain positive upper and lower bounds on the edge lengths of the tetrahedra in the ordered barycentric subdivision. However, we are not assuming a positive lower bound on $\mu$. We therefore argue as follows.

We let $\mathbb{H}^3_{1/\mu}$ denote the Poincar\'e ball model of hyperbolic space, rescaled so that it has Euclidean radius $1/\mu$. We place $v_1$ at the origin of this ball.
Then each choice of $v_2, v_3, v_4$ determines a point in $\mathbb{H}^3_{1/\mu} \times \mathbb{H}^3_{1/\mu} \times \mathbb{H}^3_{1/\mu}$. A point $v$ at hyperbolic distance $d$ from $v_1$ has Euclidean distance $(1/\mu) \tanh(d/2)$ from the origin. Hence, if the 4 points determine a tetrahedron that is $(\theta, a \mu, b \mu)$-thick, then $v_2, v_3, v_4$ lie in the shell between radius $(1/\mu) \tanh(a\mu/2)$ and $(1/\mu) \tanh(b\mu/2)$ about the origin. Recall that $\mu$ is between $\epsilon_3$ and 0, and as $\mu$ approaches 0, the tetrahedra increasingly approximate Euclidean ones. This yields an upper and lower bound on the side lengths of the tetrahedra as follows. Let $S$ denote the compact subset of $\mathbb{E}^3$ consisting of points with distance from the origin between $(1/\epsilon_3) \tanh(a\epsilon_3/2)$ and $b/2$. We encode the tetrahedron using the three points in $S$, as well as the parameter $\mu$. Thus, the tetrahedron corresponds to a point in $S \times S \times S \times (0,\epsilon_3]$. Let $C$ denote the set of points in $S \times S \times S \times (0,\epsilon_3]$ corresponding to $(\theta, a \mu, b \mu)$-thick tetrahedra. Let $\overline{C}$ be its closure in $S \times S \times S \times [0,\epsilon_3]$. Each point in $\overline{C} \setminus C$ determines a Euclidean tetrahedron that is $(\theta, a, b)$-thick, since, again, as $\mu \rightarrow 0$, the tetrahedra in $S \times S \times S \times (0,\epsilon_3]$ increasingly approximate Euclidean ones. Hence, for each point in $\overline{C}$, there is a well-defined ordered barycentric subdivision lying either in $\mathbb{H}^3$ or $\mathbb{E}^3$. In this subdivision, the minimal interior angle of the tetrahedra is a continuous function on $\overline{C}$. Hence, by compactness of $\overline{C}$, this minimal interior angle is realised by taking some $(\theta, a\mu, b\mu)$-thick hyperbolic tetrahedron $\Delta$ and barycentrically subdividing, or by taking some $(\theta, a, b)$-thick Euclidean tetrahedron and barycentrically subdividing. In particular, there is a positive lower bound $\theta'$ for these interior angles. Similarly, we can also consider the function on $C$ that is the ratio of the minimal edge length of the tetrahedra in the barycentric subdivision to $\mu$. This is a continuous function that extends to a continuous function on $\overline{C}$. Hence, again, it has a positive lower bound $a'$. Therefore, the edge length of each tetrahedron in the barycentric subdivision of $\Delta$ is at least $a' \mu$. By a similar argument, there is also an upper bound $b'\mu$ to these edge lengths. Hence, given any $(\theta, a\mu, b\mu)$-thick tetrahedron $\Delta$, the tetrahedra of the ordered barycentric subdivision of $\Delta$ are $(\theta',a'\mu,b'\mu)$-thick. \end{proof}

\begin{remark}\label{cc}
The argument in the above proof can be applied more generally, as a method of establishing properties of $(\theta, a \mu, b\mu)$-thick tetrahedra, where $\mu \in (0,\epsilon_3]$. Let $C$ be the space of such tetrahedra, viewed as a subset of $S \times S \times S \times (0,\epsilon_3]$, and let $\overline{C}$ be its closure in $S \times S \times S \times [0,\epsilon_3]$. Suppose that $f \colon C \rightarrow \mathbb{R}$ is some continuous function that extends continuously to $\overline{C}$. For example, in the above proof, we took $f$ to be the minimum dihedral angle of the tetrahedra in the barycentric subdivision of the given $(\theta, a \mu, b\mu)$-thick tetrahedron. The continuity of $f$ and the compactness of $\overline{C}$ implies that $f$ is bounded below and that its minimal value is achieved at some point of $\overline{C}$. If we can interpret the restriction of $f$ to $\overline{C} - C$ as a geometric quantity associated with Euclidean tetrahedra, then we can hope to show that this minimal value is strictly positive. We will call this style of argument as `using continuity and compactness'. 
\end{remark}

Remark \ref{cc} implies the following lemma. Here, we consider the faces of a hyperbolic tetrahedron and the three interior angles in each face. Setting $f$ to be the minimal such angle over all the faces, we obtain this result.

\begin{lemma}
\label{Lem:InteriorAngleFace}
Given $\theta \in (0,\pi)$ and $0 < a \leq b \leq 1/40$,
there is a positive $\theta_2$ with the following property. If $\mu$ is any positive real number at most the 3-dimensional Margulis constant, and $\Delta$ is a hyperbolic tetrahedron that is $(\theta, a \mu, b \mu)$-thick, then for each of its faces, each interior angle in that face is at least $\theta_2$. 
\end{lemma}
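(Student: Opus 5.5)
We follow the `continuity and compactness' method of Remark \ref{cc}. As in the proof of Lemma \ref{Lem:Barycentric}, we place one vertex of $\Delta$ at the origin of the rescaled Poincar\'e ball $\mathbb{H}^3_{1/\mu}$. We then encode a $(\theta, a\mu, b\mu)$-thick tetrahedron by its other three vertices in $S \times S \times S$ together with the parameter $\mu \in (0,\epsilon_3]$. As before, let $C$ be the set of such tetrahedra and $\overline{C}$ its closure in $S\times S\times S\times[0,\epsilon_3]$, which is compact. Define $f \colon C \to \mathbb{R}$ to be the minimum, over the four faces and the three corners of each face, of the interior angle of that face. This $f$ is continuous on $C$. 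It extends continuously to $\overline{C}$: in the rescaled model the hyperbolic metric converges smoothly on compact sets to the Euclidean metric as $\mu \to 0$, so the geodesic triangles, and hence their angles, converge to those of the limiting Euclidean tetrahedron. On $\overline{C}\setminus C$ the extension is the minimal face angle of a $(\theta,a,b)$-thick Euclidean tetrahedron. Compactness then gives that $f$ attains its minimum on $\overline{C}$, and it remains to show that $f > 0$ at every point.

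Positivity reduces to non-degeneracy. A face angle of a tetrahedron (hyperbolic or Euclidean) is zero only if the face is degenerate, meaning its three vertices are collinear. In that case the tetrahedron is flat, since its four vertices lie in a single totally geodesic plane, and then some dihedral angle is $0$ or $\pi$. More directly, if two vertices of a face coincided or the face collapsed, the edge lengths could not all lie in $[a\mu,b\mu]$ (or $[a,b]$ in the Euclidean limit) while all dihedral angles stay at least $\theta$. Since all dihedral angles are at least $\theta>0$, and the dihedral angles of a non-degenerate tetrahedron sum to less than $3\pi$ with each less than $\pi$, none is $0$. Hence no face is degenerate, and every face angle is strictly positive. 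This holds for each point of $C$ and also for each limiting Euclidean tetrahedron in $\overline{C}\setminus C$, because the closed conditions (dihedral angles $\geq \theta$, edge lengths in $[a,b]$) pass to the limit. Taking $\theta_2$ to be the minimum of $f$ on $\overline{C}$ gives the lemma.

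The main obstacle is justifying that the limit points are genuinely non-degenerate tetrahedra, so that the limiting dihedral-angle bound $\geq\theta$ still makes sense. If the four limiting points were coplanar, dihedral angles would be undefined rather than small. I would handle this by a quantitative bound. A lower bound $\theta$ on dihedral angles, together with edges of length $\geq a\mu$, forces each face to have area, and the tetrahedron to have volume, bounded below in the rescaled metric. Alternatively, one can use the spherical link at a vertex: it is a spherical triangle with angles the dihedral angles, all $\geq\theta$, and whose side lengths are the face angles at that vertex. A spherical triangle with all angles at least $\theta$ cannot have a side of length $0$ unless it degenerates, and a degenerate spherical triangle has an angle equal to $0$ or $\pi$. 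Since the angles sum to less than $3\pi$, they cannot all be $\geq\theta$ while one side shrinks to zero. Indeed, by spherical trigonometry, $\cos$ of a side equals $(\cos\alpha+\cos\beta\cos\gamma)/(\sin\beta\sin\gamma)$ in terms of the angles, so a side of length $0$ forces the angle sum condition $\beta+\gamma=\pi+\alpha$. That contradicts $\beta,\gamma<\pi$ together with $\alpha\ge\theta$, once one notes the angles also stay bounded away from $\pi$. This link argument is the cleanest route, and it works uniformly in both the hyperbolic and Euclidean cases.
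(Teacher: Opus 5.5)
Your route is the paper's: its proof is a one-line appeal to Remark~\ref{cc} with $f$ the minimal face angle. You also correctly isolate the step that this leaves implicit: every point of $\overline{C}$ must be a non-degenerate tetrahedron with dihedral angles at least $\theta$, and this includes limits at fixed $\mu>0$, not only at $\mu=0$.

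Your justification of that step fails. A spherical triangle with all angles at least $\theta$ can have arbitrarily short sides; a tiny, nearly Euclidean equilateral triangle has angles near $\pi/3$. Your identity gives $\cos a=1$ exactly when $\alpha+\beta+\gamma=\pi$ or $\beta+\gamma=\pi+\alpha$. Neither branch conflicts with all angles lying in $[\theta,\pi-\theta]$; for instance $(\alpha,\beta,\gamma)=(\pi/2,3\pi/4,3\pi/4)$ satisfies the second. Likewise, a dihedral lower bound does not bound face areas or volume from below. The inference ``a face is degenerate, so some dihedral angle is $0$ or $\pi$'' presumes that dihedral angles extend continuously to collinear degenerations, and they do not.

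In fact no argument can close this step for the lemma as stated. In $\mathbb{E}^3$ take $p_0=(0,\epsilon,0)$, $p_1=(1,0,\epsilon)$, $p_2=(2,-\epsilon,0)$, $p_3=(3,0,-\epsilon)$. As $\epsilon\to0$ the edge lengths tend to $1,1,1,2,2,3$, and the dihedral angles tend to:
$\arccos(2/\sqrt{5})\approx0.46$ along $p_0p_1$ and $p_2p_3$;
$\arccos(4/5)$ along $p_0p_3$;
$\pi/2$ along $p_1p_2$;
$\arccos(-1/\sqrt{5})$ along $p_0p_2$ and $p_1p_3$.
Yet every face angle at $p_0$ and $p_3$ tends to $0$. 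The links at $p_0,p_3$ are tiny triangles with angle sum tending to $\pi$, and the links at $p_1,p_2$ satisfy $\beta+\gamma\to\pi+\alpha$, so both of your branches occur.

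Now rescale by $s\mu$ with $a<s<b/3$. For fixed $\epsilon$, a hyperbolic tetrahedron of the same shape at scale $\mu$ has dihedral angles and rescaled edge lengths converging to the Euclidean ones as $\mu\to0$, and $\theta_2$ must be independent of $\mu$. So for $b>3a$ and $\theta<0.46$ (e.g.\ $a=1/200$, $b=1/50$) there are $(\theta,a\mu,b\mu)$-thick hyperbolic tetrahedra with arbitrarily small face angles. The paper's appeal to Remark~\ref{cc}, which asserts without proof that limit points are $(\theta,a,b)$-thick Euclidean tetrahedra, has the same hole.

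What rescues the argument is an extra hypothesis such as $b<3a$. Then four points with pairwise distances in $[a,b]$ cannot be collinear. Any coplanar limit then has a dihedral angle tending to $0$: along an edge of the convex hull if no three points are collinear, or along $p_0p_3$ if $p_0,p_1,p_2$ are collinear with $p_1$ between. Hence $\overline{C}$ consists of non-degenerate tetrahedra, and your compactness argument goes through. The closed-case constants $a=4/500$, $b=11/500$ of the Appendix satisfy $b<3a$. Alternatively, a bound on circumradius over shortest edge, available in Breslin's Delaunay setting, rules out such needles.
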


We are now ready to adjust the barycentric subdivision.

\begin{lemma}
\label{Lem:AvoidVectorsForBarycentre}
For any $0 < a \leq b \leq 1/40$ and $0 < \theta < \pi$, there are $\delta'' > 0$ and $\theta'' \in (0, \pi)$ and $0 < a'' \leq b'' \leq 1/40$ with the following properties. For any $0 < \mu \leq \epsilon_3$, any $(\theta, a\mu, b\mu)$-thick hyperbolic tetrahedron $\Delta$, any total ordering on its vertices, and any assignment of unit vectors $v_1, \dots, v_4$ at the vertices of $\Delta$, there is a perturbation of the ordered barycentre of $\Delta$ and the ordered barycentres of its 2-dimensional faces, so that the following hold:
\begin{enumerate}
\item the perturbed barycentre of each face still lies in that face;
\item the resulting triangulation of $\Delta$ is $(\theta'', a''\mu, b''\mu)$-thick;
\item for each edge $e$ of the perturbed barycentric subdivision of $\Delta$, other than an edge lying in an edge of $\Delta$, and each vertex $x$ of $\Delta$,
some unit tangent vector to $\exp_x^{-1}(e)$ has angle at most $(\pi/2) - \delta''$ or at least $(\pi/2) + \delta''$ from the given unit vector $v_i$ at $x$.
\end{enumerate}
\end{lemma}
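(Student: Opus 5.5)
We argue by continuity and compactness, as in Remark \ref{cc}, combined with a genericity argument for a single tetrahedron. The plan has three steps: a generic-position lemma for one fixed tetrahedron, a uniform quantitative version obtained from compactness, and a check of thickness and of property (1).

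\emph{Step 1: one fixed tetrahedron.} Fix a thick tetrahedron $\Delta$ (hyperbolic, or Euclidean in the limit $\mu \to 0$), a vertex ordering, and unit vectors $v_1,\dots,v_4$. The perturbation parameters are the barycentre $c$ of $\Delta$ (free to move in an open 3-ball) and the four face barycentres $c_R$ (each free to move in an open 2-disc in its face $R$). The edge midpoints are not moved.

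Every edge of the subdivision that does not lie in an edge of $\Delta$ has at least one endpoint among these movable points. Its other endpoint may also be a vertex or an edge midpoint of $\Delta$.

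For an edge $e$ and a vertex $x$ of $\Delta$, the bad condition is that \emph{every} unit tangent to $\exp_x^{-1}(e)$ is perpendicular to $v_x$. This forces $\exp_x^{-1}(e)$ to lie in a translate of the plane $v_x^{\perp}$, which is a closed condition of positive codimension on the position of the movable endpoint. Concretely:
\begin{itemize}
\item If $e$ is a segment $[p,c]$ with $p$ fixed, moving $c$ off the surface $\exp_x^{-1}$ of the plane through $\exp_x^{-1}(p)$ perpendicular to $v_x$ suffices. In the hyperbolic case this is a smooth surface. In the Euclidean case it is a plane.
\item If $e$ lies in a face $R$, then $c_R$ moves in $R$. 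Since $R$ contains $e$'s fixed endpoint $p$, the bad locus within $R$ is the intersection of $R$ with that surface through $p$. This is a curve, unless $R$ itself is contained in the bad surface.
\end{itemize}

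The second alternative is the step I expect to be the main obstacle. It occurs, for example, when $v_x$ is normal to a face $R$ containing $x$: then every edge lying in $R$ has all its tangents perpendicular to $v_x$ when viewed from $x$, at least to first order. To handle this I would first use the freedom in the choice of witnessing tangent vector together with the curvature of $\exp_x^{-1}$ of a geodesic not through $x$. If that is insufficient, I would observe that in the application only edges whose closure meets a neighbourhood of $x$ matter, and refine the bookkeeping accordingly.

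Granting that the bad loci are nowhere dense, finitely many edges and vertices give a finite union of closed nowhere-dense sets. Hence a good choice exists arbitrarily close to the ordered barycentres, and some positive angle margin is attained.

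\emph{Step 2: uniformity.} Define $f(\Delta,\text{order},v_1,\dots,v_4)$ to be the supremum, over perturbations within a fixed small size $\rho\mu$, of the minimum over edges $e$ and vertices $x$ of the best angle margin $\max_t \bigl|\angle(\text{tangent}_t, v_x) - \pi/2\bigr|$. Parametrise as in Lemma \ref{Lem:Barycentric}: the rescaled Poincar\'e ball with parameter in $S^3 \times [0,\epsilon_3]$, times the compact space $(S^2)^4$ of vector choices, times the finitely many orderings. Then $f$ is lower semicontinuous, because a good perturbation remains good under small changes of the data. By Step 1, $f$ is positive on the closure $\overline{C}$, including the Euclidean limit points. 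Compactness then gives a uniform $\delta''>0$.

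\emph{Step 3: thickness and property (1).} Choose $\rho$ small. Perturbations of size at most $\rho\mu$, scaled relative to $\mu$, keep the dihedral angles and edge lengths of the subdivision within a fixed fraction of the constants $\theta', a'\mu, b'\mu$ from Lemma \ref{Lem:Barycentric}; this is again by continuity on $\overline{C}$. This yields $\theta''$, $a''$ and $b''$, with $b'' \le 1/40$ after shrinking $\rho$ if necessary. Property (1) holds by construction, since each $c_R$ is only moved within its face $R$.
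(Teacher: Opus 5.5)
Your Step 1 breaks down at exactly the obstacle you flagged, and the gap cannot be closed, because (1) and (3) are incompatible there. Let $x$ be a vertex of $\Delta$, let $R$ be a face containing $x$, and let $v_x$ be a unit normal to $R$ at $x$. By (1), the perturbed face barycentre $c_R'$ stays in $R$. Since $R$ lies in a totally geodesic plane through $x$, $\exp_x^{-1}$ carries $R$ into the plane $T_xR=v_x^{\perp}$. So every edge of the subdivision lying in $R$ has \emph{all} of its tangent vectors in $\exp_x^{-1}$-coordinates exactly perpendicular to $v_x$, for every admissible perturbation. In particular this holds for $[x,c_R']$, which belongs to every subdivided tetrahedron $[x,m,c_R',c']$ with $m$ the midpoint of an edge of $R$ at $x$. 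The failure is exact, not first-order.

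Neither of your remedies helps. The edge $[x,c_R']$ passes through $x$, so its image is a straight segment, and even curves in $R$ that miss $x$ still map into $T_xR$. The edge is also incident to $x$, so no locality bookkeeping can discard it. Hence the bad locus is the whole admissible set, and your $f$ vanishes on a nonempty closed part of the compact parameter space, so Step 2 yields no $\delta''>0$; the lemma is false as stated for such $v_x$. Moreover, at the Euclidean limit points of $\overline{C}$ the faces are flat, so a $v_x$ normal to $R$ also kills the edges in $R$ when $x\notin R$. Excluding only the pairs with $x\in R$ would therefore not restore uniformity either.

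You will not find the missing step in the paper. Its proof bounds each forbidden set in the disc $D$ by a wedge from $p_i$ of area at most $d\mu^2\delta''$. That bound presupposes that the component of $v_j$ along the face plane is bounded away from zero. When $v_j$ is within about $\delta''$ of the face normal, the forbidden set is all of $D$. In Proposition \ref{mu} this is the case where $F$ is nearly tangent to a face of $T$ near one of its vertices, which nothing excludes.

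The idea missing from both arguments is to let the face barycentres leave their faces. That is, drop or weaken (1), and move $c_R$ in a small ball of radius comparable to $\mu$, or in a half-ball into $\Delta$ for boundary faces. Then each bad locus is a thickened cone of volume $O(\mu^3\delta'')$, genuinely of codimension one. Either your genericity-plus-compactness argument or the paper's volume count would then prove the modified statement. Whether the rest of the paper tolerates this change (for instance Lemma \ref{Lem:BiCone} and the treatment of boundary faces) would need to be checked separately.
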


\begin{proof}
By Lemma \ref{Lem:Barycentric}, there are constants $0 < a' \leq b' \leq 1/40$ and $\theta' > 0$ such that the barycentric subdivision of $\Delta$ is $(\theta', a'\mu, b'\mu)$-thick. Hence, by Lemma \ref{Lem:InteriorAngleFace}, there is a constant $c > 0$ such that the distance of the barycentre of each 2-dimensional face of $\Delta$ from each edge of $\Delta$ is at least $c \mu$. We will perturb this barycentre by a distance at most $c \mu / 2$. Hence, it will remain in the interior of the face. The possible location of the perturbed point lies in a disc $D$ of radius $c \mu/2$ about the barycentre, and this disc has area at least $\pi (c \mu /2)^2$. Running from each of the three vertices of the face to the perturbed barycentre are three new edges. Similarly running from the midpoints of the edges of $\Delta$ are three new edges that run to the perturbed barycentre. We need to ensure that their unit tangent vectors at these six points $p_1, \dots, p_6$ have angle at most $(\pi/2) - \delta''$ or at least $(\pi/2) + \delta''$ from each of the given vectors $v_1, \dots, v_4$. Thus, emanating from $p_i$, there is a region of forbidden points where the perturbed barycentre may not lie. The intersection of this forbidden region with $D$ has area at most $d \mu^2 \delta''$ for some constant $d$. The reason is as follows. Each vector $v_j$ and each $p_i$ give rise to a region in $D$ that is enclosed by a quadrilateral, with two of its sides being the intersections of $D$ with two rays emanating from the relevant $p_i$ (see Figure \ref{Fig:ForbiddenRegion}), and two other sides being tangent to $D$ and transverse to the rays. Hence two of those edges have length at most a diameter of $D$, and the other two edges have length at most a constant times $\mu \delta''$. (This can be verified with a quick argument involving the law of sines. Here, we use the assumption that $\mu \leq \epsilon_3$. Any other universal upper bound on $\mu$ would have sufficed here.) Thus, the six forbidden regions intersect $D$ in a set of area at most $6d \mu^2 \delta''$. This is less than  $\pi (c \mu /2)^2$ provided $\delta''$ is small enough. Hence, provided $\delta''$ is sufficiently small, we may perturb the barycentres of the faces by distance at most $c \mu / 2$, and thereby ensure that condition (3) holds for the edges lying in the 2-dimensional faces of $\Delta$.

\begin{figure}
  \includegraphics[width=0.6\textwidth]{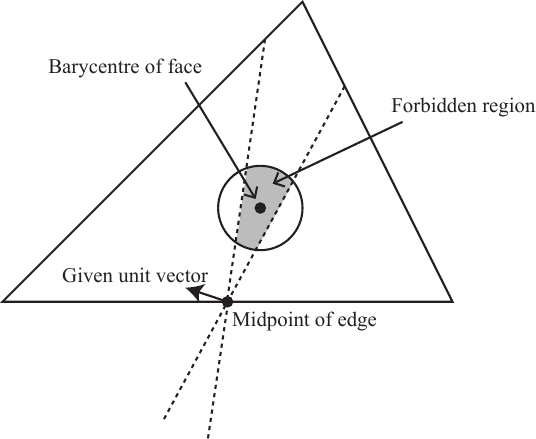}
  \caption{A unit vector has been specified. This imposes a region in which the perturbed barycentre must not lie. The disc $D$ shows the points within $c\mu/2$ from the barycentre. Shaded within this disc is the forbidden region that the perturbed barycentre must avoid.}
  \label{Fig:ForbiddenRegion}
\end{figure}

A similar argument works for the barycentre of $\Delta$. There is a constant $c' > 0$ such that the distance of the barycentre is at least $c' \mu$ from each of the faces of $\Delta$. We will perturb the barycentre by a distance at most $c' \mu / 2$. Thus, the volume of the ball $B$ of possible perturbations is at least $(4\pi/3) (c' \mu / 2)^3$. Again, there are forbidden regions where the perturbed barycentre is not allowed to lie. These arise from the 14 edges of the ordered barycentric subdivision ending at the barycentre of $\Delta$ and from the four given vectors $v_1, \dots, v_4$. The intersection of each forbidden region with $B$ has volume at most a constant times $\mu^3 \delta''$. (This is because the region lies in a ball of radius $b' \mu$ centred at a vertex of the perturbed barycentric subdivision of $\Delta$. Consider an annular strip in the boundary of this ball with the core curve being a great circle and with angular width $2 \delta''$. Now form the cone over this annular strip with cone point being the centre of the ball. This intersection between each forbidden region and $B$ lies within such a cone.) Hence, if $\delta''$ is sufficiently small, then there is a perturbation that avoids all of these forbidden regions.

Finally, we verify the resulting triangulation of $\Delta$ is $(\theta'', a''\mu, b''\mu)$-thick for some constants  $\theta'' \in (0,\pi)$ and $0 < a'' \leq b'' \leq 1/40$. As in Lemma \ref{Lem:Barycentric}, this follows from compactness. The set of possible perturbations is compact. Hence, the minimum interior angle is realised by some $\Delta$ and some perturbation of its barycentre and the barycentres of its faces. In particular, this minimum is bounded below by some $\theta'' > 0$. Similarly, the ratios of the minimum and maximum edge lengths to $\mu$ are also realised by some specific examples, and so there are the constants $a''$ and $b''$. \end{proof}

The above lemma is used to perturb the barycentre of each tetrahedron and the barycentre of each of its faces. But it leaves the midpoint of each edge of the original triangulation untouched. Neighbourhoods of such edges are shown on the right of Figure \ref{Fig:BiCone}. (Such an edge is the edge $e$ in the figure.) We now analyse the geometry of these neighbourhoods.

A \emph{triangulated bi-cone} is a convex hyperbolic polyhedron $P$ with a geodesic triangulation $T$ constructed as follows. Pick a  triangulation $T_1$ of the circle or interval. Let $T_2$ be the triangulation of the interval with a single 1-simplex. Then $T$ is the join of $T_1$ and $T_2$.

Thus, a triangulated bi-cone has the following properties, which could be taken as an alternative definition. There is a specified edge $e$ of $T$, called its \emph{central}
edge. Its endpoints lie on $\partial P$, but its interior may lie in $\mathrm{int}(P)$ or in $\partial P$. For every tetrahedron $\Delta$ in $T$, the following conditions hold:
\begin{enumerate}
\item $e$ is an edge of $\Delta$; 
\item the two faces of $\Delta$ that do not contain $e$ lie on the boundary of $P$;
\item if interior of $e$ lies in the interior of $P$, then the remaining two faces of $\Delta$ have interior in the interior of $P$ (Figure \ref{Fig:BiCone}, the top left polyhedron); otherwise the same holds but with the exception of exactly two faces of $P$ incident to $e$ (Figure \ref{Fig:BiCone}, the bottom left polyhedron).
\end{enumerate} 
In the description of $T$ as the join of the triangulations $T_1$ and $T_2$, this edge $e$ is the copy of $T_2$ in the join.

The \emph{canonical subdivision} of a triangulated bi-cone has one extra vertex, at the midpoint of $e$, and each of the tetrahedra is subdivided into two. (See Figure \ref{Fig:BiCone}: polyhedra on the right are shown with their canonical subdivision.)
This new vertex is called the \emph{new vertex}. This subdivision can also be described by subdividing $T_2$ into two 1-simplices, giving a new triangulation $T'_2$ of the interval. Then the canonical subdivision is the join of $T_1$ and $T'_2$.

\begin{figure}
  \includegraphics[width=0.8\textwidth]{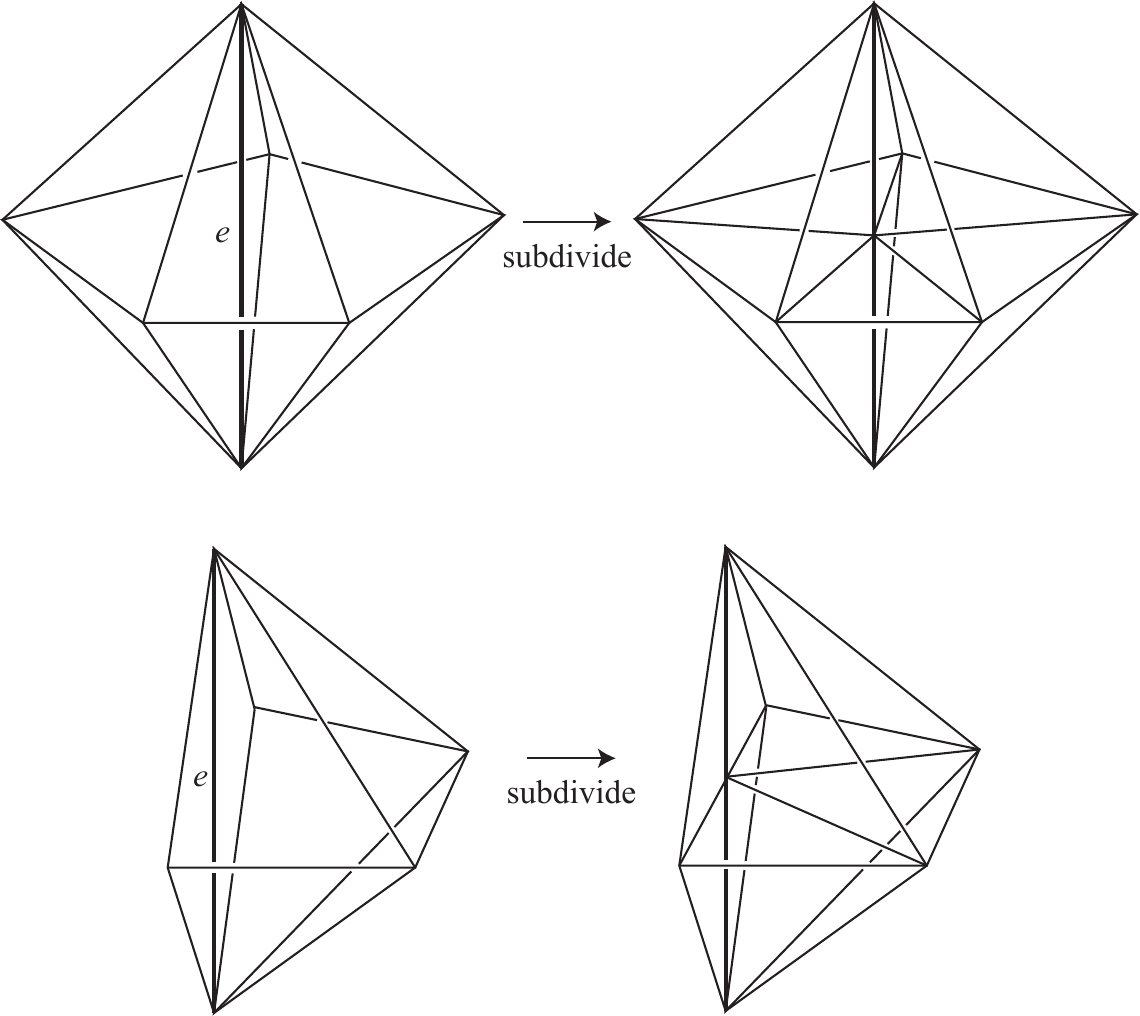}
  \caption{Left: triangulated bi-cones with their central edge $e$ shown in bold. Right: their canonical subdivisions.}
  \label{Fig:BiCone}
\end{figure}

\begin{lemma} 
\label{Lem:BiCone}
For any $0 < a'' \leq b'' \leq 1/40$ and $\theta'' \in (0,\pi)$, there are $\delta''' \in (0,\pi/2)$ and $\theta''' \in (0,\pi)$ and $0 < a''' \leq b''' \leq 1/40$
 with the following properties. Let $T$ be the canonical subdivision of a triangulated bi-cone $P$. Suppose that $T$ is $(\theta'', a''\mu, b''\mu)$-thick for some positive $\mu \leq \epsilon_3$. Suppose that at both vertices at the endpoints of its central edge, a unit vector has been chosen. Then there is a perturbation of the new vertex so that the resulting triangulation is $(\theta''', a'''\mu, b'''\mu)$-thick and so that the unit vectors in the direction of the edges incident to the new vertex have angle at most $(\pi/2) - \delta'''$ or at least $(\pi/2) + \delta'''$ from the given vectors. In the case where the new vertex lies on the boundary of $P$, then it is perturbed into the interior of $P$.
\end{lemma}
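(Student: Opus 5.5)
We follow the template of Lemma \ref{Lem:AvoidVectorsForBarycentre}: a volume-counting argument for the perturbed new vertex, then continuity and compactness as in Remark \ref{cc} for thickness. Let $m$ be the midpoint of the central edge $e$, with endpoints $w_1,w_2$ carrying the given unit vectors $u_1,u_2$. Let $z_1,\dots,z_k$ be the vertices of $T_1$.

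\textbf{Bounding the combinatorics.} We first bound $k$. Every tetrahedron of the canonical subdivision has dihedral angle at least $\theta''$ along the half of $e$ it contains. These angles sum to at most $2\pi$, so $k \leq 2\pi/\theta''+1$. It follows that the number of edges incident to $m$ is at most $k+2$, a universal bound. This is what makes the counting argument uniform.

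\textbf{Room to move.} Next we find space for the perturbation. By thickness and the argument of Lemma \ref{Lem:InteriorAngleFace}, there is a constant $c>0$ such that the ball of radius $c\mu$ about $m$ meets only those faces of the subdivision that contain $m$. The constant comes from continuity and compactness applied to the triangulated bi-cone, whose combinatorics have bounded size.

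\emph{Interior case.} If $\mathrm{int}(e)\subset \mathrm{int}(P)$, we perturb $m$ within the ball $B$ of radius $c\mu/2$ about $m$. Since $P$ is convex and the link of $m$ is star-shaped from any nearby point, the result is still a geodesic triangulation of $P$. It is again a join with $T_1$, with the new vertex now off $e$.

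\emph{Boundary case.} If $e\subset\partial P$, we instead restrict to $B\cap P$ intersected with a cone of fixed angle pointing into $P$. Thickness gives a uniform dihedral angle of $P$ along $e$, so this region has volume at least $c'\mu^3$.

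\textbf{Forbidden regions.} For each new edge from the perturbed vertex $m'$ to a vertex $p\in\{w_1,w_2,z_1,\dots,z_k\}$, and each $u_i$, the condition fails exactly when the direction of $\exp^{-1}(\overline{m'p})$ is within $\delta'''$ of perpendicular to $u_i$. We must make sense of comparing the direction at $m'$ with $u_i$ at $w_i$. As in the paper, we pull back via $\exp_{w_i}$; because $\mu\leq\epsilon_3$, this distorts angles only by a bounded factor. The bad set for a given $p$ is then contained in a cone, with apex $p$, over an annular strip of angular width $O(\delta''')$. Its intersection with $B$ therefore has volume at most $d\mu^3\delta'''$.

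The edges to $w_1,w_2$ need separate care, since $m$ lies on the line $w_1w_2$. Their directions from $m'$ are nearly $\pm$ the direction of $e$, so they are still controlled by such cones. The same estimate applies.

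Summing over the at most $2(k+2)$ pairs gives total bad volume $O(\mu^3\delta''')$. Choosing $\delta'''$ small makes this less than $c'\mu^3$, so a good $m'$ exists.

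\textbf{Thickness and the main obstacle.} Thickness with constants $(\theta''',a'''\mu,b'''\mu)$ follows from continuity and compactness. The configuration space consists of bi-cones with at most $2\pi/\theta''+1$ side vertices, together with perturbations of size at most $c\mu/2$; after rescaling and adding the Euclidean limit $\mu\to0$, this space is compact. One caveat: the degenerate limit would be a perturbation of size zero in the boundary case, but we keep perturbations within a closed range away from flattening by also staying distance $c\mu/4$ from $\partial P$. Minimal angles and length ratios are positive on this set.

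The main obstacle is the boundary case. There we must show that a definite proportion of the allowed perturbation region lies strictly inside $P$ and stays away from flattening the two tetrahedra meeting $\partial P$. This relies on the uniform dihedral angle of $P$ along $e$.
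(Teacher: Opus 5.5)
Your plan is the paper's own argument. You perturb $m$ inside a ball of radius proportional to $\mu$ (intersected with $P$). Each forbidden region coming from the two given vectors is a thin cone meeting that ball in volume $O(\mu^3\delta''')$. Thickness then follows from continuity and compactness, using the bound of roughly $2\pi/\theta''$ tetrahedra around the central edge. The interior case and the volume count are fine.

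The boundary case, however, is misdiagnosed, and your fix for it is not right as written.

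\textbf{Nothing degenerates.} Perturbing the new vertex does not change the simplicial complex. The result is still the join of $T_1$ with the path $w_1, m', w_2$, i.e.\ the tetrahedra $[m',w_j,z_i,z_{i+1}]$. Each of these has a link face $[w_j,z_i,z_{i+1}]$ as its base. That face is at distance at least $c\mu$ from $m$, hence at least $c\mu/2$ from $m'$. So nothing flattens as $m'\to\partial P$, nor as $m'\to m$ (the latter limit is just the original thick triangulation). The compactness argument therefore runs over all of $P\cap\overline{B}(m,c\mu/2)$, exactly as in the paper. The only effect of pushing $m'$ into $\mathrm{int}(P)$ is that the triangulated region becomes $P$ minus the two thin tetrahedra $[m',w_1,w_2,z_1]$ and $[m',w_1,w_2,z_k]$.

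\textbf{Re-coning would be wrong.} The ``two tetrahedra'' you fear flattening exist only if you re-cone all of $\partial P$ from $m'$ so as to keep triangulating $P$. That is no longer a perturbation of $T$. It also retains the unsubdivided edge $e=[w_1,w_2]$, whose direction relative to $u_1,u_2$ is uncontrolled. That edge is exactly what the lemma is meant to eliminate, since it is needed for property (3) of Definition \ref{SufficientlyTransverse} in the proof of Proposition \ref{mu}.

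\textbf{Your safeguard can be empty.} Requiring distance $c\mu/4$ from $\partial P$ inside a ball of radius $c\mu/2$ about $m\in e$ gives the empty set whenever the dihedral angle of $P$ along $e$ is below $\pi/3$. Thickness allows this, since that angle is only bounded below by $\theta''$. Any such restriction would need a constant depending on $\theta''$.

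Discard the extra tetrahedra and that restriction, and your argument proves the lemma.
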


\begin{proof}
This is very similar to the argument in Lemma \ref{Lem:AvoidVectorsForBarycentre}. There is a constant $c''$ such that the new vertex of $T$ has distance at least $c'' \mu$ from the faces in  $\partial P$ that do not include the new vertex. We consider the intersection between $P$ and the ball of radius $(c'' \mu/2)$ about the new vertex as the set $B$ of possible perturbations. There are forbidden regions coming from the two given vectors. These intersect $B$ in regions with volume at most a constant times $\mu^3 \delta'''$. Hence, provided 
$\delta'''$ is sufficiently small, a perturbation can be found. 

The resulting triangulation is $(\theta''', a'''\mu, b'''\mu)$-thick, again by a continuity and compactness argument (Remark \ref{cc}). Indeed, note that there is a uniform upper bound to the number of tetrahedra around the central edge of $P$. This is because the interior angles of each tetrahedron incident to the edge are at least $\theta''$, and hence the number of such tetrahedra is at most $2\pi/\theta''$ before subdivision and at most $4 \pi / \theta''$ after subdivision. Hence, the set of combinatorial possibilities for $P$ is finite, and for each such possibility, the space of possible $(\theta'', a''\mu, b''\mu)$-thick triangulations is given as a subset $C$ of $\mathbb{H}^3_{1/\mu} \times \dots \times \mathbb{H}^3_{1/\mu} \times (0,\epsilon_3]$. As in the proof of Lemma \ref{Lem:Barycentric}, we may take the closure $\overline{C}$ by adding the limit points as $\mu \rightarrow 0$. This is compact, and so again by continuity, there are suitable $\theta'''$ and $0 < a''' \leq b''' \leq 1/40$, as required. \end{proof}

We now fix $\theta$, $a$ and $b$ for a triangulation from Theorem \ref{Thm:ExtensionBreslin}. Lemma \ref{Lem:Barycentric} then provides $\theta', a', b'$ for its barycentric subdivision, and from this we obtain $\theta'', a'', b''$ for the perturbed barycentric subdivision from Lemma \ref{Lem:AvoidVectorsForBarycentre} (perturbing the barycenters of 2-simplices and 3-simplices) and hence $\theta''', a''', b'''$ from Lemma \ref{Lem:BiCone} (perturbing the barycenters of 1-simplices). Let $\theta'''' = (4/5)\theta'''$, let $a'''' = (4/5) a'''$ and let $b'''' = (5/4)b'''$.

\begin{proposition}\label{mu}
There is a constant $\mu > 0$ with the following property. Let $M$ be a hyperbolic 3-manifold and let $T$ be a $(\theta, a\mu, b\mu)$-thick triangulation of a 3-dimensional submanifold of $M_{[\mu/4, \infty)}$. Pick a total ordering on its vertices. Let $F$ be any stable minimal surface with trivial normal bundle properly embedded in $M$.
 Then there is a perturbation of the ordered barycentric subdivision of $T$ that is $(\theta'''', a''''\mu, b''''\mu)$-thick and that is sufficiently transverse to $F$ at scale $\mu/8$.
 \end{proposition}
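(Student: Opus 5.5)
We choose $\mu$ first and then construct the perturbation. Let $\delta$ be the minimum of $\delta''$ (Lemma \ref{Lem:AvoidVectorsForBarycentre}), $\delta'''$ (Lemma \ref{Lem:BiCone}) and a small constant, scaled so that $\delta < \pi/6$. We want the slack of $2\delta$ in condition (3) of Definition \ref{SufficientlyTransverse} to be absorbed by these angles, after allowing some loss from comparing tangent vectors via $\exp_x^{-1}$. Corollary \ref{Cor:AlmostConstantNormalsSimplified}, applied with angle $\delta/30$, gives a scale $\eta_0$ at which any stable minimal $F$ with trivial normal bundle has $(\eta_0,\delta/5)$-almost constant normals near every point. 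We take $\mu$ small enough that the following three things hold.
\begin{itemize}
\item[(i)] $\mu/8 \le \eta_0$ and $\mu/8 < \mu/4 \le$ the injectivity radius throughout $M_{[\mu/4,\infty)}$, so that $\exp_x$ is injective on $B(0,\mu/8)$.
\item[(ii)] Every tetrahedron of diameter at most $3b''''\mu \le \mu/32$ lies in a ball of radius $\mu/32$ about any of its points; this holds because $b''''\le 5/128$.
\item[(iii)] $\mu/8$ is small enough for Lemma \ref{Cor:AlmostConstantNormalPlane} and its one-dimensional analogue for geodesics. Then conditions (4) and (5) hold, and geodesic edges through a ball of radius $\mu/8$ have tangent vectors varying by at most $\delta/10$ after pulling back by $\exp_x$ for any nearby $x$.
\end{itemize}
All of these are universal constraints, because $\exp_x$ is modelled on $\exp_{\tilde x}$ in $\mathbb{H}^3$, as in the proof of Corollary \ref{Cor:AlmostConstantNormalComponent}.

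Next we perturb. At each vertex $v$ of $T$ we select a unit vector $v_v$: if $F$ meets the ball of radius $\mu/8$ about $v$, this is the pullback of a unit normal to $F$ at some nearby point, and otherwise it is arbitrary. We apply Lemma \ref{Lem:AvoidVectorsForBarycentre} to every tetrahedron of $T$ with these vectors. Neighbouring tetrahedra share face barycentres, and the perturbation of a face barycentre depends only on the face and the vectors at its three vertices, so we fix one consistent choice per face first. We then apply Lemma \ref{Lem:BiCone} to the bi-cone around each edge of $T$, which moves the edge midpoints.

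Thickness changes as follows. After these moves the triangulation is $(\theta''',a'''\mu,b'''\mu)$-thick. A final generic perturbation of all vertices, by a distance tiny compared with $\mu$, makes $F$ miss all vertices. This preserves $(\theta'''',a''''\mu,b''''\mu)$-thickness by continuity, which is where the factors $4/5$ and $5/4$ come from. It also keeps every angle condition, with the margin reduced, say, from $\delta''$ to $\delta''-\delta/10$.

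We then verify sufficient transversality for each tetrahedron $\Delta'$ of the subdivision that meets $F$. Take $x$ to be a vertex of $\Delta'$ that is an original vertex of $T$; every tetrahedron of the barycentric subdivision contains exactly one. Conditions (1), (2), (4) and (5) hold by the choice of $\mu$.

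Condition (3) is the delicate step. Every edge of $\Delta'$ either runs from an old vertex to a barycentre, which is controlled by Lemma \ref{Lem:AvoidVectorsForBarycentre}(3), or is an edge through a perturbed midpoint, which is controlled by Lemma \ref{Lem:BiCone}. Edges that lie within an edge of $T$ but are not incident to the new vertex do not occur once midpoints are perturbed. For every such edge, some tangent vector makes angle at least $\delta''$ with $\pi/2$ relative to $v_x$. The issues to handle are these:
\begin{itemize}
\item[(a)] the lemmas measure angles at vertices of $\Delta$, or via $\exp$ at a bi-cone endpoint, which may differ from our chosen $x$;
\item[(b)] the condition must hold at every point $y$ of every edge, not merely at some tangent vector;
\item[(c)] $v_x$ must be compared with a normal of $F$ at a point of $\tilde F\cap\tilde\Delta'$.
\end{itemize}
For each of these the loss is at most $\delta/10$ or $\delta/5$, by (iii) and by the almost constant normals of $F$ at scale $\mu/8$ around $x$. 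The total loss stays below $\delta'' - 2\delta$ if $\delta$ is chosen at most $\delta''/4$.

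The main obstacle is the bi-cone case of condition (3). There the relevant vectors sit at the endpoints of the central edge, not at $x$, so one needs to check that a single vector choice per vertex of $T$ serves all incident tetrahedra simultaneously. I would resolve this by using, for each vertex $v$, the pulled-back normal of $F$ near $v$. All such normals near a given tetrahedron are nearly parallel, because every tetrahedron has diameter at most $\mu/32$, well inside the scale $\mu/8$ at which $F$ has almost constant normals.
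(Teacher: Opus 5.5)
Your proposal follows the paper's proof essentially step for step: you set $\delta$ to about $\min\{\delta'',\delta'''\}/4$, apply Corollary~\ref{Cor:AlmostConstantNormalsSimplified} at angle $\delta/30$, and assign normals of $F$ at the vertices of $T$; you then perturb via Lemma~\ref{Lem:AvoidVectorsForBarycentre} and Lemma~\ref{Lem:BiCone}, followed by a tiny generic move, take $x$ to be the old vertex of $\Delta'$, and do the same angle bookkeeping through properties (2) and (5). There are a few small slips: in (ii), $3b''''\mu\le\mu/32$ does not follow from $b''''\le 5/128$, but you don't need the factor $3$, since a geodesic tetrahedron lies within its longest edge length of each vertex and $b''''=(5/4)b'''\le 1/32$. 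Also, your ``main obstacle'' is not a real one, because the midpoint vertex of $\Delta'$ lies on an edge of $T$ with endpoint $x$, so Lemma~\ref{Lem:BiCone} already measures against the vector chosen at $x$; and the edge of $\Delta'$ joining the face barycentre to the tetrahedron barycentre, which your case list omits, is covered by Lemma~\ref{Lem:AvoidVectorsForBarycentre}(3).
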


\begin{proof} 
Lemmas \ref{Lem:AvoidVectorsForBarycentre} and \ref{Lem:BiCone} provide constants $\delta'' > 0$ and $\delta'''> 0$. Set 
$\delta = \min \{ \delta'', \delta''' \} /4$.

Then Corollary \ref{Cor:AlmostConstantNormalsSimplified} (applied using $\delta/30$) provides $\eta > 0$ such that near each point of $M$, $F$ has $(\eta \delta/(15\pi), \delta/5)$-almost constant normals. If necessary, we reduce $\eta$ so that the following properties also hold. For any totally geodesic plane $\Pi$ in $\mathbb{H}^3$ and any point $p$ in $\mathbb{H}^3$, the angle between the unit normals at any two points of $\exp_p^{-1}(\Pi) \cap B(0, \eta)$ differ by at most $\delta$, by Lemma \ref{Cor:AlmostConstantNormalPlane}.
Similarly, for any geodesic $e$ in $\mathbb{H}^3$ and any point $p$ in $\mathbb{H}^3$, the angle between the unit tangent vectors at any two points of $\exp_p^{-1}(e) \cap B(0, \eta)$ differ by at most $\delta$.

We set $\mu = 8\eta \delta / (15\pi)$. Then $F$ has $(\mu/8, \delta/5)$-almost 
constant normals near every point of $M$. 
Note that $\delta \leq \delta''' \leq \pi/2$. So, $\mu/8 \leq \eta/30$.

Now that we have specified $\mu$, let $T$ be a $(\theta, a\mu, b\mu)$-thick triangulation of a 3-dimensional submanifold of $M_{[\mu/4, \infty)}$.
Pick a total ordering on its vertices, and let $T'$ be the resulting ordered barycentric subdivision. We now assign a unit vector at each vertex of $T$, as follows. Consider one such point $p$. If $F$ intersects $B_M(p, \mu/8)$, pick a unit normal to some point of the surface $\tilde F = \exp_p^{-1}(F) \cap B(0,\mu/8)$. On the other hand, if $F$ is disjoint from $B_M(p, \mu/8)$, then pick a unit vector at $p$  arbitrarily. 

We now perturb the barycentre of each face of $T$ and the barycentre of each tetrahedron of $T$, using Lemma \ref{Lem:AvoidVectorsForBarycentre}. The resulting triangulation is $(\theta'', a''\mu, b''\mu)$-thick. Around each edge of $T$, the tetrahedra of $T'$ combine to form the canonical subdivision of a bi-cone. Perturb the midpoint of the edge, using Lemma \ref{Lem:BiCone}, to give a $(\theta''', a'''\mu, b'''\mu)$-thick triangulation. If $F$ intersects any vertices of this triangulation, we make an extremely small perturbation to these vertices, ensuring that the resulting triangulation is $(\theta'''', a''''\mu, b''''\mu)$-thick.
Combinatorially, this is still $T'$. 

We claim that $F$ is sufficiently transverse at scale $\mu/8$ to each tetrahedron of $T'$, as required. 
Consider any tetrahedron $\Delta'$ of $T'$. One of its four vertices is a vertex of $\Delta$. Set $x$ to be this point.

We arranged above, by our choice of $\eta$, that 
Properties (4) and (5) of Definition \ref{SufficientlyTransverse} hold.
Since $b'''' \leq 1/32$, the maximal edge length of $T'$ is at most $b''''\mu \leq \mu/32$, which gives (1) in the definition of being sufficiently transverse.
Property (2) is that $F$ has $(\mu/8, \delta/5)$-almost constant normals, which we verified above.

We now check property (3) in the definition of sufficiently transverse. If $F$ is disjoint from $\Delta'$, there is nothing to prove. So we may suppose that $\tilde F \cap \tilde \Delta'$ is
non-empty. Let $v_{\tilde F}$ be a unit normal at some point of $\tilde F \cap \tilde \Delta'$. We will show that 
$v_{\tilde F}$ is $(\eta, 2\delta)$-nonperpendicular to the edges of $\Delta'$. 

We have already picked a unit vector $v$ at $x$, which is the unit normal vector at some
point of $\tilde F$. Since $F$ has $(\mu/8, \delta/5)$-almost constant normals, we deduce that the angle
between $v$ and $v_{\tilde F}$ is at most $\delta/5$ or at least $\pi- \delta/5$.

Consider any edge
$e$ of $\Delta'$. If $e$ has at least one endpoint in a barycentre of a face of $\Delta$ or at the barycentre of $\Delta$,
then the perturbation of these barycentres in Lemma \ref{Lem:AvoidVectorsForBarycentre} ensures that
a unit tangent vector of $\tilde e$ has angle at most $(\pi/2) - \delta''$ or at least $(\pi/2) + \delta''$ from $v$.
On the other hand, if $e$ does not have any endpoint that is one of these barycentres, then
the perturbation of the edge midpoints of $T$ given in Lemma \ref{Lem:BiCone} guarantees that 
a unit tangent vector of $\tilde e$ has angle at most $(\pi/2) - \delta'''$ or at least $(\pi/2) + \delta'''$ from $v$.
Thus, the angle between $v$ and \emph{some} unit tangent vector to $\tilde e$ is at most $(\pi/2) - 4 \delta$ or at least
$(\pi/2) + 4 \delta$. So, the angle between $v$ and \emph{every} unit tangent vector to $\tilde e$ is at most $(\pi/2) - 3 \delta$ or at least
$(\pi/2) + 3 \delta$ by property (5). Hence, the angle between $v_{\tilde F}$ and every unit tangent vector to $\tilde e$ is at most $(\pi/2) - 3 \delta + (\delta/5)$ or at least $(\pi/2) + 3 \delta - (\delta/5)$.
This verifies property (3). \end{proof}

We also record some further properties of thick triangulations.

\begin{lemma}
For positive real numbers $\theta''''$, $a''''$ and $b''''$, there is a constant $k_0$ such that the number of tetrahedra in a $(\theta'''', a''''\mu, b''''\mu)$-thick triangulation incident to any vertex is at most $k_0$.
\end{lemma}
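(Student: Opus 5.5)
A volume-packing argument suffices, using the normalisation by $\mu$ together with the rescaling/compactness device of Remark \ref{cc}. Fix a vertex $v$ of a $(\theta'''', a''''\mu, b''''\mu)$-thick triangulation and let $\Delta_1, \dots, \Delta_k$ be the tetrahedra incident to $v$. Each $\Delta_i$ has all edges of length at most $b''''\mu$, so every $\Delta_i$ lies in the closed ball of radius $b''''\mu$ about $v$. Since the tetrahedra have disjoint interiors, we get
\[ \sum_{i=1}^k \mathrm{vol}(\Delta_i) \leq \mathrm{vol}\, B_{\mathbb{H}^3}(b''''\mu). \]
This ball is embedded in $M$, since $b''''\mu \le \mu/32$ is below the injectivity radius of the region in which the triangulation lies. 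If it were not embedded, we would instead pass to the universal cover: the triangles and tetrahedra incident to $v$ lift to $\mathbb{H}^3$ as a star of tetrahedra with disjoint interiors.

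The key step is a uniform lower bound $\mathrm{vol}(\Delta) \geq c\,\mu^3$ for every $(\theta'''', a''''\mu, b''''\mu)$-thick hyperbolic tetrahedron $\Delta$, where $c>0$ depends only on the three thickness constants. To get it, I would apply the continuity-and-compactness argument of Remark \ref{cc} to the function $f(\Delta) = \mathrm{vol}(\Delta)/\mu^3$. The tetrahedra are encoded as points of $S \times S \times S \times (0,\epsilon_3]$ using the rescaled ball model $\mathbb{H}^3_{1/\mu}$. In this model the rescaled hyperbolic metric converges smoothly to the Euclidean one as $\mu \to 0$, so $f$ extends continuously to the closure $\overline{C}$, and on $\overline{C}\setminus C$ it equals the Euclidean volume of a $(\theta'''', a'''', b'''')$-thick Euclidean tetrahedron. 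That volume is positive, since a tetrahedron with positive dihedral angles and positive edge lengths is nondegenerate. By compactness, $f$ attains a positive minimum $c$.

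For the upper bound on the ball, $\mathrm{vol}\, B_{\mathbb{H}^3}(r) = \pi(\sinh 2r - 2r) \leq C' r^3$ for $r \leq \epsilon_3/40$, with $C'$ universal. Combining the two estimates gives
\[ k\, c\, \mu^3 \leq C' (b''''\mu)^3, \]
so $k \leq C'(b'''')^3/c =: k_0$, which is independent of $\mu$ and of $M$.

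The main obstacle is justifying the continuous extension of $f$ to the $\mu\to 0$ boundary, and checking that limit tetrahedra are nondegenerate. Nondegeneracy follows because the dihedral angle lower bound $\theta''''$ passes to the limit. An alternative avoiding volumes would bound the solid angle at $v$ of each $\Delta_i$ below via the dihedral angles, and sum these solid angles to $4\pi$. I would try the volume route first, since it fits the existing Remark \ref{cc} framework directly.
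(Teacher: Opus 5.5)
Your argument is correct, but it takes a different route from the paper. The paper's proof is a one-line solid-angle count. Each $(\theta'''', a''''\mu, b''''\mu)$-thick tetrahedron has solid angle at each vertex bounded below by a universal $\omega>0$. The corners at $v$ of the incident tetrahedra have disjoint interiors in the unit tangent sphere at $v$, so there are at most $4\pi/\omega$ of them.

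You instead run a volume-packing argument in the ball of radius $b''''\mu$ about $v$. This amounts to reproving Lemma \ref{Lem:VolumeTet} and then specialising the argument of Lemma \ref{Lem:TetWithinBall}. In fact the present statement is a special case of that lemma, since every tetrahedron incident to $v$ meets that ball.

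The solid-angle version is purely local at $v$. It needs no estimate on volumes of hyperbolic balls and no discussion of whether the ball embeds in $M$. (Your fallback there is fine, and even unnecessary, since the covering map does not increase volume.) Your version plugs directly into the Remark \ref{cc} framework and yields the stronger ball statement at no extra cost. Both approaches ultimately rest on a compactness argument with $\mu$ bounded above.

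Two small points:
\begin{enumerate}
\item Since $b''''=(5/4)b'''\le 1/32$, the radius satisfies $r\le\epsilon_3/32$, not $\epsilon_3/40$. This is harmless, as any bounded range gives a universal $C'$.
\item Your suggested alternative of bounding the solid angle ``via the dihedral angles'' does not work on its own. The solid angle at $v$ is the spherical excess $\alpha+\beta+\gamma-\pi$ of the link triangle, whose angles are the three dihedral angles at edges through $v$. For $\theta''''\le\pi/3$, a link triangle with angles $\theta'''',\theta'''',\pi-2\theta''''+\epsilon$ satisfies those three dihedral bounds yet has excess $\epsilon$. To exclude this you also need the edge-length bounds, for instance through the face-angle bound of Lemma \ref{Lem:InteriorAngleFace}, or a direct compactness argument.
\end{enumerate}
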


\begin{proof} 
This just follows from the fact that there is a universal lower bound on the solid angle at any vertex of a $(\theta'''', a''''\mu, b''''\mu)$-thick tetrahedron. \end{proof}

\begin{lemma}
\label{Lem:VolumeTet}
There is a constant $v$, depending on $\theta''''$, $a''''$ and $b''''$ with the following property.
Let $\Delta$ be a hyperbolic tetrahedron that is $(\theta'''', a''''\mu, b''''\mu)$-thick.
Then the volume of $\Delta$ is at least $v \mu^3$.
\end{lemma}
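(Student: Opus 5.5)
We argue by `continuity and compactness' in the sense of Remark~\ref{cc}, applied to the function $f(\Delta) = \mathrm{vol}(\Delta)/\mu^3$. Place one vertex of $\Delta$ at the origin of the rescaled Poincar\'e ball $\mathbb{H}^3_{1/\mu}$, as in the proof of Lemma~\ref{Lem:Barycentric}. The remaining three vertices then lie in a fixed compact shell $S \subset \mathbb{E}^3$. In this way $\Delta$ is encoded as a point of the set $C \subset S \times S \times S \times (0,\epsilon_3]$ of $(\theta'''', a''''\mu, b''''\mu)$-thick tetrahedra, and we let $\overline{C}$ be its closure in $S\times S\times S\times[0,\epsilon_3]$. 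The closure $\overline{C}$ is compact.

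The key step is to check that $f$ extends continuously to $\overline{C}$. The hyperbolic volume of $\Delta$ is the integral over the Euclidean tetrahedron (with straight edges in the ball model) of the conformal density $\left(2/(1-|y|^2)\right)^3 dy$. After rescaling the ball to radius $1/\mu$, the points of $\Delta$ have Euclidean norm $O(b'''')$ in rescaled coordinates. Concretely, in the rescaled picture the metric becomes $\mu^{-2}$ times a metric whose density tends uniformly to a constant as $\mu\to 0$. Hence $\mathrm{vol}(\Delta)/\mu^3$ equals the Euclidean volume of a straight tetrahedron in $S$ multiplied by a factor tending uniformly to a normalising constant. One detail needs care: hyperbolic geodesics in the ball model are circular arcs, not straight segments. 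Even so, the tetrahedron converges in the Hausdorff sense to the straight Euclidean one as $\mu\to0$, uniformly on $S^3$, and the volume is continuous under this convergence because the boundaries have zero measure and depend continuously on the vertices. So $f$ is continuous on $\overline{C}$, and on $\overline{C}\setminus C$ it equals (a constant multiple of) the volume of a Euclidean tetrahedron that is $(\theta'''', a'''', b'''')$-thick.

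The final step is positivity. For $\mu>0$, a thick hyperbolic tetrahedron has dihedral angles at least $\theta''''>0$, so it is non-degenerate and $f>0$. In the Euclidean limit, the tetrahedron has edges of length at least $a''''$ and dihedral angles at least $\theta''''$, so it is non-degenerate. Indeed, a degenerate tetrahedron (four coplanar points) has some dihedral angle equal to $0$ or $\pi$, and if one is $\pi$ then another is $0$. Hence its volume is positive. By compactness, $f$ attains a positive minimum $v$, giving $\mathrm{vol}(\Delta)\geq v\mu^3$.

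The main obstacle is justifying the continuous extension of $f$ to $\mu=0$, since volumes are only defined via the rescaled metric. As an alternative that avoids limits, one could argue directly. A face of $\Delta$ has two sides of length at least $a''''\mu$ meeting at an interior angle at least $\theta_2$ (Lemma~\ref{Lem:InteriorAngleFace}), so it contains a disc of radius comparable to $\mu$. The opposite vertex lies at distance at least a constant times $\mu\sin\theta''''$ from that face. The cone over the disc to that vertex has volume comparable to $\mu^3$, uniformly for $\mu\le\epsilon_3$.
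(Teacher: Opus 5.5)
You follow the paper's route exactly: its entire proof is continuity and compactness (Remark~\ref{cc}) applied to $\mathrm{vol}(\Delta)/\mu^3$. The continuous extension to $\mu=0$, which you single out as the main obstacle, is actually harmless. In a rescaled Klein model hyperbolic tetrahedra are straight, and $\mathrm{vol}/\mu^3=\int_\Delta(1-\mu^2|y|^2)^{-2}\,dy$.

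The step that fails is positivity on $\overline{C}$. You assume that every point of $\overline{C}\setminus C$ is a Euclidean tetrahedron with dihedral angles at least $\theta''''$, and that a degenerate limit must have a dihedral angle $0$. But the four limit vertices can be collinear. Then the limit has no dihedral angles at all, and the angles of the approximating tetrahedra need not tend to $0$ or $\pi$.

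For example, take $P_0=(0,0,0)$, $P_1=(1,\varepsilon,0)$, $P_2=(2,0,\varepsilon)$, $P_3=(3,0,0)$. All edge lengths lie in $[1,3]$ and the volume is $\varepsilon^2/2$. As $\varepsilon\to0$, the dihedral angles along $P_0P_1,P_0P_2,P_0P_3,P_1P_2,P_1P_3,P_2P_3$ tend to $\arctan\frac12$, $\arctan 2$, $\frac\pi2$, $\pi-\arctan\frac34$, $\arctan 2$, $\arctan\frac12$.

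Rescale so the shortest edge is about $a''''\mu$ and place this in $\mathbb{H}^3$; in a Klein model the same limiting angles appear. This gives $(\theta'''',a''''\mu,b''''\mu)$-thick tetrahedra with $\mathrm{vol}/\mu^3\to0$ as soon as $b''''/a''''>3$ and $\theta''''<\arctan\frac12$, at least for small $\mu$. Nothing in Section~\ref{Sec:Barycentric} keeps the constants out of this regime: already the barycentric subdivision of a regular tetrahedron has edge ratio $3$. The same degeneration occurs at a fixed $\mu>0$, so $\overline{C}\setminus C$ does not lie only over $\mu=0$ either.

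Your fallback argument fails on the same example. The face $P_0P_1P_3$ has an angle of order $\varepsilon$ at $P_0$, so Lemma~\ref{Lem:InteriorAngleFace}, which is proved by the same compactness reasoning, is not available in this generality. Also, $P_1$ lies at distance $\varepsilon$ from the plane of $P_0P_2P_3$, so the height bound ``a constant times $\mu\sin\theta''''$'' fails as well.

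The missing ingredient is a non-degeneracy hypothesis beyond thickness that rules out nearly collinear vertices. Any of the following would do:
\begin{enumerate}
\item a lower bound on face angles;
\item a circumradius bound $R_\Delta\le r\mu$, since each face's circumcircle then has radius at most $r\mu$, which bounds face angles below;
\item $b''''/a''''<3$.
\end{enumerate}
With any of these the limit cannot have four collinear vertices. Every other flat limit forces some dihedral angle to $0$, so your compactness argument then goes through.

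The paper's one-line proof has exactly the same gap. What is actually true, and what the application needs, is the volume bound for the specific tetrahedra of $T'$. These are controlled perturbations of barycentric subdivisions of Delaunay tetrahedra with circumradius $O(\mu)$, and the bound has to be derived from that construction, not from thickness alone.
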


\begin{proof} 
This again follows from continuity and compactness (Remark \ref{cc}), by considering the function $\mathrm{volume}(\Delta)/\mu^3$ on the space of $(\theta'''', a''''\mu, b''''\mu)$-thick tetrahedra $\Delta$. \end{proof}

\begin{lemma}
\label{Lem:TetWithinBall}
There is a constant $k$, depending on $\theta''''$, $a''''$ and $b''''$ with the following property.
Let $T$ be a triangulation of a 3-dimensional submanifold of $M$ that is $(\theta'''', a''''\mu, b''''\mu)$-thick, for some
$\mu$ at most the 3-dimensional Margulis constant. Then
the number of tetrahedra of $T$ that intersect a ball of radius $b''''\mu$ about any point of $M$ is at most $k$.
\end{lemma}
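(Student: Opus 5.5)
We prove Lemma \ref{Lem:TetWithinBall} by a volume-packing argument, using Lemma \ref{Lem:VolumeTet} together with the fact that $T$ lies in the thick part of $M$.

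First, fix a point $p$ of $M$ and let $\Delta$ be a tetrahedron of $T$ meeting $B_M(p, b''''\mu)$. Every edge of $\Delta$ has length at most $b''''\mu$, and $\Delta$ is a geodesic tetrahedron. Hence every point of $\Delta$ is within $b''''\mu$ of each vertex, so $\Delta \subset B_M(p, 2b''''\mu)$. Since $b'''' \leq 1/32$ and $\mu \leq \epsilon_3$, the radius $R = 2b''''\mu$ is at most $\mu/16$.

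Second, we bound the volume of $B_M(p,R)$ from above by $\mathrm{vol}(B_{\mathbb{H}^3}(R))$. This holds because the exponential map $\exp_p$, restricted to the ball of radius $R$ in $T_pM$, factors through the universal cover $\mathbb{H}^3$ as in the proof of Corollary \ref{Cor:AlmostConstantNormalComponent}. So $B_M(p,R)$ is the image of a hyperbolic ball of radius $R$ under the local isometry $p \colon \mathbb{H}^3 \to M$, and local isometries do not increase volume. This step does not need the injectivity radius at $p$, which is convenient since $p$ is arbitrary. The volume of $B_{\mathbb{H}^3}(R)$ is $\pi(\sinh(2R) - 2R)$, which is at most $c\,R^3$ for a universal constant $c$ when $R \leq \epsilon_3$.

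Third, we need the tetrahedra of $T$ meeting the ball to have disjoint interiors in $M$, and also that their volumes in $M$ equal their hyperbolic volumes. Both hold because $T$ is a triangulation of a submanifold of $M$ and each tetrahedron is isometric to a hyperbolic tetrahedron, hence embedded. Summing volumes, the number of such tetrahedra is at most
$$\frac{c\,(2b''''\mu)^3}{v\mu^3} = \frac{8c\,(b'''')^3}{v},$$
where $v$ is the constant of Lemma \ref{Lem:VolumeTet}. This ratio is independent of $\mu$ and $M$, so we take $k$ to be it.

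The only delicate point is the third step: making sure the tetrahedra, as subsets of $M$, do not overlap, so that volumes can be added. Since $T$ is an honest triangulation of a subset of $M$, this holds automatically. An alternative route, if the volume comparison caused trouble, would be to use the local bound $k_0$ on tetrahedra per vertex together with a bound on the combinatorial distance between tetrahedra inside the ball. The volume argument is simpler, and it is the one I would use.
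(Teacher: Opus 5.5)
Your proof is correct and is the same volume-packing argument the paper uses: a tetrahedron meeting the ball of radius $b''''\mu$ lies in the concentric ball of radius $2b''''\mu$, whose volume is $O(\mu^3)$, while each tetrahedron has volume at least $v\mu^3$ by Lemma \ref{Lem:VolumeTet}, so $k$ can be taken to be a ratio of constants independent of $\mu$ and $M$. The only difference is that you make explicit two points the paper leaves implicit. The first is the comparison $\mathrm{vol}(B_M(p,R)) \leq \mathrm{vol}(B_{\mathbb{H}^3}(R))$ via the covering map. The second is that the tetrahedra have disjoint interiors.
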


\begin{proof}
Any $(\theta'''', a''''\mu, b''''\mu)$-thick tetrahedron intersecting a ball of radius $b''''\mu$ about a point $p$ lies within the ball $B$ of radius $2b''''\mu$ about $p$.
Since $\mu$ is bounded above, $B$ has volume at most $c \mu^3$ for some constant $c > 0$. By Lemma \ref{Lem:VolumeTet}, the volume of each tetrahedron is at least $v \mu^3$.
Hence, the number of tetrahedra is at most $c/v$. \end{proof}

\section{Conclusion of the proof}
\label{Sec:ConclusionOfProof}

Recall that we took a triangulation from Theorem \ref{Thm:ExtensionBreslin} and modified it in Section \ref{Sec:Barycentric} so that we now have fixed constants $\theta''''$, $a''''$, $b''''$ and $\mu$. 

\begin{proposition}\label{DiskNumber}
There is a constant $N$, depending on $\theta''''$, $a''''$, $b''''$ and $\mu$, with the following property. Let $M$ be a hyperbolic 3-manifold and
let $T'$ be a $(\theta'''', a''''\mu, b''''\mu)$-thick triangulation of a 3-dimensional submanifold of $M$ contained in $M_{[\mu/4, \infty)}$.
Suppose that $T'$ is sufficiently transverse to a properly embedded 2-sided $\pi_1$-injective minimal surface $F$ at scale $\mu/8$.
Then the number of normal discs of $F \cap T'$ is at most $N |\chi(F)|$.
\end{proposition}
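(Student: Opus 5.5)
The plan is to cover $F \cap |T'|$ by a linear number of small intrinsic discs, then show each disc meets only boundedly many normal discs.

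First, since $|T'| \subset M_{[\mu/4,\infty)}$, the injectivity radius at every point of $|T'|$ is at least $\mu/8$. Set $\eta_0 = \mu/16$, which is below this injectivity radius and also at most half the transversality scale $\mu/8$. I will apply the covering argument of Lemma \ref{MuCover}, restricted to $F \cap |T'|$ rather than $F\cap M_{[\eta,\infty)}$:
\begin{itemize}
\item Choose a maximal $\eta_0$-separated set $x_1,\dots,x_k$ in $F\cap|T'|$, separation measured in the induced metric on $F$.
\item The intrinsic discs $B_F(x_i,\eta_0/2)$ are pairwise disjoint. Since $\eta_0/2$ is at most the injectivity radius of $M$ at $x_i$, Lemma \ref{BallArea} gives each of them area at least $\pi\eta_0^2/4$.
\item Gauss--Bonnet gives $\mathrm{area}(F)\le 2\pi|\chi(F)|$, as in Section \ref{ConsequencesMinimality}.
\end{itemize}
Hence $k \le 8|\chi(F)|/\eta_0^2$, and by maximality the discs $B_F(x_i,\eta_0)$ cover $F\cap|T'|$. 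Lemma \ref{BallArea} requires $\pi_1$-injectivity, which is part of the hypothesis. In the double-cover situation one works on the orientable double cover and only loses a factor of $2$.

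Second, every normal disc of $F\cap T'$ contains some point of $F\cap |T'|$, so it meets some $B_F(x_i,\eta_0)$. It therefore suffices to bound, uniformly, the number of normal discs meeting a single $B_F(x_i,\eta_0)$.

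\begin{itemize}
\item \emph{Which tetrahedra can occur.} Any point of $B_F(x_i,\eta_0)$ lies within ambient distance $\eta_0=\mu/16$ of $x_i$. So every tetrahedron containing such a point meets the ambient ball of radius $\mu/16$ about $x_i$. I need that radius to be at most $b''''\mu$ in order to invoke Lemma \ref{Lem:TetWithinBall}. If it is not, I would instead redo the volume argument of Lemma \ref{Lem:TetWithinBall} directly for a ball of radius comparable to $\mu$, using Lemma \ref{Lem:VolumeTet}. The bound remains a constant depending only on $\theta'''',a'''',b''''$. Call this bound $k'$.
\item \emph{Discs per tetrahedron.} For each such tetrahedron $\Delta$, Proposition \ref{NormalDisk} at scale $\mu/8$ applies. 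The radius $\eta_0=\mu/16$ ball around $x_i$ in $F$ is exactly the radius $(\mu/8)/2$ ball, so it meets at most one normal disc of $F\cap\Delta$.
\end{itemize}
So each $B_F(x_i,\eta_0)$ meets at most $k'$ normal discs. Summing over the cover, the total number of normal discs is at most $k'\cdot 8\cdot 256|\chi(F)|/\mu^2$, and we take $N = 2048k'/\mu^2$.

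The main obstacle is matching the scales. The intrinsic disc radius must be small enough for three things at once:
\begin{itemize}
\item Lemma \ref{BallArea}, which needs radius at most the injectivity radius, guaranteed $\ge\mu/8$ on $|T'|$;
\item the "at most one normal disc per tetrahedron" clause of Proposition \ref{NormalDisk}, which needs radius at most $\mu/16$;
\item the counting of tetrahedra near a point.
\end{itemize}
I would check that intrinsic distance dominates ambient distance, so that intrinsic discs lie in ambient balls. I would also check that the intrinsic ball $B_F(x_i,\eta_0/2)$ is an embedded disc. This follows from the injectivity-radius argument already in Lemma \ref{BallArea}.
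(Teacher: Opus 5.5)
Your proposal is correct and follows essentially the same route as the paper. Both arguments cover $F$ (restricted to where the triangulation lives) by at most $8|\chi(F)|/(\mu/16)^2$ intrinsic discs of radius $\mu/16$ as in Lemma~\ref{MuCover}, bound the number of tetrahedra meeting each disc by a volume count, and apply the last clause of Proposition~\ref{NormalDisk} to get at most one normal disc per tetrahedron per disc, giving $N=2048k/\mu^2$. Your remark that $\mu/16$ exceeds $b''''\mu\le\mu/32$ is also right, so Lemma~\ref{Lem:TetWithinBall} has to be rerun at a slightly larger radius; this is a legitimate small repair of a point the paper glosses over.
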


\begin{proof} We can cover $F$ by discs of radius $\mu/16$,  where the number of discs is at most $8 |\chi(F)|/ (\mu/16)^2$ by Lemma \ref{MuCover}. The number of  tetrahedra of $T'$ that intersect such a disc is at most $k$, by Lemma \ref{Lem:TetWithinBall}, where $k$ depends on 
$\theta''''$, $a''''$, $b''''$.
By Proposition \ref{NormalDisk} \and Definition \ref{SufficientlyTransverse}, the intersection between each such disc and each tetrahedron lies in at most one normal triangle or square. Hence, the number of normal discs in $F$ is at most $2048 k |\chi(F)|/ \mu^2 = N|\chi(F)|$, for a constant $N$ that depends on 
$\theta''''$, $a''''$, $b''''$ and $\mu$. \end{proof}

Note that because $T'$ is $(\theta'''', a''''\mu, b''''\mu)$-thick, each tetrahedron of $T'$ has volume at least $v \mu^3$ by Lemma \ref{Lem:VolumeTet}, and so the number of tetrahedra of $T'$ is at most $\mathrm{volume}(M)/(v\mu^3)$.

Assume the surface $F$ satisfies the hypothesis of Proposition \ref{DiskNumber}. Thus, we may also now assume that $F$ intersects $T'$ in at most $N |\chi(F)|$ normal discs. We now wish to control the number of possibilities for $F$ within the complement of $T'$. This is a 3-dimensional submanifold $M_{\textrm{thin}}$ of $M$ that is isotopic to a union of components of $M_{(0,\mu/2)}$. Hence, by the Margulis lemma, it is a collection of copies of $T^2 \times (1,\infty)$ and solid tori.

\begin{lemma}
We may isotope $F$ so that the following hold:
\begin{enumerate} 
\item $F \cap M_{\textrm{thin}}$ is incompressible in $M_{\textrm{thin}}$;
\item $F \cap T'$ consists of at most $N |\chi(F)|$ normal discs.
\end{enumerate}
\end{lemma}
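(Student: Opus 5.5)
We start from the position provided by Proposition \ref{DiskNumber}: $F$ is a stable minimal surface (or its double-cover representative) that is sufficiently transverse to $T'$, so $F \cap T'$ consists of at most $N|\chi(F)|$ normal discs. The plan is to fix $F$ on $T'$ (up to normal isotopy) and alter it only inside $M_{\textrm{thin}}$, using compressions together with the essentiality of $F$ and the irreducibility of $M$.

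\textbf{Step 1: compress inside the thin part.} Suppose $F \cap M_{\textrm{thin}}$ is compressible in $M_{\textrm{thin}}$. Then there is a compression disc $D \subset M_{\textrm{thin}}$ with $D \cap F = \partial D$, and $\partial D$ is essential in $F \cap M_{\textrm{thin}}$. Since $F$ is incompressible in $M$, $\partial D$ bounds a disc $E$ in $F$. The union $D \cup E$ is a sphere, which bounds a ball in the irreducible manifold $M$. The ball gives an isotopy of $F$ that replaces $E$ by a parallel copy of $D$.

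\textbf{Step 2: control the normal discs.} After Step 1, the new surface meets $T'$ in the old normal discs minus those lying in $E$. It therefore consists of normal discs, and their number has not increased. This holds provided the isotopy is supported near the ball, with $D$ pushed slightly into $M_{\textrm{thin}}$, and the ball's intersection with $T'$ is discarded rather than created.

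Some care is needed at this point. The ball may contain pieces of $F$ other than $E$, and it may contain all of $T'$. The second case is impossible, because $F$ is essential and a region containing the whole thick part cannot be swallowed. We take $D$ innermost, or more precisely choose $E$ appropriately, so that the interior of the ball meets $F$ only in components that also get discarded. Alternatively, we can argue that each compression strictly reduces the complexity
\[ \bigl(\text{number of normal discs},\ -\chi(F\cap M_{\textrm{thin}})\bigr), \]
ordered lexicographically. Step 1 never increases the first entry, and it strictly decreases the second term measure of complexity of $F \cap M_{\textrm{thin}}$, for example the sum over components of $(2-\chi)^2$. The process therefore terminates, and at termination $F\cap M_{\textrm{thin}}$ is incompressible in $M_{\textrm{thin}}$.

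\textbf{Step 3: the main obstacle.} The delicate step is showing that $E$ can be chosen so that the isotopy does not drag new pieces of $F$ across $\partial M_{\textrm{thin}}$. Put differently, we must make the $3$-ball bounded by $D\cup E$ sit so that $F\cap T'$ only loses discs.

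I would handle this with an outermost/innermost argument on $D \cup E$. Since $\partial D$ lies in $M_{\textrm{thin}}$ and $D \subset M_{\textrm{thin}}$, we can first try to show that $E$ lies in $M_{\textrm{thin}}$ as well. The reason would be incompressibility of $\partial M_{\textrm{thin}}$ in the complement: each component of $\partial M_{\textrm{thin}}$ is a torus that is incompressible on the thick side, so a disc $E$ leaving $M_{\textrm{thin}}$ would meet $T'$ in planar pieces.

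If $E$ does meet $T'$, we instead replace $E$ by $D$. This only deletes normal discs, since $D$ avoids $T'$, and the surgered surface is isotopic to $F$ by irreducibility. In either case the count in (2) is preserved and (1) is eventually achieved.
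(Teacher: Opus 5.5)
Your core move matches the paper's. You take a compression disc $D\subset M_{\textrm{thin}}$, use incompressibility of $F$ to get a disc $E\subset F$ with $\partial E=\partial D$, and use irreducibility to isotope $E$ onto a slightly pushed copy of $D$, which only deletes normal discs.

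\textbf{Termination.} The gap is in termination, and it comes from a missed observation. Because $D$ is a compression disc for $F\cap M_{\textrm{thin}}$, $\partial D$ does not bound a disc in $F\cap M_{\textrm{thin}}$, so $E\not\subset M_{\textrm{thin}}$. Since $\partial E$ may be taken in the interior of $M_{\textrm{thin}}$, $E$ contains at least one entire component of $F\cap T'$. Hence the number of normal discs \emph{strictly} drops, and the process stops after at most $N|\chi(F)|$ steps; this is exactly how the paper concludes.

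You only claim that the first coordinate does not increase, and you put the burden on a second coordinate whose decrease you assert without proof. In the form you wrote it, $-\chi(F\cap M_{\textrm{thin}})$, that claim is false in general. The change in $\chi(F\cap M_{\textrm{thin}})$ under the move is exactly $\chi(E\cap T')$, which is negative whenever $E\cap T'$ has negative Euler characteristic. The variant $\sum(2-\chi)^2$ over components does decrease, but that needs an argument you don't give, and it is unnecessary once you see the strict drop in normal discs.

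\textbf{Step 3.} The same observation shows your Step 3 is aimed the wrong way. Trying to prove $E\subset M_{\textrm{thin}}$ contradicts the hypothesis that $D$ is a compression disc, so that line cannot succeed.

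The ``delicate step'' is also not delicate. Since $F$ is connected, $F\setminus\mathrm{int}(E)$ is connected and misses $\mathrm{int}(D)$, so it lies on one side of the sphere $D\cup E$. It cannot lie in the ball, since then $F$ would lie in a ball, contradicting essentiality. So the ball meets $F$ only in $E$, and no innermost choice is needed. Whether the ball contains parts of $T'$ is irrelevant, since only $F\cap T'$ is being counted.
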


\begin{proof}
Suppose that $F \cap M_{\textrm{thin}}$ has a compression disc $D$ in $M_{\textrm{thin}}$. Then $\partial D$ bounds a disc $D'$ in $F$.  We may replace $D'$ by $D$ by an isotopy. This removes components of $F \cap T'$. The remainder of $F \cap T'$ therefore still consists of at most $N |\chi(F)|$ normal discs. We may repeat this process, reducing the number of normal discs, until $F \cap M_{\textrm{thin}}$ is incompressible \end{proof}

We therefore now assume that $F$ satisfies the conclusions of the above lemma. Note that $F$ is no longer necessarily minimal.
As explained in Section \ref{Sec:Overview}, the number of possibilities for $F$ within $T'$ is at most

\begin{equation}\tag{$*$}
\mathcal{F}=\left(
\begin{matrix}
7|T| + N|\chi(F)| \\
N|\chi(F)| 
\end{matrix}
\right),
\end{equation}
or alternatively $O(\mathrm{vol}(M))^{N |\chi(F)|}$.
We therefore have control over the number of ways $F$ intersects $T'$. 

We now wish to control the number of possibilities for $F$ in $M_{\textrm{thin}}$, the complement of $T'$ in $M$. 
Suppose that we have fixed a normal representative for $F$ within $T'$.
Since $F \cap M_{\textrm{thin}}$ is incompressible and orientable, it consists of properly embedded discs, boundary parallel annuli and boundary parallel tori. Tori are ruled out by our hypothesis that $F$ is essential in $M$. The position of the discs are determined up to isotopy by the curves $F \cap \partial M_{\textrm{thin}}$. We now consider the annuli.

\begin{lemma}
\label{Lem:AnnuliInTorusTimesIOneBoundary}
Let $C_1, \dots, C_{2n}$ be a collection of disjoint essential simple closed curves in $T^2 \times \{ 0 \}$. Then the number of collections of disjoint properly embedded annuli in $T^2 \times [0,1]$ with boundary equal to these curves, up to isotopy fixing the boundary, is $\left ( \begin{smallmatrix}
2n \\ n \end{smallmatrix} \right )$.
\end{lemma}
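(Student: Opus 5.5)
The plan is to reduce the count to a purely combinatorial problem about the cyclic order of the curves $C_1, \dots, C_{2n}$, and then to exhibit a bijection with $n$-element subsets of $\{1, \dots, 2n\}$.

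\textbf{Setup.} Since the $C_i$ are disjoint essential simple closed curves on the torus $T^2 \times \{0\}$, they are all parallel. They therefore cut $T^2 \times \{0\}$ into $2n$ annuli and inherit a cyclic order; relabel so that this order is $C_1, \dots, C_{2n}$.

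\textbf{Step 1: each annulus is parallel to an annulus in $T^2 \times \{0\}$.} Let $S$ be one of the annuli, with $\partial S = C_i \cup C_j$. Since $\partial S$ is essential in $T^2 \times \{0\}$, $S$ is incompressible. Since $T^2 \times [0,1]$ has incompressible boundary, a standard outermost-arc argument shows $S$ is also boundary-incompressible. An incompressible, boundary-incompressible annulus in $T^2 \times [0,1]$ with both boundary curves in $T^2 \times \{0\}$ is boundary parallel. Hence $S$ cobounds a solid torus $V_S$ with one of the two annuli $A_S \subset T^2 \times \{0\}$ bounded by $C_i \cup C_j$.

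\textbf{Step 2: $S$ is determined by $A_S$.} I would show that, for a fixed $A$, any two annuli parallel to $A$ with boundary $\partial A$ are isotopic rel boundary. Both can be pushed, by isotopies rel boundary, into a thin collar $T^2 \times [0,\epsilon]$ as graphs over $A$. The product structure then gives the isotopy. Conversely, annuli with different $A_S$ are not isotopic rel boundary: the solid torus $V_S$, and in particular which curves $C_k$ lie in $A_S$, is an isotopy invariant rel boundary.

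\textbf{Step 3: nesting.} For a disjoint collection of annuli, the regions $V_S$ are pairwise nested or disjoint, so the annuli $A_S$ are pairwise nested or disjoint. Conversely, given any family of annuli $A_S$ in $T^2 \times \{0\}$ that pair off the $2n$ curves and are pairwise nested or disjoint, I would realise it by disjoint annuli. Push each $A_S$ into $T^2 \times [0,1]$ to a height that increases with its nesting depth, working from outermost to innermost. Combined with Step 2, the isotopy classes (rel boundary) of collections correspond bijectively to such nested-or-disjoint pairings.

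\textbf{Step 4: the bijection.} Fix an orientation of the core direction transverse to the $C_i$, so that each $A_S$ has a ``starting'' curve and an ``ending'' curve in the cyclic order. Label the starting curve $+$ and the ending curve $-$. This produces a cyclic word with $n$ pluses and $n$ minuses.

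Conversely, given any such word, reconstruct the pairing by repeatedly matching a $+$ with the cyclically next unmatched symbol whenever that symbol is a $-$, exactly as in parenthesis matching but on a circle. Since the numbers of $+$ and $-$ are equal, some $+$ is always immediately followed cyclically by a $-$, so the process never gets stuck. It produces a nested-or-disjoint family, and the two constructions are mutually inverse.

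The nested-or-disjoint pairings are therefore in bijection with the choices of which $n$ of the $2n$ curves receive the label $+$, which gives $\binom{2n}{n}$.

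\textbf{Main obstacle.} I expect Step 2 to be the delicate part: uniqueness rel boundary of an annulus parallel to a given $A$, together with the fact that the two choices of $A$ really are distinct isotopy classes. I would handle it via the product structure on the parallelism solid torus together with Waldhausen-type uniqueness of incompressible surfaces in $T^2 \times I$.
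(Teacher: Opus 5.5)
Your Step 4 is the same parenthesis count as the paper's. Your reduction is a legitimate alternative to the paper's route: each annulus is determined by the sub-annulus $A_S\subset T^2\times\{0\}$ it is parallel to, and these sub-annuli are nested or disjoint. The paper instead picks a product structure with $C_i=S^1\times\{p_i\}$, makes the annuli vertical of the form $S^1\times\alpha_q$, and counts the arcs $\alpha_q$ in $S^1\times[0,1]$.

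\textbf{Step 1 is wrong as written.} A $\partial$-parallel annulus is never $\partial$-incompressible. If $S$ is parallel to $A_S$ through $V_S\cong A_S\times[0,1]$ and $\tau$ is a spanning arc of $A_S$, then $\tau\times[0,1]$ meets $S$ in an essential arc. It meets $\partial(T^2\times[0,1])$ in the complementary arc, so it is a $\partial$-compressing disc. The outermost-arc lemma you are presumably recalling says that, in an irreducible manifold with incompressible torus boundary, an incompressible surface is $\partial$-incompressible \emph{unless it is a $\partial$-parallel annulus}. That is exactly your situation. So you cannot prove $\partial$-incompressibility, and the implication you then invoke holds only vacuously. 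The real content of Step 1 is therefore unproved.

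\textbf{How to repair it.} The argument must run the other way: show $S$ is $\partial$-compressible, then use the standard fact that an incompressible, $\partial$-compressible annulus in an irreducible manifold with incompressible boundary is $\partial$-parallel. To get the $\partial$-compression:
\begin{enumerate}
\item Choose a curve $\gamma\subset T^2$ meeting each $C_k$ once, and make $S$ transverse to $B=\gamma\times[0,1]$.
\item No circle of $S\cap B$ is essential in both $S$ and $B$, since $[C_i]\neq\pm[\gamma]$ in $\pi_1(T^2\times[0,1])$. So by incompressibility and irreducibility, all circles can be removed.
\item Then $S\cap B$ is a single arc joining $C_i\cap\gamma$ to $C_j\cap\gamma$, which is essential in $S$. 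The disc it cuts off from $B$ is a $\partial$-compressing disc.
\end{enumerate}
Equivalently, you can quote that essential annuli in $T^2\times I$ are vertical, so they meet both boundary components.

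\textbf{The remaining steps.} Once Step 1 is repaired, Step 2 (which you flagged as the main obstacle) is routine. The step needing more care than you indicate is Step 3. There you need a \emph{simultaneous} isotopy rel boundary of the whole collection, not just of each annulus separately. To get it, straighten the innermost annuli first inside their parallelism regions, which contain no other annuli, and then work outwards.
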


\begin{proof}
There is a product structure $S^1 \times S^1 \times [0,1]$ on $T^2 \times [0,1]$ so that each $C_i$ is of the form $S^1 \times p_i$ for some point $p_i$ on $S^1 \times \{ 0 \}$. A collection of properly embedded annuli with boundary equal to $C_1 \cup \dots \cup C_{2n}$ can be isotoped, keeping their boundary fixed, to the form $S^1 \times \alpha_q$ for arcs $\alpha_q$ in $S^1 \times [0,1]$. Thus, we need to count the number of properly embedded arcs in the annulus $S^1 \times [0,1]$, with boundary equal to given points $p_1, \dots, p_{2n}$ on the boundary. We claim that the number of such arcs, up to isotopy fixing the boundary, is $\left ( \begin{smallmatrix}
2n \\ n \end{smallmatrix} \right )$.

Counting arcs in such a system corresponds to counting matching pairs of parentheses, where each arc corresponds to a matched pair, and each point $p_i, i=1, ..., 2n$,  corresponds to an opening or closing parenthesis. Specifically, each arc in the collection is parallel to an interval in $S^1 \times \{ 0 \}$, and we assign opening and closing parentheses to the endpoints of this interval. Note that the points $p_1, \dots, p_{2n}$ are in a circular arrangement (rather than being arranged in a line, as in the classical problem), and therefore non-matching pairs cannot occur.  The standard combinatorial argument counts all options as $\left ( \begin{smallmatrix}
2n \\ n \end{smallmatrix} \right )$, which corresponds to designating $n$ points out of $2n$ as open parentheses, and the rest as closed ones. \end{proof}

\begin{lemma}
\label{Lem:AnnuliInSolidTorus}
Let $C_1, \dots, C_{2n}$ be a collection of disjoint essential non-meridional simple closed curves in the boundary of a solid torus. Then the number of collections of disjoint properly embedded annuli in the solid torus with boundary equal to these curves, up to isotopy fixing the boundary, is at most 
$\left ( \begin{smallmatrix}
2n \\ n \end{smallmatrix} \right )$.
\end{lemma}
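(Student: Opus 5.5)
Let $V$ be the solid torus, $\mu$ a meridian disc, and $\lambda$ the common slope on $\partial V$ of the non-meridional curves $C_1,\dots,C_{2n}$. The plan is to reduce to the count already done in Lemma \ref{Lem:AnnuliInTorusTimesIOneBoundary}. The idea is that, up to isotopy fixing the boundary, each annulus in such a collection is boundary parallel, so the whole collection is determined by a pairing of the $2n$ curves into non-crossing pairs. I would carry this out in three steps.

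\textbf{Step 1: classify a single annulus.} Let $A$ be a properly embedded annulus in $V$ with essential, non-meridional boundary. If $A$ were compressible in $V$, compressing would give discs whose boundaries are isotopic to the core of $A$. These boundaries would be meridians or inessential, contradicting the hypothesis on $\partial A$. So $A$ is incompressible. I would then invoke the standard fact that an incompressible annulus in a solid torus is $\partial$-compressible. To see this, isotope $A$ to meet a meridian disc $\mu$ in essential arcs and take an outermost arc in $\mu$. That $\partial$-compression turns $A$ into a disc whose boundary is inessential in $\partial V$, since $\lambda$ is not meridional. Reversing the move shows that $A$ is parallel into $\partial V$, cobounding a solid torus $W$ with an annulus $A'\subset \partial V$.

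\textbf{Step 2: reduce to arcs in an annulus.} I would build a product-like picture as in the proof of Lemma \ref{Lem:AnnuliInTorusTimesIOneBoundary}. Take a regular neighbourhood $U\cong \partial V\times[0,1]$ of $\partial V$ in $V$. Each annulus of the collection can be isotoped rel boundary into $U$, working innermost first by nesting of the parallelism regions $W$ and keeping the annuli disjoint. Once inside $U$, the argument of Lemma \ref{Lem:AnnuliInTorusTimesIOneBoundary} applies. Choosing coordinates $S^1\times S^1\times[0,1]$ with the $C_i=S^1\times p_i$, the collection becomes $S^1\times$(arcs in the annulus $S^1\times[0,1]$ with endpoints on the outer circle). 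The number of such arc systems up to isotopy is at most $\binom{2n}{n}$ by the parenthesis count.

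\textbf{Step 3: uniqueness.} I must check that annuli which are isotopic in $V$ rel boundary yield the same pairing, or at least that distinct collections give distinct arc systems. Only the upper bound is claimed, so I need the map from collections to arc systems to be well-defined and surjective onto the isotopy classes of collections. For this it suffices to show the following. If two boundary-parallel annuli determine the same pairing of $\partial V$ into annuli $A'$, then they are isotopic rel boundary, because each is isotopic to a push-off of $A'$. The one subtlety is which of the two complementary annuli of $\partial A$ in $\partial V$ is $A'$. Both choices are among the parenthesis configurations counted, so the bound is unaffected.

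\textbf{Main obstacle.} I expect the hardest part to be the simultaneous isotopy in Step 2. The individual parallelisms must be arranged so that the isotopies do not create intersections between annuli. The first approach I would try is to order the annuli by inclusion of their parallelism solid tori $W$, which are nested or disjoint because the annuli are disjoint. I would then isotope outermost-to-innermost, or innermost first with the inner ones carried along.
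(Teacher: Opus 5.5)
Your proof is correct, but it takes a different route from the paper. The paper puts a Seifert fibration on the solid torus in which the $C_i$ are fibres. It then tacitly uses that the annuli can be made vertical, so that they are preimages of arcs in the base disc, which has at most one cone point. Deleting the cone point (or a regular point off the arcs) turns the problem into arcs in $S^1\times[0,1]$ with all endpoints on one boundary circle, counted by the parenthesis argument of Lemma \ref{Lem:AnnuliInTorusTimesIOneBoundary}. You avoid Seifert fibred structure theory altogether. A compressing-disc and outermost-arc argument shows each annulus is parallel into $\partial V$, and you then push the whole collection into a collar $U\cong \partial V\times[0,1]$, where Lemma \ref{Lem:AnnuliInTorusTimesIOneBoundary} applies verbatim. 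The paper's version is shorter, and it explains the remark after the lemma: for longitudinal curves the base is an honest disc, the relevant count is of arc systems in a disc (a Catalan number), and deleting a regular point overcounts. Yours is more elementary and self-contained, at the price of some bookkeeping with the parallelism regions $W_i$.

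A few small points on that bookkeeping.

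\begin{enumerate}
\item Your claim that the $W_i$ are nested or disjoint ``because the annuli are disjoint'' needs a consistent choice of sides in the longitudinal case. There each annulus is parallel to \emph{both} complementary annuli of its boundary. For instance, always take the side missing a fixed interior point that lies off all the annuli. For non-longitudinal slopes the side is forced and nesting is automatic. The reason is that the complement of a product region carries all of $\pi_1(V)$, while a product region carries only an index-$q$ subgroup with $q\ge 2$.
\item The simultaneous push into $U$ is cleanest as an ambient isotopy that is fixed on $\partial V$ and supported near each maximal $W_i$. This carries the nested annuli along and preserves disjointness automatically.
\item Your Step 3 is unnecessary, and its logic (``well-defined and surjective onto the isotopy classes'') is muddled. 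Every collection is isotopic rel $\partial V$ to one in $U$, and isotopies in $U$ are isotopies in $V$. So the number of classes in $V$ is at most the number in $U$, namely $\binom{2n}{n}$. No map from collections to pairings needs to be well defined.
\item In Step 1, the $\partial$-compressed disc has inessential boundary because that boundary is a band sum of two parallel curves along an arc lying in a complementary annulus. It is not because $\lambda$ is non-meridional. Non-meridionality is what you needed earlier, to get incompressibility and to ensure $A\cap\mu\neq\emptyset$.
\end{enumerate}
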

 
\begin{proof}
There is a Seifert fibration of the solid torus in which these curves $C_1, \dots, C_{2n}$ are fibres. The base of this Seifert fibration is a disc, possibly with a single singular point that corresponds to an exceptional fibre. The curves project to points $p_1, \dots, p_{2n}$ in the disc boundary. The annuli are the inverse images of arcs in this disc, with endpoints $p_1, \dots, p_{2n}$. These arcs miss the singular point, if there is one. If there is a singular point, remove it from the disc, forming an annulus $S^1 \times [0,1]$. If there is no singular point, remove a regular point disjoint from the arcs. Then we obtain arcs in $S^1 \times [0,1]$ with $2n$ specified boundary points. The number of such arcs is $\left ( \begin{smallmatrix}
2n \\ n \end{smallmatrix} \right )$, as we showed in the proof of Lemma \ref{Lem:AnnuliInTorusTimesI}. \end{proof}

In the case where the curves are not longitudes, this upper bound is sharp. However, when they are longitudinal, it might not be.

\begin{lemma}
\label{Lem:AnnuliInTorusTimesI}
Let $C_1, \dots, C_{n}$ be a collection of disjoint essential simple closed curves in $T^2 \times \{ 0 \}$. Then the number of collections of disjoint properly embedded annuli in $T^2 \times [0,1]$, such that no annulus is parallel to an annulus in $T^2 \times \{ 1 \}$, and that intersects $T^2 \times \{ 0 \}$ in precisely $C_1 \cup \dots \cup C_{n}$, up to isotopy fixing $T^2 \times \{ 0 \}$, is at most $3^n$.
\end{lemma}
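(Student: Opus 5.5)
We reduce to arcs in an annulus, following the proof of Lemma \ref{Lem:AnnuliInTorusTimesIOneBoundary}. Since $C_1, \dots, C_n$ are disjoint essential curves in $T^2 \times \{0\}$, they are parallel. So we may choose a product structure $T^2 \times [0,1] = S^1 \times S^1 \times [0,1]$ in which $C_i = S^1 \times \{p_i\} \times \{0\}$ for distinct points $p_1, \dots, p_n$ of the circle $S^1 \times \{0\}$.

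Each annulus in the collection is incompressible and properly embedded, so its boundary curves are essential and parallel to the $C_i$. It therefore has either both boundary components on $T^2 \times \{0\}$, or one on $T^2 \times \{0\}$ and one on $T^2 \times \{1\}$. Annuli with both boundary components on $T^2 \times \{1\}$ are excluded: they are boundary parallel into $T^2 \times \{1\}$, since an incompressible annulus in $T^2 \times I$ with both boundary curves on the same side is parallel into that side. Such an annulus is also disjoint from $T^2 \times \{0\}$, which contradicts the requirement that the collection meets $T^2 \times \{0\}$ in exactly $\bigcup C_i$ (at least for the annuli in the collection that we count). By the standard classification of incompressible annuli in $T^2 \times I$, after an isotopy fixing $T^2 \times \{0\}$ every annulus has the form $S^1 \times \alpha$, where $\alpha$ is a properly embedded arc in the annulus $A = S^1 \times [0,1]$. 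Here $S^1 \times \{0\} \subset A$ contains the points $p_i$ as endpoints. An arc with both endpoints on $S^1 \times \{0\}$ is parallel into $S^1 \times \{0\}$, and an arc with one endpoint on each side is a spanning arc. Isotopy classes of such annuli correspond to isotopy classes of arc systems in $A$ that fix $S^1 \times \{0\}$ but allow the endpoints on $S^1 \times \{1\}$ to move.

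Next we encode each arc system by a word over three letters at the $n$ points, read around the circle: each $p_i$ is labelled ``open'', ``close'', or ``through''. Parallel arcs are nested parenthesis pairs that cannot cross the spanning arcs. In the circular arrangement, the through-points cut the circle into intervals, and the parentheses are matched within each interval. When there are no through-points, the matching is circular, as in Lemma \ref{Lem:AnnuliInTorusTimesIOneBoundary}. I would then show that the labelling determines the arc system up to isotopy. Non-crossing forces the matching of the parentheses once the labels are fixed, as in that lemma. The spanning arcs are determined up to isotopy because their endpoints on $S^1 \times \{1\}$ are free, and disjointness fixes their cyclic order.

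The step I expect to be the main obstacle is exactly this last point, that spanning arcs are determined up to isotopy once the endpoints on $S^1 \times \{1\}$ may move. One must rule out Dehn twisting of the spanning arcs around the core of $A$, which seems to give infinitely many classes. Twisting all the spanning arcs together is realised by an isotopy of $T^2 \times [0,1]$ that rotates near $T^2 \times \{1\}$ and fixes $T^2 \times \{0\}$. This is permitted, since the isotopy need only fix $T^2 \times \{0\}$. Disjointness forces any twisting to be common to all the spanning arcs, so twists are absorbed into isotopy. With that settled, the map from arc systems to labellings is injective, and the number of collections is at most $3^n$.
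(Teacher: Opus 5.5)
Your proposal is correct and is essentially the paper's argument. The paper chooses which $2k$ of the curves bound boundary-parallel annuli ($\binom{n}{2k}$ ways) and pairs them in $\binom{2k}{k}$ ways via Lemma \ref{Lem:AnnuliInTorusTimesIOneBoundary}. It then notes that the spanning annuli are uniquely determined whenever they exist, and bounds $\sum_k \binom{n}{2k}\binom{2k}{k}$ by $3^n$ using trinomial coefficients. That count is exactly your open/close/through labelling, and your twisting argument fills in the uniqueness step that the paper only asserts.

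One small correction: annuli with both boundary curves on $T^2\times\{1\}$ are excluded by the hypothesis that none is parallel into $T^2\times\{1\}$. Your extra claim, that such an annulus being disjoint from $T^2\times\{0\}$ contradicts the intersection condition, is not a valid reason and should be dropped.
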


\begin{proof}

Denote such a collection of annuli by $\mathcal{C}$. Each annulus in $\mathcal{C}$ either runs from $T^2 \times \{ 0 \}$ to $T^2 \times \{ 1 \}$ or has both boundary components in $T^2 \times \{ 0 \}$. We first consider $k$ annuli of the latter type, and, as their boundary, pick the $2k$ components out of $C_1, \dots, C_{n}$. There are $\left ( \begin{smallmatrix}
n \\ 2k \end{smallmatrix} \right )$ ways to do this.  By Lemma \ref{Lem:AnnuliInTorusTimesIOneBoundary}, there are
$\left ( \begin{smallmatrix}
2k \\ k \end{smallmatrix} \right )$ ways of joining these with annuli, up to isotopy fixing $T^2 \times \{ 0 \}$. For some of these
choices, there is no way to extend this collection of $k$ annuli to a collection $\mathcal{C}$ of disjoint properly embedded annuli that intersect $T^2 \times \{ 0 \}$
precisely in $C_1 \cup \dots \cup C_{n}$. This happens when some $C_i$ is separated from $T^2 \times \{ 1 \}$
by one of the inserted annuli. However, when one can extend the collection of $k$ annuli to $\mathcal{C}$, 
there is a unique way to do this. Hence, the number of possibilities for such $\mathcal{C}$ is at most
$$\sum_{k = 0}^{\lfloor n/2 \rfloor} 
\left(
\begin{matrix}
n \\ 2k
\end{matrix}
\right)
\left(
\begin{matrix}
2k \\ k
\end{matrix}
\right).
$$
Each summand is the coefficient of $x^{n-2k}y^k z^k$ in $(x+y+z)^n$. Hence, the sum is at most $3^n$.
\end{proof}

We now complete the proof of Theorem \ref{Thm:NumberOfSurfaces}. Recall that we have arranged for the surface $F$ to intersect $T'$ in at most $N |\chi(F)|$ normal triangles and squares, where $N$ is a universal constant, and from this obtained an upper bound $(*)$ on the number of
possibilities for $F \cap T'$. Let us consider one such possibility. Then $F \cap \partial M_{\textrm{thin}}$ is a normal 1-manifold consisting of at most $4N |\chi(F)|$ normal arcs. The curves that bound discs in $M_{\textrm{thin}}$ bound discs of $F \cap M_{\textrm{thin}}$, and there is no choice, up to isotopy, for these discs. By Lemma \ref{Lem:AnnuliInTorusTimesI} and \ref{Lem:AnnuliInSolidTorus}, the number of possibilities for the annuli of $F \cap M_{\textrm{thin}}$ is at most 
$3^{4N |\chi(F)|}$. Hence, using $(*)$, the total number of possibilities for $F$ up to isotopy is at most $\mathcal{F} \cdot 3^{4N |\chi(F)|}$.

So, the number of possibilities for $F$ is at most $(c_1 \mathrm{vol}(M))^{c_2 |\chi|}$ for universal constants $c_1$ and $c_2$. 
This completes the proof of Theorem \ref{Thm:NumberOfSurfaces}.

\begin{proof}[Proof of Corollary \ref{Cor:Links}]
Let $K$ be a hyperbolic link in the 3-sphere. The exterior of $K$ has a triangulation using at most $t = 8c(K)$ tetrahedra, and hence, by a result of Gromov and Thurston \cite{Thurston}, the hyperbolic volume of $S^3 \setminus K$ is at most $v_3 t$, where $v_3$ is the volume of a regular ideal hyperbolic tetrahedron. Theorem \ref{Thm:NumberOfSurfaces} gives that the number of non-isotopic properly embedded orientable essential surfaces with Euler characteristic at least $\chi$ is no more than $(8 v_3 c_1 c(K))^{c_2 |\chi|}$. Setting $c'_1 = 8 v_3 c_1$ completes the proof.\end{proof}

\begin{restate}{Corollary}{Cor:notorientable}
The statement of Theorem \ref{Thm:NumberOfSurfaces} extends to non-orientable surfaces that are $\pi_1$-injective and boundary-$\pi_1$-injective.
\end{restate}

\begin{proof}
For such a surface $S$, one can consider $\tilde S$, the frontier of a regular neighbourhood of $S$. When $S$ is $\pi_1$-injective and boundary-$\pi_1$-injective, then $\tilde S$ is incompressible and boundary-incompressible. As $\tilde S$ is orientable, Theorem \ref{Thm:NumberOfSurfaces} bounds the number of possibilities for $\tilde S$ up to isotopy. This then bounds the number of possibilities for $S$, up to isotopy, since any given $\tilde S$ can arise from at most two possible $S$. \end{proof}

\section{Appendix}\label{Sec:Appendix}

In this appendix, we give an outline of the proof of the following result. 

\begin{reptheorem}{Thm:ExtensionBreslin}{\it
There are real numbers $0 < a \leq b \leq 1/40$ and $\theta > 0$ with the following property.
Let $\mu$ be any positive real number less than the 3-dimensional Margulis constant, and let $M$ be a complete finite-volume hyperbolic 3-manifold. Then there is a  $(\theta, a\mu,b\mu)$-thick triangulation of a subset of $M$ isotopic to $M^{\textrm{fat}}_{\mu/2}$ and lying in $M_{[\mu/4,\infty)}$}.
\end{reptheorem}

As stated in Section \ref{Sec:Triangulate}, this is a refined version of the following result of Breslin for manifolds with finite volume.

\begin{restate}{Theorem}{BreslinTh}
Let $\mu$ be a positive real number less than a 3-dimensional Margulis constant. There exist positive constants $A := A(\mu), B := B(\mu)$, and $\theta := \theta(\mu)$ with the following property. Any complete hyperbolic 3-manifold $M$ has a triangulation such that each of its tetrahedra lying in 
the thick part $M_{[\mu/2, \infty)}$ is $(\theta, A,B)$-thick.
\end{restate}

Note that several other authors have also constructed geodesic triangulations of hyperbolic 3-manifolds with a controlled number of tetrahedra. The initial construction was due to Thurston \cite{Thurston}, but the argument there was only sketched. Full details were given by Kobayashi and Rieck \cite{KobayashiRieck}, who paid particular attention to the case where the thin part of the manifold is non-empty. Similarly, our argument is most complicated when the thin part is non-empty. We initially avoid this case, as follows.

\subsection{Empty thin part} We will now give a proof of Theorem \ref{Thm:ExtensionBreslin} in the case where $M^{\textrm{fat}}_{\mu/2}$ is all of $M$, or equivalently when the thin part $M_{(0,\mu/4)}$ is empty. In particular, in this subsection, $M$ is closed.

We start by sketching Breslin's argument.

Breslin picks a collection $\calS$ of points in $M$ with the following properties:
\begin{enumerate}
\item for any $p \in M$, the ball $B(p, \mathrm{inj}(M,p)/5)$ centred at $p$ with radius $\mathrm{inj}(M,p)/5$ has a point of $\calS$ in its interior;
\item any point in $\calS \cap M_{[\mu, \infty)}$ has distance at least $\mu/100$ from every other point in $\calS$;
\item any $p \in M_{[\mu, \infty)}$ has distance less than $\mu/100$ from some point in $\calS$. 
\end{enumerate}

Breslin then ensures that there is a Delaunay triangulation of $M$ associated with $\calS$.
This triangulation has vertex set $\calS$ and is such that four points of $\calS$ are the vertices of a tetrahedron 
if and only if there is some point $p$ in $M$ that is equidistant from these four points and there is no other point of $\calS$ closer to $p$. 
Condition (1) above ensures that each point $p$ in $M$ has distance at most $\mathrm{inj}(M,p)/5$ from some point in $\calS$. 
This is less than the injectivity radius of $M$ at $p$, and so the points in $\calS$ closest to $p$ lie in a ball around $p$ that is isometric 
to a ball in $\mathbb{H}^3$. Hence, one can view $p$ as the centre of the circumsphere of these four points.

Breslin needs to assume Condition (1) above, as well the following  `genericity' hypothesis, to ensure that
the Delaunay triangulation is defined. The triangulation is dual to the Voronoi domain for $\calS$. Recall that this is a cell complex, where each 3-cell surrounds a point $p$ in $\calS$. This 3-cell is defined to be the set of points in $M$ that are at least as close to $p$ as to any other point in $\calS$. For a `generic' choice of $\calS$, one would expect that at most four 3-cells intersect at any point. When this is the case, the 0-cells in the interior of $M$ are exactly where four 3-cells meet. These four 3-cells surround four points of $\calS$ which span a tetrahedron of the Delaunay triangulation. The 0-cell is at the centre of the circumsphere for these four vertices. One can then see why no other point of $\calS$ can lie within this circumsphere.

Breslin then perturbs $\calS$ to a new set $\calS'$, by moving each point in $\calS \cap M_{[\mu, \infty)}$ a distance at most $\mu/1000$. 
The required triangulation of $M$ is the Delaunay triangulation associated with $\calS'$. Breslin makes this perturbation
to ensure that the resulting tetrahedra in $M_{[\mu, \infty)}$ are $(\theta, A,B)$-thick. He therefore embarks upon a detailed analysis of hyperbolic tetrahedra that are not thick. The tetrahedra considered here have edge lengths between $\mu/100$ and $\mu/50$. In fact, because one wants to perturb the vertices a little, one should relax these conditions a little; Breslin sets $A = (4/5)\mu/100$ and $B = (11/10)\mu/50$. The difficulty is that these constraints alone do not guarantee a positive lower bound on the interior angles of the tetrahedra. However, the tetrahedra in $M_{[\mu, \infty)}$ arising in the Delaunay construction have an important extra property: the distance from the centre of the circumsphere to any vertex of the tetrahedron is at most $(11/10)\mu/100$. So Breslin considers tetrahedra with an upper bound $R = (11/10)\mu/100$ on their circumradius. Again this upper bound on circumradius does not force a positive lower bound on the dihedral angles. However, Breslin shows that the situations where there is a dihedral angle that is very small are extremely limited.

The prototypical example of a tetrahedron with edge lengths between fixed constants $A$ and $B$, and with circumradius at most $R$, but with some small dihedral angles, is as follows. Consider a sphere of radius $R$ in $\mathbb{H}^3$. Pick some equator for this sphere. Now place all four vertices of the tetrahedron near this equator. (See Figure \ref{Fig:SliverTetrahedron}.) As long as no two vertices are too close, one can guarantee a uniform lower bound on edge lengths. The edge lengths are bounded above by $2R$. However, clearly there is no positive lower bound on the dihedral angles. Indeed if all four vertices lay exactly on the equator, their convex hull would lie in a plane and hence some dihedral angles would be zero.
Tetrahedra such as the above are informally called `slivers', although this term is used in a precise way below and in Breslin's paper.

\begin{figure}
  \includegraphics[width=0.4\textwidth]{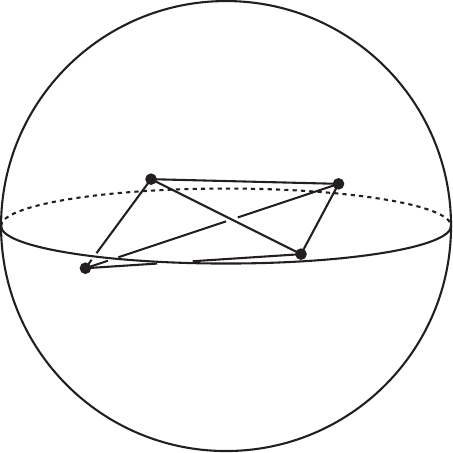}
  \caption{A tetrahedron with four vertices near the equator of a sphere forming a sliver.}
  \label{Fig:SliverTetrahedron}
\end{figure}

Breslin shows that when some dihedral angle is small, then each of the four vertices must lie close to the circumcircle specified by the other three. He shows that this set of points close to the circumcircle has very small volume. Hence, one can perturb one of the points to avoid this set, and the resulting tetrahedron is no longer a sliver. Thus, he perturbs $\calS$ one point at a time. Of course, one needs to ensure that in this perturbation, one does not destroy the progress one has made previously, by reverting a previous tetrahedron to a sliver. However, the fact that the volume of the bad set of points is small ensures that one can perturb each point so that no tetrahedron to which it is incident becomes a sliver. In this way, Breslin creates the set $\calS'$ and each tetrahedron of its Delaunay triangulation lying in $M_{[\mu,\infty)}$ is then $(\theta, A,B)$-thick for some universal positive $\theta$. 

\begin{remark}
We briefly note here the background that Breslin's proof relied on. Breslin needed to know that a given set of points in a Riemannian manifold can be perturbed so that it forms the vertex set of a Delaunay triangulation, and Breslin proved this using a theorem of Leibon and Letscher \cite{LeibonLetscher}. However, the result of Leibon and Letscher was later shown to be incorrect \cite{Boissonnat3}. But there is now an alternative way to supply the result needed by Breslin, using work of Boissonnat, Dyer and Ghosh \cite{Boissonnat}. This assumes that the initial set $\calS$ of points in the manifold is a \emph{$(\tau_0, \epsilon)$-net}, which means any point in the manifold has distance less than $\epsilon$ from some point in $\calS$ and any two points in $\calS$ are at least $\tau_0\epsilon$ apart. Theorem 3 in \cite{Boissonnat} implies that if $\calS$ is a $(\tau_0, \epsilon)$-net, with $\epsilon$ less than a quarter of the injectivity radius of the manifold and also less than an explicit function of the sectional curvatures of the manifold, its dimension and $\tau_0$, then there is a perturbation of $\calS$ to a set of points that is the vertex set of a Delaunay triangulation of the manifold. Thus, when $M$ is a closed hyperbolic 3-manifold, a small enough $\epsilon$ can be chosen, and any $(1,\epsilon)$-net then satisfies the hypotheses of Theorem 3 in \cite{Boissonnat}. Moreover, there is always a $(1,\epsilon)$-net
obtained by picking a maximal set $\calS$ of points in $M$, no two of which are less than $\epsilon$ apart. Thus for closed hyperbolic 3-manifolds with a lower bound on injectivity radius, Theorem 3 in \cite{Boissonnat} can be used instead of \cite{LeibonLetscher} to complete Breslin's proof. However, for a hyperbolic 3-manifold $M$ with cusps, its injectivity radius is zero, and so a positive $\epsilon$ satisfying the hypotheses of Theorem 3 in \cite{Boissonnat} cannot be found. Nonetheless, the reasoning  behind Theorem 3 in \cite{Boissonnat} can be reproduced. This is because the arguments in \cite{Boissonnat} are all `local' in the sense that when a point in $\calS$ is perturbed, the set of permitted perturbations only depends on the set of `nearby' points in $\calS$. (See, for example, Remark 2 in \cite{DVW}.) Hence, it also applies when $\calS$ is a \emph{generalised $(\tau_0, \epsilon)$-net}, which means that each point $p$ in $M$ has distance less than $\min \{ \epsilon, \textrm{inj}(M,p)/5 \}$ from some point in $\calS$, and for each $s$ in $\calS$, the distance from $s$ to any other point in $\calS$ is at least $\tau_0 \min \{ \epsilon, \textrm{inj}(M,s)/10 \}$. Note that a generalised $(\tau_0, \epsilon)$-net does exist in $M$ for $\tau_0 \leq 1$: pick a maximal collection $\calS$ of points in $M$ such that for any two points $s$ and $s'$ in $\calS$, the distance between them is at least $\tau_0 \min \{ \epsilon, \textrm{inj}(M,s)/5, \textrm{inj}(M,s')/5 \}$. Hence, perturbing $\calS$ as in \cite{Boissonnat}, we obtain the vertex set of a Delaunay triangulation of $M$.
\end{remark}

We now explain Breslin's general plan in more detail, as well as the modifications that we need to make to it. The following definition is due to Breslin.

\begin{definition}
Let $\Delta$ be a geodesic tetrahedron in $\mathbb{H}^3$ and let $p$ be a vertex of $\Delta$. Define the following quantities:
\begin{enumerate}
\item let $R_\Delta$ be the circumradius of $\Delta$;
\item let $\ell_\Delta$ be the length of the shortest edge of $\Delta$;
\item let $c_p$ be the circumradius of the face opposite $p$;
\item let $d_p$ be the distance from $p$ to the plane defined by the face opposite to $p$.
\end{enumerate}
For $\sigma, \rho > 0$, $\Delta$ is called a \emph{$(\sigma,\rho)$-sliver} if $d_p / c_p \leq \sigma$ and $R_\Delta / \ell_\Delta \leq \rho$ for some vertex $p$ of $\Delta$.
\end{definition}

In the situation that we and Breslin consider, $\ell_\Delta$ is between $A = (4/5)\mu/100$ and $B = (11/10)\mu/50$. Furthermore, 
$R_\Delta$ is at most $B = (11/10)\mu/50$ since the tetrahedra occur in a Delaunay triangulation. Also, $R_\Delta$ is at least $A/2 = (2/5)\mu/100$.
So, $R_\Delta / \ell_\Delta$ is bounded above and below by universal constants. Hence, the quantity $\rho$ in the above definition can be fixed as some constant and then the second condition $R_\Delta / \ell_\Delta \leq \rho$ is automatically satisfied. It is therefore $\sigma$ that plays the more crucial role. 

The following (Lemma 2.1 in \cite{Breslin}) elucidates the nature of slivers further. It can be viewed as saying that if the tetrahedron that we are considering has some small dihedral angle, then it must be a $(\sigma,\rho)$-sliver for some small $\sigma$.

\begin{lemma}
\label{Lem:Breslin2.1}
If a geodesic tetrahedron $\Delta$ in $\mathbb{H}^3$ has edge lengths between $A$ and $B$, and circumradius at most $R$, and it has some dihedral angle less than
$$\arcsin \left ( \frac{\sinh(\sigma A/2)}{\sinh(B)} \right)$$
for some $\sigma > 0$, then $\Delta$ is a $(\sigma,R/A)$-sliver.
\end{lemma}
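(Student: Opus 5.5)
We need to show: if some dihedral angle $\alpha$ of $\Delta$ is less than $\arcsin(\sinh(\sigma A/2)/\sinh B)$, then there is a vertex $p$ with $d_p/c_p \le \sigma$. The condition $R_\Delta/\ell_\Delta \le R/A$ is automatic, since $R_\Delta \le R$ and $\ell_\Delta \ge A$. So the plan is to find a vertex $p$ whose distance $d_p$ to the plane of the opposite face is at most $\sigma c_p$.

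Let the small dihedral angle $\alpha$ be along the edge $e$ with endpoints $q,r$, and let the other two vertices be $p$ and $s$. The face $qrs$ and the face $pqr$ meet along $e$ at angle $\alpha$. Let $P$ be the plane containing $qrs$. Drop the perpendicular from $p$ to the geodesic line $L$ through $e$, with foot point $f$, and let $h = d(p,L)$.

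By hyperbolic right-triangle trigonometry, the distance from $p$ to $P$ satisfies
$$\sinh d_p = \sinh h \, \sin \alpha.$$
This comes from the right-angled triangle formed by $p$, its projection onto $P$, and the point of $L$ closest to $p$. In the limiting case where $P$ is viewed from $L$, the angle at $L$ is exactly $\alpha$, or $\pi-\alpha$, which has the same sine.

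Since $h \le d(p,q) \le B$, we get
$$\sinh d_p \le \sinh B \sin\alpha < \sinh(\sigma A/2).$$
Hence $d_p < \sigma A/2$.

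It remains to compare $A/2$ with $c_p$. The circumradius $c_p$ of the face $qrs$ is at least half the length of any of its edges, because two points on a circle of radius $c$ are at distance at most $2c$. Every edge has length at least $A$, so $c_p \ge A/2$. Therefore
$$d_p < \sigma A/2 \le \sigma c_p,$$
and $\Delta$ is a $(\sigma, R/A)$-sliver.

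The step needing the most care is the trigonometric identity $\sinh d_p = \sinh h \sin\alpha$. I would justify it in the plane through $p$ perpendicular to $L$ at $f$. This plane is orthogonal to both $P$ and the plane of $pqr$. In it we get a hyperbolic right triangle with hypotenuse $h$, angle $\alpha$ at $f$, and opposite side $d_p$, and the hyperbolic sine rule for right triangles, $\sinh(\text{opposite}) = \sinh(\text{hypotenuse})\sin(\text{angle})$, gives the identity.

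One subtlety is that the projection of $p$ onto $P$ might fall outside the face. This does not matter, because $d_p$ is defined as the distance to the whole plane. The inequality actually holds in general as $\sinh d_p \le \sinh h \sin\alpha$, which is all we use.
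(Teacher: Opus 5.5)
Your argument is correct. The paper gives no proof of its own here and simply quotes Lemma 2.1 of Breslin. Your computation is exactly what the constant $\arcsin(\sinh(\sigma A/2)/\sinh B)$ is designed for: $\sinh d_p=\sinh h\sin\alpha$ in the plane perpendicular to the edge, then $h\le B$ and $c_p\ge A/2$, with the bound $R_\Delta/\ell_\Delta\le R/A$ automatic.

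Two minor points deserve a line each. First, the face $qrs$ really does have a circumcircle, which is not automatic since three points in $\mathbb{H}^2$ need not be concyclic: it is the intersection of the plane $P$ with the circumsphere of $\Delta$, whose radius is finite by hypothesis. Second, $\alpha<\arcsin(\cdot)\le\pi/2$, so $\sin$ is increasing on the relevant range and $\sin\alpha<\sinh(\sigma A/2)/\sinh B$ follows.
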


The next lemma (Lemma 2.2 in \cite{Breslin}) in Breslin's paper is concerned with a lower bound for the altitudes of a geodesic triangle in $\mathbb{H}^2$. We restate it slightly so that the nature of this lower bound is clarified when the edge lengths are all small.

\begin{lemma}
Let $\mu > 0$ be bounded above by some constant. Then
a geodesic triangle in $\mathbb{H}^2$ with edge lengths between $a \mu$ and $b \mu$ and circumradius at most $r \mu$ has altitudes bounded from below by $y(a,b,r) \mu$, where $y(a,b,r) >0$.
\end{lemma}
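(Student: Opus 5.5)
We will prove the statement by the continuity-and-compactness argument of Remark~\ref{cc}, carried out in dimension two. Fix $\mu_0$ as the assumed upper bound on $\mu$. Normalise by placing one vertex of the triangle at the origin of the Poincar\'e disc model $\mathbb{H}^2_{1/\mu}$, rescaled to have Euclidean radius $1/\mu$. In this model, the other two vertices lie in a fixed compact annulus $S\subset\mathbb{E}^2$, exactly as in the proof of Lemma~\ref{Lem:Barycentric}. A triangle is then encoded as a point of $S\times S\times(0,\mu_0]$. Let $C$ be the set of such points for which the triangle has edge lengths in $[a\mu,b\mu]$ and circumradius at most $r\mu$. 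Let $\overline{C}$ be its closure in $S\times S\times[0,\mu_0]$, which is compact.

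Next we consider the function $f=(\text{minimal altitude})/\mu$ on $C$. In the rescaled model, hyperbolic lengths divided by $\mu$ converge to Euclidean lengths as $\mu\to 0$, uniformly on compact sets. The rescaled metric is $\frac{2}{1-\mu^2|z|^2}|dz|$, and this tends to $2|dz|$ smoothly on compacta. Consequently edge lengths, circumradius and altitudes, all divided by $\mu$, extend continuously to $\overline{C}$. On $\overline{C}\setminus C$ they equal the corresponding quantities of a Euclidean triangle, with the same factor-of-two normalisation throughout. A point of $\overline{C}\setminus C$ is therefore a Euclidean triangle with edges in $[a,b]$ and circumradius at most $r$, and $f$ is its minimal altitude. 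Since $f$ is continuous on the compact set $\overline{C}$, it attains its minimum there. It remains to show this minimum is positive.

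To show $f>0$ at every point of $\overline{C}$, first take a Euclidean triangle with sides $\ge a$ and circumradius $\le r$. Its altitude from a vertex is $\beta\sin\gamma$, where $\beta$ is an adjacent side and $\gamma$ is the angle opposite the third side. The law of sines gives $\sin\gamma = c/(2R)\ge a/(2r)$, where $c$ is that third side and $R$ the circumradius. Hence every altitude is at least $a^2/(2r)>0$. In the hyperbolic case (a point of $C$), the triangle has distinct, non-collinear vertices: its circumradius is finite and its edges have positive length, so a degenerate collinear configuration has no circumcircle. So its altitudes are positive. One could also argue directly via the hyperbolic sine rule and the circumradius formula. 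By compactness, the minimum $y(a,b,r)$ of $f$ over $\overline{C}$ is then strictly positive.

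The main obstacle is justifying that circumradius$/\mu$ and altitude$/\mu$ extend continuously to the boundary $\mu=0$, and that a limit of triangles satisfying the circumradius bound still satisfies it. This is the subtle point because circumradius is not continuous near degenerate configurations. However, the limiting circumradius is bounded by $r$ by lower semicontinuity of the degeneration: a limit of triangles with bounded circumradius and edges bounded below cannot be collinear, so near the limit the circumradius is continuous. Alternatively, one can avoid the issue entirely. Write the hyperbolic altitude as $\sinh^{-1}(\sinh\beta\sin\gamma)$ and use the hyperbolic law of sines to get $\sin\gamma \ge \sinh(c)/\sinh(2R)$. This yields the explicit bound $\sinh^{-1}\bigl(\sinh(a\mu)\sinh(a\mu)/\sinh(2r\mu)\bigr)\gtrsim (a^2/(2r))\mu$ for $\mu\le\mu_0$. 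This explicit route is the one I would try first.
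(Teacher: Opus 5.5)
Your compactness argument is correct, and it follows a genuinely different route from the paper's. The paper follows Breslin in two steps:
\begin{itemize}
\item for altitudes whose foot lies on the opposite edge, it asserts an extremal configuration (a vertex on a circle of radius $r\mu$, flanked on the circle by the other two vertices at distance $a\mu$);
\item it then rescales the Poincar\'e disc by $1/\mu$ and passes to the Euclidean limit to get a bound $Y(a,b,r)\mu$, and handles general altitudes by Breslin's explicit estimate $\arcsinh\bigl(\frac{\sinh(a\mu)}{\sinh(b\mu)}\sinh(Y\mu)\bigr)\sim\frac{a}{b}Y\mu$.
\end{itemize}
You instead run the continuity-and-compactness argument of Remark~\ref{cc} on the whole space $\overline{C}$ of normalised triangles. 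You use the Euclidean law of sines, $h\ge a^2/(2r)$, only on the boundary $\mu=0$. This needs no extremal-configuration claim and no separate case for altitudes whose foot falls outside the edge. It also treats all $\mu\in(0,\mu_0]$ at once, whereas the paper argues via asymptotics as $\mu\to 0$. The price is that $y(a,b,r)$, which also depends on $\mu_0$, is not explicit. That is harmless, since Lemma~\ref{Lem:Lemma3Breslin} only needs some positive $y$. You do not actually need the circumradius to extend continuously. It is enough that $C$ is closed where $\mu>0$, and that a limit point at $\mu=0$ lies on a limit circle of normalised radius at most $r$ and is therefore non-collinear, which is what your argument shows.

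Do not use the ``explicit route'' you say you would try first. Its key inequality, $\sin\gamma\ge\sinh(c)/\sinh(2R)$, is false; the hyperbolic law of sines does not involve the circumradius at all. Take the equilateral triangle of side $1$:
\begin{itemize}
\item $\cos\gamma=\cosh 1/(1+\cosh 1)$ gives $\sin\gamma\approx 0.795$;
\item $\sinh R=2\sinh(1/2)/\sqrt3$ gives $\sinh 1/\sinh(2R)\approx 0.837$.
\end{itemize}
For side $s\to 0$ the two quantities are $\frac{\sqrt3}{2}-\frac{\sqrt3}{24}s^2$ and $\frac{\sqrt3}{2}-\frac{\sqrt3}{48}s^2$, up to $O(s^4)$. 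So the inequality fails at every scale, including the tiny scales relevant in the paper. The culprit is the angle defect. The correct relation is $\tanh(c/2)=\tanh R\,\cos(S-\gamma)$, where $S$ is half the angle sum, so $\sin\gamma=\cos\bigl((S-\gamma)+\tfrac12\mathrm{Area}\bigr)$. The defect term $\tfrac12\mathrm{Area}$ can push $\sin\gamma$ below $\sinh(c)/\sinh(2R)$. You could salvage the bound $\sin\gamma\ge\tanh(c/2)/\tanh R-\tfrac12\mathrm{Area}$, but it is guaranteed positive only for $\mu$ sufficiently small in terms of $a$ and $r$. You would then still need compactness for the rest of the range, so present the compactness argument as the proof.
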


\begin{proof}
Breslin first considers the altitudes of triangles with edge lengths at least $a \mu$, with circumradius at most $r \mu$, and where the altitude ends on an edge of the triangle (versus the extended edge outside of the triangle). Breslin argues that the smallest such altitude is minimised in the following geometric arrangement. Start with a circle of radius $r \mu$ in the hyperbolic plane. Place one vertex $p$ of this triangle on the circle, and place the two other vertices $q$ and $r$ on each side of it on the circle, both at hyperbolic distance $a \mu$ from $p$. The relevant altitude is the one from $p$. It is clear that this is positive for any given $a$, $b$, $r$, and $\mu$, but we must investigate the length of this altitude as $\mu \rightarrow 0$. So we place the centre of the circle at the centre of the Poincar\'e disc model, and then perform a Euclidean dilation of the circle and triangle of factor $1/\mu$. Then as $\mu \rightarrow 0$, the triangle increasingly approximates a fixed Euclidean triangle, and the Euclidean length of its altitude is bounded below by a quantity $Y(a,b,r) > 0$. Hence when we reverse the dilation, the Euclidean length of the altitude of the original triangle is bounded below by $Y(a,b,r) \mu$. Hyperbolic lengths in the Poincar\'e disc model are at least Euclidean lengths, and so we have the same lower bound on the hyperbolic length of the altitude. 

Then Breslin considers the case of a general altitude for a triangle with side lengths between $a\mu$ and $b\mu$ and circumradius at most $r \mu$. He shows that the length of such an altitude is at least
$$\arcsinh \left ( \frac{\sinh (a\mu)}{\sinh (b \mu)} \sinh (Y(a,b,r) \mu) \right ).$$
As $\mu \rightarrow 0$,
$$\arcsinh \left ( \frac{\sinh (a\mu)}{\sinh (b \mu)} \sinh (Y(a,b,r) \mu)  \right ) \sim \frac{a}{b} Y(a,b,r) \mu.$$
In particular, this quantity is bounded below by $y(a,b,r) \mu$ for some positive constant $y(a,b,r)$ depending on $a$, $b$ and $r$.
\end{proof}

The following is Lemma 2.3 in \cite{Breslin}. It deals with $(\sigma,R/A)$-slivers $\Delta$. By definition, \emph{some} vertex $p$ of $\Delta$ satisfies $d_p / c_p \leq \sigma$. But the lemma says that for \emph{every} vertex $v$ of $\Delta$, $d_v / c_v$ can be bounded above in terms of $\sigma$, $A$, $B$ and $R$.

\begin{lemma}
\label{Lem:Lemma3Breslin}
If a geodesic tetrahedron $\Delta$ in $\mathbb{H}^3$ with edge lengths between $a \mu$ and $b \mu$ and circumradius at most $r \mu$ is a $(\sigma, r/a)$-sliver for some $\sigma > 0$, then $d_v / c_v$ is bounded above by a constant $N:= N(\sigma, a, b, r)$ for each vertex $v$ of $\Delta$. Moreover, $N$ can be chosen so that $N \rightarrow 0$ as $\sigma \rightarrow 0$ and $a,b,r$ remain fixed and $\mu$ is bounded above.
\end{lemma}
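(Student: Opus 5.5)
We want to show that if one vertex $p$ has $d_p/c_p \le \sigma$, then every vertex $v$ has $d_v/c_v \le N$, with $N \to 0$ as $\sigma \to 0$. The plan is to relate $d_v$ and $d_p$ through a volume identity. In Euclidean space, $\mathrm{vol}(\Delta) = \frac13 \mathrm{area}(F_v)\, d_v$ for every vertex $v$, where $F_v$ is the face opposite $v$. Hence
\[ d_v = d_p\,\frac{\mathrm{area}(F_p)}{\mathrm{area}(F_v)}, \]
and so
\[ \frac{d_v}{c_v} \le \sigma\,\frac{c_p\,\mathrm{area}(F_p)}{c_v\,\mathrm{area}(F_v)}. \]
All four faces have edge lengths in $[a\mu,b\mu]$. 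The circumradius of any face is at most the circumradius of $\Delta$, so it is at most $r\mu$. It is also at least $a\mu/2$. The area of each face is at most a constant times $b^2\mu^2$. For the lower bound on area, I will use the preceding lemma on altitudes: a face has an altitude at least $y(a,b,r)\mu$ over a base of length at least $a\mu$, so its area is at least a constant times $a\,y\,\mu^2$. Therefore the ratio above is bounded by a constant $K(a,b,r)$, and $N=K\sigma$ works in the Euclidean model.

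In the hyperbolic setting this volume identity is not exact. I would handle this with the rescaling and continuity--compactness method of Remark \ref{cc}. I would not rely on an exact hyperbolic formula. Instead I would argue directly with heights. Take the foot $p'$ of the perpendicular from $p$ to the plane of $F_p$. Any vertex $w$ of $F_p$ lies at distance $d_v$ from the plane $\Pi_v$ of $F_v$. The face $F_v$ contains $p$ and two vertices of $F_p$, and its altitude from $p$ has length at least $y\mu$. Consider the dihedral angle $\alpha$ along the edge of $F_p \cap F_v$. Hyperbolic trigonometry in right-angled triangles gives $\sinh d_p = \sinh(h_p)\sin\alpha$ and $\sinh d_v = \sinh(h_w)\sin\alpha$, where $h_p$ and $h_w$ are the in-face distances from $p$ and from $w$ to the common edge. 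The first of these is at least $y\mu$, and the second is at most $b\mu$. Eliminating $\sin\alpha$ gives
\[ \sinh d_v \le \frac{\sinh(b\mu)}{\sinh(y\mu)}\,\sinh d_p . \]
Since $\mu$ is bounded above, $\sinh(b\mu)/\sinh(y\mu)$ is bounded by a constant depending only on $a$, $b$, $r$. Also $d_p \le \sigma c_p \le \sigma r\mu$, so $d_v \le K'\sigma\mu$ for small $\sigma$. Using $c_v \ge a\mu/2$, we get $d_v/c_v \le 2K'\sigma/a$. This gives the explicit $N(\sigma,a,b,r)$, which tends to $0$ as $\sigma \to 0$.

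The main obstacle is to keep all the constants uniform as $\mu \to 0$. Specifically, the altitude lower bound must scale like $\mu$; the previous lemma supplies this. The ratios of $\sinh$ terms must stay bounded for $\mu$ in $(0,\epsilon_3]$; this holds since $\sinh(b\mu)/\sinh(y\mu) \to b/y$ as $\mu \to 0$, and the ratio is continuous on a compact range otherwise. Finally, the step going from the hypothesis on $d_p/c_p$ to a bound on $d_p$ in terms of $\mu$ requires the upper bound $c_p \le r\mu$.
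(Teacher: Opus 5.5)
Your heights argument is correct and is essentially the paper's proof. Eliminating $\sin\alpha$ between $\sinh d_p=\sinh h_p\sin\alpha$ and $\sinh d_v=\sinh h_v\sin\alpha$, and then using $d_p\le\sigma r\mu$, $h_p\ge y\mu$, $h_v\le b\mu$ and $c_v\ge a\mu/2$, gives exactly Breslin's bound $d_v/c_v\le \frac{2}{a\mu}\arcsinh\bigl(\sinh(b\mu)\sinh(\sigma r\mu)/\sinh(y\mu)\bigr)$, which the paper quotes and then analyses as $\mu\to 0$; your Euclidean volume warm-up is not needed.

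Two small fixes. First, only the vertex $v$ of $F_p$ lies at distance $d_v$ from $\Pi_v$; the other two vertices of $F_p$ lie in $\Pi_v$. Second, the restriction to small $\sigma$ is unnecessary. Let $K$ bound $\sinh(b\mu)/\sinh(y\mu)$ and let $\mu_0$ bound $\mu$. Then $\arcsinh t\le t$ and $\sinh t\le t\cosh t$ give $d_v/c_v\le (2K\sigma r/a)\cosh(\sigma r\mu_0)$ for every $\sigma>0$.
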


\begin{proof} 
Breslin shows that $d_v / c_v$ is bounded above by a constant $n:= n(\sigma, a\mu, b\mu, r \mu)$. We must examine how $n$ behaves as $\mu \rightarrow 0$. He sets
$$
n(\sigma,a\mu, b\mu, r\mu) = \frac{2}{a\mu} \arcsinh \left ( \frac{\sinh (b\mu) \sinh(\sigma r \mu)}{\sinh (y(a,b,r) \mu)} \right).
$$
As $\mu \rightarrow 0$,
$$
n(\sigma,a\mu, b\mu, r\mu) \sim \frac{2}{a\mu} \arcsinh \left ( \frac{b \sigma r \mu}{y(a,b,r)} \right)
\sim \frac{2b \sigma r}{a y(a,b,r)}.
$$
Hence, $n(\sigma,a\mu, b\mu, r\mu)$ is bounded above by a constant $N(\sigma, a, b, r)$. Furthermore, this tends to zero as  $\sigma \rightarrow 0$ and $a,b,r$ remain fixed and $\mu$ is bounded above. \end{proof}

In the following result, which is Lemma 2.4 in \cite{Breslin}, the location of each vertex in a sliver is controlled in terms of the other three vertices. Specifically, it has to be close to the circumcircle of the other three vertices. This confirms a key feature of the examples given in Figure \ref{Fig:SliverTetrahedron}, where the circumcircle of any three vertices is approximately the equator of the sphere.

\begin{lemma}
\label{Lem:Lemma4Breslin}
If a geodesic tetrahedron $\Delta$ in $\mathbb{H}^3$ with edge lengths between $a \mu$ and $b \mu$ and circumradius at most $r \mu$ is a $(\sigma, r/a)$-sliver for some $\sigma > 0$, then the distance of any vertex of $\Delta$ from the circumcircle of the remaining three vertices is at most $K = k(\sigma, a, b, r) \mu$ for a constant $k$. Moreover, $k(\sigma, a, b, r)$ can be chosen so that $k(\sigma, a, b, r) \rightarrow 0$ as $\sigma \rightarrow 0$ and $a,b,r$ remain fixed and $\mu$ is bounded above.
\end{lemma}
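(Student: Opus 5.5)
The plan is to follow Breslin's proof of his Lemma 2.4 and, as in the two preceding lemmas, track how each quantity scales with $\mu$. Label the vertices $v$ and $p_1,p_2,p_3$, and write $\Pi$ for the plane through $p_1,p_2,p_3$. The circumcircle $\mathcal{C}$ of $p_1p_2p_3$ lies in $\Pi$. It is also the intersection of $\Pi$ with the circumsphere $\Sigma$ of $\Delta$, and $\Sigma$ has radius $R_\Delta \leq r\mu$ and passes through $v$. By Lemma \ref{Lem:Lemma3Breslin}, $d_v \leq N c_v$, where $N = N(\sigma,a,b,r) \to 0$ as $\sigma \to 0$. Moreover $c_v \leq R_\Delta \leq r\mu$, since a circle cut from a sphere has radius at most that of the sphere. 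So $v$ lies within hyperbolic distance $N r \mu$ of $\Pi$. Let $v'$ be the orthogonal projection of $v$ to $\Pi$. It then suffices to bound the distance from $v'$ to $\mathcal{C}$ by a constant times $\mu$ that tends to zero with $\sigma$.

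To get this bound, work in $\Pi$ and use that $v \in \Sigma$. Let $o$ be the centre of $\Sigma$, let $o'$ be its projection to $\Pi$ (this is the centre of $\mathcal{C}$), and let $h = d(o,\Pi)$. The quantity $d(v',o')$ is determined by the hyperbolic Pythagorean theorem in the right-angled quadrilaterals and triangles formed by $o, o', v, v'$. Concretely, one has $\cosh R_\Delta = \cosh h \cosh c_v$, together with a similar relation linking $\cosh R_\Delta$, $d(v,\Pi)$, $h$ and $d(v',o')$ through a Lambert quadrilateral. Subtracting the two relations shows that $\bigl|d(v',o') - c_v\bigr|$ is controlled by $d_v$ and $h$. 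The factor that could spoil this is the denominator $\sinh c_v$, and it is bounded below by a constant times $\mu$. Indeed $c_v$ is at least half the shortest edge, hence $c_v \geq a\mu/2$.

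To control the $\mu$-dependence cleanly, I would rather not expand hyperbolic formulas by hand. Instead I would use the rescaling and compactness device of Remark \ref{cc}, as in Lemma \ref{Lem:Barycentric}. Place $o$ at the centre of the Poincar\'e ball $\mathbb{H}^3_{1/\mu}$. In the limit $\mu \to 0$ the configuration becomes a Euclidean tetrahedron with edges in $[a,b]$, circumradius at most $r$, and $d_v/c_v \leq N$. For the Euclidean picture there is an elementary estimate. If a point of a sphere of radius at most $r$ lies within $\epsilon$ of a plane cutting the sphere in a circle of radius at least $a/2$, then its distance from that circle is at most $C(a,r)\sqrt{\epsilon}$. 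Consequently the function $f = \mathrm{dist}(v,\mathcal{C})/\mu$ is continuous on the closure $\overline{C}$ of the space of admissible slivers. Its supremum over the set where $d_v/c_v \leq N$ is therefore finite, which defines $k$. It tends to $0$ as $N \to 0$, because the zero set of $d_v$ on the compact set $\overline{C}$ is exactly where $v \in \mathcal{C}$.

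I expect the main obstacle to be the uniformity as $\sigma \to 0$. A bound of the form $\sqrt{\epsilon}$ degenerates when the plane $\Pi$ is nearly tangent to $\Sigma$, and this is exactly where the lower bound $c_v \geq a\mu/2$ has to be used. The cleanest route is the compactness argument. For fixed $a,b,r$ and $\mu \leq \epsilon_3$, consider the sets $\{d_v/c_v \leq N\} \subset \overline{C}$ as $N$ decreases. They are nested compact sets, and their intersection consists of cocircular configurations, where $f = 0$. Upper semicontinuity of $\sup f$ under this nested intersection then gives $k(\sigma,a,b,r) \to 0$.
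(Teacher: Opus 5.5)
Your proposal is correct, but it argues differently from the paper. The paper does no new geometry for this lemma. It takes Breslin's explicit bound $K(\sigma,a\mu,b\mu,r\mu)=n\,r\mu+2\tan(J)\cosh(r\mu)\sinh(r\mu)/\sinh(a\mu/2)$, with $J=\pi-4\arctan(e^{-nr\mu})$, and expands it to first order in $\mu$. This gives $K\sim n(r+8r^2/a)\mu\le N(\sigma,a,b,r)(r+8r^2/a)\mu$, so $k$ is explicit and linear in the constant $N$ of Lemma~\ref{Lem:Lemma3Breslin}.

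Your main route is instead the continuity-and-compactness device of Remark~\ref{cc}. You apply it to $f=\mathrm{dist}(v,\mathcal{C})/\mu$ and $g=d_v/c_v$ on $\overline{C}$, using only that $g=0$ forces $v\in\Pi\cap\Sigma=\mathcal{C}$. This bypasses Breslin's formula entirely and covers the whole range $\mu\in(0,\epsilon_3]$ at once; a first-order expansion as $\mu\to 0$ directly addresses only small $\mu$. The price is a non-explicit $k$. Your sketched trigonometric route would restore explicitness: done carefully, via $\cosh c_v-\cosh\rho=2\sinh\frac{c_v+\rho}{2}\sinh\frac{c_v-\rho}{2}$ and $c_v\ge a\mu/2$, it gives a bound of the same shape as the paper's.

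A few small corrections.
\begin{itemize}
\item Continuity of $f$ on $\overline{C}$ does not follow ``consequently'' from your Euclidean estimate. It holds because the rescaled configurations converge to Euclidean ones, and in the ball model the hyperbolic circumcircle of $p_1,p_2,p_3$ is simply the Euclidean circle through them. That circle stays well defined in the limit, since three distinct points of a sphere are never collinear.
\item With $c\ge a/2$, your Euclidean estimate is actually linear: $\bigl|d(v',o')-c\bigr|\le(2r\epsilon+\epsilon^2)/c$. The $\sqrt{\epsilon}$ behaviour appears only when that lower bound on $c$ is lost.
\item The quadrilateral $o\,o'\,v'\,v$ has right angles only at $o'$ and $v'$, so it is not a Lambert quadrilateral. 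The identity you want is $\cosh R_\Delta=\cosh h\cosh d_v\cosh d(o',v')\mp\sinh h\sinh d_v$.
\item For the compactness route you do not even need Lemma~\ref{Lem:Lemma3Breslin}. If $d_p=0$ at a single vertex, all four vertices lie on the limit sphere and in one plane, hence are cocircular, so $f$ vanishes at every vertex there.
\end{itemize}
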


\begin{proof}
Breslin's Lemma 2.4 states that the distance of any vertex of $\Delta$ from the circumcircle of the remaining three vertices is at most $K = K(\sigma, a\mu, b\mu, r\mu)$ for a function $K$. We must examine how $K$ behaves as $\mu$ varies. The function $K$ is defined by Breslin as follows:
$$
K(\sigma, a\mu, b\mu, r\mu) = 
$$
$$n(\sigma, a\mu, b \mu, r \mu) r \mu + 
\frac{2 \tan (J(\sigma, a\mu, b\mu, r\mu)) \cosh(r\mu) \sinh(r \mu)}
{\sinh (a\mu/2)}.
$$
Here,
$$
J(\sigma, a\mu, b\mu, r\mu) = \pi - 4 \arctan(e^{-n(\sigma, a\mu, b \mu, r \mu) r \mu}).
$$
Now, as $\mu \rightarrow 0$ and $a,b,r$ remain fixed, then $e^{-n(\sigma, a\mu, b \mu, r \mu) r \mu}$ is to first order
$$1 -(n(\sigma, a\mu, b \mu, r \mu) r \mu),$$
Moreover, near $x = 1$, $\arctan(x)$ is $\pi/4 + (x-1)/2$ to first order. So, to first order, $4 \arctan(e^{-n(\sigma, a\mu, b \mu, r \mu) r \mu})$ is
$$\pi - 2 n(\sigma, a\mu, b \mu, r \mu) r \mu.$$
and therefore, to first order, $J( \sigma, a\mu, b\mu, r\mu)$ is 
$$2n(\sigma, a\mu, b \mu, r \mu) r \mu.$$
So, as $\mu \rightarrow 0$,
\begin{align*}
\frac{2 \tan (J(\sigma, a\mu, b\mu, r\mu)) \cosh(r\mu) \sinh(r \mu)}{\sinh (a\mu/2)} 
&\sim \frac{4n(\sigma, a\mu, b \mu, r \mu) r^2 \mu^2}{a\mu/2} \\
& = 8 n(\sigma, a\mu, b \mu, r \mu) (r^2/a) \mu.
\end{align*}
So,
\begin{align*}
K(\sigma, a\mu, b\mu, r\mu) &\sim n(\sigma, a\mu, b \mu, r \mu) (r + 8r^2/a) \mu \\
&\leq N(\sigma, a, b , r) (r + 8r^2/a) \mu.
\end{align*}
So,
$K/\mu$ is bounded above by a function $k(\sigma, a,b,r)$ and moreover, by Lemma \ref{Lem:Lemma3Breslin}, this tends to zero as $\sigma\rightarrow 0$ and $a,b,r$ remain fixed and $\mu$ is bounded above. \end{proof}

The following is from Breslin \cite[Section 2]{Breslin}.

\begin{definition}
Let $T$ be a geodesic triangle in $\mathbb{H}^3$ with edge lengths in $[A,B]$ and circumradius at most $R$, and let $\sigma > 0$. Then the \emph{$(\sigma, A, B, R)$-sliver region} of $T$ is the set of points $p$ in $\mathbb{H}^3$ such that the tetrahedron spanned by $p$ and the vertices of $T$ is a $(\sigma, R/A)$-sliver with edge lengths in $[A,B]$ and circumradius at most $R$.
\end{definition}

The following is a version of Lemma 2.5 in \cite{Breslin}.

\begin{lemma}
\label{Lem:Lemma5Breslin}
Let $T$ be a geodesic triangle in $\mathbb{H}^3$ with edge lengths in $[a\mu, b\mu]$ and circumradius at most $r\mu$. Then the volume of the $(\sigma, a\mu, b\mu, r\mu)$-sliver region of $T$ is at most $v(\sigma, a, b, r) \mu^3$ for some continuous function $v(\sigma, a, b, r)$. Moreover, $v(\sigma, a, b, r) \rightarrow 0$ as $\sigma \rightarrow 0$ and $a,b,r$ remain fixed and $\mu$ is bounded above.
\end{lemma}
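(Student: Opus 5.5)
The plan is to follow Breslin's proof of Lemma 2.5, as with the preceding lemmas: confine the sliver region to a thin neighbourhood of a circle, bound its volume explicitly, and then track the dependence on $\mu$.

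\textbf{Step 1: localisation.} Let $p$ lie in the $(\sigma, a\mu, b\mu, r\mu)$-sliver region of $T$, and let $\Delta$ be the tetrahedron spanned by $p$ and the vertices of $T$. Then $\Delta$ has edge lengths in $[a\mu, b\mu]$ and circumradius at most $r\mu$, and it is a $(\sigma, r/a)$-sliver. By Lemma~\ref{Lem:Lemma4Breslin}, $p$ lies within distance $K = k(\sigma,a,b,r)\mu$ of the circumcircle $\Gamma$ of the vertices of $T$. So the sliver region is contained in the closed $K$-neighbourhood $U$ of $\Gamma$ in $\mathbb{H}^3$.

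\textbf{Step 2: the circle is small.} Since every edge of $\Delta$ is at most $b\mu$ and the circumradius is at most $r\mu$, the radius $c$ of $\Gamma$ satisfies $c \le r\mu$. Indeed, the circumscribed sphere of $\Delta$ contains $\Gamma$, so $c \le R_\Delta$. The length of $\Gamma$ is therefore $2\pi\sinh(c) \le 2\pi\sinh(r\mu)$.

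\textbf{Step 3: volume of the tube.} The volume of the $K$-neighbourhood of a circle of radius $c$ is at most the length of the circle times the area of a hyperbolic disc of radius $K$, up to a distortion factor. This factor comes from the normal exponential map and is bounded by $\cosh$-type terms in $c+K$, which are bounded because $\mu$ is bounded above. To be safe near the centre, where the tube can overlap itself, I would simply bound $\mathrm{vol}(U)$ by the volume of the image of the normal disc bundle under the normal exponential map. This gives
$$
\mathrm{vol}(U) \le 2\pi\sinh(r\mu+K)\cdot 2\pi(\cosh K - 1)\cdot C_0 ,
$$
where $C_0$ is uniform for $\mu \le \epsilon_3$. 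Writing $K = k\mu$, as $\mu\to 0$ the right-hand side is asymptotic to $2\pi (r+k)\mu \cdot \pi k^2\mu^2$. It is bounded by $v(\sigma,a,b,r)\mu^3$ with, for instance, $v = C_1 (r+k)k^2$ for a uniform constant $C_1$, and $v$ may be taken continuous in its arguments (after replacing $k$ by a continuous majorant if necessary).

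\textbf{Step 4: the limit.} Since $k(\sigma,a,b,r) \to 0$ as $\sigma \to 0$ with $a,b,r$ fixed, by Lemma~\ref{Lem:Lemma4Breslin}, we get $v \to 0$.

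The main obstacle is Step 3: making the tube-volume bound uniform in $\mu$, so that it is genuinely of order $\mu^3$ rather than merely finite. I would handle this with the same rescaling used in Lemma~\ref{Lem:Barycentric}. Place the centre of $\Gamma$ at the origin of $\mathbb{H}^3_{1/\mu}$. The hyperbolic metric on a ball of Euclidean radius $O(1)$ is then uniformly bi-Lipschitz to the Euclidean one, with volume distortion bounded uniformly for $\mu \le \epsilon_3$. This reduces the estimate to the Euclidean fact that the $k$-neighbourhood of a circle of radius at most $r$ has volume at most $2\pi r\cdot \pi k^2 + \tfrac{4}{3}\pi k^3$.
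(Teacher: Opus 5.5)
Your proposal is correct and takes the same route as the paper: Lemma~\ref{Lem:Lemma4Breslin} confines the sliver region to the $k(\sigma,a,b,r)\mu$-neighbourhood of the circumcircle of $T$. That circle has radius at most $r\mu$, so the tube has volume $O(\mu^3)$, with a constant that tends to $0$ with $k$. Your Step~3 spells out the tube-volume estimate (via the normal exponential map, or by rescaling to the Euclidean case), which the paper compresses into the phrase ``because $\mu$ is bounded above''.
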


\begin{proof}
By Lemma \ref{Lem:Lemma4Breslin}, the sliver region is contained within the $K$-neighbourhood of the circumcircle of $T$, for $K = k(\sigma, a, b, r) \mu$. The radius of the circumcircle is at most $r \mu$. Hence, because $\mu$ is bounded above, the volume of this $K$-neighbourhood is at most $v(\sigma, a, b, r) \mu^3$. Since $k(\sigma, a, b, r) \rightarrow 0$ as $\sigma \rightarrow 0$ and $a,b,r$ remain fixed and $\mu$ is bounded above, we deduce that $v(\sigma, a, b, r) \rightarrow 0$ as $\sigma \rightarrow 0$ and $a,b,r$ remain fixed and $\mu$ is bounded above. \end{proof}

As explained above, Breslin picked a collection of points $\calS$ in $M$ with the following three properties:
\begin{enumerate}
\item for any $p \in M$, the ball $B(p, \mathrm{inj}(M,p)/5)$ centred at $p$ with radius $\mathrm{inj}(M,p)/5$ has a point of $\calS$ in its interior;
\item any point in $\calS \cap M_{[\mu, \infty)}$ has distance at least $\mu/100$ from every other point in $\calS$;
\item any $p \in M_{[\mu, \infty)}$ has distance less than $\mu/100$ from some point in $\calS$.
\end{enumerate}

Breslin then perturbs $\calS$ to a set $\calS'$. Following Breslin, we say that $\calS'$ is a \emph{good perturbation} of $\calS$ if each point is moved a distance at most $\mu/1000$ and furthermore $\calS$ and $\calS'$ are the vertex sets of Delaunay triangulations. We refer to the perturbation of a point $p$ in $\calS$ by $p'$.

\begin{lemma}
\label{Lem:Lemma6Breslin}
Let $p \in \calS \cap M_{[\mu, \infty)}$. The number of triples $\{ q,r,s \}$ of points in $\calS$ such that $[p',q',r',s']$ is a tetrahedron in the Delaunay triangulation on some good perturbation $\calS'$ of $\calS$ is bounded above by an absolute constant.
\end{lemma}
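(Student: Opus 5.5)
The plan is a packing argument: every relevant point of $\calS$ lies in a small ball around $p$, and these points are uniformly separated, so only boundedly many fit. We first localise the possible triples. Suppose $[p',q',r',s']$ is a tetrahedron of the Delaunay triangulation for some good perturbation $\calS'$. Then there is a point $z$ (the circumcentre) that is equidistant from $p',q',r',s'$, and no point of $\calS'$ is closer to $z$. We will show that the circumradius $|zp'|$ is small compared with $\mu$.

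\textbf{Bounding the circumradius.} By property (1), the ball $B(z,\mathrm{inj}(M,z)/5)$ contains a point of $\calS$. Since every point moves by at most $\mu/1000$, the nearest point of $\calS'$ to $z$ lies within $\mathrm{inj}(M,z)/5+\mu/1000$ of $z$. Because $p \in M_{[\mu,\infty)}$, its injectivity radius is at least $\mu/2$. If $z$ is near $p$, property (3) gives something stronger: a point of $\calS$ within $\mu/100$ of any thick point. Hence $|zp'|$ is bounded, roughly by $\mu/100+\mu/1000$.

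I expect the main obstacle to be this step. The danger is that $z$ itself could lie in the thin part, and then a priori nothing controls it. To handle it, I would argue by a continuity/triangle-inequality estimate. Suppose $|zp'| = R$. Then $\mathrm{inj}(M,z) \ge \mathrm{inj}(M,p') - R$, since injectivity radius is $1$-Lipschitz. So if $R$ were larger than about $\mu/50$, the point $z$ would still be in $M_{[\mu/2,\infty)}$ roughly, or close enough that property (1) or (3) supplies a point of $\calS'$ strictly within distance less than $R$ of $z$. That would contradict the empty-sphere property. Concretely, compare $R$ against $\min\{\mu/100,(\mathrm{inj}(M,p')-R)/5\}+\mu/1000$ and solve the inequality to get $R \le c\mu$ for an explicit small $c$, say $c = 1/40$.

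\textbf{Packing.} With the circumradius bounded, the points $q',r',s'$ all lie within $2c\mu$ of $p'$. Therefore $q,r,s$ lie within $2c\mu+\mu/500$ of $p$, that is, in a ball $B(p,C\mu)$ with $C$ absolute. Since $C\mu$ is less than the injectivity radius at $p$, this ball is isometric to a hyperbolic ball. Now take any two points $u,w\in\calS\cap B(p,C\mu)$. If one of them lies in $M_{[\mu,\infty)}$, property (2) gives $d(u,w)\ge\mu/100$. Otherwise both are thin points near $p$, and then property (2) only directly separates them from $p$, not from each other. I would handle this by noting that since $p$ is $\mu$-thick and $C\mu$ is small, every point within $C\mu$ of $p$ has injectivity radius at least $\mu/2 - C\mu$. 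Here I would check whether the paper's convention lets us read (2) as applying in a slightly thinner region; if not, I would appeal to the generalised net separation $\tau_0\min\{\epsilon,\mathrm{inj}/10\}$, which gives separation comparable to $\mu$ in this region.

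Either way, the points of $\calS$ in $B(p,C\mu)$ are $c'\mu$-separated. The disjoint balls of radius $c'\mu/2$ about them fit inside $B(p,(C+c')\mu)$. Comparing volumes, which scale like $\mu^3$ uniformly for $\mu\le\epsilon_3$, bounds the number of such points by an absolute constant $m$. The number of triples is then at most $\binom{m}{3}$.
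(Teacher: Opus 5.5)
Your plan matches what the paper's proof relies on. The paper simply quotes Breslin's bound $\bigl(\mathrm{vol}\,B(\mu/50+\mu/500)/\mathrm{vol}\,B(\mu/200-\mu/1000)\bigr)^3$, which is exactly a ``bound the circumradius, then pack'' count, and notes that this ratio stays bounded as $\mu\to0$ because $\mathrm{vol}\,B(\rho)\sim\frac43\pi\rho^3$. However, two steps of your reconstruction fail as written.

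\textbf{Circumradius.} Your estimate runs in the wrong direction. Property (1) gives a point of $\calS$ within $\mathrm{inj}(M,z)/5$ of $z$. To bound $R$ from above you therefore need an \emph{upper} bound on $\mathrm{inj}(M,z)$. The Lipschitz estimate $\mathrm{inj}(M,z)\ge\mathrm{inj}(M,p')-R$ is a lower bound and gives nothing. Also, the term $\mu/100$ in your minimum is available only when $z$ itself lies in $M_{[\mu,\infty)}$. The correct argument is a dichotomy on $z$:
if $z\in M_{[\mu,\infty)}$, then (3) gives $R<\mu/100+\mu/1000$;
otherwise $\mathrm{inj}(M,z)<\mu/2$, and (1) gives $R<\mu/10+\mu/1000$.
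So your value $R\le\mu/40$ is not justified, since for thin $z$ only (1) applies. But $R<\mu/9$ is enough: $q,r,s$ then lie in $B(p,2R+\mu/500)$, and $2R+\mu/500<\mu/2\le\mathrm{inj}(M,p)$, so your packing ball is still embedded.

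\textbf{Separation.} You are right that (2) separates a thin point of $\calS$ only from thick points. This is not a technicality: (1)--(3) alone do not imply the lemma. Here is an example.
Put $p$ on the frontier of a component of $M_{(0,\mu)}$.
Place arbitrarily many points $q_1,\dots,q_n$ of $\calS$ in the thin part on the sphere of radius $\mu/100$ about $p$; (2) allows this.
Every other point of $\calS$ is at distance at least $\mu/100$ from $p$. So the midpoint of $[p,q_i]$ is at distance $\mu/200$ from $p$ and $q_i$, and strictly farther from every other point.
Hence each $[p,q_i]$ is a Delaunay edge, also after any sufficiently small perturbation, and $p$ lies in at least $n/3$ tetrahedra.

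So the separation must enter as an explicit hypothesis, as in your fallback, rather than be ``checked''. This is exactly what Breslin's denominator $\mathrm{vol}\,B(\mu/200-\mu/1000)$ tacitly assumes, and the sets the paper actually uses do have it:
in the closed case, any two points of $\calS$ are $\mu/100$ apart and every point of $M$ is within $\mu/100$ of $\calS$, so both difficulties vanish and your argument goes through with $R<\mu/100+\mu/1000$;
in the cusped case, the proof of Proposition~\ref{Prop:BoundaryIsSimplicial} shows that any two points of $\calS\cap M^{\textrm{fat}}_{\mu/2}$ are at least $(89/100)c\mu$ apart, and every point within $\mu/4$ of a $\mu$-thick point lies in $M_{[\mu/2,\infty)}\subseteq M^{\textrm{fat}}_{\mu/2}$.
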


\begin{proof}
In Lemma 3.1 of \cite{Breslin}, Breslin proves that the number of triples is bounded above by
$$\left ( \frac{\mathrm{vol}(B(\mu/50+\mu/500))}{\mathrm{vol}(B(\mu/200 - \mu/1000)} \right )^3.$$
Here, $B(R)$ is a ball of radius $R$ in $\mathbb{H}^3$. Since the volume of $B(R)$ is asymptotically $(4/3)\pi R^3$ as $R \rightarrow 0$, the number of triples is bounded above by an absolute constant. \end{proof}

The proof of Theorem \ref{Thm:ExtensionBreslin} in the case where $M = M^{\textrm{fat}}_{\mu/2}$ now follows that in Breslin's paper.
We pick a set $\calS$ of points in $M$ satisfying the following properties:
\begin{enumerate}
\item any two points in $\calS$ are at least $\mu/100$ apart;
\item any $p \in M$ has distance less than $\mu/100$ from some point in $\calS$. 
\end{enumerate}
We pick an ordering on  the points $\calS$, denoting them by $\{ p_1, \dots, p_k \}$. We will perturb these points in turn, and denote the perturbation of $p_i$ by $p'_i$. Let $\mathcal{S}_i$ be the result of perturbing the first $i$ points; so $\mathcal{S}_i = \{ p'_1, \dots, p'_i, p_{i+1}, \dots, p_k \}.$ We wish to perturb $p_{i+1}$ to $p_{i+1}'$. Let $\mathcal{U}_i$ be the set of triples $\{ q,r,s \} \subseteq \mathcal{S}_i \setminus \{ p_{i+1} \}$ such that $[p_{i+1}', q', r', s']$ is a tetrahedron in the Delaunay triangulation on some good perturbation of $\mathcal{S}$. By Lemma \ref{Lem:Lemma6Breslin}, $|\mathcal{U}_i|$ is bounded above by an absolute constant. We pick the perturbation $p'_{i+1}$ of $p_{i+1}$ so that it lies with in the ball of radius $\mu/1000$ about $p_i$ and so that it avoids the sliver regions of every triangle in $\mathcal{U}_i$. We also ensure that $\calS_i$ is generic in the sense that it is the vertex set of a Delaunay triangulation. No two points in $\calS_i$ are closer than $(4/500)\mu$ apart, and so this is a lower bound for the edge lengths of this Delaunay triangulation. Each point in $M$ has distance at most $(11/1000)\mu$ from some point in $\calS_i$. Hence, this is an upper bound for the circumradius of each tetrahedron in the triangulation. Each edge of each tetrahedron therefore has length at most $(11/500)\mu$.
The ball of radius $\mu/1000$ has volume at least $(4/3)\pi (\mu/1000)^3$. By Lemma \ref{Lem:Lemma5Breslin}, the volume of each sliver region is at most $v(\sigma, a, b, r) \mu^3$, where 
$$a = 4/500, \qquad b = 11/500, \qquad r = 11/1000.$$ 
Moreover, $v(\sigma, a, b, r) \rightarrow 0$ as $\sigma \rightarrow 0$. Hence if $\sigma$ is sufficiently small (i.e. less than some universal positive number), we can ensure that there is such a point $p'_{i+1}$. Repeating this for each point in $\calS$ gives the required set $\calS'$. Let $\mathcal{T}$ be the resulting Delaunay triangulation. Since none of its tetrahedra is a sliver, we deduce from Lemma \ref{Lem:Breslin2.1}, that there is a uniform positive lower bound $\theta$ on each of its dihedral angles. Hence, $\mathcal{T}$ is an $(a\mu, b\mu, \theta)$-thick triangulation of $M$. Note that $b = 11/500 < 1/40$. This completes the proof of Theorem \ref{Thm:ExtensionBreslin} in the case where $M_{(0,\mu/4)}$ is empty.

\subsection{The general case: when the thin part might be non-empty}
We now prove Theorem \ref{Thm:ExtensionBreslin} in the case where the thin part $M_{(0,\mu/4)}$ might be non-empty. In that case,  $M^{\textrm{fat}}_{\mu/2}$ has non-empty toral boundary.

The proof follows the same general approach as in the closed case. A collection of points $\mathcal{S}$ is picked in $M$ and a perturbation is then applied to it, giving a set $\mathcal{S}'$. The aim is to show that $\mathcal{S}'$ can be chosen so that it is the vertex set of a Delaunay triangulation that is thick in $M_{[\mu/4,\infty)}$. 
Crucially, unlike in Breslin's work, we also need to ensure that this triangulation contains a simplicial subset that is isotopic to $\partial M^{\textrm{fat}}_{\mu/2}$.
Some care must be taken with the initial choice of $\mathcal{S}$ near $\partial M^{\textrm{fat}}_{\mu/2}$. We therefore now establish various geometric properties of this boundary.

\begin{lemma}
\label{Lem:TubeRadius}
Let $X$ be a component of $\mathrm{cl}(M - M^{\textrm{fat}}_{\mu/2})$ that is an $r$-neighbourhood of a geodesic
with length less than $\mu/4$. Then $r \geq \mu/10$.
\end{lemma}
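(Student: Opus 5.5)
The plan is to compare the geometry of the tube $X$ at its core geodesic with the geometry at its boundary. By definition, $X$ is a component of $M_{(0,\mu/2)}$ that is a regular neighbourhood of a closed geodesic $\gamma$ of length $\ell < \mu/4$. Its boundary $\partial X$ lies in $\partial M_{[\mu/2,\infty)}$, so at each point of $\partial X$ the injectivity radius is exactly $\mu/4$. Equivalently, some homotopically nontrivial loop based there has length exactly $\mu/2$ and none is shorter. I would use the standard formula for translation lengths in the universal cover. Lift $X$ to the $r$-neighbourhood of an axis $\tilde\gamma$ in $\mathbb{H}^3$. The stabiliser of $\tilde\gamma$ is generated by a loxodromic $g$ with complex length $\ell + i\phi$. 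For a point at distance $r$ from the axis,
$$\cosh d(x, g^k x) = \cosh(k\ell) + \sinh^2(r)\bigl(\cosh(k\ell) - \cos(k\phi)\bigr).$$

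First, I would note that the shortest loops at $\partial X$ are realised by powers $g^k$ of the core element. By the Margulis lemma, the elements displacing a point by less than $\epsilon_3$ generate an elementary group. So the relevant elements all lie in the cyclic group $\langle g\rangle$: a cusp cannot meet the tube, and other geodesics are far away. Hence there is some $k \neq 0$ with
$$\cosh(\mu/2) = \cosh(k\ell) + \sinh^2(r)\bigl(\cosh(k\ell) - \cos(k\phi)\bigr) \leq \cosh(k\ell) + \sinh^2(r)\bigl(\cosh(k\ell) + 1\bigr).$$
The naive estimate $|k|\ell < \mu/2$ does not help by itself. Instead, I would choose $k$ to be the power that is shortest at the boundary, and compare it with the first power, which has displacement at least $\mu/2$ everywhere on $\partial X$. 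Even simpler, I would use the triangle inequality. The loop at $x\in\partial X$ representing $g^k$ is homotopic to the path that goes radially to the core (length $r$), then $|k|$ times around $\gamma$, then radially back (length $r$). Its length is $2r + |k|\ell$, and it is at least $\mu/2$.

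So the key step is to bound $|k|\ell$ for the minimising $k$ well below $\mu/2$, and then conclude $2r \geq \mu/2 - |k|\ell$. Choose $k$ with $|k|\ell \le \mu/4$, say, using the fact that $\ell<\mu/4$ allows $k=\pm1$. Then $2r + \ell \ge \mu/2$ gives $r \ge \mu/8 > \mu/10$. The subtle point is that the inequality must hold for the element $g$ itself, not just for the minimising power: every nontrivial loop based at $x$ has length at least $2\,\mathrm{inj}(M,x) = \mu/2$. So the loop $g$ drawn through the core has length at least $\mu/2$, giving $2r + \ell \geq \mu/2$ and hence $r \geq (\mu/2 - \mu/4)/2 = \mu/8 \ge \mu/10$.

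The main obstacle I expect is bookkeeping the conventions rather than any geometric difficulty. The paper's thin part uses injectivity radius $<\mu/2$ for $M_{(0,\mu)}$. I need to check that at $\partial M_{(0,\mu/2)}$ the injectivity radius is $\mu/4$, so that shortest loops have length $\mu/2$, and that "length less than $\mu/4$" refers to $\ell$. If the conventions instead gave loops of length $\mu/4$, the same argument still yields a positive multiple of $\mu$. In that case I would fall back on the displacement formula above, using $\cos(k\phi)\ge -1$, to extract $\sinh^2 r \gtrsim (\cosh(\mu/2)-\cosh\ell)/(\cosh\ell+1)$ and hence $r \ge \mu/10$ for $\mu<\epsilon_3\le 0.766$.
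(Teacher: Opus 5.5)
Your argument is correct and is essentially the paper's: bound the displacement of a point of $\partial X$ under the deck transformation of the core geodesic by the length of an explicit path, and compare this with $2\,\mathrm{inj} = \mu/2$. Your path through the core gives $2r+\ell$ directly, which is sharper than the paper's $2r+\ell\cosh r$; the paper then needs an upper bound on $\mu$ to control $\cosh r$, whereas you get $r>\mu/8$ with no numerics, so your Margulis-lemma discussion of minimising powers and your displacement-formula fallback are unnecessary.
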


\begin{proof}
Let $\ell < \mu/4$ be the length of the geodesic. Consider any point on $\partial X$ and its lift to $\mathbb{H}^3$.
The covering transformation corresponding to $\ell$ translates this point distance $\ell \cosh r$ along the inverse image of 
$\partial X$, together with a rotation of some angle. Hence the distance in $\mathbb{H}^3$ between these two points
is at most $2r + \ell \cosh r$. But by definition of the boundary of $M_{[\mu/2,\infty)}$, this distance
is at least $\mu/2$. Hence,
$$2r + (\mu/4) \cosh r \geq 2r + \ell \cosh r \geq \mu/2.$$
Now suppose that $r$ is less than $\mu/10$. Since 
$\mu \leq 0.776$ \cite[Theorem 1.5(2)]{FPS}, we deduce that 
$\cosh r \leq \cosh 0.0776 < 1.004$. So,
$$2r \geq \mu \left(\frac{1}{2} - \frac{1.004}{4} \right)$$
and hence $r \geq (0.1245) \mu$, which is a contradiction. \end{proof}

\begin{lemma}
\label{Lem:BoundaryHasControlledGeometry}
The boundary of $M^{\textrm{fat}}_{\mu/2}$ inherits a Riemannian metric that is Euclidean with injectivity radius more than $\mu/4$.
\end{lemma}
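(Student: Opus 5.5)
Each component of $\partial M^{\textrm{fat}}_{\mu/2}$ is a component of $\partial M_{[\mu/2,\infty)}$. So it is either the boundary of a cusp component of $M_{(0,\mu/2)}$, or the boundary of a Margulis tube $X$ about a geodesic of length $\ell<\mu/4$. I would treat the two cases separately. In each case I would lift to $\mathbb{H}^3$ and use the explicit description of the relevant set of points with injectivity radius exactly $\mu/4$.

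\textbf{Cusp case.} Work in the upper half-space model with the cusp at $\infty$ and stabiliser $\Gamma_\infty\cong\mathbb{Z}^2$ acting by Euclidean translations. For a point at height $h$, a parabolic translation by Euclidean length $t$ moves it hyperbolic distance $2\arcsinh(t/2h)$. Since $\mu$ is below the Margulis constant, only elements of $\Gamma_\infty$ realise distances less than $\mu$ on the cusp boundary. Hence this boundary is the horosphere $\{h=h_0\}$ modulo $\Gamma_\infty$, and its induced metric is the quotient of a Euclidean plane by translations, hence Euclidean. At height $h_0$ the shortest translation satisfies $2\arcsinh(t/2h_0)=\mu/2$. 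The induced metric on the horosphere is Euclidean scaled by $1/h_0$, so the shortest closed geodesic on the torus has length $2\sinh(\mu/4)>\mu/2$. Thus the injectivity radius of the flat torus is more than $\mu/4$.

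\textbf{Tube case.} Lift $X$ to the $r$-neighbourhood of a geodesic axis, with $r\ge\mu/10$ by Lemma~\ref{Lem:TubeRadius}. The boundary is the quotient of the equidistant cylinder $\{d=r\}$ by a loxodromic element $g$ with translation length $\ell$ and some rotation angle. With coordinates $(s,\phi)$ (distance along the axis, angle), the induced metric is $\cosh^2 r\,ds^2+\sinh^2 r\,d\phi^2$. This is flat, and the deck group is generated by $g$ and the $2\pi$-rotation, acting by translations, so the quotient is a Euclidean torus.

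For the injectivity radius, any closed geodesic on this flat torus corresponds to a nontrivial element $\gamma$ of the cyclic group generated by $g$ (together with the rotation). Its length is at least the hyperbolic displacement of the corresponding point under $\gamma$, because a path on the torus is a path in $M$ and the hyperbolic distance between endpoints is at most the path length. Since $\partial X\subset\partial M_{[\mu/2,\infty)}$, every nontrivial element of $\pi_1(M)$ displaces the point by at least $\mu/2$. For the pure $2\pi$-rotation curve, its length is $2\pi\sinh r\ge 2\pi\sinh(\mu/10)>\mu/2$. Therefore every closed geodesic on the torus has length at least $\mu/2$.

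\textbf{Main obstacle.} The difficulty is strictness ("more than $\mu/4$"), together with checking that the boundary of the thin part is exactly a level set of the horofunction or distance function. For strictness, I would argue that the displacement of $\gamma$ at a point is the chordal hyperbolic distance between its endpoints, which is strictly less than the length of the curve on the horosphere or equidistant surface, since those curves are not hyperbolic geodesics. Hence each closed geodesic has length $>\mu/2$, and the injectivity radius, half the shortest closed geodesic, exceeds $\mu/4$. The level-set description follows because the displacement of each relevant element is monotone in height, respectively in $r$.
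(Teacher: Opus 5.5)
Your proof is correct and follows essentially the same route as the paper. In both, a closed geodesic on the torus is compared with the hyperbolic displacement of its lift, which is at least $\mu/2$ because the torus lies in $\partial M_{[\mu/2,\infty)}$, and the tube meridian is handled separately via $2\pi\sinh r$ and Lemma~\ref{Lem:TubeRadius}. The differences are minor: you check flatness with explicit horospherical and Fermi coordinates where the paper uses homogeneity, you compute the cusp case directly to get $2\sinh(\mu/4)$, and you justify the strict inequality that the paper simply asserts.
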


\begin{proof}
The metric is Euclidean, because the isometry group of each component of $ \partial M^{\textrm{fat}}_{\mu/2}$ acts
transitively. The injectivity radius of the torus is half the length of a shortest closed geodesic. So
consider a closed geodesic in the torus, based at some point. This
geodesic lifts to a path in the inverse image of the torus in $\mathbb{H}^3$. If the endpoints of this path are distinct, then
they have hyperbolic distance at least $\mu/2$. Hence, their Euclidean distance is more than $\mu/2$, as required. On the other hand,
if the endpoints are the same, then the geodesic is a meridian of a solid toral component of $\mathrm{cl}(M - M^{\textrm{fat}}_{\mu/2})$. This forms the boundary of a circle with radius at least $r$.
Hence, by Lemma \ref{Lem:TubeRadius}, its length is at least $2 \pi \sinh r \geq 2 \pi r \geq (\pi/5) \mu > \mu /2$, as required. \end{proof}

The above lemma refers to the Riemannian metric on $\partial M^{\textrm{fat}}_{\mu/2}$. We will also need to consider the induced metric on
$\partial M^{\textrm{fat}}_{\mu/2}$ that it inherits as a subspace of $M$. The following lemma compares these metrics at small scales.

\begin{lemma}
\label{Lem:DistanceComparison}

For any $\epsilon > 0$, there is $\zeta \in (0,1/100)$ with the following property.
Let $d_{eucl}$ denote the Euclidean Riemannian metric on $\partial M^{\textrm{fat}}_{\mu/2}$, and let $d_{hyp}$ denote the
metric it inherits from $M$. Then for any two points $x$ and $y$ in $\partial M^{\textrm{fat}}_{\mu/2}$ with $d_{hyp}(x,y) \leq \zeta\mu$,
we have the following comparison:
$$d_{hyp}(x,y) \leq d_{eucl}(x,y) \leq \min \{ 1.12, (1 + \epsilon) \} d_{hyp}(x,y).$$

\end{lemma}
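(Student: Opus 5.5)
The first inequality is immediate. The Euclidean Riemannian metric on a component $\Sigma$ of $\partial M^{\textrm{fat}}_{\mu/2}$ is the induced Riemannian metric, so $d_{eucl}(x,y)$ is the infimum of lengths of paths in $\Sigma$ joining $x$ to $y$. Any such path is also a path in $M$, hence $d_{hyp}(x,y)\le d_{eucl}(x,y)$.

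For the reverse inequality I would lift to $\mathbb{H}^3$ and work locally. A component of $\partial M^{\textrm{fat}}_{\mu/2}$ is either a horospherical torus (the boundary of a cusp) or the boundary of an $r$-neighbourhood of a short geodesic, with $r\ge \mu/10$ by Lemma \ref{Lem:TubeRadius}. Let $x,y$ satisfy $d_{hyp}(x,y)\le \zeta\mu$, with $\zeta$ small compared with the injectivity radius bound $\mu/4$ from Lemma \ref{Lem:BoundaryHasControlledGeometry} and with the Margulis scale. Then a shortest hyperbolic geodesic from $x$ to $y$ lifts to a geodesic segment in $\mathbb{H}^3$ whose endpoints $\tilde x,\tilde y$ lie on a single lift $\tilde\Sigma$ of the boundary component, namely a horosphere or an equidistant tube around a geodesic. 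The reason is that distinct lifts are separated: different lifts are far apart relative to $\zeta\mu$, and the thin region is convex so the segment cannot leave through another sheet. The key check is that the $d_{eucl}$-path in $\Sigma$ of length comparable to $d_{hyp}$ projects injectively, using the injectivity radius $>\mu/4$ and the choice $\zeta<1/100$. So it suffices to compare, for two points on a single horosphere or tube in $\mathbb{H}^3$, the intrinsic distance with the chordal hyperbolic distance.

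Next I would make the comparison explicit. On a horosphere, the intrinsic distance $s$ and the hyperbolic distance $d$ satisfy $s=2\sinh(d/2)$, so $s/d\to 1$ as $d\to 0$ and $s/d\le 2\sinh(\zeta\mu/2)/(\zeta\mu)$. On a tube of radius $r$, write the displacement in cylindrical coordinates, with angle $\phi$ and axial translation $t$. Then $\cosh d=\cosh^2 r\cosh t-\sinh^2 r\cos\phi$, while the intrinsic flat metric is $ds^2=\sinh^2 r\,d\phi^2+\cosh^2 r\,dt^2$. For $d\le\zeta\mu$ the ratio of the flat distance to $d$ is bounded by $1+O(d/\sinh r)+O(d^2)$. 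Since $\sinh r\ge r\ge\mu/10$, we get $d/\sinh r\le 10\zeta$. Choosing $\zeta$ small in terms of $\epsilon$, and using $\mu\le 0.776$, gives the bound $(1+\epsilon)$. The absolute constant $1.12$ should come from carrying out the same estimate with $\zeta=1/100$, taking the minimum of the two resulting bounds.

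The main obstacle I expect is the tube case. I would bound the tube case by comparing with the planar circle of radius $\sinh r$ in the perpendicular direction, treating the curved circle worst case: a chord subtending angle $\phi$ has hyperbolic length $2\arcsinh(\sinh r\sin(\phi/2))$ against arc length $\phi\sinh r$. I would then handle the mixed directions by noting that the ratio is monotone between the pure-angular and pure-axial cases. The uniformity in $\mu$ comes from $\sinh r\gtrsim\mu$ together with $d\le\zeta\mu$, so the arc-to-chord ratio stays close to $1$.
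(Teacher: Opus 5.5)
Your proof is correct, but it takes a different route from the paper's.

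\textbf{The paper's route.} The paper never computes a distance formula. It takes the collar $W$ of width $\zeta\mu$ on the thin side of $\partial M^{\textrm{fat}}_{\mu/2}$ and pushes the hyperbolic geodesic from $x$ to $y$ (which lies in $W$) onto the inner boundary $\partial_- W$ by closest-point retraction. That retraction does not increase length. Since $\partial_- W$ is a rescaled copy of $\partial M^{\textrm{fat}}_{\mu/2}$ with contraction factor at least $\sinh(\mu/10-\zeta\mu)/\sinh(\mu/10)$, the paper obtains $d_{eucl}\le \frac{\sinh(\mu/10)}{\sinh(\mu/10-\zeta\mu)}\,d_{hyp}$. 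The worst case for that factor is the meridian of the thinnest tube, via Lemma \ref{Lem:TubeRadius}. This ratio is uniformly close to $1$ for small $\zeta$ and at most $1.12$ for $\zeta\le 1/100$.

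\textbf{Your route.} You compare intrinsic and chordal distance on a single horosphere or equidistant tube by explicit hyperbolic trigonometry.
\begin{itemize}
\item What it buys: a much sharper error, namely $O(d^2)+O((d/\sinh r)^2)$ rather than $O(\zeta)$.
\item What it costs: you must treat mixed directions separately, whereas the Lipschitz retraction handles all directions at once.
\end{itemize}

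\textbf{Tightening the mixed-direction step.} Your \emph{monotonicity} step is vague as stated, but it is easy to make rigorous. Your formula rearranges to
\[
\sinh^2(d/2)=\cosh^2 r\,\sinh^2(t/2)+\sinh^2 r\,\sin^2(\phi/2).
\]
Compare this termwise with $s^2=\cosh^2 r\,t^2+\sinh^2 r\,\phi^2$. This gives
\[
s/d\le \frac{\sinh(d/2)}{d/2}\cdot\frac{\phi/2}{\sin(\phi/2)},
\]
and the same identity forces $\sin(\phi/2)\le \sinh(d/2)/\sinh r\lesssim 5\zeta$. So the bound is uniform, and far below $1.12$.

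\textbf{An unnecessary step.} Your \emph{key check}, that the short $d_{eucl}$-path projects injectively, is not needed. The covering $\tilde\Sigma\to\Sigma$ is a local isometry of intrinsic metrics, so $d_{eucl}(x,y)$ is automatically at most the intrinsic distance between any two lifts.

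\textbf{What is actually needed.} The real requirement is that the lifts realising $d_{hyp}(x,y)$ lie on the same $\tilde\Sigma$. This follows from the Margulis lemma: distinct lifts of the thin part are separated by much more than $\zeta\mu$. The paper uses the same fact implicitly when it asserts that the geodesic lies in $W$.
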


\begin{proof}

The fact that $d_{hyp}(x,y) \leq d_{eucl}(x,y)$ is automatic, since $d_{hyp}(x,y)$ is realised by a shortest geodesic in $M$ between $x$ and $y$, whereas $d_{eucl}(x,y)$ is realised by a shortest geodesic in $\partial M^{\textrm{fat}}_{\mu/2}$.

So we focus on the second inequality. 
Let $W$ be $\mathrm{cl}(N_{\zeta\mu}(\partial M^{\textrm{fat}}_{\mu/2}) - M^{\textrm{fat}}_{\mu/2})$. Thus, $W$ is a collar  on $\partial M^{\textrm{fat}}_{\mu/2}$ lying outside of $M^{\textrm{fat}}_{\mu/2}$ with width $\zeta\mu$. Because of Lemma \ref{Lem:TubeRadius}, $W$ does not reach as far as the core geodesic
of any solid toral components of $M - M^{\textrm{fat}}_{\mu/2}$. So $W$ is a disjoint union of copies of $T^2 \times [0,1]$. 

Let $\partial_- W$ be the tori
$\partial W - \partial M^{\textrm{fat}}_{\mu/2}$. Let $p \colon W \rightarrow \partial_- W$ be closest point retraction. Then $p$ does not increase the length of any curve. Moreover, $p$ scales the Riemannian metric on $\partial M^{\textrm{fat}}_{\mu/2}$ by a factor between $1$ and
$\sinh (\mu/10- \zeta\mu) / \sinh(\mu/10)$, for the following reason. Consider a solid toral component $Q$ of $\mathrm{cl}(M - M^{\textrm{fat}}_{\mu/2})$.
Let $r$ be the distance from the core geodesic of $Q$ to $\partial M^{\textrm{fat}}_{\mu/2}$. By Lemma \ref{Lem:TubeRadius}, $r \geq \mu/10$. The circumference
of a meridian circle of this tube with boundary $Q$ is $2\pi \sinh r$, and $p$ sends this meridian circle to a circle with circumference $2 \pi \sinh (r - \zeta\mu)$. The ratio $\sinh (r - \zeta\mu) / \sinh r$ is minimised when $r$ is as small as possible. Setting this minimum to be $\mu/10$ gives the required scale factor. One must also consider the scale factor in directions other than the meridian. And one must also consider the scale factor for cusp components of $M - M^{\textrm{fat}}_{\mu/2}$. But it is straightforward to check that the meridians for the solid tori, as above, gives the smallest scale factor. 

The scale factor $\sinh (\mu/10- \zeta\mu) / \sinh(\mu/10)$ tends to $1$ as $\zeta \rightarrow 0$. Hence, because $\mu$ is bounded above, the scale factor is at least $1/(1+\epsilon)$ if $\zeta$ is sufficiently small.
Also since $\mu \leq 0.776$ and $\zeta \leq 1/100$, the scale factor is at least $1/1.12$. \end{proof}

In the proof of Theorem \ref{Thm:ExtensionBreslin}, we will start with a triangulation of $\partial M^{\textrm{fat}}_{\mu/2}$. 
This will have edges that are geodesic in the Euclidean metric on $\partial M^{\textrm{fat}}_{\mu/2}$, 
bounding triangular faces.
We would like to ensure that when these edges and faces are straightened in $M$, the result is a simplicial subset of the triangulation of $M$.
To do so, we introduce the following concept.

Let $\calS$ be a set of points in $M$ that forms the vertex set of a Delaunay triangulation and such that $\calS \cap \partial M^{\textrm{fat}}_{\mu/2}$ forms the vertex set of a Delaunay triangulation of $\partial M^{\textrm{fat}}_{\mu/2}$ with its Euclidean Riemannian metric. For $\epsilon > 0$, we say that $\calS$ is \emph{$\epsilon$-robust} if for any triangle of the Delaunay triangulation of $\partial M^{\textrm{fat}}_{\mu/2}$ with vertices $x$, $y$, $z$, and Euclidean circumcentre $w$, the hyperbolic distance between $w$ and any point in $\calS \setminus \{ x,y,z \}$ is at least $(1+\epsilon)$ times the hyperbolic distance from $w$ to $x$, $y$ and $z$.

\begin{lemma} 
\label{Lem:RobustImpliesSimplicial}

Given $\epsilon \in (0,1)$, let $\zeta \in (0,1/100)$ be the constant from Lemma \ref{Lem:DistanceComparison}. Then there are $c \in (0,\zeta/2]$ and $\lambda \in (0,1)$ with the following property. Let $\calS$ be an $\epsilon$-robust set such that each edge of the Delaunay triangulation of $\partial M^{\textrm{fat}}_{\mu/2}$ with vertex set $\calS \cap \partial M^{\textrm{fat}}_{\mu/2}$ has Euclidean length in $[c\mu/4, c\mu/2]$. Suppose also that each point of 
$\partial M^{\textrm{fat}}_{\mu/2}$ has Euclidean distance less than $c\mu/2$ from some point of $\calS \cap \partial M^{\textrm{fat}}_{\mu/2}$.
Let $\calS'$ be a perturbation of $\calS$ in $M$, moving each point a hyperbolic distance at most $c\lambda\mu/1000$, and forming the vertex set of a Delaunay triangulation of $M$. 
Then for any three points in $\calS$ that span a triangle in the triangulation of 
$\partial M^{\textrm{fat}}_{\mu/2}$, their perturbations span a triangle in the Delaunay triangulation of $M$ with vertex set $\calS'$.

\end{lemma}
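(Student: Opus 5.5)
The approach is to use the empty-sphere characterisation of Delaunay simplices. Three points of $\calS'$ span a triangle of the Delaunay triangulation of $M$ precisely when some point $w'$ of $M$ is equidistant from them and strictly closer to them than to every other point of $\calS'$. That is, some ball through the three points contains no other point of $\calS'$ in its closure. So, given a triangle $x,y,z$ of the boundary Delaunay triangulation with Euclidean circumcentre $w$, I will take $w'=w$ itself, or a nearby point, and show that the hyperbolic ball about it through $x',y',z'$ is empty of the other perturbed points.

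\textbf{Step 1: the three vertices are nearly equidistant from $w$.} Since $w$ is the Euclidean circumcentre, $d_{eucl}(w,x)=d_{eucl}(w,y)=d_{eucl}(w,z)=\rho$. The hypothesis that every point of $\partial M^{\textrm{fat}}_{\mu/2}$ is within Euclidean distance $c\mu/2$ of a vertex, together with the empty-circle property, gives $\rho\le c\mu/2\le\zeta\mu/4$. The edge-length lower bound gives $\rho\ge c\mu/8$. Hence $d_{hyp}\le d_{eucl}\le\zeta\mu$, so Lemma \ref{Lem:DistanceComparison} applies. The hyperbolic distances $d_{hyp}(w,x)$, $d_{hyp}(w,y)$, $d_{hyp}(w,z)$ lie in $[\rho/(1+\epsilon),\rho]$. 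The robustness definition states that these three distances are equal, say $R$, and that every other point of $\calS$ has distance at least $(1+\epsilon)R$ from $w$.

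\textbf{Step 2: perturb the centre.} After perturbation, each point moves at most $\tau=c\lambda\mu/1000$. So $d(w,x'),d(w,y'),d(w,z')\in[R-\tau,R+\tau]$, and every other $s'$ has $d(w,s')\ge(1+\epsilon)R-\tau$. Since $R\ge c\mu/(8(1+\epsilon))\ge c\mu/16$, choosing $\lambda$ small relative to $\epsilon$ gives $(1+\epsilon)R-\tau>R+3\tau$, with room to spare.

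Now I need an exact equidistant point. The set of points equidistant from $x',y',z'$ is a geodesic line, namely the intersection of two bisector planes; it is well defined because $x',y',z'$ are not collinear, by the edge-length bounds. I will show this line passes within distance $O(\tau)$ of $w$, using a lower bound on the angles of triangle $xyz$. That bound follows from the edge lengths lying in $[c\mu/4,c\mu/2]$ and the circumradius bound, together with the near-Euclidean comparison at small scale, exactly as in the altitude lemma.

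\textbf{Step 3: conclude emptiness.} Pick the point $w'$ on that line with $d(w,w')\le C\tau$, where $C$ depends only on the angle bound. Then $d(w',x')\le R+(C+1)\tau$, while every other $s'$ satisfies $d(w',s')\ge(1+\epsilon)R-(C+1)\tau$. Choosing $\lambda$ so that $2(C+1)c\lambda\mu/1000<\epsilon c\mu/16$ gives the required strict inequality. Hence $x',y',z'$ span a Delaunay triangle. It is also embedded in the local picture, since all relevant distances are below the injectivity radius, which is at least about $\mu/4$ near the boundary by Lemma \ref{Lem:BoundaryHasControlledGeometry}.

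The main obstacle is Step 2: quantitatively controlling how far the hyperbolic circumcentre line of $x',y',z'$ moves. I would handle this by rescaling by $1/\mu$ in the Poincaré model, as in the proof of Lemma \ref{Lem:Barycentric}, and using continuity and compactness (Remark \ref{cc}). The triangles with edges in $[c\mu/4,c\mu/2]$ and uniformly bounded angles form a compact family, including the Euclidean limit $\mu\to0$. On this family the circumcentre depends Lipschitz-continuously on the vertices, with a uniform constant after rescaling. This yields $C$.
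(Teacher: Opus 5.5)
There is a genuine gap where Step 1 feeds into Step 2. You say the robustness definition makes $d_{hyp}(w,x)$, $d_{hyp}(w,y)$, $d_{hyp}(w,z)$ equal. This is false in general, and robustness does not assert it. The point $w$ is the circumcentre for the intrinsic flat metric on $\partial M^{\textrm{fat}}_{\mu/2}$, and only on cusp tori is ambient distance a monotone function of flat distance. On the boundary of a tube of radius $r$, take two points at flat distance $s$ from $w$, one in the meridional and one in the longitudinal direction. Their ambient distances satisfy $\sinh(d_m/2)=\sinh r\,\sin(s/(2\sinh r))$ and $\sinh(d_l/2)=\cosh r\,\sinh(s/(2\cosh r))$, so $d_m<d_l$. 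Robustness only gives a lower bound of $(1+\epsilon)$ times these unequal distances.

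Consequently your Step 2 claim, that the equidistant geodesic of $x',y',z'$ passes within $C\tau$ of $w$, already fails at $\tau=0$. The hyperbolic circumcentre of $x,y,z$ is displaced from $w$ by an amount that depends on $c$, not on $\lambda$, so shrinking $\lambda$ cannot absorb it. Worse, the only quantitative control you record is $d_{hyp}(w,v)\in[\rho/(1+\epsilon),\rho]$. That permits a spread of order $\epsilon\rho$ among the three distances, and hence a displacement comparable to the whole robustness margin $\epsilon R$ that Step 3 relies on.

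The missing ingredient is to use the freedom to choose $c$, which the statement grants but you never use beyond $c\le\zeta/2$. At scale $c\mu$ the boundary torus is nearly totally geodesic in $M$, since its principal curvatures are $O(1/\mu)$ by Lemma~\ref{Lem:TubeRadius}. So the Euclidean and hyperbolic bisectors nearly agree, and the displacement is at most $\delta c\mu$ with $\delta\to 0$ as $c\to 0$. For instance, apply Lemma~\ref{Lem:DistanceComparison} with a tolerance $\epsilon_1\ll\epsilon/C$ and take $c\le\zeta(\epsilon_1)/2$.

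This is exactly what the paper does. It locates $w'$ on the torus, where the traces of the hyperbolic bisectors of $x',y'$ and of $x',z'$ cross, within $\delta c\mu$ of $w$, with $\delta\to0$ as $c,\lambda\to0$. It then runs the triangle-inequality chain using a single vertex $x$ and the exact equidistance of $w'$ from $x',y',z'$. With that correction, and with $R$ taken to be the largest of the three distances, your variant with $w'$ on the three-dimensional equidistant geodesic also goes through.
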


\begin{proof}

Consider a triangle $[x,y,z]$ of the Delaunay triangulation of $\partial M^{\textrm{fat}}_{\mu/2}$ with vertex set $\calS \cap \partial M^{\textrm{fat}}_{\mu/2}$. Let $w$ be its Euclidean circumcentre. Then $d_{eucl}(x,w) = d_{eucl}(y,w) = d_{eucl}(z,w)$. Also
$$2d_{eucl}(x,w) = d_{eucl}(x,w) + d_{eucl}(y,w) \geq d_{eucl}(x,y) \geq c \mu/4.$$
Now $w$ is at the intersection of three arcs $\alpha_{xy}$, $\alpha_{xz}$ and $\alpha_{yz}$. Here, $\alpha_{xy}$ is the set of points that are equidistant between $x$ and $y$ in the Euclidean metric, and  $\alpha_{xz}$ and $\alpha_{yz}$ are defined similarly. Let $x'$, $y'$, $z'$ be the perturbations of $x$, $y$, $z$.
The set of points in $M$ equidistant between $x'$ and $y'$ is, near $w$, a hyperplane. This intersects $[x,y,z]$ in an arc that runs very close to $\alpha_{xy}$. More precisely, these two arcs lie within distance at most $\delta c \mu$ of each other for some $\delta$, where $\delta \rightarrow 0$ as $c \rightarrow 0$ and $\lambda \rightarrow 0$. Hence, there is a point $w'$ in $N_{\delta c \mu}(w) \cap \partial M^{\textrm{fat}}_{\mu/2}$  that is equidistant in the hyperbolic metric between $x'$, $y'$ and $z'$. (To see that such a point $w'$ must exist, consider the locus of points in $N_{\delta c \mu}(w) \cap \partial M^{\textrm{fat}}_{\mu/2}$ equidistant between $x'$ and $y'$ and its intersection with the locus of points equidistant between $x'$ and $z'$.)
Then for any other point $p$ in $\calS \setminus \{ x, y, z \}$, its perturbation $p'$ satisfies 
\begin{align*}
&d_{hyp}(w',p')  \geq d_{hyp}(w,p) - d_{hyp}(w',w) - d_{hyp}(p',p) \\
&\quad \geq (1+\epsilon) d_{hyp}(w,x) - d_{hyp}(w',w) - d_{hyp}(p',p) \\
&\quad \geq (1+\epsilon) d_{hyp}(w',x) - (2 + \epsilon) d_{hyp}(w',w)  - d_{hyp}(p',p) \\
&\quad \geq (1+\epsilon) d_{hyp}(w',x')  - (1+\epsilon) d_{hyp}(x',x) - (2 + \epsilon) d_{hyp}(w',w) - d_{hyp}(p',p) \\
&\quad \geq (1+\epsilon) d_{hyp}(w',x') - (2+\epsilon) c \lambda \mu /1000 - (2 + \epsilon) \delta c \mu.
\end{align*}

The first, third and fourth inequalities are the triangle inequality.
The second inequality is a consequence of the hypothesis that $\calS$ is $\epsilon$-robust. 
Now $d_{hyp}(w',x')/(c\mu)$ is bounded away from zero, and so provided $c$ and $\lambda$ are sufficiently small,
we deduce that 
$$(2+\epsilon) c \lambda \mu /1000 + (2 + \epsilon) \delta c \mu < \epsilon d_{hyp}(w',x').$$
This implies that provided $c$ and $\lambda$ is sufficiently small, $w'$ is closer to $x', y', z'$ than it is to any other point of $\calS'$ in the hyperbolic metric. Thus, $w'$ generates a 1-cell containing $x'$, $y'$ and $z'$ in the Voronoi domain for $\calS'$. Dual to this is a face of the Delaunay triangulation for $\calS'$, which has vertices $x'$, $y'$ and $z'$. Hence, we have established that $x'$, $y'$ and $z'$ indeed form a face of the Delaunay triangulation of $M$.
\end{proof}

Under the above robustness assumption, we now know from the previous lemma that when the triangulation of $\partial M^{\textrm{fat}}_{\mu/2}$ is straightened in $M$, each face is simplicial
in the Delaunay triangulation of $M$. We also require that the straightened triangulation of $\partial M^{\textrm{fat}}_{\mu/2}$ forms a surface that is isotopic to $\partial M^{\textrm{fat}}_{\mu/2}$. This is achieved by the following lemma.

\begin{lemma}
\label{Lem:StraighteningIsIsotopic}
Let $\mathcal{T}$ be a triangulation of $\partial M^{\textrm{fat}}_{\mu/2}$ in which each 1-simplex is geodesic in the
Euclidean Riemannian metric with length at most $\mu/8$. Let $\mathcal{T}'$ be obtained by homotoping each edge and each face so that it is totally geodesic in $M$. Then $\mathcal{T}'$ forms a triangulation for a surface that is isotopic to $\partial M^{\textrm{fat}}_{\mu/2}$. Furthermore this surface lies in $M_{[\mu/4,\infty)}$.
\end{lemma}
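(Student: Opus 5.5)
Since every edge of $\mathcal{T}$ has Euclidean length at most $\mu/8$, each simplex is small compared with the local geometry, so the plan is to compare $\mathcal{T}'$ with $\partial M^{\textrm{fat}}_{\mu/2}$ simplex by simplex and then build the isotopy by a closest-point projection. The argument is local, so I work in a lift to $\mathbb{H}^3$. Let $X$ be the component of $\mathrm{cl}(M - M^{\textrm{fat}}_{\mu/2})$ bounded by the torus component under consideration, and let $\tilde X$ be a component of its inverse image. Then $\tilde X$ is either a horoball or an $r$-neighbourhood of a geodesic, with $r \geq \mu/10$ by Lemma \ref{Lem:TubeRadius}.

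\textbf{Step 1 (where the straightened simplices lie).} Each simplex of $\mathcal{T}$ lifts to a simplex on $\partial \tilde X$ of Euclidean diameter at most $\mu/8$, hence hyperbolic diameter at most $\mu/8$. Its straightening is the geodesic simplex with the same vertices, so it lies in the convex hull of three points of $\partial\tilde X$ at mutual distance at most $\mu/8$. Since $\tilde X$ is convex, this hull lies in $\tilde X$. The depth of such a hull below $\partial\tilde X$ is controlled by the curvature of $\partial \tilde X$. For a horosphere, a chord of length $\ell$ dips to depth about $\ell^2/8$. For a tube of radius $r \geq \mu/10$, the principal curvatures are $\coth r$ and $\tanh r$, so the worst case is a chord around the meridian circle. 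Since $\ell \leq \mu/8$ is less than the meridian radius, the depth is $O(\ell^2/r) = O(\mu)$ with a small constant.

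\textbf{Step 2 (staying in $M_{[\mu/4,\infty)}$).} I would show that points of $X$ within this depth of $\partial X$ still have injectivity radius at least $\mu/8$, i.e.\ displacement at least $\mu/4$. On $\partial X$ the displacement of any nontrivial deck transformation is at least $\mu/2$. Moving inward by depth $t$ changes distances by at most $2t$ in the cusp case. In the tube case the displacement function $\cosh^{-1}(\cosh\ell\cosh^2 s - \cos\phi\,\sinh^2 s)$ is monotone in the distance $s$ to the core. This lets me bound how far inward one can go before the displacement falls below $\mu/4$; I would compare that distance with the chord-depth bound, which is a constant multiple of $\mu$. This is where the constant $1/8$ in the hypothesis is needed, and it is the main computational obstacle. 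I expect it to need care for tubes with $\ell$ close to $\mu/4$ and $r$ close to $\mu/10$, and there I would use Lemma \ref{Lem:TubeRadius} together with $\mu\le 0.776$ to bound $\cosh r$ and $\sinh r$ as in that proof.

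\textbf{Step 3 (the isotopy).} Consider the nearest-point projection $\pi\colon \tilde X \to \partial\tilde X$ along geodesics orthogonal to $\partial\tilde X$, i.e.\ the radial or vertical fibres. This projection is well defined on the shallow collar found in Step 1, since the collar avoids the core geodesic. I claim that each straightened triangle is transverse to these fibres. Near each of its points the triangle is close to the tangent plane of $\partial \tilde X$: its normal makes a small angle with the fibre direction because its diameter is small compared with $r$, in the same spirit as Lemma \ref{Cor:AlmostConstantNormalPlane}. So $\pi$ restricted to each straightened simplex is an embedding. Its image is close to the original simplex, and the vertices are fixed. Adjacent straightened simplices share their straightened edges, and $\pi$ is equivariant under the deck group. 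Hence $\pi\colon \mathcal{T}' \to \partial M^{\textrm{fat}}_{\mu/2}$ is a local homeomorphism which is a degree-one map on each component, and therefore a homeomorphism. Since $\mathcal{T}'$ is a graph over $\partial M^{\textrm{fat}}_{\mu/2}$ in this product collar $T^2\times[0,\epsilon]$, sliding along the fibres gives the isotopy. By Step 2, the whole surface lies in $M_{[\mu/4,\infty)}$.
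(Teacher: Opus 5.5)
\textbf{The gap is the transversality claim in Step 3.} You justify it by saying a straightened triangle has diameter small compared with $r$, so it stays close to the tangent plane of $\partial\tilde X$. The constants do not give this: $r$ can be as small as $\mu/10$ while edges can have length $\mu/8$. The same slip occurs in Step 1, although the conclusion there survives, because three vertices at flat distance at most $\mu/8$ subtend less than a half-turn about the core.

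The more serious problem is that the lemma puts no lower bound on the angles of $\mathcal{T}$. For a thin triangle, the plane through its vertices need not be near the tangent plane at all. Already on a horosphere, three nearly collinear points span a huge hemisphere that is nearly vertical near them. Transversality survives there, but only because a hemisphere is a vertical graph.

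In a tube, transversality genuinely fails. Let $P$ be a totally geodesic plane containing a geodesic $\rho$ orthogonal to the core, with $P$ neither containing nor orthogonal to the core. Let $p_0$ be a point of $\rho\cap\partial\tilde X$.
\begin{itemize}
\item The curve $P\cap\partial\tilde X$ passes through $p_0$ and is not a flat geodesic; in the Euclidean limit it is an ellipse, which unrolls to a sine curve.
\item So three points of it near $p_0$, one on one side of $\rho$ and two on the other, span a nondegenerate flat triangle with arbitrarily short edges.
\item Its straightening lies in $P$ and contains a nondegenerate segment of $\rho$, and nearest-point projection collapses that segment to $p_0$.
\end{itemize}
So your projection is not even injective on that straightened simplex, and Step 3 fails under the stated hypotheses.

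\textbf{A second gap, and comparison with the paper.} Even granting an embedding on each simplex, agreement along shared edges does not make the projection a local homeomorphism. You also need every simplex mapped with the same orientation, so that adjacent images lie on opposite sides of the image of their common edge, plus control at the vertices.

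The paper follows your route. It straightens, composes with closest-point projection, and claims an orientation-preserving diffeomorphism on each simplex. It then uses that a degree-one branched self-cover of a torus is a homeomorphism, and that homotopic incompressible surfaces are isotopic; your fibre-sliding would serve equally well for that last step. However, the paper simply asserts the per-simplex claim for tube components, and the example above shows it is false as stated.

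To close the argument you need a lower bound on the angles of $\mathcal{T}$. This is available where the lemma is actually applied, via $\tilde\theta$ in Lemma \ref{Lem:StraighteningIsIsotopicVariation}. You then need a quantitative estimate that each straightened simplex is a positively oriented radial graph.

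\textbf{Step 2 is much easier than you expect.} It needs no depth or curvature computation. Every point of a straightened simplex is within $\mu/8$ of one of its vertices, and that vertex lies in $\partial M^{\textrm{fat}}_{\mu/2}\subseteq M_{[\mu/2,\infty)}$. Your triangle-inequality estimate with $t\le\mu/8$ then gives displacement at least $\mu/2-\mu/4$ for every nontrivial deck transformation, in both the cusp and tube cases. This is exactly the paper's argument.
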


\begin{proof}
Let $X$ be the subset of $M$ that is the union of the simplices in $\mathcal{T}'$. 
This lies in $M_{(0,\mu/2]}$. 
There is map
$$straight \colon \partial M^{\textrm{fat}}_{\mu/2} \rightarrow X$$ given by straightening.
Formally, each vertex of $\mathcal{T}$ is sent to the same vertex of $\mathcal{T}'$. Each 1-simplex
of $\mathcal{T}$ is parametrised at constant speed and is sent to a constant-speed geodesic in $M$.
Each 2-simplex of $\mathcal{T}$ is sent to its image simplex in $X$, using ordered barycentric coordinates,
much as in Section \ref{Sec:Barycentric}.

We claim that $X$ misses the core geodesics $\gamma$ in the solid toral components of $\mathrm{cl}(M - M^{\textrm{fat}}_{\mu/2})$.
Suppose that, on the contrary, there was a point of intersection with $\gamma$.
This would lie in the image of the interior of some 1-simplex of $\mathcal{T}$ or of some 2-simplex of $\mathcal{T}$.
It cannot lie in the image of some 1-simplex, since the endpoints of this 1-simplex would have
to have opposite meridional co-ordinates in $\partial M^{\textrm{fat}}_{\mu/2}$. Thus the 
edge of $\mathcal{T}$ joining them would have length at least $\mu/4$, by Lemma \ref{Lem:BoundaryHasControlledGeometry}. 
But we are
assuming that each edge length is at most $\mu/ 8$. The point of intersection
also cannot lie in the image of the interior of a 2-simplex. If it did, the simplex would intersect
$\gamma$ transversely, and so the boundary of this simplex would have come
from a path in $\mathcal{T}$ encircling the meridian. But the length of such a path
would be at most $3 (\mu / 8)$, whereas the length of any meridian is more than $\mu/2$.
This proves the claim.

There is a radial projection map from $\mathrm{cl}(M - M^{\textrm{fat}}_{\mu/2}) - \gamma$ to $\partial M^{\textrm{fat}}_{\mu/2}$, defined as follows.
For each point $x$ in $\mathrm{cl}(M - M^{\textrm{fat}}_{\mu/2}) - \gamma$,
$radial(x)$ is its closest point in $\partial M^{\textrm{fat}}_{\mu/2}$. This therefore restricts to a map
$$radial \colon X \rightarrow  \partial M^{\textrm{fat}}_{\mu/2}.$$

We claim that the restriction of $(radial )\circ (straight)$ to each simplex of $\mathcal{T}$ is an orientation-preserving diffeomorphism
onto its image. This is easiest to see in the case of simplices lying in the boundary tori of cusps (see Figure \ref{Fig:RadialStraight}). In that case,
each simplex is its own image under  $(radial )\circ (straight)$. 
For example, consider an edge of $\mathcal{T}$ lying in the boundary of a cusp. The inverse image of
this torus in $\mathbb{H}^3$ is a horosphere, which we may take to be the horizontal Euclidean plane $\{ (x,y,z) : z = 1 \}$ in the upper-half space
model for $\mathbb{H}^3$. The edge 
of $\mathcal{T}$ lifts to a straight arc in that plane. The map $straight$ send this arc to a hyperbolic geodesic with the same endpoints,
which is part of a Euclidean circle. Then the map $radial$ sends this Euclidean circular segment back to the original Euclidean geodesic.
The composition $(radial )\circ (straight)$ is not the identity on this edge of $\mathcal{T}$. This is because $straight$ sends the arc
along the hyperbolic geodesic with constant hyperbolic speed, and so when we compose with $radial$, it no longer has constant Euclidean
speed. Nevertheless it is clear that $(radial )\circ (straight)$ sends each simplex of $\mathcal{T}$ in the boundary of a cusp
back to itself via an orientation-preserving diffeomorphism. For simplices in components of $\partial M^{\textrm{fat}}_{\mu/2}$
bounding solid tori, the composition $(radial )\circ (straight)$ does not necessarily send each simplex back to itself. But it sends each
simplex onto its image via an orientation-preserving diffeomorphism.

\begin{figure}
  \includegraphics[width=0.7\textwidth]{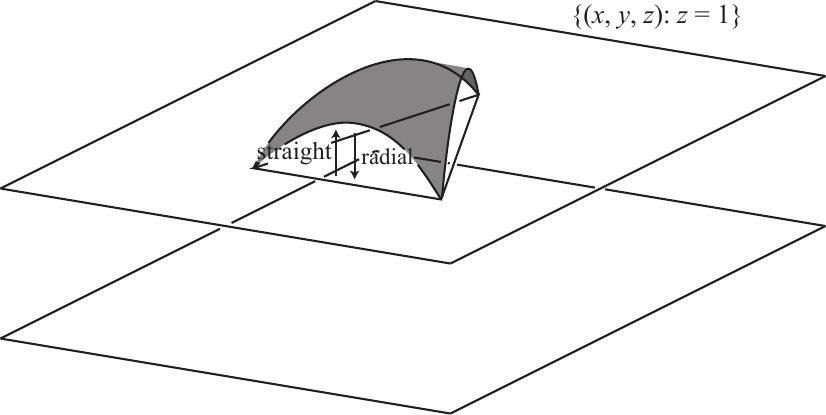}
  \caption{The maps {\it straight} and {\it radial} applied to a triangle in a triangulation of a cusp torus.}
  \label{Fig:RadialStraight}
\end{figure}

As a result of this claim, the composition $(radial )\circ (straight)$ is a branched covering map
from each component of  $\partial M^{\textrm{fat}}_{\mu/2}$ to itself. (In fact, it is easy that it is
actually a local homeomorphism of $\partial M^{\textrm{fat}}_{\mu/2}$,
by examining its action in a small neighbourhood of any vertex of $\mathcal{T}$,
but we do not need to follow that line of argument.) 
The composition $(radial )\circ (straight)$ is homotopic to the identity (via a straight line homotopy in the Euclidean tori $\partial M^{\textrm{fat}}_{\mu/2}$),
and so it has degree $1$. Any degree 1 branched cover from a torus to itself is a homeomorphism.
In particular, $straight$ is injective. Because $\partial M^{\textrm{fat}}_{\mu/2}$ is compact and $X$ is Hausdorff, $straight$ is therefore
a homeomorphism onto its image. Thus, its image $X$ is indeed a surface, and $\mathcal{T}'$
forms a triangulation for it.

Note that $X$ is homotopic to $\partial M^{\textrm{fat}}_{\mu/2}$ via a straight line homotopy in $\mathrm{cl}(M - M^{\textrm{fat}}_{\mu/2}) - \gamma$.
Both $X$ and $\partial M^{\textrm{fat}}_{\mu/2}$ are incompressible and embedded in 
$\mathrm{cl}(M - M^{\textrm{fat}}_{\mu/2}) - \gamma$, and such surfaces
are isotopic if and only if they are homotopic.

The last thing to prove is that each simplex of $X$ lies in $M_{[\mu/4,\infty)}$. Consider any point $x$ in $X$.
This lies in a totally geodesic simplex of $X$. The distance from $x$ to any of the vertices of this simplex is at most the
maximal length of the edges of the simplex, which is at most $\mu/8$. So, there is a path $p$ of length at most $\mu/8$
joining $x$ to a point $x'$ in $\partial M^{\textrm{fat}}_{\mu/2} \subseteq M_{[\mu/2,\infty)}$. If there were a homotopically non-trivial 
loop based at $x$ with length less than $\mu/4$, we could use this to build a homotopically non-trivial loop based at $x'$, by travelling
backwards along $p$ to $x$, then along the loop based at $x$, then forwards along $p$. This loop would have length
less than $\mu/8+\mu/4+\mu/8 = \mu/2$, which contradicts the fact that $x'$ lies in $M_{[\mu/2,\infty)}$.

\end{proof}

We will actually need a strengthening of the above lemma, where the vertices of the triangulation
$\calT$ are perturbed a little, as follows.

\begin{lemma}
\label{Lem:StraighteningIsIsotopicVariation}
Let $\mu$ be any real number less than the 3-dimensional Margulis constant, and let $\tilde a, \tilde b, \tilde \theta$ be positive real numbers
where $0 < \tilde a < \tilde b < 1/10$ and $\tilde \theta \in (0,\pi/2)$. 
Let $\mathcal{T}$ be a triangulation of $\partial M^{\textrm{fat}}_{\mu/2}$ in which each 1-simplex is geodesic in the
Euclidean Riemannian metric, has edge lengths between $\tilde a \mu$ and $\tilde b \mu$, and
interior angles at least $\tilde \theta$. Let $\mathcal{T}'$ be obtained by perturbing the
vertices of $\mathcal{T}$ in $M$ by distance at most $(5/12) \min \{ \tilde a \mu/100, \tilde a \mu (\tilde \theta/ 2\pi) \}$ and then
homotoping each edge and each face so that it is totally geodesic in $M$. 
Then $\mathcal{T}'$ forms a triangulation for a surface that is isotopic to $\partial M^{\textrm{fat}}_{\mu/2}$.
Furthermore this surface lies in $M_{[\mu/4,\infty)}$.
\end{lemma}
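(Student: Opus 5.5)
The plan is to run the proof of Lemma \ref{Lem:StraighteningIsIsotopic} again, carrying the perturbation through each step. Write $\epsilon_0 = (5/12)\min\{\tilde a\mu/100, \tilde a\mu(\tilde\theta/2\pi)\}$ for the allowed displacement of each vertex. Let $X$ be the union of the simplices of $\mathcal{T}'$. As before, the map $straight \colon \partial M^{\textrm{fat}}_{\mu/2} \rightarrow X$ sends each vertex to its perturbed vertex, each edge at constant speed to the geodesic joining the perturbed endpoints, and each face via ordered barycentric coordinates. Since $\tilde b < 1/10$, each perturbed edge has length at most $\tilde b\mu + 2\epsilon_0 < \mu/8$. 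So the length estimates of the earlier lemma go through essentially unchanged.

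\textbf{Step 1: $X$ misses the core geodesics $\gamma$.} A perturbed edge meeting $\gamma$ would need endpoints on nearly opposite sides of a meridian. The perturbed vertices lie within $\epsilon_0$ of $\partial M^{\textrm{fat}}_{\mu/2}$, and Lemma \ref{Lem:TubeRadius} puts $\gamma$ at distance at least $\mu/10$ from that boundary. Together with Lemma \ref{Lem:BoundaryHasControlledGeometry} (meridians have length $> \mu/2$), edges of length $< \mu/8$ cannot pass through $\gamma$. Likewise a face cannot be crossed by $\gamma$, because its boundary loop has length less than $3\mu/8$ and so cannot encircle a meridian.

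\textbf{Step 2 (the main obstacle): $radial \circ straight$ is an orientation-preserving local diffeomorphism on each simplex.} This is where the bound $\epsilon_0 \le (5/12)\tilde a\mu\,\tilde\theta/(2\pi)$ is used. Before perturbation the composition is orientation-preserving on each triangle, by the earlier lemma. Moving a vertex by $\epsilon_0$ changes the direction of each incident edge by at most roughly $2\epsilon_0/(\tilde a\mu)$ radians. The constant $5/12$ and the comparison factor $1.12$ from Lemma \ref{Lem:DistanceComparison} keep this total below about $\tilde\theta/2$, up to small corrections. Hence each interior angle of the projected triangle stays positive, and no triangle folds over.

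To make this precise I would work in a lift to $\mathbb{H}^3$, first for a horosphere and then for an equidistant tube boundary. In the rescaled model (as in Remark \ref{cc}), the radial projection of the straightened triangle is $C^1$-close to the Euclidean triangle on the torus, since $\mu$ is small. The angle bound then follows from the angle perturbation estimate. The second constraint $\epsilon_0 \le (5/12)\tilde a\mu/100$ makes the displacement small compared with edge lengths and also with the distance to $\gamma$. This keeps $radial$ smooth near $X$ and keeps the error terms from the nonlinearity of $radial$ controlled.

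\textbf{Step 3: conclude exactly as before.} The composition $radial \circ straight$ is a branched cover of each torus component that is homotopic to the identity, via a straight-line homotopy that moves points only a short distance. It therefore has degree one, hence is a homeomorphism. So $straight$ is an embedding, and $\mathcal{T}'$ triangulates a surface. That surface is incompressible and homotopic to $\partial M^{\textrm{fat}}_{\mu/2}$ in $\mathrm{cl}(M - M^{\textrm{fat}}_{\mu/2}) - \gamma$, hence isotopic to it.

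\textbf{Step 4: $X$ lies in $M_{[\mu/4,\infty)}$.} Each point of $X$ is within $\mu/8$ of a perturbed vertex, and hence within $\mu/8 + \epsilon_0$ of $\partial M^{\textrm{fat}}_{\mu/2} \subseteq M_{[\mu/2,\infty)}$. A nontrivial loop of length less than $\mu/4$ at such a point would give a nontrivial loop of length less than $\mu/4 + 2(\mu/8 + \epsilon_0)$ at a point of $\partial M^{\textrm{fat}}_{\mu/2}$. This bound is too large by $2\epsilon_0$. To close the gap I would use the sharper estimate: the distance from a point of a face to its nearest vertex is at most about $\tilde b\mu/\sqrt{3} + \epsilon_0$, well below $\mu/8 - \epsilon_0$ because $\tilde b < 1/10$. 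This gives total loop length below $\mu/2$, a contradiction.
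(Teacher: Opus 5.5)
Your outline is essentially the paper's proof. It uses the same perturbed straightening map, the same check that the image misses the core geodesics, the same degree-one branched-cover argument for $radial_+\circ f$ (your $radial\circ straight$), and the same loop-shortening argument for $M_{[\mu/4,\infty)}$. Where you differ, the paper handles Step 2 without any $C^1$ comparison, and your Step 4 detour is unnecessary.

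On Step 2, your rescaled $C^1$-closeness claim would not hold uniformly anyway. For a solid-torus component the tube radius, at least $\mu/10$, can be comparable to the edge lengths, which are at most $\tilde b\mu<\mu/10$. So rescaling by $1/\mu$ never makes the boundary torus flat at the scale of the triangles. The paper argues instead as follows. Each vertex $v$ goes under $radial_+\circ f$ to a point $v''$ with $d_{hyp}(v,v'')\le 2\epsilon_0$: one $\epsilon_0$ from the perturbation and one from the projection. Since $2\epsilon_0<\mu/100$, Lemma \ref{Lem:DistanceComparison} gives $d_{eucl}(v,v'')\le 1.12\cdot 2\epsilon_0<(12/5)\epsilon_0\le(\tilde a\mu/2)\sin(\tilde\theta/2)$. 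Lemma \ref{Lem:PerturbTriangle} then shows the orientation is unchanged.

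On Step 4, the overshoot you found comes from using $\mu/8$ as the bound. Bound the distance from points of $X$ to $\partial M^{\textrm{fat}}_{\mu/2}$ by $\tilde b\mu+3\tilde a\mu/100$ instead. Then $2(\tilde b\mu+3\tilde a\mu/100)+\mu/4<\mu/2$ directly, because $\tilde a<\tilde b<1/10$.
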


In the proof, we will need the following elementary fact about the perturbation of Euclidean triangles.

\begin{lemma}
\label{Lem:PerturbTriangle}
Let $T = [x,y,z]$ be a Euclidean triangle, with side lengths between $A$ and $B$, and interior angles at least $\tilde \theta \in (0,\pi/2)$.
Then, for any Euclidean triangle $\tilde T$ obtained from $T$ by perturbing each vertex a distance less than $(A/2)\sin(\tilde \theta/2)$,
the orientations of $\tilde T$ and $T$ are equal, i.e. the cyclic order of vertices remains unchanged under the perturbation. 
\end{lemma}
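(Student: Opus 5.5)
The plan is to show that the perturbation can never move a vertex across the line through the other two, so the signed area never vanishes along the way. Continuity then forces the sign, which is the orientation, to stay the same.

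First, every altitude of $T$ is bounded below. Consider the altitude from $x$ to the line through $y$ and $z$. Its length is $|xy|\sin(\angle y) \geq A\sin\tilde\theta$, and the same holds for the other two altitudes. Since $\tilde\theta<\pi/2$, we have $\sin\tilde\theta \geq \sin(\tilde\theta/2)$, so each altitude is at least $A\sin(\tilde\theta/2)$.

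Next, write the perturbed vertices as $x_t = x+t(\tilde x - x)$ for $t\in[0,1]$, and similarly $y_t$, $z_t$. Let $f(t)$ be the signed area of $[x_t,y_t,z_t]$, which is a continuous function of $t$. Each vertex moves a distance less than $\epsilon := (A/2)\sin(\tilde\theta/2)$, and this bound holds for every $t$. It therefore suffices to show that no triangle with all three vertices moved less than $\epsilon$ is degenerate. Then $f$ never vanishes on $[0,1]$, so $f(1)$ has the same sign as $f(0)$, and the cyclic order is preserved.

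For non-degeneracy, I would use the signed distance from $x_t$ to the line $L_t$ through $y_t$ and $z_t$. This line is close to the line $L$ through $y$ and $z$.
\begin{itemize}
\item The distance from $x$ to $L$ is at least $h := A\sin\tilde\theta$.
\item Moving $x$ changes its distance to a fixed line by less than $\epsilon$.
\item Within the relevant region, moving $y$ and $z$ each by less than $\epsilon$ moves the line by less than about $\epsilon$ near the segment. Some care is needed here, because the nearby points are within distance about $B$ of the segment, and the line can rotate.
\end{itemize}

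An alternative that avoids the rotation issue is to handle the vertices one at a time. Move $x$ first, keeping $y$ and $z$ fixed. Its distance to $L$ stays above $h-\epsilon>0$, so the orientation is preserved. Then move $y$ with $x'$ and $z$ fixed, and finally move $z$. At each stage we need a lower bound on the relevant altitude of the intermediate triangle.

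This is the main obstacle: a perturbation of the earlier vertices also degrades the later altitudes. To handle it, I would bound the altitude from $y$ in $[x',y,z]$ below by the distance from $y$ to the line $xz$, minus the effect of moving $x$. A cleaner way to control that effect is to work with signed area directly. The area equals $\tfrac12 |xz|\cdot \mathrm{dist}(y, xz)$. Moving $x$ by $\epsilon$ changes the area by at most $\tfrac12\epsilon\cdot \mathrm{dist}(x, yz)$-type terms, more precisely by at most $\tfrac12 \epsilon\, |yz|$. Summing the three such changes, the area changes by at most $\tfrac12\epsilon(|yz|+|xz|+|xy|)$ plus quadratic terms. This must be compared with the area $\tfrac12|yz|\cdot h_x$.

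I expect this comparison to be the delicate step. The factor $1/2$ and the use of $\sin(\tilde\theta/2)$ rather than $\sin\tilde\theta$ in the statement are presumably there to absorb the side ratio and the cross terms. If that comparison fails, I would fall back on the sequential argument, estimating at each stage the distance of the moving vertex to the current line directly.
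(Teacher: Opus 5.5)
Your reduction is sound: along the straight-line homotopy each vertex stays within $\epsilon=(A/2)\sin(\tilde\theta/2)$ of where it started, so it suffices to show that no such perturbation of $T$ is degenerate. But that non-degeneracy statement is the whole content of the lemma, and the proposal never proves it. You list three possible estimates and complete none of them.

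The estimate you spell out most concretely does not close. Take an equilateral triangle of side $A$ with $\tilde\theta=\pi/3$, which the hypotheses allow. Then $\epsilon=A/4$ and $2\,\mathrm{area}(T)=\tfrac{\sqrt3}{2}A^2\approx0.87A^2$. The first-order bound on the change in $2\,\mathrm{area}$ is already $\epsilon\cdot 3A=\tfrac34A^2$. The quadratic term $a\times b+b\times c+c\times a$, where $a,b,c$ are the displacements, can be as large as $\tfrac{3\sqrt3}{2}\epsilon^2\approx0.16A^2$. So bounding the two terms separately cannot certify that the area stays positive.

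Your sequential fallback fails in the same way if the distances to the current lines are controlled through areas as you suggest, using $\mathrm{dist}(y,x'z)=2\,\mathrm{area}[x',y,z]/|x'z|$, $|x'z|<|xz|+\epsilon$ and $|x'y'|<|xy|+2\epsilon$. It reduces to exactly the same sufficient condition $|yz|\,h_x>\epsilon(|xy|+|yz|+|zx|)+3\epsilon^2$, which fails for the same triangle. The ``direct'' estimate that would be needed instead is not supplied.

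The missing idea is a way to control the perturbed triangle that does not lose these constants. The paper translates $\tilde T$ so that $\tilde x=x$; then $\tilde y,\tilde z$ lie within $2\epsilon=A\sin(\tilde\theta/2)$ of $y,z$. In $[x,y,\tilde y]$ the sine rule gives $\sin\angle yx\tilde y\le|y\tilde y|/|xy|<\sin(\tilde\theta/2)$. This angle is acute because it is opposite the side $y\tilde y$, which is shorter than $xy$. Hence each of the rays $xy$ and $xz$ turns by less than $\tilde\theta/2$. Since $\angle yxz\in[\tilde\theta,\pi-2\tilde\theta]$, the signed angle from $x\tilde y$ to $x\tilde z$ lies in $(0,\pi-\tilde\theta)$, so the orientation is unchanged, and no homotopy is needed.

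Alternatively, you can finish your own framework with a width argument. A degenerate intermediate triangle would put all three vertices of $T$ within $\epsilon$ of one line, so $T$ would lie in a strip of width $w<2\epsilon$. Cut $T$ along the line through its middle vertex parallel to the strip. The resulting chord has length at most the longest side $L$, and the two pieces have heights summing to at most $w$, so $\mathrm{area}(T)\le\tfrac12Lw$. Therefore the altitude onto the longest side is less than $A\sin(\tilde\theta/2)$. This contradicts your bound $h_{\min}\ge A\sin\tilde\theta$, which holds because every angle lies in $[\tilde\theta,\pi-2\tilde\theta]$ and $\tilde\theta\le\pi/3$.
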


\begin{proof}
Let $\epsilon = (1/2)\sin(\tilde \theta/2)$.
Let $\tilde x$, $\tilde y$ and $\tilde z$ be the vertices of $\tilde T$ obtained by perturbing $x$, $y$ and $z$ respectively, with
each vertex moving less than $\epsilon A$. 
Translate $\tilde T$ so that afterwards $\tilde x$ equals $x$. Then, after this translation, $\tilde y$ and $\tilde z$ lie at a distance at most $2 \epsilon A$
from $y$ and $z$. It suffices to show that the angle between $xy$ and $x \tilde y$ is less than $\tilde \theta/2$
and the angle between $xz$ and $x \tilde z$ is less than $\tilde \theta/2$.
This is because the angle between $xy$ and $xz$ is, by assumption, at least $\tilde \theta$ and at most $\pi - 2 \tilde \theta$. Hence, by shifting
$y$ and $z$, the angle between $x \tilde y$ and $x \tilde z$ remains positive. It also remains at most $\pi$.

We consider the triangle $[x,y,\tilde y]$. (The argument for $[x,z,\tilde z]$ is entirely analogous.)
Let $X$, $Y$ and $\tilde Y$ be the angles at these three vertices.
An application of the sine rule at $x$ gives 
$$\frac{\sin X}{|y \tilde y|} = \frac{\sin \tilde Y}{|x y|}$$
and so
$$\sin X = \frac{|y \tilde y| \sin \tilde Y}{|x y|} \leq \frac{2 \epsilon A}{A} = 2 \epsilon.$$
Since $\epsilon = (1/2)\sin(\tilde \theta/2)$, we obtain the lemma.

\end{proof}

\begin{proof}[Proof of Lemma \ref{Lem:StraighteningIsIsotopicVariation}]

There is a map $f \colon \partial M^{\textrm{fat}}_{\mu/2} \rightarrow M$ that is defined in an analogous way to
$straight$ but first performing the given perturbation of the vertices: we perturb the vertices
of $\mathcal{T}$ to the corresponding vertices of $\mathcal{T}'$ and then straighten the
edges and faces. The image of $f$ lies in $V = N_{(5/12)\min \{\tilde a \mu/100, \tilde a \mu (\tilde \theta/ 2\pi) \} }(M - M^{\textrm{fat}}_{\mu/2})$. 
Moreover it misses the union of the core geodesics, denoted by $\gamma$, of the solid toral components of $V$, by the same argument as in the
proof of Lemma \ref{Lem:StraighteningIsIsotopic}, since the perturbation distance for the vertices is small enough. Let $X$ be the image of $f$.

There is also a map $radial_+ \colon V - \gamma  \rightarrow \partial M^{\textrm{fat}}_{\mu/2}$ given
by closest-point projection. This restricts to 
$radial_+ \colon X \rightarrow \partial M^{\textrm{fat}}_{\mu/2}$. As in the proof of Lemma \ref{Lem:StraighteningIsIsotopic},
the composition $(radial_+ \circ f)$ might not send edges to edges, or faces to faces. However, this time it also
might not send vertices to vertices, because of our initial perturbation. But it moves each vertex $v$
a Euclidean distance at most $ \min \{ \tilde a \mu/100, \tilde a \mu (\tilde \theta/ 2\pi) \}$, for the following reason.

The initial perturbation moves $v$ to a point $v'$ which is at most 
$$D = (5/12) \min \{ \tilde a \mu/100, \tilde a \mu (\tilde \theta/ 2\pi) \}$$ 
away. The map
$radial_+$ sends $v'$ to the nearest point $v''$ in $\partial M^{\textrm{fat}}_{\mu/2}$. Thus,
$v''$ and $v'$ are also at most $D$ apart. Hence, the hyperbolic distance between
$v$ and $v''$ is at most $2D$. Let $d_{eucl}$ be 
the Euclidean distance  in $\partial M^{\textrm{fat}}_{\mu/2}$
between $v$ and $v''$. Since $2D < \mu/100$, we deduce 
from Lemma \ref{Lem:DistanceComparison} that
$$d_{eucl} \leq (1.12)2D < (12/5)D,$$
as required.

Consider any face of $\mathcal{T}$, which is a Euclidean triangle $\Delta$. Its vertices are moved by
$(radial_+ \circ f)$, each by a distance at most $(12/5)D$. Now, 
$$(12/5)D \leq \tilde a \mu (\tilde \theta/ 2\pi) \leq (\tilde a \mu/2) \sin (\tilde \theta /2).$$ 
So by Lemma \ref{Lem:PerturbTriangle}, the Euclidean triangle spanned by these
vertices has the same orientation as  $\Delta$. As in the proof of Lemma \ref{Lem:StraighteningIsIsotopic},
the restriction of $(radial_+ \circ f)$ to $\Delta$ is a diffeomorphism onto its image
and it is orientation-preserving. So the map $(radial_+ \circ f)$
is a branched covering map from the torus to itself that is also homotopic to the identity,
and hence a homeomorphism. Therefore, $f$ is a homeomorphism onto its image,
as required. Again, this image surface $X$ is homotopic in $\mathrm{cl}(M - M^{\textrm{fat}}_{\mu/2}) -  \gamma$ to $\partial M^{\textrm{fat}}_{\mu/2}$,
and hence they are isotopic.

Finally, the  surface $X$ lies in $M_{[\mu/4,\infty)}$, for the following reason. 
The distance from any point $x$ of $X$ to $\partial M^{\textrm{fat}}_{\mu/2}$ is at most $\tilde b\mu + 3(\tilde a \mu/100)$. 
This is because each vertex of $X$ lies within $\tilde a \mu/100$ of $\partial M^{\textrm{fat}}_{\mu/2}$
and each point in $X$ has distance at most $\tilde b\mu + 2(\tilde a \mu/100)$ from a vertex.
If there were a homotopically non-trivial 
loop based at $x$ with length less than $\mu/4$, we could use this to build a homotopically non-trivial loop based at a point $x'$
in $\partial M^{\textrm{fat}}_{\mu/2}$ with length less than
$2(\tilde b\mu + 3(\tilde a \mu/100)) + \mu/4 < \mu/2$.
The latter inequality holds because $0 < \tilde a < \tilde b < 1/10$.
The existence of this loop based at $x'$ contradicts the fact that $x'$ lies in $M_{[\mu/2,\infty)}$.

\end{proof}

There is one final result that we will need. It is an analogue of Theorem \ref{Thm:ExtensionBreslin}, but for straight triangulations
of Euclidean surfaces. It is essentially proved in \cite{Boissonnat2} (see Theorem 4.1 \cite{Boissonnat2} which deals with
triangulations of $\mathbb{R}^n$).

\begin{theorem}
\label{Thm:ThickTriangTorus}
There are real numbers $0 < \tilde a < \tilde b < 1/10$, $\tilde \theta \in (0,\pi/2)$ and $\epsilon \in (0,1/10)$
with the following property. Let $\mu$ be any positive real number, let $c \in (0, 1/200)$, and let $T$ be a 
Euclidean torus with injectivity radius at least $\mu/4$. Let $\mathcal{S}_\partial$ be a maximal collection of points in $T$, no two of which
are closer than $c\mu$. Then there is a perturbation $\tilde{\mathcal{S}}_\partial$ of $\mathcal{S}_\partial$ in $T$, moving each point at most $c\mu/1000$, 
so that 
\begin{enumerate}
\item the Delaunay triangulation arising from $\tilde{\mathcal{S}}_\partial$ has edge lengths between $\tilde a \mu$ and $\tilde b \mu$ and interior angles at least $\tilde \theta$;
\item for each point in $T$ that has exactly three closest points in $\tilde{\calS}_\partial$, the next closest point in $\tilde{\calS}_\partial$ is least $(1+3\epsilon)$ times further away.
\end{enumerate}
\end{theorem}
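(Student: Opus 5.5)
The plan is to lift everything to the universal cover $\mathbb{R}^2 \to T$ and run a two-dimensional version of the sliver-removal argument of Breslin described above, together with the perturbation scheme of Boissonnat, Dyer and Ghosh \cite{Boissonnat2}. We rescale by $1/(c\mu)$, so that $\mathcal{S}_\partial$ becomes a maximal $1$-separated set. Maximality makes it a $(1,1)$-net in the Euclidean plane; the lift is periodic under the deck group. Since $c < 1/200$ and the injectivity radius of $T$ is at least $\mu/4$, every disc of radius $50$ (in rescaled units) embeds isometrically in $T$. So all Delaunay computations, whose empty circles have radius less than $1$, are local and take place in an embedded flat disc. This is exactly the locality that lets the argument of \cite{Boissonnat2} for $\mathbb{R}^n$ transfer to the torus, in the same way as in the Remark on generalised nets.

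First, I would record the a priori geometry of any Delaunay triangulation of a perturbation $\tilde{\mathcal{S}}_\partial$ moving points at most $1/1000$ (rescaled):
\begin{itemize}
\item every empty circumdisc has radius less than $1+1/1000$, since $\mathcal{S}_\partial$ is a $1$-net;
\item points are at least $1-2/1000$ apart;
\item hence edge lengths lie in $[1-2/1000,\, 2+2/1000]$.
\end{itemize}
Undoing the rescaling gives edge lengths between roughly $c\mu$ and $2c\mu$. The constants $\tilde a,\tilde b$ in (1) must be independent of $c$; I read the statement as intending constants multiplied by $c$, or else take $c$ fixed and absorb it. In either case a bounded ratio of circumradius to shortest edge gives interior angles bounded below. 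Indeed, by the sine rule each angle $\alpha$ of a triangle satisfies $\sin\alpha = \ell/(2R)$ with $\ell$ the opposite edge. This gives $\sin\alpha \ge (1-2/1000)/(2(1+1/1000))$, so $\tilde\theta$ is about $\pi/6$ minus a tiny amount. Thus (1) is automatic, with no perturbation needed beyond genericity.

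The real content is (2): the \emph{protection} condition that every Delaunay circumcircle is $\epsilon$-robustly empty. I would perturb points one at a time in a fixed order, as in the Appendix proof above. When perturbing $p$, its new position must avoid a bad region for each triple or pair of nearby points $q,r,s$ (with $p$ replacing one of them, or $p$ being the fourth point). The bad region consists of positions where $p$ lies in the annulus between radii $R$ and $(1+3\epsilon)R$ around the circumcircle of $q,r,s$, or where $p,q,r$ have a circumcircle with another point in such a thin annulus. Each such region has area $O(\epsilon)$, because an annulus of width $3\epsilon R$ with $R \le 1.01$ is thin. Moreover, by the packing bound (analogue of Lemma \ref{Lem:Lemma6Breslin}), only a bounded number of nearby points, hence triples, are relevant. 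Choosing $\epsilon$ small makes the total bad area less than the area $\pi(1/1000)^2$ of the allowed disc, so a good position exists.

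The main obstacle is ensuring that later perturbations do not destroy the protection obtained earlier, and handling the region attached to $p$ as a vertex of a circle (where the circumcentre itself moves with $p$). I would follow \cite{Boissonnat2}. There one imposes, at the moment $p$ is placed, a protection margin relative to \emph{all} points within distance about $3$, requiring stronger protection $(1+6\epsilon)$. Later moves of at most $1/1000$ can then degrade it only to $(1+3\epsilon)$; this works provided the later perturbation radius is chosen small compared to $\epsilon$. Rescaling back then yields the constants.
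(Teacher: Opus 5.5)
Your reduction to the plane by locality (rescaled injectivity radius $>50$ against empty discs of radius $<1.001$) is sound. So is your remark that (1) can only hold with $\tilde a,\tilde b$ of order $c$; what the paper actually needs downstream is that $\epsilon$ is independent of $c$, which scale invariance gives. Your sine-rule proof that the angle bound is automatic in dimension two is also correct. The paper itself gives no argument beyond deferring to Theorem 4.1 of \cite{Boissonnat2}.

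\textbf{The gap: the margin device for (2) cannot work.} In rescaled units each point moves in a disc of area $\pi\cdot 10^{-6}$, and each of the boundedly many bad regions meets that disc in area of order $\epsilon\cdot 10^{-3}$. So a good position exists only when $\epsilon\ll 10^{-3}$. But a later point may move by up to $10^{-3}\gg\epsilon$. That can carry it from distance $(1+6\epsilon)R$ straight into the forbidden annulus between $R$ and $(1+3\epsilon)R$, so the margin buys nothing. Asking that the later perturbation radius be small compared with $\epsilon$ contradicts the first requirement, since every point needs a disc of radius much larger than $\epsilon$ to find a good position.

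\textbf{The fix: use no margin at all.} When $p_i$ is placed, impose only the constraints coming from configurations $\{p_i,q,r,s\}$ whose other three points already have their final positions, and defer all other configurations. Each configuration is then certified exactly once, when its last point is placed, and nothing moves it afterwards. This requires the bad set to be small whichever of the four points is placed last. In particular it must be small when that point is a vertex of the triangle, so that the circumcentre moves with it; your thin-annulus justification covers only the case where $p_i$ is the fourth point.

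The symmetric case follows by lifting $x\mapsto\hat x=(x,|x|^2)$ to the paraboloid:
\begin{enumerate}
\item $|s-c|^2-R^2$ is the vertical distance from $\hat s$ to the plane through $\hat p,\hat q,\hat r$. So near-cocircularity forces the tetrahedron $\hat p\hat q\hat r\hat s$ to have volume $O(\epsilon)$.
\item The points $q,r,s$ are $0.998$-separated and lie in a bounded region, and no line meets the paraboloid in three points. Hence the triangle $\hat q\hat r\hat s$ has area bounded below.
\item Therefore $\hat p$ lies within $O(\epsilon)$ of the plane through $\hat q,\hat r,\hat s$. This confines $p$ to an annulus or strip of width $O(\epsilon)$ about the circumcircle (or line) of $q,r,s$, using that this circumradius is at least $0.499$.
\end{enumerate}
This is the planar analogue of Lemma~\ref{Lem:Lemma4Breslin}, which plays exactly this role in the one-point-at-a-time scheme of the appendix.
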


In the closed case, the proof of Theorem \ref{Thm:ExtensionBreslin} proceeded by picking a finite set of points $\mathcal{S}$ in $M$ and then making a good perturbation, giving a set $\mathcal{S}'$ that forms the vertices of the required thick triangulation. In the case where $M^\mathrm{fat}_{[\mu/2, \infty)}$ has non-empty boundary, we need somewhat more control over $\mathcal{S}$, as follows.

\begin{proposition}
\label{Prop:BoundaryIsSimplicial}
Let $\tilde a$, $\tilde b$, $\tilde \theta$ and $\epsilon$ be the constants provided by Theorem \ref{Thm:ThickTriangTorus}.
Let $c$ and $\lambda$ be the constants provided by Lemma 
\ref{Lem:RobustImpliesSimplicial}.
There is a finite set $\calS$ of points in $M$ with the following properties:
\begin{enumerate}
\item any two points in $\calS \cap \mathrm{int}(M^\mathrm{fat}_{[\mu/2, \infty)})$ are at least $\mu/90$ apart, 
\item any $p \in M_{[\mu/2, \infty)}$ has distance less than $\mu/90$ from some point in $\calS$;
\item for every point $p$ in $M$, the ball of radius $\mathrm{inj}_p(M)/5$ about $p$ has at least one point of $\calS$ in its interior.
\end{enumerate}
Furthermore, for any perturbation $\mathcal{S}'$ of $\mathcal{S}$ moving each point at most
$$D' = \min \{ (5/12) \tilde a \mu/100, (5/12) \tilde a \mu (\tilde \theta/ 2\pi), c \lambda \mu/1000 \}$$
and where the points are the vertices of a Delaunay triangulation of $M$, this
Delaunay triangulation includes a simplicial subset $\mathcal{Y}$ isotopic to $\partial M^{\textrm{fat}}_{\mu/2}$ 
and that lies in $M_{[\mu/4,\infty)}$.
\end{proposition}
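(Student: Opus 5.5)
We build $\calS$ in two pieces: a boundary piece $\calS_\partial$ on $\partial M^{\textrm{fat}}_{\mu/2}$ and an interior piece. For the boundary, each component of $\partial M^{\textrm{fat}}_{\mu/2}$ is a Euclidean torus with injectivity radius more than $\mu/4$ by Lemma \ref{Lem:BoundaryHasControlledGeometry}. So Theorem \ref{Thm:ThickTriangTorus} applies with the constant $c$ from Lemma \ref{Lem:RobustImpliesSimplicial}; we shrink $c$ if needed so that $c<1/200$ and so that the resulting edge lengths lie in $[c\mu/4,c\mu/2]$. Here the constants $\tilde a,\tilde b$ should really be read at the scale $c\mu$; I would restate Theorem \ref{Thm:ThickTriangTorus} in scale-invariant form to arrange this. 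The theorem yields $\tilde\calS_\partial$ whose Euclidean Delaunay triangulation is thick and in which every Euclidean circumcentre has its fourth-nearest point at least $(1+3\epsilon)$ times further away.

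We then place $\tilde\calS_\partial$ in $\calS$. Outside $M^{\textrm{fat}}_{\mu/2}$, in the thin part, we add points as in Breslin so that condition (3) holds; we may take a generalised net as in the Remark on \cite{Boissonnat}. Inside $\mathrm{int}(M^{\textrm{fat}}_{\mu/2})$, we choose a maximal $\mu/90$-separated set among points at hyperbolic distance at least $\kappa c\mu$ from $\partial M^{\textrm{fat}}_{\mu/2}$, for a suitable constant $\kappa$. Maximality gives (1), and, together with the boundary points, gives (2). Condition (3) holds in the thick part because there $\mu/90<\mathrm{inj}/5$.

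The key step is to check that $\calS$ is $\epsilon$-robust. Take a boundary triangle $[x,y,z]$ with Euclidean circumcentre $w$. Its Euclidean circumradius is at most about $c\mu/2$. We must bound the hyperbolic distance from $w$ to any other point $p$ of $\calS$ from below by $(1+\epsilon)d_{hyp}(w,x)$. There are three cases.

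\begin{enumerate}
\item If $p$ lies on the boundary, Lemma \ref{Lem:DistanceComparison} with $(1+\epsilon)$ comparison converts the Euclidean margin $(1+3\epsilon)$ into a hyperbolic margin of at least $(1+3\epsilon)/(1+\epsilon)\ge 1+\epsilon$ for small $\epsilon$. This case uses $d_{hyp}\le\zeta\mu$, which is fine since $c\le\zeta/2$. If instead $d_{hyp}(w,p)>\zeta\mu$, the bound is trivial.
\item If $p$ is an interior point, it is at distance at least $\kappa c\mu$ from the boundary. With $\kappa$ large, say $\kappa=2$, this exceeds $(1+\epsilon)c\mu/2$.
\item If $p$ lies in the thin part, we likewise keep thin-part points at distance at least $\kappa c\mu$ from $\partial M^{\textrm{fat}}_{\mu/2}$. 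This is possible since the tube radius is at least $\mu/10$ by Lemma \ref{Lem:TubeRadius}.
\end{enumerate}

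The main obstacle is that pushing the interior points away from the boundary by $\kappa c\mu$ may spoil conditions (2)--(3) near the boundary. Since $\kappa c\mu\ll\mu/90$, the boundary points themselves cover that collar within $\mu/90$, so this should be fine.

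Given robustness, take any perturbation $\calS'$ moving points at most $D'$ and forming a Delaunay triangulation. Since $D'\le c\lambda\mu/1000$, Lemma \ref{Lem:RobustImpliesSimplicial} shows that every boundary triangle perturbs to a face of the Delaunay triangulation of $M$. Since $D'$ is also at most $(5/12)\min\{\tilde a\mu/100,\tilde a\mu\tilde\theta/2\pi\}$ and the boundary triangulation has edges in $[\tilde a\mu,\tilde b\mu]$ with angles at least $\tilde\theta$, Lemma \ref{Lem:StraighteningIsIsotopicVariation} shows that the union $\mathcal{Y}$ of these straightened faces is a surface isotopic to $\partial M^{\textrm{fat}}_{\mu/2}$ and lying in $M_{[\mu/4,\infty)}$. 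This gives the required simplicial subset.
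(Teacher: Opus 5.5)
Your proposal is correct and follows the paper's route: a boundary net from Theorem \ref{Thm:ThickTriangTorus}, extension with a buffer, $\epsilon$-robustness via Lemma \ref{Lem:DistanceComparison}, then Lemmas \ref{Lem:RobustImpliesSimplicial} and \ref{Lem:StraighteningIsIsotopicVariation}. The only difference is that you keep non-boundary points at distance $2c\mu$ from the boundary torus, whereas the paper keeps them $\mu/90$ from the boundary points. This costs you the collar-covering check and a slightly smaller $c$, which the paper gets for free from maximality of a set containing $\tilde\calS_\partial$.

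One correction: shrinking $c$ does not put the edges of a maximal $c\mu$-separated net into $[c\mu/4,c\mu/2]$, since they lie in roughly $[c\mu,2c\mu]$. Build the boundary net at spacing about $c\mu/4$ instead; the paper's own proof also glosses over this scale mismatch.
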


\begin{proof}
Let $\mathcal{S}_\partial$ be a maximal collection of points on $\partial M^{\textrm{fat}}_{\mu/2}$, no two of which are closer than $c\mu$. 
By Theorem \ref{Thm:ThickTriangTorus}, we may perturb $\mathcal{S}_\partial$ to $\tilde{\mathcal{S}}_\partial$ in $\partial M^{\textrm{fat}}_{\mu/2}$, 
moving each point at most $c\mu/1000$, so that the resulting Delaunay triangulation 
$\mathcal{T}$ of $\partial M^{\textrm{fat}}_{\mu/2}$ with its induced Riemannian metric has edge lengths between $\tilde a \mu$ and $\tilde b \mu$, and
interior angles at least $\tilde \theta$. Furthermore, for each point in $\partial M^{\textrm{fat}}_{\mu/2}$ that has exactly three closest points in $\tilde \calS_\partial$ (in the Euclidean metric), the next closest point in $\tilde \calS_\partial$ is least $(1+3\epsilon)$ times further away (in the Euclidean metric). 
Note that after this perturbation, the Euclidean balls about these points
of radius $(1001/1000)c\mu$ cover $\partial M^{\textrm{fat}}_{\mu/2}$, and any two points are at least $(998/1000)c\mu$ apart.
So by Lemma \ref{Lem:DistanceComparison}, the hyperbolic balls about these points of radius $(1001/1000)c\mu$
cover $\partial M^{\textrm{fat}}_{\mu/2}$, and any two points are at least $(89/100)c\mu$ apart in the hyperbolic metric by Lemma \ref{Lem:DistanceComparison}.
By Lemma \ref{Lem:StraighteningIsIsotopicVariation}, when we perturb the
vertices of $\mathcal{T}$ in $M$ by distance at most $(5/12) \min \{ \tilde a \mu/100, \tilde a \mu (\tilde \theta/ 2\pi) \}$  
and then straighten, the result is a triangulation of a surface $X$ that is isotopic to $\partial M^{\textrm{fat}}_{\mu/2}$ and lying in $M_{[\mu/4,\infty)}$. This will be the simplicial subset $\mathcal{Y}$ required by the proposition. 

We need to define an extension of $\tilde{\mathcal{S}}_\partial$ to a set of points $\mathcal{S}$ in $M$. 
Define $\mathcal{S} \cap M^{\textrm{fat}}_{\mu/2}$ as follows. Take it to be a maximal set having the following properties:
\begin{enumerate}
\item $\mathcal{S} \cap M^{\textrm{fat}}_{\mu/2}$ contains $\tilde{\mathcal{S}}_\partial$;
\item for any $p \in (\mathcal{S} \cap M^{\textrm{fat}}_{\mu/2}) \setminus \tilde{\mathcal{S}}_\partial$ and $q \in \mathcal{S} \cap M^{\textrm{fat}}_{\mu/2}$, the distance between $p$ and $q$ is at least 
$\mu/90$.
\end{enumerate}
Then extend $\calS \cap M^{\textrm{fat}}_{\mu/2}$ to a set $\calS$ by adding points in $M - M^{\textrm{fat}}_{\mu/2}$, so that (1) and (2) still hold
and in addition:
\begin{enumerate}
\item[(3)] for any $p \in \mathcal{S} - M^{\textrm{fat}}_{\mu/2}$ and $q \in \mathcal{S} \cap M^{\textrm{fat}}_{\mu/2}$, the distance between $p$ and $q$ is at least $\mu/90$;
\item[(4)] for any $p \in M$, the ball $B(p, \mathrm{inj}(M,p)/5)$ centred at $p$ with radius $\mathrm{inj}(M,p)/5$ has a point of $\calS$ in its interior;
\item[(5)] for each $p$ in $\mathcal{S} - M^{\textrm{fat}}_{\mu/2}$, the distance from $p$ to any other point in $\mathcal{S} - M^{\textrm{fat}}_{\mu/2}$
is at least $\mathrm{inj}(M,p)/10$.
\end{enumerate}
This guarantees that the distance between any two points in $\mathcal{S} \cap M^{\textrm{fat}}_{\mu/2}$ is at least $(89/100)c\mu$. This is true by (2) when at least one of the points is in $\mathcal{S} \setminus \tilde{\mathcal{S}}_\partial$ (due to $c$ being less than $1/200$ in Lemma 
\ref{Lem:RobustImpliesSimplicial}), and we observed this above when both points are in $\tilde{\mathcal{S}}_\partial$. We also note that every point in $M^{\textrm{fat}}_{\mu/2}$ has distance less than 
$\mu/90$ from some point in $\mathcal{S}$. For otherwise we could add this point to $\calS \cap M^{\textrm{fat}}_{\mu/2}$ contradicting its maximality. Hence, the balls of radius 
$\mu/90$ about $\calS$ cover $M^{\textrm{fat}}_{\mu/2}$.

 We claim that $\calS$ is $\epsilon$-robust. Consider any three points $x$, $y$, $z$ in $\partial M^{\textrm{fat}}_{\mu/2}$ that span a triangle in $\calT$. Let $w$ be their Euclidean circumcentre. The Euclidean distance from $w$ to any of $x$, $y$, $z$ is at most $(1001/1000) c \mu$. So the hyperbolic distance from 
$w$ to any of $x$, $y$, $z$ is also at most $(1001/1000) c \mu \leq \mu/199.$
 By Theorem \ref{Thm:ThickTriangTorus}, the next closest point in $\partial M^{\textrm{fat}}_{\mu/2}$ to $w$ is at least $(1 + 3 \epsilon)$ further away in the Euclidean metric.  Hence, it is at least $(1 + \epsilon)$ times further away in the hyperbolic metric, by Lemma \ref{Lem:DistanceComparison},
as required. Moreover, by (2) and (3) above, any point in $\calS \setminus \partial M^{\textrm{fat}}_{\mu/2}$ has hyperbolic distance at least $\mu/90$
from $x$, and hence hyperbolic distance at least $(\mu/90) - (\mu/199)$ from $w$. This verifies the claim.

Now consider any perturbation $\mathcal{S}'$ of $\mathcal{S}$, moving each point a distance at most $D'$,
and resulting in a set that is the vertex set of a Delaunay triangulation of $M$. Let $\mathcal{S}'_\partial$ be the subset of $\mathcal{S}'$ arising from $\tilde{\mathcal{S}}_\partial$.

 By Lemma \ref{Lem:RobustImpliesSimplicial}, $X$ is simplicial in the Delaunay triangulation of $M$ with vertex set $\calS$, as required.
 We have already verified above that $X$ is a surface isotopic to $\partial M^{\textrm{fat}}_{\mu/2}$ and lying in $M_{[\mu/4,\infty)}$.\end{proof}
 
Note that each point in $M^{\textrm{fat}}_{\mu/2}$ has distance at most $(\mu/90) + 2D' < \mu/80$ from some point in $\calS'$. In particular, this is true
of the circumcentre of any tetrahedron in the Delaunay triangulation with vertex set $\calS'$. So each edge in this triangulation has length less than $\mu/40$.

The rest of the proof of Theorem \ref{Thm:ExtensionBreslin} proceeds as in the closed case. We pick a total ordering on the set $\mathcal{S}$ from Proposition \ref{Prop:BoundaryIsSimplicial},
and we perturb these points one point at a time. Let $\mathcal{S}_i$ be the result of perturbing the first $i$ points, and let $p_{i+1}$ be the
next point to perturb. Let $\mathcal{U}_i$ be the set of triples $\{ q,r,s \} \in \mathcal{S}_i \setminus \{ p_{i+1} \}$ such that $[p_{i+1}', q', r', s']$ is a tetrahedron in the Delaunay triangulation on some good perturbation of $\mathcal{S}$. 
The distance that we perturb each point is at most $D'$,
and we choose the perturbation so that it avoids
the sliver region of each triangle in $\mathcal{U}_i$. As previously, if $\sigma$ is sufficiently small, we can ensure that
there is such a perturbation of $p_i$. Repeating this for all points of $\mathcal{S}$ gives a perturbation $\mathcal{S}'$.
By Proposition \ref{Prop:BoundaryIsSimplicial}, the Delaunay triangulation with vertex set $\mathcal{S}'$ forms a triangulation
a subset of $M$ isotopic to $M^{\textrm{fat}}_{\mu/2}$ and lying in $M_{[\mu/4,\infty)}$.
Since we have avoided all sliver regions, it is a $(\theta, a\mu,b\mu)$-thick triangulation. 
This completes the proof of Theorem
\ref{Thm:ExtensionBreslin}.

\medskip 
\noindent
Marc Lackenby\\
Mathematical Institute\\
University of Oxford\\
Andrew Wiles Building\\
Radcliffe Observatory Quarter\\
Woodstock Road, Oxford, OX2 6GG, UK \\
marc.lackenby@maths.ox.ac.uk

\medskip
\noindent
Anastasiia Tsvietkova\\
Department of Mathematics and Computer Science\\
Rutgers University-Newark \\
101 Warren Street, Newark, NJ  07102, USA\\
a.tsviet@rutgers.edu

\end{document}